\theoremstyle{plain}
\newtheorem{thm}{Theorem}[section]
\newtheorem{prop}[thm]{Proposition}
\newtheorem{lem}[thm]{Lemma}
\newtheorem{thmalpha}{Theorem}
\newtheorem{propalpha}[thmalpha]{Proposition}
\newtheorem{lemalpha}[thmalpha]{Lemma}
\newtheorem*{thm*}{Theorem}
\theoremstyle{definition}
\newtheorem{defn}[thm]{Definition}
\theoremstyle{remark}
\newtheorem*{rem}{Remark}
\newtheorem*{claim*}{Claim}
\newcommand{\scrT}{\mathscr{T}}
\newcommand{\scrS}{\mathscr{S}}
\newcommand{\scrL}{\mathscr{L}}
\newcommand{\scrU}{\mathscr{U}}
\newcommand{\calI}{\mathcal{I}}	
\newcommand{\calE}{\mathcal{E}}	
\newcommand{\calA}{\mathcal{A}}	
\newcommand{\calC}{\mathcal{C}}	
\newcommand{\calH}{\mathcal{H}}	
\newcommand{\calP}{\mathcal{P}}	
\newcommand{\calK}{\mathcal{K}}	
\newcommand{\calR}{\mathcal{R}}	
\newcommand{\bbZ}{\mathbb{Z}}	
\newcommand{\bbR}{\mathbb{R}}	
\newcommand\fakeslant[1]{\pdfliteral{1 0 0.25 1 0 0 cm}#1\pdfliteral{1 0 -0.25 1 0 0 cm}}
\newcommand{\mathbbs}[1]{\mathbb{\fakeslant{#1}}}
\newcommand{\bbsR}{\mathbbs{R}}	
\newcommand{\ts}{\hspace{.11111em}}
\newcommand{\tts}{\hspace{.05555em}}
\newcommand{\nts}{\hspace{-.11111em}}
\newcommand{\ntts}{\hspace{-.05555em}}
\newcommand{\nil}{\varnothing}
\newcommand{\ssminus}{\smallsetminus}
\newcommand{\rank}{\operatorname{rk}}
\newcommand{\bbZtwo}{\mathbb{Z}_{\nts/\ntts2\mathbb{Z}}}
\newcommand{\homeo}{\approx}
\newcommand{\nhomeo}{\not\approx}
\newcommand{\RP}{\bbsR \ntts P}
\newcommand{\SxS}{S^2 \nts\times\nts S^1}
\newcommand{\tri}{\scrT}
\newcommand{\simplices}{\varSigma}
\newcommand{\pairings}{\varPhi}
\newcommand{\quot}{\varpi}
\newcommand{\cell}{\scrU}
\newcommand{\lst}{\scrS}
\newcommand{\lls}{\scrL}
\newcommand{\Index}{\calI}
\newcommand{\Evens}{\calE}
\newcommand{\Adults}{\calA}
\newcommand{\Children}{\calC}
\newcommand{\Klusters}{\calK}
\newcommand{\House}{\calH}
\newcommand{\Posse}{\calP}
\newcommand{\Rogues}{\calR}
\newcommand{\Ecount}{\bm{E}}
\newcommand{\Tcount}{\bm{T}}
\newcommand{\Icount}{\bm{I}}
\newcommand{\fcount}{\bm{f}}
\newcommand{\gcount}{\bm{g}}
\newcommand{\icount}{\bm{i}}
\newcommand{\dcount}{\bm{d}}
\newcommand{\scount}{\bm{s}}
\newcommand{\bcount}{\bm{b}}
\newcommand{\acount}{\bm{a}}
\newcommand{\chineg}{\chi_{\nts\scriptscriptstyle-}}
\newcommand{\typee}{{\displaystyle \mathfrak{e}}}
\newcommand{\typet}{{\displaystyle \mathfrak{t}}}
\newcommand{\typeq}{{\displaystyle \mathfrak{q}}}
\newcommand{\typett}{{\displaystyle \mathfrak{tt}}}
\newcommand{\typeqq}{{\displaystyle \mathfrak{q \ntts q}}}
\newcommand{\typeqtt}{{\displaystyle \mathfrak{qtt}}}
\newcommand{\typeqqq}{{\displaystyle \mathfrak{q \ntts q \ntts q}}}
\newcommand{\aclass}{\varphi}			
\newcommand{\zclass}{0}
\newcommand{\iclass}{\aclass_i}
\newcommand{\jclass}{\aclass_j}
\newcommand{\rclass}{\textcolor{black}{\aclass_1}}
\newcommand{\gclass}{\textcolor{black}{\aclass_2}}
\newcommand{\bclass}{\textcolor{black}{\aclass_3}}
\newcommand{\surf}{F}				
\newcommand{\subsurf}{S}
\newcommand{\asurf}{\surf_{\aclass}}
\newcommand{\isurf}{\surf_i}
\newcommand{\jsurf}{\surf_j}
\newcommand{\ksurf}{\surf_k}
\newcommand{\kluster}{K}
\begin{document}


\title[The Complexity and the $\bbZtwo$-Cohomology]
	{The Complexity of Prime 3-manifolds and \\ the First $\bbZtwo$-Cohomology of Small Rank}
\author[Kei Nakamura]{Kei Nakamura}
\thanks{Nakamura is partially supported by NSF FRG grant DMS-1463940.}
\address{Department of Mathematics\\ Rutgers University \\ New Brunswick, NJ}
\email{kei.nakamura@rutgers.edu}
\subjclass[2010]{Primary 57N10}

\begin{abstract}
For a closed orientable connected 3-manifold $M$, its complexity $\Tcount(M)$ is defined to be the minimal number of tetrahedra in its triangulations. Under the assumption that $M$ is prime (but not necessarily atoroidal), we establish a lower bound for the complexity $\Tcount(M)$ in terms of the  $\bbZtwo$-coefficient Thurston norm for $H^1(M;\bbZtwo)$:
\begin{enumerate}
\item for any rank-1 subgroup $\{\zclass,\aclass\} \leqslant H^1(M;\bbZtwo)$, we have
\[
\Tcount(M) \geqslant 2+2||\aclass||
\]
unless $M$ is a lens space with $\Tcount(M)=1+2||\aclass||$;
\item for any rank-2 subgroup $\{\zclass,\rclass,\gclass,\bclass\} \leqslant H^1(M;\bbZtwo)$, we have
\[
\Tcount(M) \geqslant 2+||\rclass||+||\gclass||+||\bclass||.
\]
\end{enumerate}
Under the extra assumption that $M$ is \emph{atoroidal}, these inequalities had already been shown; see \cite{JRT:Lens} where the rank-1 inequality is given implicitly, and \cite{JRT:Z2} where the rank-2 inequality is given explicitly. Our work here shows that we do not need to require $M$ to be atoroidal.
\end{abstract}

\maketitle

\section{Introduction} 
\label{sec:Introduction}

For a closed orientable 3-manifold $M$, the \emph{complexity of $M$} is defined to be the minimal number of tetrahedra in a triangulation of $M$; here, and hereafter, we always allow triangulations to be pseudo-simplicial. This notion of complexity agrees with the complexity defined by Matveev \cite{Mat:Table} and studied by Martelli and Petronio \cite{MP:Geometric}, except for $S^3$, $\RP^2$, and $L(3,1)$. Determining the complexity for a given manifold $M$ is known to be a difficult problem; the seemingly easier task of finding a lower/upper bound for the complexity is still highly non-trivial.

In \cite{JRT:Lens,JRT:Covering,JRT:Z2}, Jaco, Rubinstein and Tillmann established certain lower bounds for $\Tcount(M)$, which are attained by particular minimal triangulations for infinite families of 3-manifolds. For $\aclass \in H^1(M;\bbZtwo)$, the \emph{$\bbZtwo$-coefficient Thurston norm} $||\aclass|||$ is defined to be the minimum of $\max\{0,-\chi(F)\}$ with $F$ varying over all closed surfaces that are Poincar\'e dual to $\aclass$. In terms of this norm, their lower bounds for lens spaces can be stated as follows.

\begin{thmalpha}[\cite{JRT:Lens}] \label{thm:JRTLens}
Let $M$ be a lens space. Then, for any cohomology class $\aclass \in H^1(M;\bbZtwo)$, we have
\[
\Tcount(M) \geqslant 1+2||\aclass||.
\]
\end{thmalpha}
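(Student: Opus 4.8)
The plan is to start from a minimal triangulation $\tri$ of $M$, which has $t:=\Tcount(M)$ tetrahedra, and to extract from it a normal surface Poincar\'e dual to $\aclass$ whose Euler characteristic is controlled by $t$. First I would dispose of the trivial case $||\aclass||=0$ (which includes $\aclass=\zclass$): there the asserted inequality reads $t\ge 1$, which holds for every closed $3$-manifold. So assume $||\aclass||\ge 1$; in particular $M$ is none of $S^3$, $\RP^3$, $L(3,1)$ (for each of which $||\aclass||=0$ for every class), and, lens spaces being irreducible, the Jaco--Rubinstein theorem on $0$-efficiency applies to $\tri$. Hence $\tri$ has a single vertex $v$, the link of $v$ is the only normal $2$-sphere, and (since $M\nhomeo\RP^3$ and $M$ is irreducible and orientable) $M$ contains no embedded projective plane.

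Next I would represent the Poincar\'e dual $z\in H_2(M;\bbZtwo)$ of $\aclass$ by an embedded surface and normalize it with respect to $\tri$ by the standard moves --- isotopies across faces and tetrahedra, compressions, and deletions of normal $2$-spheres --- each of which preserves the $\bbZtwo$-homology class and does not increase $-\chi$. By the previous paragraph no normal $\RP^2$ and no non-vertex-linking normal $2$-sphere can arise, so this yields a normal surface $S$ with $[S]$ Poincar\'e dual to $\aclass$ and with no sphere or projective-plane component; consequently $-\chi(S)\ge 0$, and therefore $-\chi(S)\ge||\aclass||$ (since $||\aclass||\ge 1$ forces every surface dual to $\aclass$ to have strictly positive $-\chi$). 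Among all such $S$ I would fix one of least weight $w:=|S\cap\tri^{(1)}|$.

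The theorem then reduces to the reverse estimate $-\chi(S)\le\tfrac12(t-1)$: together with the previous step this gives $\Tcount(M)=t\ge 1+2||\aclass||$ at once. The cell decomposition induced on $S$ by $\tri$ --- with $0$-cells $S\cap\tri^{(1)}$, $1$-cells $S\cap\tri^{(2)}$, and $2$-cells the normal discs --- yields the identity $-\chi(S)=\tfrac12 N_\triangle+N_\square-w$, where $N_\triangle$ and $N_\square$ count the triangular and quadrilateral discs of $S$; so the goal is equivalently the combinatorial inequality $N_\triangle+2N_\square-2w\le t-1$. This is the step that genuinely uses $0$-efficiency and the minimality of $w$: I would try to show that the edge weights are forced to be large relative to the disc count, by ruling out families of triangular discs running parallel to the link of $v$, by showing that a least-weight dual surface cannot carry a quadrilateral in \emph{every} one of the $t$ tetrahedra (this is what produces the crucial ``$\,-1\,$''), and by tracking how an edge of large degree forces up the weight through the discs incident to it. I expect this local-to-global disc count to be the main obstacle; the homological setup, the normalization, and the Euler-characteristic identity are all routine.
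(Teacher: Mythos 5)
Your reductions are all sound as far as they go: the dispatch of the $||\aclass||=0$ cases, the passage to a one-vertex $0$-efficient triangulation, the normalization of a dual surface with no $S^2$- or $\RP^2$-components, the Euler-characteristic identity $-\chi(S)=\tfrac12 N_\triangle+N_\square-w$, and the observation that everything reduces to $N_\triangle+2N_\square-2w\leqslant \Tcount-1$. (Note also that a least-weight dual surface necessarily has edge-weight exactly $\aclass[e]$ on every edge, so your $S$ is the canonical surface $\asurf$, and your target inequality is exactly equivalent to $\sum_{d}(d-4)\Ecount_{\typee,d}+\Tcount_\typet\geqslant -2$ in the notation of Lemma~\ref{lem:comb1}.) But this is where the theorem actually lives, and you have not proved it --- you explicitly flag the ``local-to-global disc count'' as the main obstacle and offer only heuristics. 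The reductions you do carry out are the routine part; the cited result \cite{JRT:Lens} is precisely the nontrivial combinatorial estimate you defer.

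Moreover, the mechanisms you gesture at are not the ones that make the estimate work, and at least one is off target. The inequality is not produced by showing that a quadrilateral cannot sit in every tetrahedron; rather, the proof proceeds by classifying the $\aclass$-even edges of low degree. Zero-efficiency forbids edges of degree $1$ and $2$ (\hyperref[thm:edges_min]{Theorem~\ref*{thm:edges_min}}), and forces every degree-$3$ edge to be the interior edge of an $\lst_{\{1,3,4\}}$ subcomplex; one then controls how many such degree-$3$ even edges (each contributing $-1$ to $\sum_d(d-4)\Ecount_{\typee,d}$) can coexist, using the structure of maximal layered solid tori and how they intersect (\hyperref[prop:mlst]{Proposition~\ref*{prop:mlst}}). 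The extremal value $-2$, which yields the ``$1+$'' rather than ``$2+$'' in the bound, is realized exactly by the layered lens space triangulations and has to be treated as a separate case from all other triangulations (which satisfy the stronger bound $\geqslant -1$, hence $\geqslant 0$ by parity). None of this is recoverable from the least-weight property alone without the degree analysis, so as it stands the proposal has a genuine gap at its central step.
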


There is an infinite family of lens spaces for which the standard \emph{layered lens space triangulation} $\scrL$ with the number of tetrahedra attaining the lower bound $1+2||\aclass||$, and it follows that lens spaces in this family has the complexity $\Tcount(M)=1+2||\aclass||$ \cite{JRT:Lens}. The proof of \hyperref[thm:JRTLens]{Theorem~\ref{thm:JRTLens}} consists of two parts; they first showed that the standard layered lens space triangulations $\scrL$ satisfies the inequality $1+2||\aclass||$, and then they showed that any other triangulation $\scrT$ of a lens space satisfies the inequality $2+2||\aclass||$. The latter part of the proof applies verbatim to any irreducible atoroidal 3-manifold other than lens spaces. So, as a byproduct, the following statement is established implicitly in \cite{JRT:Lens}.

\begin{thmalpha}[\cite{JRT:Lens}] \label{thm:JRT1}
Let $M$ be an oriented connected closed irreducible atoroidal 3-manifold other than a lens space. Then, for any rank-1 subgroup $\{0,\aclass\} \leqslant H^1(M;\bbZtwo)$, we have
\[
\Tcount(M) \geqslant 2+2||\aclass||.
\]
\end{thmalpha}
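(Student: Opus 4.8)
The plan is to revisit the proof of Theorem~\ref{thm:JRTLens} in \cite{JRT:Lens}. That proof has two parts: (a) the layered lens space triangulations of lens spaces realize the bound $1+2||\aclass||$; and (b) every triangulation which is not a layered lens space triangulation has at least $2+2||\aclass||$ tetrahedra. Part~(b) uses $M$ only through the assumptions ``closed, orientable, connected, irreducible, atoroidal, and not a lens space'', so---once one verifies that no lens-space-specific input has slipped in---part~(b) already establishes the present statement. I will describe how I would carry out part~(b).

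First I would reduce to a minimal triangulation. We may assume $||\aclass||\ge 1$, since otherwise the claim is merely $\Tcount(M)\ge 2$, which holds because no closed orientable irreducible non-lens-space admits a triangulation with fewer than two tetrahedra (the one-tetrahedron closed orientable triangulations give only $S^3$, and $H^1(S^3;\bbZtwo)=0$). So fix a minimal triangulation $\scrT$ of $M$ with $t=\Tcount(M)$ tetrahedra. Since $M$ is irreducible and not $S^3$, $\RP^3$, or $L(3,1)$, the Jaco--Rubinstein theory of $0$-efficient triangulations gives that $\scrT$ has exactly one vertex, is $0$-efficient, and has all edges of degree at least $3$; in particular $\scrT$ has $t+1$ edges and $2t$ triangles. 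Because $\scrT$ has a single vertex, the coboundary map $C^0(\scrT;\bbZtwo)\to C^1(\scrT;\bbZtwo)$ is zero, so $\aclass$ is represented by a \emph{unique} $\bbZtwo$-valued edge-labelling $z$ satisfying the cocycle condition, namely with an even number of labelled edges on each $2$-simplex.

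Next I would build dual normal surfaces and set up the Euler-characteristic bookkeeping. Any normal surface $F$ Poincar\'e dual to $\aclass$ meets each edge $e$ in $w_e\equiv z(e)\pmod 2$ points, and a routine inclusion--exclusion count---of normal discs, of intersection arcs of $F$ with the $2$-simplices of $\scrT$, and of intersection points of $F$ with the edges, using $\deg e\ge 3$---yields the identity $-\chi(F)=\tfrac12\sum_e(\deg e-2)\,w_e-D(F)$, where $D(F)$ is the total number of normal discs of $F$. For the \emph{canonical} dual surface $S$---insert into each tetrahedron the unique normal disc (the empty disc, one normal triangle, or one normal quadrilateral) compatible with the labelling $z$ there; these fit together across faces and meet each labelled edge exactly once---the identity specializes to $-\chi(S)=\tfrac12 x+y-|E_1|$, where $E_1$ is the set of labelled edges and $x$, $y$ count the tetrahedra carrying a normal triangle, respectively a normal quadrilateral. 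This already gives a lower bound for $\Tcount(M)$, but one which falls short of $2+2||\aclass||$ by roughly a factor of two. To recover the missing factor I would instead start from a Thurston-norm-minimizing embedded surface $F$ dual to $\aclass$, isotope it to a least-weight normal surface---legitimate since $M$ is irreducible, $\scrT$ is $0$-efficient, and $F$ may be taken incompressible---and then bound $D(F)$ from \emph{below} in terms of the weights $w_e$, using the structure of least-weight normal surfaces in $0$-efficient minimal triangulations; fed into the displayed identity this forces $||\aclass||=-\chi(F)\le\tfrac{t-2}{2}$, that is, $\Tcount(M)\ge 2+2||\aclass||$.

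The hard part will be exactly this last step: promoting the factor-one estimate coming from the canonical surface to the sharp factor-two estimate. It requires both the quantitative lower bound on the number of normal discs of a least-weight normal surface and a case analysis that rules out the configurations in which the estimate would be tight---and those tight configurations are precisely the layered lens space triangulations. It is at this point that minimality of $\scrT$, together with $M$ being irreducible, atoroidal, and not a lens space, is genuinely used. Since this case analysis is part~(b) of the argument in \cite{JRT:Lens} and invokes no property of $M$ beyond the present hypotheses, it transfers verbatim, which proves the theorem.
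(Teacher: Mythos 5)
Your high-level framing agrees with the paper's: Theorem~\ref{thm:JRT1} is exactly part (b) of the argument of \cite{JRT:Lens}, which uses nothing about $M$ beyond irreducibility, atoroidality, and not being a lens space. But your reconstruction of part (b) goes wrong at the decisive step. The canonical-surface bookkeeping does \emph{not} fall short by a factor of two: the identity actually used (Lemma~\ref{lem:comb1}, i.e.\ \cite[Lem.\,12]{JRT:Lens}) reads $2\Tcount-4+4\chi(\asurf)=\sum_{d\geqslant3}(d-4)\Ecount_{\typee,d}+\Tcount_\typet$, so the coefficient $4$ on $\chi(\asurf)$ already matches the target $2\Tcount-4\geqslant 4||\aclass||$; in your notation this identity is just the rearrangement of $-\chi(S)=\tfrac{x}{2}+y-|E_1|$ using $|E_0|+|E_1|=t+1$ and $\sum_{e\in E_0}\deg e=6z+3x+2y$. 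What is missing is not a factor of two but a \emph{lower bound on the even-edge term} $\sum_{d\geqslant3}(d-4)\Ecount_{\typee,d}$, whose negativity is driven entirely by $\aclass$-even edges of degree~3. The whole content of part (b) is the analysis of those edges: each sits inside an $\lst_{\{1,3,4\}}$ subcomplex (Theorem~\ref{thm:edges_min}), one organizes them by maximal layered solid tori (Proposition~\ref{prop:mlst}), removes the problematic degree-4 configurations by edge flips (Proposition~\ref{prop:edgeflip1}), and then invokes atoroidality to show that at most one problematic even edge of degree 5 or 6 survives, so the term is $\geqslant-1$ and parity finishes. None of this appears in your outline.

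Your proposed substitute --- normalize a norm-minimizing surface to least weight and bound $D(F)$ from below --- is not the mechanism of \cite{JRT:Lens}, and you give no indication of what bound would be proved or why least weight and incompressibility would yield it; the only general estimate, $D(F)\geqslant\tfrac14\sum_e\deg(e)\,w_e$, reproduces exactly the weak ``factor-one'' bound you are trying to improve. You also misidentify where atoroidality enters: layered lens space triangulations are excluded by hypothesis (and handled in part (a) of \cite{JRT:Lens}); atoroidality is used instead to rule out the coexistence of two or more high-degree even edges each encircled by several maximal layered solid tori, since such configurations produce an essential torus. (A minor slip: the one-tetrahedron closed orientable triangulations yield $S^3$, $L(4,1)$ and $L(5,2)$, not only $S^3$; your conclusion $\Tcount(M)\geqslant 2$ still holds since the latter two are lens spaces.)
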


In \cite{JRT:Z2}, the ideas from \cite{JRT:Lens} are extended to the context involving a rank-2 subgroup $H \leqslant H^1(M;\bbZtwo)$. In particular, they generalized \hyperref[thm:JRT1]{Theorem~\ref{thm:JRT1}}.

\begin{thmalpha}[\cite{JRT:Z2}] \label{thm:JRT2}
Let $M$ be an oriented connected closed irreducible atoroidal 3-manifold. Then, for any rank-2 subgroup
$\{0,\rclass, \gclass, \bclass\} \leqslant H^1(M;\bbZtwo)$, we have
\[
\Tcount(M) \geqslant 2+||\rclass||+||\gclass||+||\bclass||.
\]
\end{thmalpha}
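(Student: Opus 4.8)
The plan follows the strategy introduced in \cite{JRT:Lens} and developed in \cite{JRT:Z2}, applied to all three nonzero classes of the rank-$2$ subgroup at once. First replace $\scrT$ by a minimal triangulation of $M$, with $t:=\Tcount(M)$ tetrahedra. Each of $S^3$, $\RP^3$, $L(3,1)$ has first $\bbZtwo$-cohomology of rank $\le 1$, so the hypothesis rules them out, and by a theorem of Jaco and Rubinstein a minimal triangulation of such an $M$ is $0$-efficient and hence has a single vertex. For a one-vertex triangulation the only $0$-coboundary is trivial, so $H^1(M;\bbZtwo)=Z^1(\scrT;\bbZtwo)$; choosing cocycle representatives $z_1,z_2,z_3$ of $\rclass,\gclass,\bclass$, the cocycle $z_1+z_2+z_3$ represents $\rclass+\gclass+\bclass=\zclass$ and is therefore identically $0$. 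Each $z_a$ is a $\{0,1\}$-labelling of the edges of $\scrT$, even on every triangle, and is $\bbZtwo$-dual to an embedded closed normal surface $F_a$ crossing each edge $\edge$ in $z_a(\edge)$ points; in each tetrahedron $z_a$ is the coboundary of a $2$-colouring of the four vertices, so $F_a$ meets that tetrahedron in nothing (type $\typee$, for a $(4{+}0)$ colouring), a normal triangle (type $\typet$, for $(3{+}1)$), or a normal quadrilateral (type $\typeq$, for $(2{+}2)$). Since $M$ is irreducible and is not $\RP^3$ it contains no embedded $\RP^2$; and since $F_a$ has at most one normal disc per tetrahedron it cannot contain the vertex link, so by $0$-efficiency it has no sphere component. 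Thus every component of $F_a$ has $\chi\le 0$, $-\chi(F_a)\ge 0$, and $||\aclass_a||\le -\chi(F_a)$.

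Two computations drive the count. An Euler-characteristic tally of $F_a$ — one vertex per $1$-edge of $\scrT$, one edge per triangle carrying two $1$-edges, one face per non-empty tetrahedron, with each disc-side lying on a triangle shared by two tetrahedra — yields
\[
-\chi(F_a)=\#\typeq(F_a)+\tfrac12\#\typet(F_a)-n_a,\qquad n_a:=\textstyle\sum_{\edge}z_a(\edge),
\]
where $\#\typeq(F_a)$ and $\#\typet(F_a)$ count the tetrahedra carrying the corresponding type for $z_a$; moreover $n_a\ge 2$, since were $\edge$ the unique $1$-edge of $z_a$, any triangle through it would carry an odd number of $1$-edges. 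Next, $z_1+z_2+z_3=0$ forces the three vertex $2$-colourings in each tetrahedron to sum to a constant, so the unordered triple of types there is one of
\[
(\typee,\typee,\typee),\quad(\typet,\typet,\typee),\quad(\typeq,\typeq,\typee),\quad(\typet,\typet,\typeq),\quad(\typeq,\typeq,\typeq).
\]
In particular, writing $c(\Delta)$ for the number of quadrilateral pieces of $F_1\sqcup F_2\sqcup F_3$ lying in $\Delta$ plus half the number of triangle pieces, we have $c(\Delta)\in\{0,1,2,3\}$, with $c(\Delta)=3$ only for the all-quadrilateral triple.

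Summing the identity over $a\in\{1,2,3\}$ reduces the theorem to the single combinatorial estimate
\[
\sum_{\Delta}c(\Delta)\;\le\;t-2+\sum_a n_a ,
\]
since $\sum_a||\aclass_a||\le\sum_a(-\chi(F_a))=\sum_\Delta c(\Delta)-\sum_a n_a$. This estimate is the heart of the matter and the step I expect to be hardest: a bare count only gives $\sum_\Delta c(\Delta)\le 3t$, short of the target by a constant factor, so the geometry of minimal triangulations must be used. Following \cite{JRT:Z2}, one first makes the $F_a$ into $\bbZtwo$-taut normal surfaces — compressing and discarding inessential components, which by $0$-efficiency does not increase any $-\chi(F_a)$ and only makes the right-hand side smaller — and then argues that a tetrahedron distribution with many quadrilaterals but few edge-crossings would assemble the $F_a$, or their pairwise symmetric differences, into a normal essential torus, Klein bottle, or annulus in $M$; that a minimal triangulation of an atoroidal $M$ admits no such surface is precisely where atoroidality enters. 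Granting the estimate and using $t\ge 2$, one obtains $\Tcount(M)=t\ge 2+||\rclass||+||\gclass||+||\bclass||$; the scheme runs parallel to that behind \hyperref[thm:JRT1]{Theorem~\ref{thm:JRT1}}, with the classification of local triples bookkeeping the combined right-hand side.
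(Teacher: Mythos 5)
Your setup is sound: the reduction to a one-vertex $0$-efficient minimal triangulation, the construction of the three canonical surfaces $F_a$, the exclusion of $S^2$- and $\RP^2$-components, the Euler-characteristic identity $-\chi(F_a)=\#\typeq(F_a)+\tfrac12\#\typet(F_a)-n_a$, and the classification of the five admissible type-triples per tetrahedron are all correct, and your target estimate $\sum_\Delta c(\Delta)\le t-2+\sum_a n_a$ is (after converting the per-tetrahedron count into a per-$H$-even-edge count) exactly the inequality $\sum_{d}(d-4)\Ecount_{\typee,d}+\Tcount_\typett+\Tcount_\typeqtt\geqslant 0$ that the cited argument must establish. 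But you then write ``Granting the estimate\dots'' --- so the entire content of the theorem is deferred. What is missing is precisely the machinery of \cite{JRT:Z2} sketched in \hyperref[ssec:Insolvent2]{\S\ref{ssec:Insolvent2}}: one shows every $H$-even edge of degree $3$ is the interior edge of an $\lst_{\{1,3,4\}}$ (\hyperref[thm:edges_min]{Theorem~\ref{thm:edges_min}}), groups low-degree even edges with the maximal layered solid tori containing them, classifies the finitely many ``bad'' local configurations of degree $4$, $5$, $6$, eliminates the degree-$4$ ones by edge flips, and uses atoroidality to show at most one bad edge of degree $5$ or $6$ survives; this yields only $\geqslant -1$, and the final step is a parity argument ($2\Tcount-4-2\sum_i\lVert\iclass\rVert$ is even) to upgrade $\geqslant -1$ to $\geqslant 0$. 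None of these steps appears in your proposal, and your $t\geqslant 2$ remark plays no role.

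Two smaller points. First, your suggestion to ``make the $F_a$ into $\bbZtwo$-taut normal surfaces'' before counting is backwards: the identity $-\chi(F_a)=\#\typeq+\tfrac12\#\typet-n_a$ holds only for the canonical (edge-weight $\leqslant 1$) surface, and compressing destroys it; the correct logic keeps $F_a$ canonical, uses $\lVert\aclass_a\rVert\leqslant-\chi(F_a)$, and bounds $-\chi(F_a)$ combinatorially. Second, your argument for $n_a\geqslant 2$ ignores that in a pseudo-simplicial triangulation a face may be incident to the same edge twice, so the parity constraint does not immediately force a second odd edge; fortunately this claim is not needed anywhere in the proof.
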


This time, there is an infinite family of certain small Seifert fibered spaces, for which the \emph{twisted layered loop triangulation} with the number of tetrahedra attaining the lower bound $2+2||\aclass||$, and it follows that small Seifert fibered spaces in this infinite family has the complexity $\Tcount(M)=2+||\aclass||$ \cite{JRT:Z2}.

The purpose of this article is to generalize \hyperref[thm:JRT1]{Theorem~\ref{thm:JRT1}} and \hyperref[thm:JRT2]{Theorem~\ref{thm:JRT2}} by allowing $M$ to be any prime 3-manifolds that is not a lens space. In particular, we do not require the condition that $M$ is atoroidal; it should be noted that this condition plays a crucial role in the proof of \cite{JRT:Lens,JRT:Z2}, but we will establish our inequality without this assumption.

\begin{thm} \label{thm:main1}
Let $M$ be an oriented connected closed prime 3-manifold other than a lens space. Then, for any rank-1 subgroup $\{0,\aclass\} \leqslant H^1(M;\bbZtwo)$, we have
\[
\Tcount(M) \geqslant 2+2||\aclass||.
\]
\end{thm}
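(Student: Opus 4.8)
The plan is to follow the normal-surface argument of \cite{JRT:Lens} that underlies \hyperref[thm:JRT1]{Theorem~\ref{thm:JRT1}}, to locate the one point where atoroidality is used, and to show that it is not needed. First I would set up a minimal triangulation $\mathcal{T}$ of $M$ with $t=\Tcount(M)$ tetrahedra and dispose of the easy cases. Since $S^3$ and the lens spaces are the only closed orientable $3$-manifolds with a one-tetrahedron triangulation, $t\geqslant 2$; and since the subgroup has rank $1$, $\aclass\neq 0$. If $M$ is reducible then, being prime, $M\homeo\SxS$, the unique nonzero class in $H^1(M;\bbZtwo)$ is Poincar\'e dual to a $2$-sphere, so $\|\aclass\|=0$ and the asserted inequality reduces to $t\geqslant 2$. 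Thus we may assume $M$ irreducible; as $M$ is then none of $S^3,\RP^3,L(3,1)$, we may take $\mathcal{T}$ to be $0$-efficient with a single vertex, so that it has exactly $t+1$ edges. If in addition $M$ is atoroidal, \hyperref[thm:JRT1]{Theorem~\ref{thm:JRT1}} already applies, so from now on $M$ is irreducible and toroidal. Since $\|\aclass\|$ is an infimum over closed surfaces Poincar\'e dual to $\aclass$, it then suffices to build one such surface $F$ with $\max\{0,-\chi(F)\}\leqslant\tfrac12(t-2)$.

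Next I would take $F$ to be a normal surface Poincar\'e dual to $\aclass$ of least weight with respect to $\mathcal{T}$. Least weight and $0$-efficiency together kill any $2$-sphere component (a normal $2$-sphere is a vertex link, hence bounds a ball and can be deleted), and irreducibility together with $M\nhomeo\RP^3$ kills any $\RP^2$ component, so every component of $F$ has nonpositive Euler characteristic and $\|\aclass\|\leqslant-\chi(F)$. This is precisely where we part company with \cite{JRT:Lens}: there, atoroidality is used to guarantee that, after such simplifications, $F$ is incompressible with no torus or Klein-bottle component, which pins it down as a norm-minimising surface. I would \emph{not} make that claim; instead I would observe that torus and Klein-bottle components contribute $0$ to $-\chi(F)$, and that all we need is a \emph{combinatorial} upper bound for $-\chi(F)$ itself, valid even if $F$ is compressible.

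The combinatorial estimate would proceed as in \cite{JRT:Lens}: a least-weight normal surface meets each tetrahedron in a canonical configuration --- empty, a single triangle, or a single quadrilateral --- dictated by the parities of the six edge-weights, since any configuration with the same edge-parities but more discs (two parallel discs, or a triangle and a quadrilateral meeting along a vertex) has strictly larger weight. With $P$ and $Q$ the total numbers of triangular and quadrilateral discs, one then has $P+Q\leqslant t$, $w(F)\geqslant 2$, and $\chi(F)=w(F)-\tfrac12 P-Q$; a finer count of the tetrahedra carrying quadrilaterals, together with standard structural facts about minimal triangulations (in particular, no edge of degree $\leqslant 2$), sharpens this to $-\chi(F)\leqslant\tfrac12(t-2)$, completing the argument. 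The main obstacle, and the genuinely new part, is checking that this estimate survives in the toroidal setting: one must show that the reduction to the canonical per-tetrahedron configuration remains globally valid --- i.e.\ that the non-canonical configurations obstructing it can always be absorbed into vertex links or into $\chi=0$ pieces without altering the $\bbZtwo$-homology class or raising the weight --- and that the Euler-characteristic bookkeeping (including $\|\aclass\|\leqslant-\chi(F)$ and the pseudo-simplicial subtleties of edges and faces identified within one tetrahedron) is unaffected by normal tori and Klein bottles that atoroidality had previously ruled out. With these in place the argument nowhere uses atoroidality and, together with the reducible case, yields \hyperref[thm:main1]{Theorem~\ref{thm:main1}}.
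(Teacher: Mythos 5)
There is a genuine gap, and it sits exactly at the point you flag as "the main obstacle" but then treat as bookkeeping. Your plan is to take the least-weight (equivalently, canonical) normal surface $\asurf$ dual to $\aclass$ and prove the purely combinatorial bound $-\chi(\asurf)\leqslant\tfrac12(\Tcount-2)$, "valid even if $F$ is compressible." That bound is false in the toroidal setting. The exact identity (Lemma~\ref{lem:comb1}) is
\[
2\Tcount-4+4\chi(\asurf)\;=\;\sum_{d\geqslant 3}(d-4)\Ecount_{\typee,d}+\Tcount_\typet,
\]
and your claimed bound is precisely the assertion that the right-hand side is non-negative. Atoroidality is what JRT use to show the right-hand side is at least $-1$ (there is at most one problematic low-degree even edge); once $M$ is allowed to be toroidal, a single triangulation can contain many disjoint configurations of the form "$\aclass$-even edge of degree $5$ or $6$ surrounded by maximal layered solid tori each contributing a degree-$3$ interior edge," each of which contributes $-1$ to the sum, so $2\Tcount-4+4\chi(\asurf)$ can be negative with no a priori lower bound. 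No refinement of the disc count per tetrahedron rescues this: the deficit is real, and for such triangulations $-\chi(\asurf)>\tfrac12(\Tcount-2)$.

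The missing idea is that you must exploit the gap between $||\aclass||$ and $-\chi(\asurf)$ rather than avoid it. Each configuration that makes the combinatorial count negative (a "rogue" edge, or a triple of them sharing an $\aclass$-even face) forces $\asurf$ to be compressible across those edges inside the cluster of incident tetrahedra: the quadrilaterals around such an edge assemble into an annulus, pair of pants, or cross-capped disk whose compression produces a nearly canonical surface with $-\chi$ reduced by $2$ per compression. One then has to show that enough of these compressions can be performed simultaneously (clusters may share tetrahedra, so one needs a packing/covering estimate such as $4\kappa'\geqslant\kappa$ for a maximal disjoint subfamily) to conclude $||\aclass||\leqslant-\chi(\asurf)-\kappa/2$, where $\kappa$ counts the clusters and also equals the total combinatorial deficit. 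Combining the two gives $2\Tcount-4-4||\aclass||\geqslant-\kappa+2\kappa\geqslant 0$. Without this compression-versus-deficit matching, the argument does not close; your reduction of the problem to a direct Euler-characteristic bound on the canonical surface cannot succeed. (Your treatment of the reducible case $M\homeo\SxS$ and the reduction to a one-vertex $0$-efficient triangulation are fine.)
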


\begin{thm} \label{thm:main2}
Let $M$ be an oriented connected closed prime 3-manifold. Then, for any rank-2 subgroup
$\{0,\rclass, \gclass, \bclass\} \leqslant H^1(M;\bbZtwo)$, we have
\[
\Tcount(M) \geqslant 2+||\rclass||+||\gclass||+||\bclass||.
\]
\end{thm}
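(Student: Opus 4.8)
The plan is to reduce \hyperref[thm:main2]{Theorem~\ref{thm:main2}} to the atoroidal case handled in \hyperref[thm:JRT2]{Theorem~\ref{thm:JRT2}}. First, a rank-$2$ subgroup of $H^1(M;\bbZtwo)$ can exist only when $\dim_{\bbZtwo}H^1(M;\bbZtwo)\geq 2$, which forbids $M\homeo\SxS$; since $M$ is prime it is then irreducible, and if it is moreover atoroidal we are already done. So I would assume henceforth that $M$ is irreducible and toroidal, fix a minimal triangulation $\scrT$ with $t=\Tcount(M)$ tetrahedra, and use that $\scrT$ is one-vertex and $0$-efficient (Jaco--Rubinstein), so that its only normal sphere is the vertex link.

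Next I would re-examine the colored normal surface argument of \cite{JRT:Z2}, in which the rank-$2$ subgroup $\{0,\rclass,\gclass,\bclass\}$ is Poincar\'e dual to a family of $\bbZtwo$-cycles carried by a single embedded normal surface whose components are labelled by $\rclass$, $\gclass$, and $\bclass$, and an Euler-characteristic count of how this surface meets the tetrahedra yields $t\geq 2+||\rclass||+||\gclass||+||\bclass||$ when $M$ is atoroidal. The role of atoroidality there is to guarantee that a norm-minimizing (taut) colored representative can be placed in a normal form that interacts with $\scrT$ efficiently enough for the count to close --- in effect, to rule out essential normal tori and annuli that would otherwise spoil the estimate --- and this is precisely what breaks once $M$ is toroidal.

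To remove the hypothesis I would induct on $t$. Since $\scrT$ is a minimal triangulation of an irreducible toroidal manifold, it carries an embedded essential normal torus $T$ of least weight. Using $T$ and the combinatorics of $\scrT$, one cuts $M$ along $T$ and produces (pseudo-)triangulations of the resulting pieces with strictly fewer tetrahedra in total, by a crushing-type argument in the spirit of Jaco--Rubinstein; one then restricts $\rclass,\gclass,\bclass$ to the pieces and invokes additivity of the $\bbZtwo$-Thurston norm across an essential torus --- the torus contributing nothing since $\chi(T)=0$ --- so that $||\aclass||$ for $M$ is the sum of the norms of its restrictions. The inductive hypothesis then applies to each piece: it is Seifert fibered, handled directly with vertical and horizontal normal surfaces, or atoroidal; an atoroidal closed piece is covered by \hyperref[thm:JRT2]{Theorem~\ref{thm:JRT2}}, and an atoroidal piece with toral boundary by the same normal-surface count run on an ideal triangulation, and adding the estimates together with the tetrahedron counts across the gluing should recover the bound for $M$.

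I expect the crux of the argument to be the bookkeeping, in two respects. First, one must control the tetrahedron count of the pieces obtained by decomposing $\scrT$ along $T$, verifying a strict decrease; this needs the Jaco--Rubinstein crushing technology, the choice of $T$ so that no manifold is destroyed (no thin-edge-linking tori, no non-manifold debris), and tight control of how the colored surface meets $T$. Second, and more delicately, the additive constant $2$ must survive the decomposition: a piece with torus boundary satisfies only a weaker bound, so one cannot simply award each piece its own ``$+2$'', and instead the deficit must be charged to the gluing tori while the norms of the restricted classes are shown to add up to at least $||\rclass||+||\gclass||+||\bclass||$ --- this is subtle already for a single non-separating torus, where the restriction map on $H^1(\,\cdot\,;\bbZtwo)$ has a kernel. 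Making this uniform over all torus decompositions of a prime $3$-manifold, with closed Seifert fibered spaces over hyperbolic base orbifolds --- where the norm-minimizing representative can have arbitrarily negative Euler characteristic --- as the hardest subcase, is where the real work lies. The proof of \hyperref[thm:main1]{Theorem~\ref{thm:main1}} would then run on the same template with the colored family replaced by a single class, the lens-space exception being dealt with as in \cite{JRT:Lens}.
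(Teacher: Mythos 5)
Your strategy---decompose $M$ along an essential normal torus, induct on the number of tetrahedra, and feed the pieces to the atoroidal result---is not the paper's approach, and as outlined it has gaps that are not mere bookkeeping. First, the inductive step requires that a minimal triangulation of $M$ give rise to triangulations of the cut-open pieces whose tetrahedron counts sum to at most $\Tcount(M)$ (in fact to strictly less, with enough slack to recover the additive constant $2$). Crushing along a normal torus in the sense of Jaco--Rubinstein does not deliver this: crushing a torus can change the topology of the complementary pieces (collapsing product regions, performing Dehn fillings), and the general question of whether complexity is additive, or even appropriately subadditive, under torus decompositions is open. Second, the claimed additivity $||\aclass||_M=\sum ||\aclass|_{N_i}||$ of the $\bbZtwo$-Thurston norm across an essential torus is unjustified; the restriction $H^1(M;\bbZtwo)\to H^1(N_i;\bbZtwo)$ has kernel (as you note), the restricted classes are dual to surfaces with boundary on the JSJ tori, and no relative version of the norm or of the complexity bound is available from \cite{JRT:Lens,JRT:Z2}, which treat only closed manifolds. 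Third, even granting all of that, \hyperref[thm:JRT2]{Theorem~\ref{thm:JRT2}} applies only to closed atoroidal pieces, so the atoroidal pieces with toral boundary and the Seifert fibered pieces would each need a new argument that you do not supply.

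The paper never decomposes $M$. It runs the colored edge-counting of \cite{JRT:Z2} directly on a minimal triangulation of the (possibly toroidal) manifold, via the identity $2\Tcount-4+2\sum_i\chi(\isurf)=\sum_d(d-4)\Ecount_{\typee,d}+\Tcount_\typett+\Tcount_\typeqtt$, and then classifies exactly which $H$-even edges (the \emph{rogue} edges of \hyperref[lem:tprofile2]{Lemma~\ref*{lem:tprofile2}}) make the right-hand side negative. Atoroidality is replaced by a geometric compensation: around each cluster of rogue edges the canonical surfaces $\isurf$ are shown to be compressible (the Busting Lemmas), which yields $||\iclass||\leqslant-\chi(\isurf)-\kappa_i/2$ and exactly cancels the deficit $-\frac{1}{2}\sum_i\kappa_i$ contributed by the rogue edges. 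That local compressibility analysis---identifying the compressing disks inside the $10$- and $11$-tetrahedron clusters and showing enough disjoint clusters exist to perform the compressions simultaneously---is the missing idea in your proposal.
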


\subsection*{Outline}

Foundational material on 3-manifolds, triangulations, and Thurston norm are collected in \hyperref[ssec:Triang]{\S\ref{ssec:Triang}}. We then prove \hyperref[thm:main1]{Theorem~\ref{thm:main1}} in \hyperref[sec:Rank1]{\S\ref{sec:Rank1}}, and then \hyperref[thm:main2]{Theorem~\ref{thm:main2}} in \hyperref[sec:Rank2]{\S\ref{sec:Rank2}}. Somewhat detailed outlines of the proofs are given at the beginning of \hyperref[sec:Rank1]{\S\ref{sec:Rank1}} and \hyperref[sec:Rank2]{\S\ref{sec:Rank2}}.
We will carry out certain counting argument based on the combinatorial profiles around the so-called \emph{even} edges, and then analyze the norm $||\aclass||$ via the combinatorics and topology of the canonical normal surface $\asurf$ dual to $\aclass$. Heuristically, if some even edges contribute ``negatively'' against establishing the desired inequality, we will look for a way to compensate for it; we will do this first by grouping these edges with other even edges that contribute ``positively'' toward establishing the desired inequality, and then by compressing the surface $\asurf$ to assure a gap between $-\chi(\asurf)$ and $||\aclass||$. To establish the desired inequality, we must show that sufficient number of compressions can be carried out together; this is proved by analyzing the normal surface $\asurf$ locally (i.e.\;identifying the compression disks across the problematic edges) and globally (i.e.\;estimating the number of compressions that can be carried out together).



\section{Canonical Surfaces in Minimal Triangulations}
\label{sec:Canonical}

\subsection{Triangulations}
\label{ssec:Triang}

A \emph{triangulation} $\tri=(\simplices,\quot)$ of a compact 3-manifold $Q$ consists of a finite disjoint union $\simplices=\bigsqcup_i \sigma_i$ of euclidean 3-simplices and a quotient map $\quot: \simplices \rightarrow \simplices/\pairings=Q$ via a set $\pairings$ of affine face pairings, defining a $\varDelta$-complex structure on $Q$. The restriction of the map $\quot$ to the interior of each $k$-simplex in $\simplices$ is injective. We refer to the image of 3-, 2-, 1-, 0-simplices under $\quot$ as \emph{tetrahedra}, \emph{faces}, \emph{edges}, \emph{vertices} in $\tri$, with the induced incidence structure. An edge is called a \emph{boundary edge} if it is in $\partial Q$, and an \emph{interior edge} otherwise; similarly, a face is called a \emph{boundary face} if it is in $\partial Q$, and an \emph{interior face} otherwise. The \emph{degree} of an edge $e$, denoted by $\dcount(e)$, is the number of 1-simplices in the preimage $\quot^{-1}(e)$.

The disjoint union $\simplices' \subset \simplices$ of a subcollection of 3-simplices descends to the subcomplex $R=\quot(\simplices') \subset \quot(\simplices)=Q$ with the \emph{induced cellulization} $\cell=(\simplices',\quot|_{\simplices'})$ defining the $\varDelta$-complex structure on $\quot(\simplices')$. If $e$ is an edge in a subcomplex $R$, the degree of $e$ in $\cell$ is the number of 1-simplices in the preimage $\quot^{-1}(e)$ incident to the 3-simplices in $\simplices'$, and we denote it by $\dcount_\cell(e)$; the notation $\dcount(e)$ is reserved for the degree of $e$ in the ambient triangulation $\tri$.
We point out that $\simplices' \subset \simplices$ and the quotient map $\quot': \simplices' \rightarrow \simplices'/\pairings'=Q'$ via the subset $\pairings' \subset \pairings$ of \emph{all} face pairings between the faces of 3-simplices in $\simplices'$ form a triangulation $\tri'=(\simplices',\quot')$ of a compact 3-manifold $Q'$. The natural map $\iota:Q' \rightarrow Q$ satisfying $\iota \ts\circ\ts \quot'=\quot|_{\simplices'}$ takes the complex $Q'=\quot'(\simplices')$ onto the subcomplex $R=\quot(\simplices') \subset \quot(\simplices)=Q$. We call $\tri'=(\simplices',\pairings')$ the \emph{triangulation of $Q'$ generated by the subcomplex $R \subset Q$}. The map $\iota$ is injective if and only if $R \homeo Q'$ is a submanifold of $Q$ with the \emph{induced triangulation} $\cell \homeo \tri'$; generally, $\iota$ may fail to be injective along edges and vertices in $\partial Q'$, and $R$ may contain non-manifold points in its 1-skeleton.

\subsection{Layered Triangulations}
\label{ssec:Layered}

Given a triangulation $\cell$ of a compact manifold $Q$ with $\partial Q \neq \nil$, suppose $u$ is a boundary edge incident to two distinct boundary faces. The union of these faces is the closure of an open quadrilateral with a diagonal $u$. 

To \emph{layer along the edge $u$} is to construct a new triangulation of $Q$ from $\cell$ and a 3-simplex $\tau$ by identifying the quadrilateral formed by two boundary faces of $\cell$ incident to $u$ with a quadrilateral formed by two faces of $\tau$ incident to an edge $e$ so that the diagonals $u$ and $e$ are identified; see \hyperref[fig:layering]{Figure~\ref*{fig:layering}} (left). In the new triangulation, the diagonals $u$ and $e$ become a single interior edge of degree $\dcount_\cell(u)+1$, and the edge $e'\nts$ opposite to $e$ in $\tau$ becomes a boundary edge of degree~1.

To \emph{fold along the edge $u$} is to construct a new triangulation of a 3-manifold, not necessarily homeomorphic to $Q$, from $\cell$ by identifying the two boundary faces incident to $u$ via the reflection of the quadrilateral about the diagonal $u$; see \hyperref[fig:layering]{Figure~\ref*{fig:layering}} (right). We note that, if $\partial Q \homeo T^2$, the new triangulation yields a closed manifold.

\begin{figure}[h]
\begin{center} 
\includegraphics[height=30mm, width=30mm]{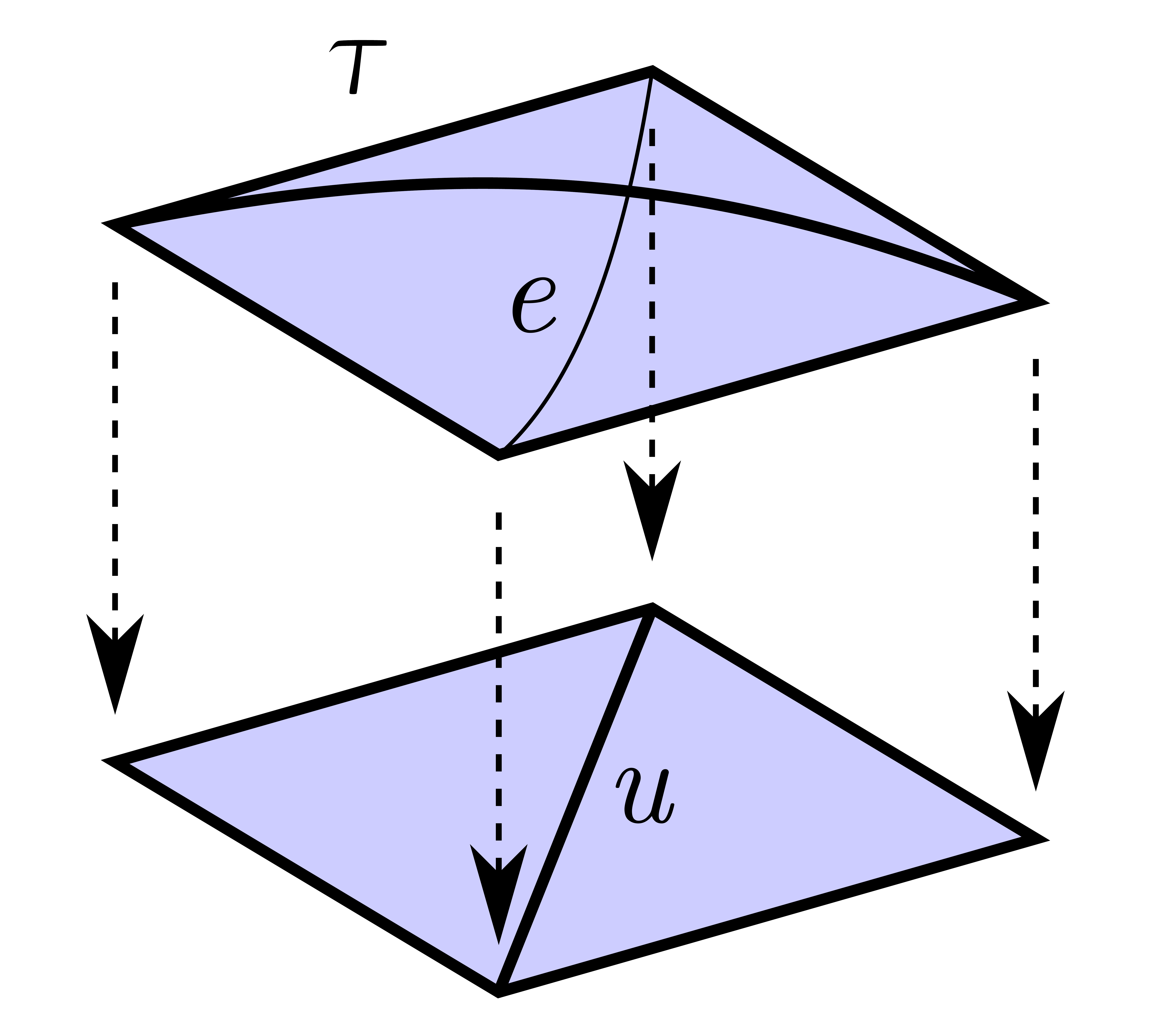} \qquad
\includegraphics[height=30mm, width=30mm]{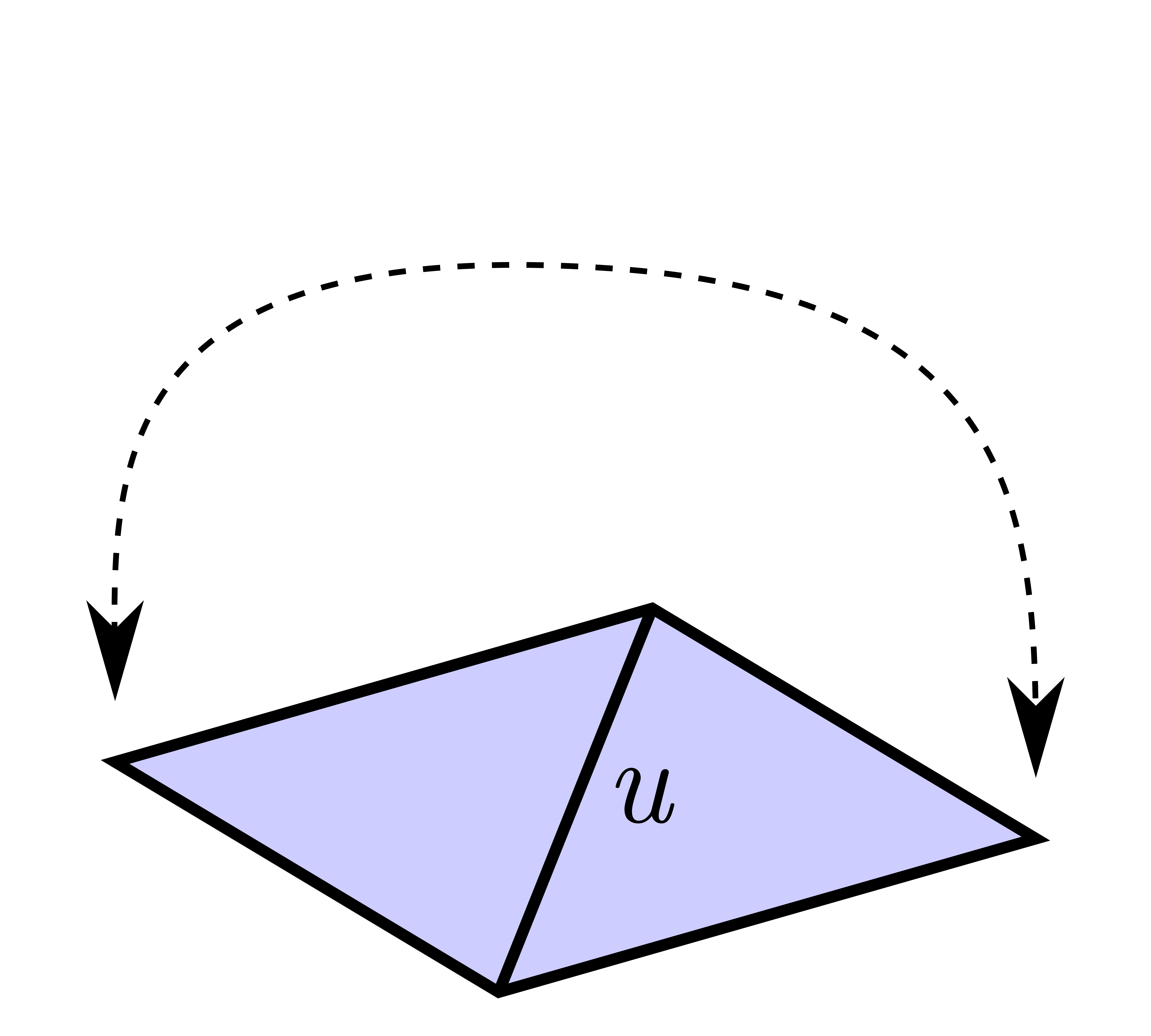}
\end{center}
\vspace{-2mm}
\caption{Layering (left) and folding (right) along the edge $u$.}
\label{fig:layering}
\vspace{-1mm}
\end{figure}

A \emph{layered solid torus} \cite[\S4]{JR:Layered} is a triangulation $\ntts\lst\ntts$ of a solid torus, obtained from the unique 1-tetrahedron triangulation $\lst_{\{1,2,3\}}$ of a solid torus in \hyperref[fig:lst]{Figure~\ref*{fig:lst}}~(left) by layering (possibly zero) tetrahedra iteratively along boundary edges; by an abuse of language, a layered solid torus may also refer to a solid torus equipped with such a triangulation $\lst$. A \emph{layered lens space} \cite[\S6]{JR:Layered} is a triangulation $\lls$ of a lens space, obtained from a layered solid torus $\lst$ by folding along a boundary edge; by an abuse of language, a layered lens space may also refer to a lens space equipped with such a triangulation $\lls$.

By convention, we avoid layering or folding along a boundary edge of degree 1 in the construction of a layered solid torus or a layered lens space in this article. Under this convention, a layered solid torus is canonically defined for almost every meridional slope: for any coprimitive $p,q \in \bbZ$ satisfying $0<p<q$, there exists a unique layered solid torus $\lst_{\{p,q,p+q\}}$ such that its (unique) normal meridional disk has the boundary edge-weights $\{p,q,p+q\}$ \cite[\S5]{JR:Layered}. Layered solid tori $\lst_{\{1,2,3\}}$ and $\lst_{\{1,3,4\}}$ are shown in \hyperref[fig:lst]{Figure~\ref*{fig:lst}}. Note that triples $\{1,1,2\}$ and $\{0,1,1\}$ are excluded above by the inequalities $0<p<q$; these triples can be realized as the meridional boundary edge-weights in ``\emph{exceptional}'' layered triangulations of a solid torus \cite[Fig.\;5]{JR:Layered}, but they are not referred to as layered solid tori in this article.

\begin{figure}[h]
\begin{center} 
\includegraphics[height=25mm, width=25mm]{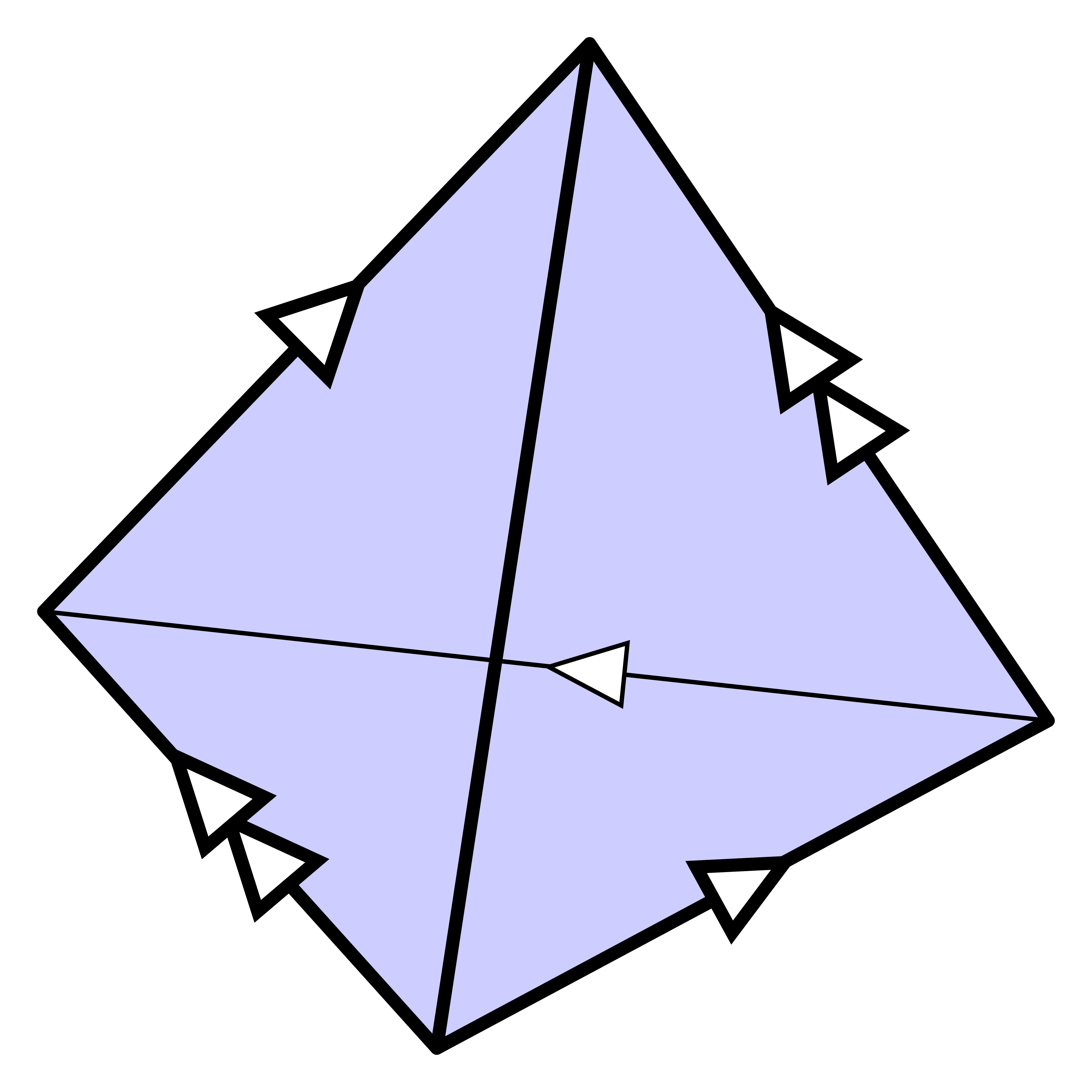} \qquad\qquad
\includegraphics[height=25mm, width=25mm]{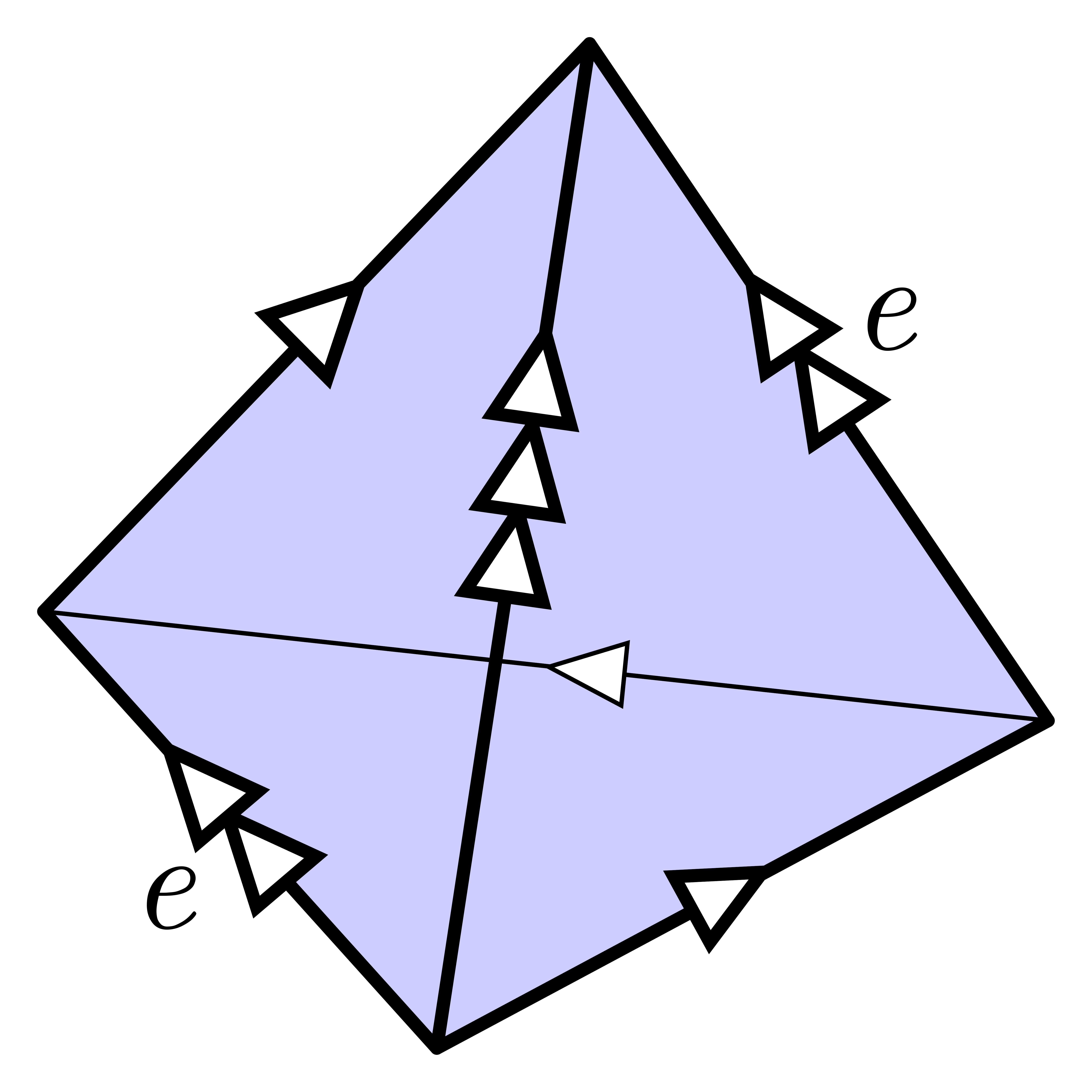} \quad
\includegraphics[height=25mm, width=25mm]{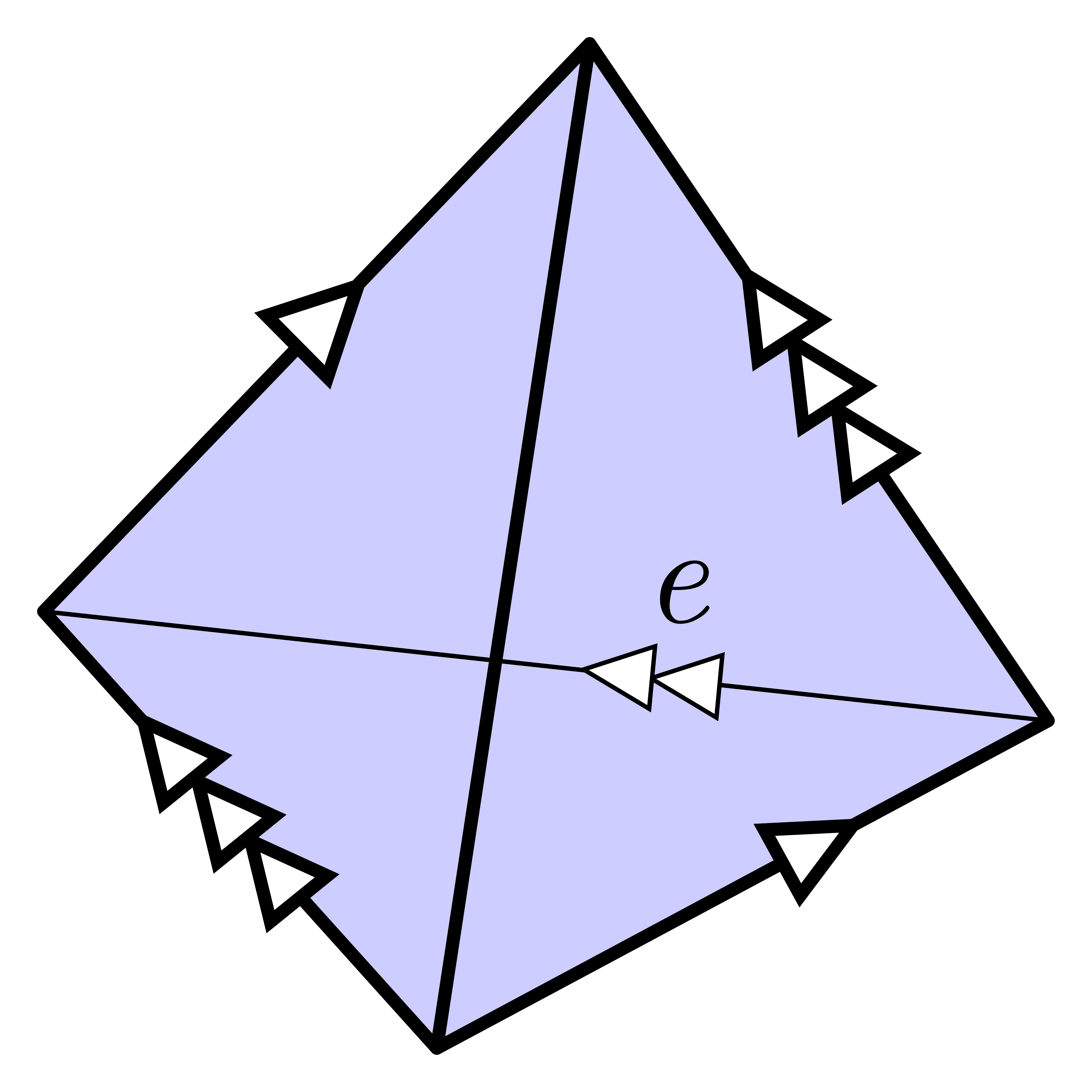}
\end{center}
\vspace{-2mm}
\caption{The layered solid tori $\lst_{\{1,2,3\}}$ (left), and the layered solid torus $\lst_{\{1,3,4\}}$ with an interior edge $e$ of degree~3 (right).}
\label{fig:lst}
\vspace{-1mm}
\end{figure}

\begin{rem}
Our notions of layered solid tori and layered lens spaces are rather restrictive; see \cite[\S4-5]{JR:Layered} for a more general notion of layered solid tori and layered lens space, where one starts with a one-triangle M\"obius band (regarded as a degenerate solid torus) instead of $\lst_{\{1,2,3\}}$, and allows layereing and folding along an edge of degree 1. Analogous constructions in terms of \emph{(almost) special spines} of 3-manifolds are described by Matveev \cite{Mat:Table} and by Martelli and Petronio \cite{MP:Geometric}.
\end{rem}

For reference, we collect some basic facts about degrees of edges in layered solid tori in the following lemma, which readily follow from the layering construction and our convention that we never layer along an edge of degree 1.

\begin{lem} \label{lem:edges_lst}
For any layered solid torus $\lst$, the following statements hold:
\begin{itemize}
\item $\lst$ has no interior edge of degree 1;
\item $\lst$ has no interior edge of degree 2;
\item $\lst$ has no interior edge of degree 3 if $\lst \not\supseteq \lst_{\{1,3,4\}}$;\\
$\lst$ has exactly one interior edge of degree 3 if $\lst \supseteq \lst_{\{1,3,4\}}$;
\item $\lst$ has exactly one boundary edge of degree 1;
\item $\lst$ has exactly one boundary edge of degree 3;
\item $\lst$ has exactly one boundary edge of degree $2$ if $\lst=\lst_{\{1,2,3\}}$;\\
$\lst$ has exactly one boundary edge of degree $d \geq 4$ if $\lst\neq\lst_{\{1,2,3\}}$.
\end{itemize}
\end{lem}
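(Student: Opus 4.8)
The plan is to induct on the number $n$ of tetrahedra of $\lst$, proving the slightly stronger statement that, besides the listed claims, $\partial\lst$ is a torus carrying exactly one vertex, two triangular faces, and three edges, whose degrees are $1$, $3$, and a third value $m$ with $m=2$ if $n=1$ and $m\geq 4$ if $n\geq 2$, and that all interior edges of $\lst$ have degree $\geq 3$, with one of degree exactly $3$ precisely when $\lst\supseteq\lst_{\{1,3,4\}}$ (and in that case exactly one). For $n=1$ we have $\lst=\lst_{\{1,2,3\}}$, and everything is read off the one-tetrahedron triangulation (see \hyperref[fig:lst]{Figure~\ref*{fig:lst}}): there are no interior edges, and the three boundary edges have degrees $1$, $2$, $3$.

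For the inductive step, write $\lst$ as a layered solid torus $\lst_0$ on $n$ tetrahedra together with one further tetrahedron $\tau$ layered along a boundary edge $u$ of $\lst_0$; by our convention $u$ has degree $\geq 2$ in $\lst_0$. By the layering construction of \hyperref[ssec:Layered]{\S\ref{ssec:Layered}}, in $\lst$ the edge $u$ becomes an interior edge of degree $\dcount_{\lst_0}(u)+1$, the edge $e'$ of $\tau$ opposite to the glued edge becomes a boundary edge of degree $1$, and the interior edges of $\lst_0$ are left unchanged. The only point needing care is the effect on the two remaining boundary edges $\beta,\gamma$ of $\lst_0$. First I would check that layering preserves the one-vertex, two-triangle, three-edge structure of the boundary torus: all four vertices of $\tau$ are carried to the single boundary vertex of $\lst_0$, and the two boundary faces of $\lst_0$ incident to $u$ are replaced, as boundary faces, by the two faces of $\tau$ not glued to $\lst_0$, so the boundary-face count stays at two, whence the number of boundary edges is three since $\chi(T^2)=0$. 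Consequently the two boundary faces of $\lst_0$ incident to $u$ are all of $\partial\lst_0$; their union is a quadrilateral with diagonal $u$; and, as $u$ meets each of these two triangles in a single side, the four edges of that quadrilateral's boundary are two copies each of $\beta$ and of $\gamma$. When $\tau$ is layered on, its four edges other than the glued edge and $e'$ are identified with these four, one to each, so the degrees of $\beta$ and of $\gamma$ each increase by exactly $2$, and $\partial\lst$ consists of $e'$, $\beta$, $\gamma$.

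It remains to feed the inductive hypothesis through. The unique degree-$1$ boundary edge of $\lst_0$ is not $u$, hence is one of $\beta,\gamma$ and becomes a degree-$3$ edge of $\lst$; the other of $\beta,\gamma$ has degree $\geq 2$ in $\lst_0$ (its degree is $3$ or $m$), hence degree $\geq 4$ in $\lst$. So the boundary edges of $\lst$ have degrees $1$, $3$, and a value $\geq 4$ --- the required form, as $\lst$ has $n+1\geq 2$ tetrahedra --- and in particular there is exactly one boundary edge of each of the degrees $1$ and $3$, and exactly one of degree $\geq 4$. For interior edges, those of $\lst_0$ keep their degrees (all $\geq 3$, with one equal to $3$ iff $\lst_0\supseteq\lst_{\{1,3,4\}}$), whereas the new interior edge $u$ has degree $\dcount_{\lst_0}(u)+1\geq 3$, equal to $3$ iff $\dcount_{\lst_0}(u)=2$; by the inductive hypothesis the latter happens iff $n=1$ and $u$ is the degree-$2$ edge of $\lst_{\{1,2,3\}}$, that is, iff $\lst=\lst_{\{1,3,4\}}$ (using that $\lst_{\{1,3,4\}}$ is $\lst_{\{1,2,3\}}$ layered along its degree-$2$ edge; cf.\ \hyperref[ssec:Layered]{\S\ref{ssec:Layered}}, \hyperref[fig:lst]{Figure~\ref*{fig:lst}}, and \cite[\S4--5]{JR:Layered}). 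Piecing the cases together, $\lst$ has an interior edge of degree $3$ iff $\lst\supseteq\lst_{\{1,3,4\}}$, and whenever it does only one such edge was ever created, so there is exactly one. This closes the induction, and all the listed claims follow.

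The one step that is not pure bookkeeping is the ``$+2$ each'' update for $\beta$ and $\gamma$: this is exactly what sharpens the boundary-edge counts from ``at least one'' to ``exactly one'', and it is the only place where the global combinatorics of the layered-solid-torus boundary (two triangles, three edges) enters, as opposed to the purely local description of a single layering move.
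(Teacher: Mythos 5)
Your proof is correct, and it takes the approach the paper intends: the paper offers no written proof, stating only that the lemma ``readily follow[s] from the layering construction and our convention that we never layer along an edge of degree 1,'' and your induction on the number of layerings is exactly that argument carried out in full. The one step you rightly flag as non-trivial --- that each layering raises the degrees of the two untouched boundary edges by exactly $2$, because the quadrilateral's boundary consists of two copies of each --- is the bookkeeping the paper leaves implicit, and you justify it correctly from the two-triangle, three-edge structure of the boundary torus.
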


\subsection{Minimal Triangulations}
\label{ssec:Minimal}

From now on, whenever we write $M$ for a 3-manifold, we \emph{always} assume by convention that $M$ is closed, connected, and orientable, often without mentioning these properties. We assume rudiments from \emph{normal surface theory} for triangulated 3-manifolds; see, for example, \cite{JR:0-efficient, JR:Layered}.

For a triangulation $\tri$ of a 3-manifold $M$, the number of tetrahedra in $\tri$ is denoted by $\Tcount(\tri)$, or simply by $\Tcount$. A triangulation $\tri$ is said to be \emph{minimal} if $\Tcount(\tri)$ is minimal among all triangulations of $M$, and \emph{0-efficient} if there is no normal $S^2$ other than the vertex-linking ones \cite{JR:0-efficient}.

\begin{thmalpha}[{\cite[Thm.\,6.1]{JR:0-efficient}}] \label{thm:0-efficient}
A minimal triangulation of a prime 3-manifold $M$ is a one-vertex 0-efficient triangulation unless $M \homeo S^3$, $\SxS$, $\RP^3$ or $L(3,1)$.
\end{thmalpha}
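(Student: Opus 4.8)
The plan is to establish, for a minimal triangulation $\tri$ of a prime $3$-manifold $M$ that is none of $S^3$, $\SxS$, $\RP^3$, $L(3,1)$, the two separate assertions that $\tri$ has exactly one vertex and that $\tri$ carries no normal $2$-sphere other than the vertex-linking one.

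For the one-vertex claim I would argue by contradiction with minimality. If $\tri$ had at least two vertices, then by connectedness of $M$ there is an edge $\edge$ whose two endpoints are distinct vertices, and I would collapse $\edge$: each of the $\dcount(\edge)$ tetrahedra incident to $\edge$ is flattened onto a triangle while its two faces disjoint from $\edge$ are identified in pairs, and the remaining combinatorial data descends to a pseudo-triangulation $\tri'$. Generically $\tri'$ triangulates the \emph{same} manifold $M$ with $\dcount(\edge) \geqslant 1$ fewer tetrahedra, contradicting minimality. The collapse can fail to behave this way only in a short list of degenerate local configurations around $\edge$ --- for instance when some tetrahedron meets $\edge$ in two of its edges, when the link of $\edge$ is too small, or when an identification of two faces of a single tetrahedron would alter the homeomorphism type. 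The core of the step is a case analysis showing that if \emph{every} non-loop edge of $\tri$ is degenerate in one of these ways, then $\tri$ is so small and its face pairings so constrained that $M$ must be one of $S^3$, $\SxS$, $\RP^3$, $L(3,1)$; so for all other $M$ some collapse succeeds and minimality is violated.

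For $0$-efficiency I would again argue by contradiction: suppose the minimal $\tri$ contains a normal $2$-sphere $S$ that is not a vertex link. Since $M$ is prime and $M \nhomeo \SxS$, $M$ is irreducible, so $S$ bounds a $3$-ball $B$. Crushing $\tri$ along $S$ --- deleting the normal-disk pieces that $S$ cuts out of each tetrahedron, collapsing the resulting ``product'' and ``cone'' blocks, and reassembling --- produces, by the Jaco--Rubinstein crushing analysis, an honest pseudo-triangulation $\tri'$ of a manifold $M'$ with $\Tcount(\tri') < \Tcount(\tri)$, where $M$ is recovered from $M'$ by connected sum with copies of $S^3$, $\SxS$, $\RP^3$ and $L(3,1)$ only. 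Because $M$ is prime and is none of these four, this forces $M' \homeo M$, again contradicting minimality of $\tri$.

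The main obstacle is the two degenerate-case analyses, and the bulk of the difficulty sits in the crushing step: one must check that the output is a genuine triangulation (no fully collapsed tetrahedron, no spurious identification creating a non-manifold point), control precisely how the homeomorphism type can change across the collapse, and pin down that the only irreducible summands crushing can shed are exactly the four exceptional manifolds. This needs a careful enumeration of the normal-disk patterns $S$ can realize inside each tetrahedron and of how $S$ meets the vertex link; by comparison, the edge-collapse bookkeeping for the one-vertex claim, though also delicate, is comparatively routine once the relevant degenerate configurations have been listed.
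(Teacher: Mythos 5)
This statement is quoted in the paper as Theorem~\ref{thm:0-efficient} with a citation to Jaco--Rubinstein; the paper itself contains no proof of it, so your sketch can only be measured against the argument in the cited source. Your second half --- reducing $0$-efficiency to the crushing of a non-vertex-linking normal $2$-sphere, using primeness and $M\nhomeo\SxS$ to get irreducibility so the sphere bounds a ball, and then invoking the Jaco--Rubinstein analysis of which summands crushing can shed --- is a faithful outline of the actual proof, and you correctly locate the hard technical content in verifying that the crushed complex is again a triangulation and that only the four exceptional manifolds can be lost.

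The gap is in your one-vertex step. A raw edge collapse along an edge $\edge$ with distinct endpoints is not the argument used, and as sketched it does not close: the decisive claim --- that if \emph{every} such edge is degenerate for collapsing then $M$ is one of $S^3$, $\SxS$, $\RP^3$, $L(3,1)$ --- is a global statement about $M$ that you are trying to extract from purely local obstructions around individual edges, and nothing in your sketch rules out a large multi-vertex minimal triangulation of some other prime manifold in which every candidate edge happens to be degenerate. The actual proof sidesteps this by staying inside the normal-surface framework: one takes the boundary of a regular neighborhood of an edge joining two distinct vertices, normalizes it using barrier-surface theory, and either obtains a non-vertex-linking normal $2$-sphere (reducing the vertex count to the same crushing machinery as the $0$-efficiency step, where irreducibility does the global work) or derives the exceptional structure directly from the failure of normalization. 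Either replace your edge-collapse step with that normalization-and-crush argument, or supply the missing global classification of triangulations in which every collapse is obstructed; without one of these the first half of the proof is not complete.
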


The degree of an edge plays an important role in analysis of minimal triangulations \cite{JR:0-efficient, JRT:Lens, JRT:Covering, JRT:Z2}. The 0-efficiency of minimal triangulations leads to the following essential facts about edges of low degrees.

\begin{thmalpha}[{\cite[Prop.\,9]{JRT:Lens}, cf.\;\cite[Thm.\,6.3]{JR:0-efficient}}] \label{thm:edges_min}
For any minimal triangulation $\tri$ of an irreducible 3-manifold $M$, the following statements hold:
\begin{itemize}
\item $\tri$ has no edge of degree~1, unless $M \homeo S^3$, $\RP^3$, or $L(3,1)$;
\item $\tri$ has no edge of degree~2, unless $M \homeo \SxS$, $\RP^3$, $L(3,1)$, or $L(4,1)$;
\item Every edge of degree~3 in $\tri$ is the unique interior edge of a two-tetrahedra subcomplex homeomorphic to a solid torus and with the induced triangulation isomorphic to $\lst_{\{1,3,4\}}$, unless $M \homeo \RP^3$, $L(5,1)$, $L(5,2)$, or $L(7,2)$.
\end{itemize}
\end{thmalpha}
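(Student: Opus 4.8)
The plan is to reduce everything to one-vertex $0$-efficient triangulations and then, at a low-degree edge, run a dichotomy: the local picture either produces a non-vertex-linking normal (or almost normal) $2$-sphere, contradicting $0$-efficiency, or it admits a complexity-reducing retriangulation, contradicting minimality; the finitely many configurations where both escape routes fail turn out to be realized only by the small manifolds listed in the statement.

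First I would dispose of the exceptional manifolds. Since an irreducible $M$ is prime while $\SxS$ is not irreducible, \hyperref[thm:0-efficient]{Theorem~\ref{thm:0-efficient}} says a minimal triangulation of an irreducible $M$ is one-vertex and $0$-efficient unless $M\homeo S^3$, $\RP^3$, or $L(3,1)$; those three manifolds have minimal triangulations with at most two tetrahedra, which one inspects directly to confirm that they exhaust the degree-$1$ exceptions. The remaining small lens spaces on the lists ($L(4,1)$ for degree $2$; $L(5,1)$, $L(5,2)$, $L(7,2)$ for degree $3$) I would carry along as cases to be recognized by hand wherever the general argument breaks down. So from now on I assume $\tri$ is a one-vertex $0$-efficient minimal triangulation of an irreducible $M$ not on these lists: every edge is a loop at the unique vertex, and every normal $2$-sphere is vertex-linking.

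Then, for an edge $e$ with $\dcount(e)=d\in\{1,2,3\}$, I would study the subcomplex $R$ generated by the $d$ one-simplices of $\quot^{-1}(e)$ — the union of the (at most $d$) tetrahedra incident to $e$ — together with the triangulation $\tri_R$ of the compact manifold $Q_R$ it generates, in the sense of \hyperref[ssec:Triang]{\S\ref{ssec:Triang}}. Inside $Q_R$ the edge $e$ lifts to an interior edge of degree $d$ whose link is a disk, so $Q_R$ is a regular neighbourhood of a fan of tetrahedra around $e$, a ball whose boundary is a sphere assembled from the ``outer'' faces, and the map $\iota\colon Q_R\to M$ glues these outer faces together in pairs. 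The basic estimate is that $-\chi$ of the frontier surface $\iota(\partial Q_R)\subset M$ is controlled by $d$ and the number of such self-gluings, and for $d\le 3$ it is small enough that $0$-efficiency and irreducibility are extremely restrictive: an innermost component of the frontier is either a normal $2$-sphere — hence vertex-linking, and tracing it back forces the degree-$1$ and degree-$2$ configurations into $S^3$, $\RP^3$, $L(3,1)$, or $L(4,1)$ — or no such sphere appears, $\iota$ is an embedding, $R$ is a genuine submanifold, and (orientability excluding a one-sided frontier, irreducibility excluding a ball on the far side) $R$ is an embedded solid torus with core edge $e$. In the latter situation: for $d=1,2$ the solid torus $R$ has one or two tetrahedra and meets the rest of $\tri$ across at most two faces, so a standard ``close-the-book''/``$2\to0$'' retriangulation of $R$ together with the adjacent tetrahedron(a) uses strictly fewer tetrahedra and contradicts minimality unless $\Tcount(\tri)$ is already so small that $M$ is on the list; for $d=3$ I would first show that the three preimages of $e$ can lie in neither three distinct tetrahedra (else the same reducing move applies) nor a single tetrahedron (a degeneracy realized only by the small lens spaces), so exactly two tetrahedra are involved, and then, among triangulated solid tori carrying a unique interior edge of degree exactly $3$ — compare \hyperref[lem:edges_lst]{Lemma~\ref{lem:edges_lst}} — minimality pins $\tri_R$ down to $\lst_{\{1,3,4\}}$, which is the asserted conclusion.

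The main obstacle, where essentially all the work sits, is the bookkeeping of degenerate face identifications: when two of the tetrahedra around $e$ coincide, or when outer faces of $R$ get glued to one another or across a single tetrahedron, the frontier surface can fail to be a $2$-sphere yet still be atypical, and one must verify case by case that every such configuration either produces a genuine non-vertex-linking normal (or almost normal) $2$-sphere, admits a complexity-reducing move, or occurs only for one of the finitely many small manifolds named in the statement. Checking that each reducing move is legitimate — that the retriangulated complex is still a pseudo-simplicial triangulation of the \emph{same} $M$, with no new vertices and no non-manifold points in the $1$-skeleton — is the other delicate point.
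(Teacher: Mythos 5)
The paper does not prove \hyperref[thm:edges_min]{Theorem~\ref*{thm:edges_min}}; it imports it verbatim from the cited sources (it is stated as a quoted \emph{Theorem} with the citation \cite{JRT:Lens}, and the remark following it only explains the cosmetic reformulation). So the only meaningful comparison is with the proofs in \cite{JR:0-efficient,JRT:Lens}, and your outline does reproduce their strategy: pass to a one-vertex $0$-efficient triangulation, examine the star of a low-degree edge, and run the dichotomy ``non-vertex-linking normal sphere (contradicting $0$-efficiency) versus complexity-reducing retriangulation (contradicting minimality),'' with the listed small manifolds absorbing the configurations where both fail.

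That said, as a proof the proposal has a genuine gap: essentially all of the content of \cite[Prop.\,9]{JRT:Lens} lives in the case analysis you explicitly defer, and one of the two ``escape routes'' is stated in a form that does not work as written. The frontier $\iota(\partial Q_R)$ is a subcomplex of the $2$-skeleton, not a normal surface, so you cannot take ``an innermost component of the frontier'' to be a normal $2$-sphere; you must pass to the boundary of a regular neighbourhood, normalize it using the subcomplex as a barrier, and then separately argue that the resulting normal sphere (if it survives normalization at all) is not vertex-linking --- this is precisely the delicate step in the sources. Likewise, for $d=3$ the relevant reducing move is the $3$--$2$ move (not the ``close-the-book''/$2$--$0$ move you invoke for $d=1,2$), and the claim that ``minimality pins $\tri_R$ down to $\lst_{\{1,3,4\}}$'' does not follow from \hyperref[lem:edges_lst]{Lemma~\ref*{lem:edges_lst}}, which describes triangulations already known to be layered solid tori; what must actually be done is to enumerate the possible face pairings of two tetrahedra about a degree-$3$ edge, check which quotients are consistent with $0$-efficiency and a single vertex, and verify that the surviving one embeds as a solid-torus \emph{subcomplex} isomorphic to $\lst_{\{1,3,4\}}$ (i.e.\ that $\iota$ is injective). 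Finally, the identification of the exceptional manifolds $L(4,1)$, $L(5,1)$, $L(5,2)$, $L(7,2)$ is exactly the residue of the degenerate-identification bookkeeping you name as ``the main obstacle'' but do not carry out; without it the statement's exception lists are asserted rather than derived.
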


\begin{rem}
The statement in \cite[Prop.\,9]{JRT:Lens} assumes the 0-efficiency; our reformulation incorporates minimal triangulations of $\RP^3$ and $L(3,1)$ that are not 0-efficient. Also, extending to prime 3-manifolds, we included $S^2 \nts \times \nts S^1$ in the degree 2 statement.
\end{rem}

\subsection{Rank-1 Coloring} 
\label{ssec:Coloring1}

We recall the notion of rank-1 $\bbZtwo$-coloring from \cite{JRT:Lens}. Let $M$ be a 3-manifold with a rank-1 subgroup $H=\{\zclass,\aclass\} \leqslant H^1(M;\bbZtwo)$, and $\tri$ be a one-vertex triangulation of $M$; each edge $e$ defines a 1-cycle in $M$. The \emph{rank-1 $\bbZtwo$-coloring} of $\tri$ by $H$, or the \emph{$H$-coloring}, assigns the value $\aclass[e] \in \bbZtwo$ to each edge $e$; an edge $e$ is \emph{$\aclass$-even} or \emph{$H$-even} if $\aclass[e]=0$, and \emph{$\aclass$-odd} or \emph{$H$-odd} if $\aclass[e]=1$.

The cocycle condition constrains the coloring on faces and tetrahedra in $\tri$. Faces are divided into two types; a face is \emph{$\aclass$-even} or \emph{$H$-even} if all three edges are $\aclass$-even, and \emph{$\aclass$-odd} or \emph{$H$-odd} if one edge is $\aclass$-even and other two edges are $\aclass$-odd. Tetrahedra are divided into three types; a tetrahedron is of type $\typee$, $\typet$, or $\typeq$ as follows.

\begin{figure}[h]
\begin{center} 
\includegraphics[height=22mm, width=22mm]{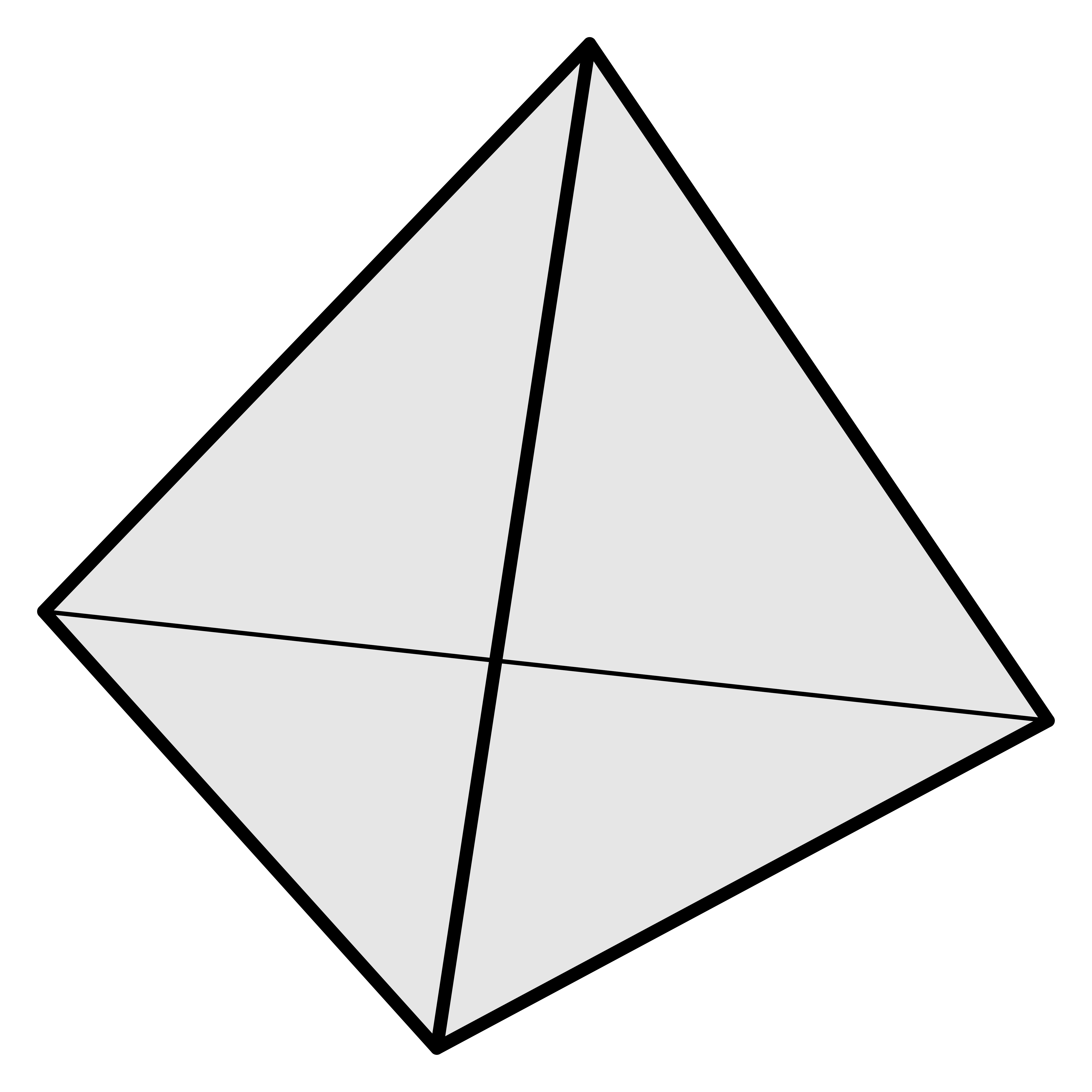} \;\;\;
\includegraphics[height=22mm, width=22mm]{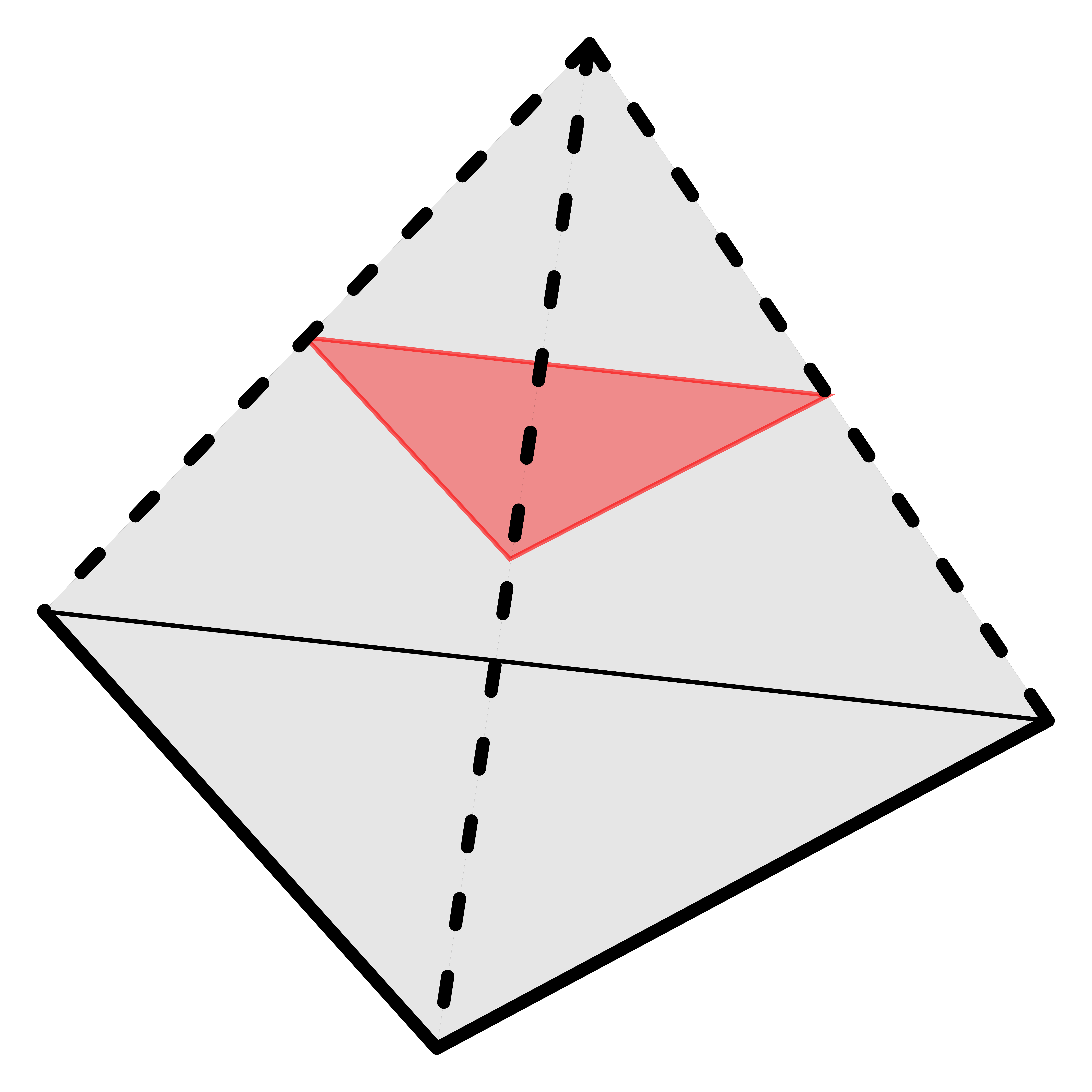} \;\;\;
\includegraphics[height=22mm, width=22mm]{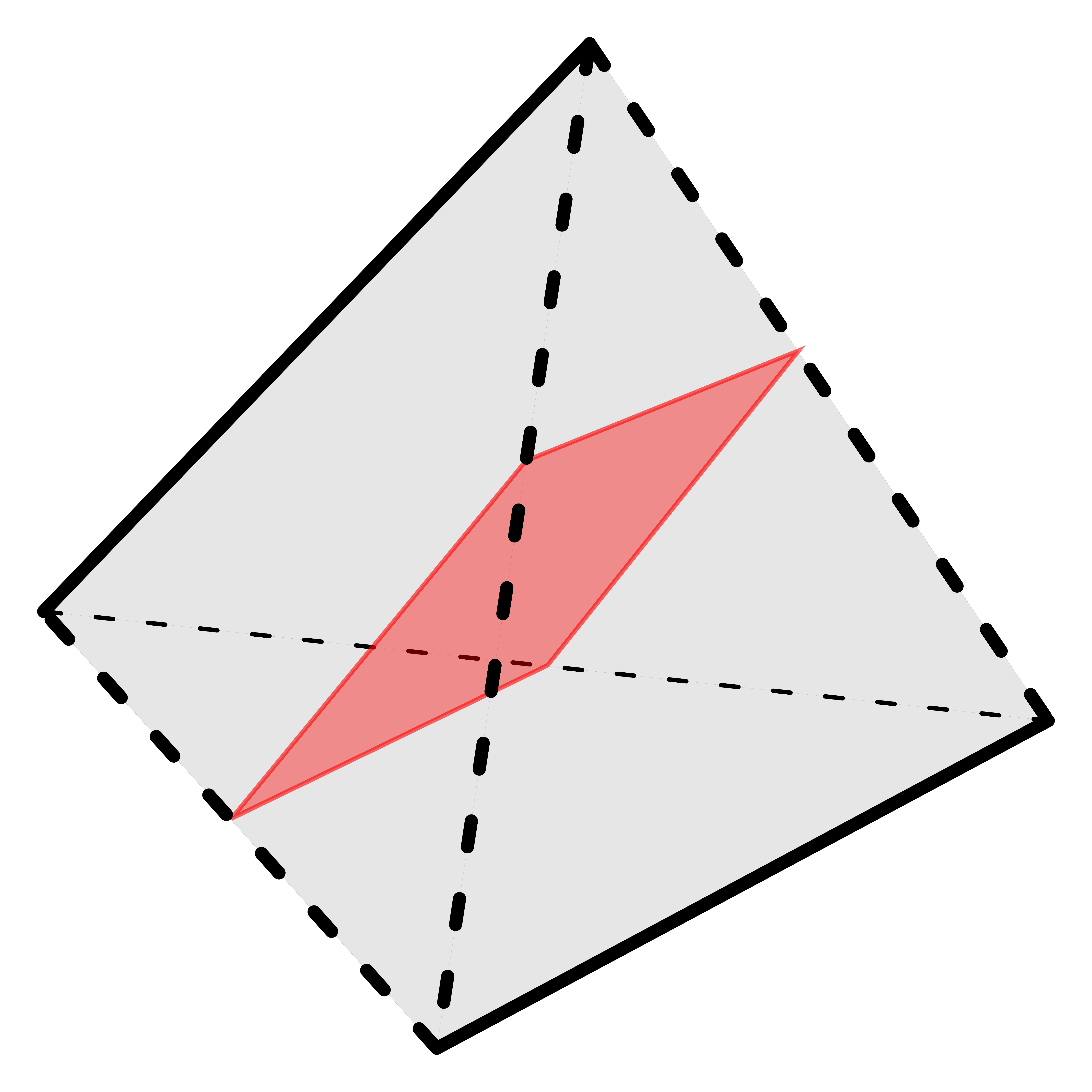}
\end{center}
\vspace{-2mm}
\caption{Rank-1 coloring of Tetrahedra}
\label{fig:types1}
\vspace{-1mm}
\end{figure}

\begin{description}
\item[Type $\typee$]
all six edges are $\aclass$-even.
\item[Type $\typet\tts$]
three edges in a face are $\aclass$-even, and other three edges are $\aclass$-odd.
\item[Type $\ntts\typeq$]
a pair of opposite edges are $\aclass$-even, and other four edges are $\aclass$-odd.
\end{description}
The types $\typee$, $\typet$, and $\typeq$ are called types 3, 2, and 1 respectively in \cite{JRT:Lens}. These three types of tetrahedra are depicted in \hyperref[fig:types1]{Figure~\ref*{fig:types1}}.

\begin{defn}
For a non-zero class $\aclass \in H^1(M;\bbZtwo)$, the \emph{canonical surface} $\asurf$ in $\tri$ is the normal surface whose edge-weight is $\aclass[e] \in \bbZtwo$ on each edge $e$.
\end{defn}

The canonical surface $\asurf$ represents the Poincar\'e dual of $\aclass$. The intersection of $\asurf$ with a tetrahedron $\tau$ is empty if $\tau$ is of type $\typee$, a triangle if $\tau$ is of type $\typet$, and a quadrilateral if $\tau$ is of type $\typeq$. Let us write
\begin{align*}
\Tcount_\typet(\tri):=\;
  &\#\{\text{tetrahedra of type $\typet$}\},\\
\Ecount_{\typee,d}(\tri):=\;
  &\#\{\text{$\aclass$-even edges of degree~$d$}\},
\end{align*}
or simply as $\Tcount_\typet$, $\Ecount_{\typee,d}$; they are related to the Euler characteristic $\chi(\asurf)$.

\begin{lemalpha}[{\cite[Lem.\,12]{JRT:Lens}}] \label{lem:comb1}
Let $M$ be an irreducible 3-manifold and $\tri$ be a minimal triangulation of $M$ with the rank-1 coloring by $\{\zclass,\aclass\} \leqslant H^1(M;\bbZtwo)$. Then, assuming $M \nhomeo \RP^3,L(4,1)$, the surface $\asurf$ and the numbers $\Tcount_\typet$, $\Ecount_{\typee,d}$ satisfy
\begin{align} \label{eqn:comb1}
2\Tcount-4+4 \chi(\asurf)
\;=\;
\sum_{d=3}^\infty (d-4) \Ecount_{\typee,d} + \Tcount_\typet
\;\geqslant\;
\sum_{d=3}^\infty (d-4) \Ecount_{\typee,d}.
\end{align}
\end{lemalpha}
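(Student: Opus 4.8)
The plan is to compute the Euler characteristic of $\asurf$ by a local contribution argument organized around the edges of $\tri$, and then to reinterpret the resulting identity in terms of the tetrahedron types. First I would recall that since $\tri$ is a one-vertex triangulation with $\Tcount$ tetrahedra, it has $2\Tcount$ faces, $\Tcount+1$ edges (by an Euler characteristic count: $1 - (\Tcount+1) + 2\Tcount - \Tcount = 0 = \chi(M)$), and one vertex. Now I would count the cells of $\asurf$. A tetrahedron of type $\typee$ contributes nothing; a tetrahedron of type $\typet$ contributes one normal triangle; a tetrahedron of type $\typeq$ contributes one normal quadrilateral. So the number of faces of the cell structure on $\asurf$ is $\Tcount_\typet + \Tcount_\typeq$, where $\Tcount_\typeq$ counts type $\typeq$ tetrahedra. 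Each normal triangle has three edges and each normal quadrilateral has four edges, and each such edge lies in a face of $\tri$; conversely each $\aclass$-odd face of $\tri$ is met by $\asurf$ in exactly one arc, shared by the two tetrahedra across that face. Hence the number of edges of $\asurf$ is $\tfrac12(3\Tcount_\typet + 4\Tcount_\typeq)$, which is an integer since each type $\typeq$ tetrahedron has an even number ($4$) of odd faces — more carefully, the count of (tetrahedron, odd-face) incidences is $3\Tcount_\typet + 4\Tcount_\typeq$ and equals twice the number of odd faces. The vertices of $\asurf$ correspond to intersection points of $\asurf$ with the $\aclass$-odd edges of $\tri$: $\asurf$ meets each $\aclass$-odd edge once and each $\aclass$-even edge not at all.

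Next I would assemble $\chi(\asurf) = V_{\asurf} - E_{\asurf} + F_{\asurf}$, expressing $V_{\asurf}$ as the number of $\aclass$-odd edges. Writing $\Ecount_\typeo$ for the number of $\aclass$-odd edges and $\Ecount_\typee = \sum_d \Ecount_{\typee,d}$ for the number of $\aclass$-even edges, we have $\Ecount_\typee + \Ecount_\typeo = \Tcount + 1$. Around each $\aclass$-even edge $e$ of degree $d$, the link of the arc-free locus forces a constraint counted by $\sum_d d\,\Ecount_{\typee,d}$ among the even-face incidences; around each $\aclass$-odd edge the link picks up the odd faces. The key bookkeeping identity I expect is that the total number of face-incidences $6\Tcount = \sum_e \dcount(e)$ splits as (even-edge degrees) plus (odd-edge degrees), i.e.\ $6\Tcount = \sum_d d\,\Ecount_{\typee,d} + \sum (\text{odd-edge degrees})$, while the number of $\aclass$-odd faces equals $3\Tcount_\typet/2 + \dots$ — here one must be careful, and the cleanest route is: each odd face has exactly one even edge, so the number of odd faces equals the number of (odd face, even edge) incidences, which in turn is $\sum_d d\,\Ecount_{\typee,d}$ minus the (even face, even edge) incidences $3\Tcount_\typee^{\mathrm{faces}}$. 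Substituting $F_{\asurf} = \Tcount_\typet + \Tcount_\typeq$, $E_{\asurf} = \tfrac12(3\Tcount_\typet + 4\Tcount_\typeq)$, and $V_{\asurf} = \Ecount_\typeo$, and then using $\Tcount = \Tcount_\typee + \Tcount_\typet + \Tcount_\typeq$ together with the edge- and face-count relations, I would solve for $4\chi(\asurf)$ and arrive after rearrangement at
\[
2\Tcount - 4 + 4\chi(\asurf) = \sum_{d=3}^\infty (d-4)\Ecount_{\typee,d} + \Tcount_\typet .
\]
The inequality in \eqref{eqn:comb1} is then immediate from $\Tcount_\typet \geqslant 0$.

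The one nontrivial input, which is where the hypotheses $M \nhomeo \RP^3, L(4,1)$ and minimality enter, is that the sum $\sum_{d} (d-4)\Ecount_{\typee,d}$ may a priori start at $d=1$ or $d=2$; by \hyperref[thm:edges_min]{Theorem~\ref*{thm:edges_min}}, a minimal triangulation of an irreducible $M$ other than the listed small lens spaces and $\SxS$ has no edges of degree $1$ or $2$, so in particular no $\aclass$-even edges of degree $1$ or $2$, which lets us start the sum at $d=3$. (One also needs $M$ irreducible so that minimal triangulations are one-vertex, via \hyperref[thm:0-efficient]{Theorem~\ref*{thm:0-efficient}}, excluding the further sporadic cases there.) \textbf{The main obstacle} I anticipate is not any single step but getting the incidence bookkeeping exactly right — in particular correctly accounting for the fact that a type $\typeq$ tetrahedron has two even edges (an opposite pair) and four odd faces while a type $\typet$ tetrahedron has three even edges all in one face and three odd faces — so that the coefficients $(d-4)$ and the constant $-4$ come out precisely; a convenient cross-check is to verify the identity on $\lst_{\{1,3,4\}}$-type pieces and on a known minimal triangulation where all quantities are computable.
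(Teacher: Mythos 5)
Your proposal is correct and follows the standard incidence-counting argument; the paper itself does not reprove this lemma (it is quoted from \cite{JRT:Lens}), and your route is essentially the one used there. The deferred algebra does close: with $V_{\asurf}=\Ecount_{\typeo}=\Tcount+1-\sum_d\Ecount_{\typee,d}$, $E_{\asurf}=\tfrac12(3\Tcount_\typet+4\Tcount_\typeq)$, $F_{\asurf}=\Tcount_\typet+\Tcount_\typeq$, and the incidence identity $\sum_d d\,\Ecount_{\typee,d}=6\Tcount_\typee+3\Tcount_\typet+2\Tcount_\typeq$, both sides of \hyperref[eqn:comb1]{(\ref*{eqn:comb1})} reduce to $6\Tcount_\typee+4\Tcount_\typet+2\Tcount_\typeq-4\sum_d\Ecount_{\typee,d}$, and your use of \hyperref[thm:0-efficient]{Theorem~\ref*{thm:0-efficient}} and \hyperref[thm:edges_min]{Theorem~\ref*{thm:edges_min}} (with the remaining exceptional manifolds ruled out because they are reducible or have no rank-1 subgroup of $H^1(\,\cdot\,;\bbZtwo)$) correctly justifies the one-vertex count and the truncation of the sum at $d=3$.
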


\subsection{Rank-2 Coloring} 
\label{ssec:Coloring2}

We also recall the notion of rank-2 $\bbZtwo$-coloring from \cite{JRT:Z2}. Let $M$ be a 3-manifold with a rank-2 subgroup $H=\{\zclass,\rclass,\gclass,\bclass\} \leqslant H^1(M;\bbZtwo)$, and $\tri$ be a one-vertex triangulation of $M$. For each $i \in \Index:=\{1,2,3\}$, $\iclass$ defines a rank-1 coloring. The \emph{rank-2 $\bbZtwo$-coloring} of $\tri$ by $H$, or the \emph{$H$-coloring}, assigns three values $\iclass[e] \in \bbZtwo$, $i \in \Index$, to each edge $e$. An edge $e$ is \emph{$H$-even} if $\iclass[e]=0$ for all $i \in \Index$, and \emph{$i$-even} if $\iclass[e]=0$ for a unique $i \in \Index$; since $H$ is a rank-2 subgroup, each edge $e$ must be either $H$-even or $i$-even for some $i \in \Index$.

The cocycle condition constrains the coloring on faces and tetrahedra in $\tri$. Faces are divided into three types; a face is \emph{$H$-even} if all three edges are $H$-even, $i$-even if one edge is $H$-even and other two edges are $i$-even for the same $i \in \Index$, and \emph{$H$-odd} if edges are $i$-even, $j$-even, $k$-even with $\{i,j,k\}=\Index$. Tetrahedra are divided into five types; a tetrahedron is said to be of type $\typee$, $\typett$, $\typeqq$, $\typeqtt$, or $\typeqqq$ as follows.

\begin{figure}[h]
\begin{center}
\includegraphics[height=21mm, width=21mm]{typee.pdf} \;
\includegraphics[height=21mm, width=21mm]{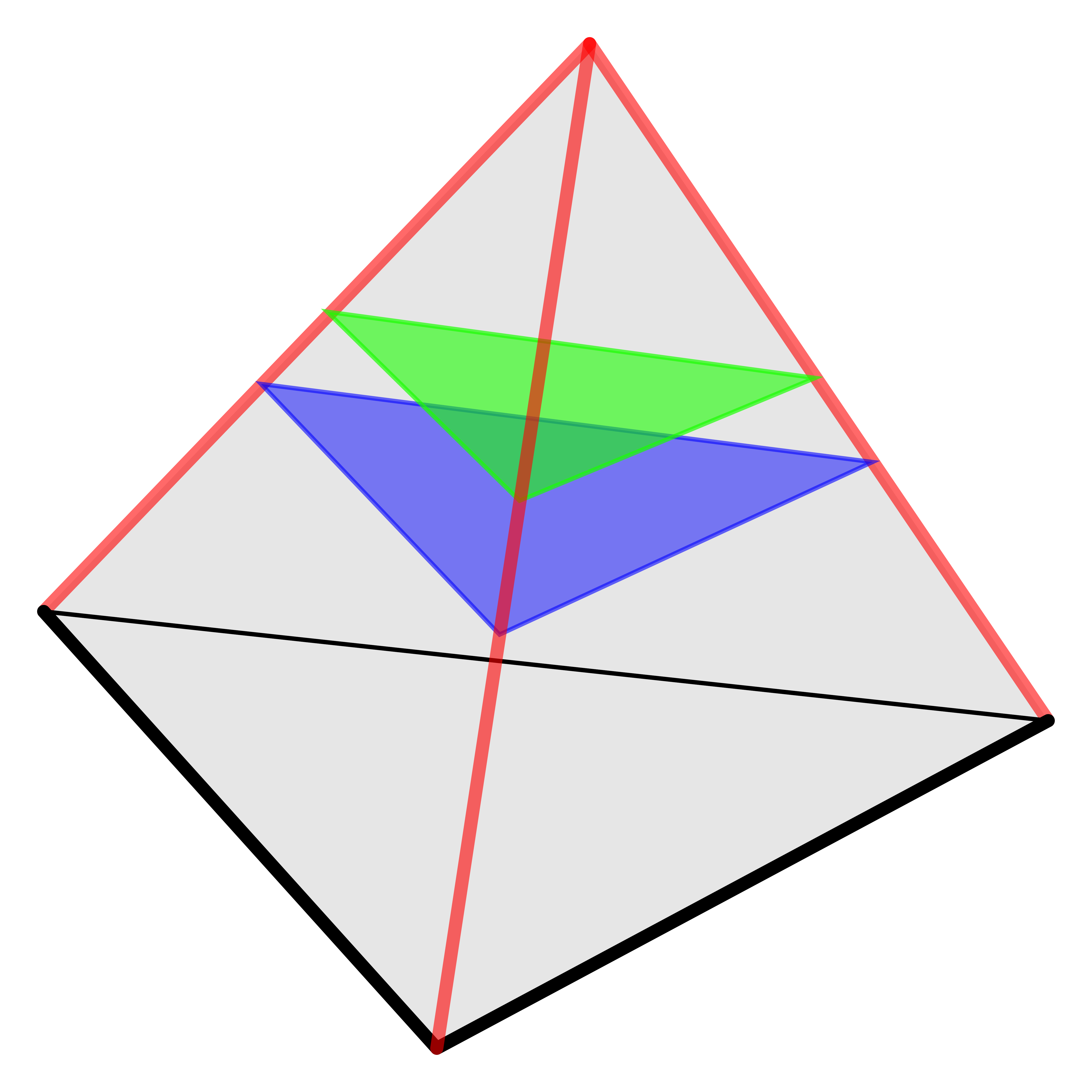} \;
\includegraphics[height=21mm, width=21mm]{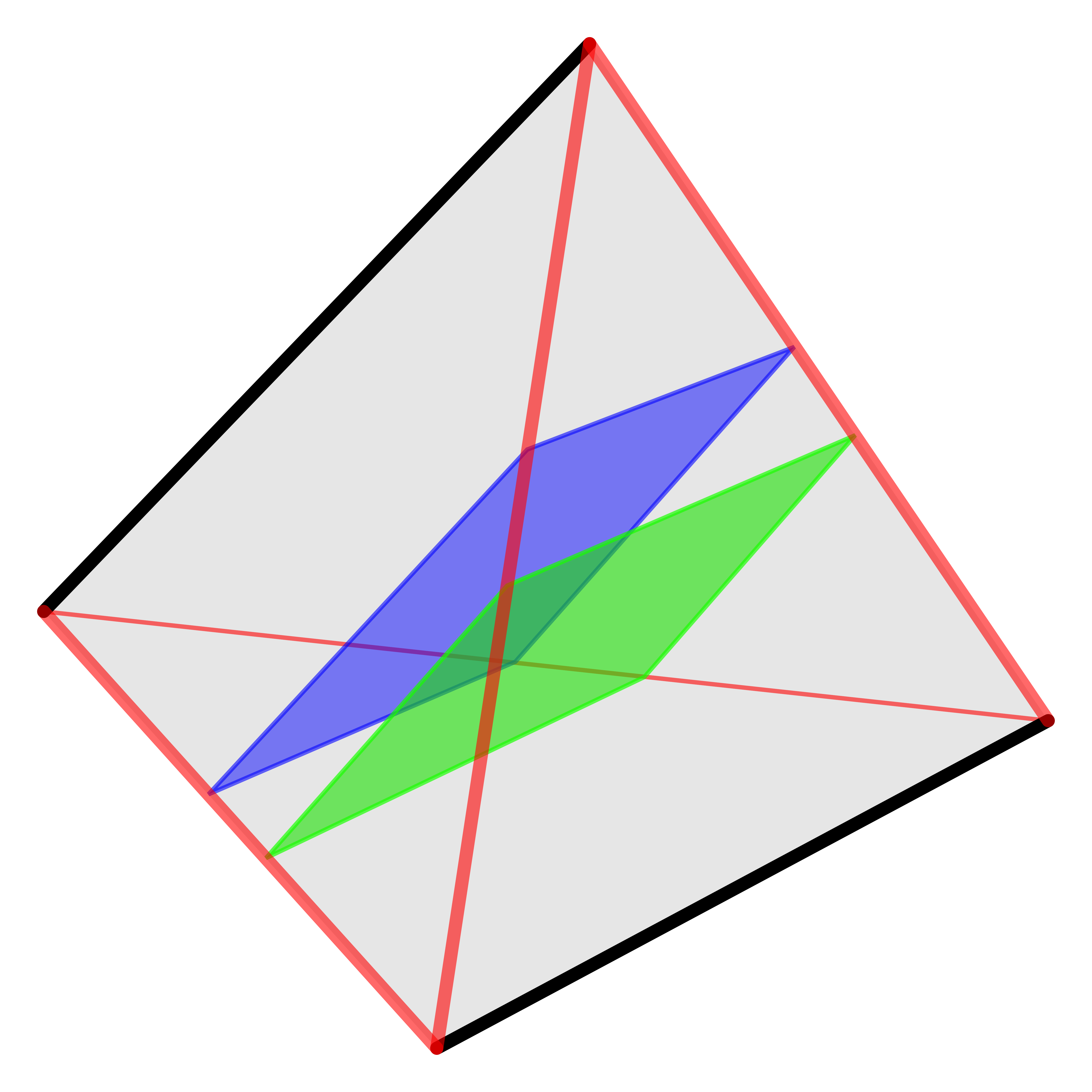} \;
\includegraphics[height=21mm, width=21mm]{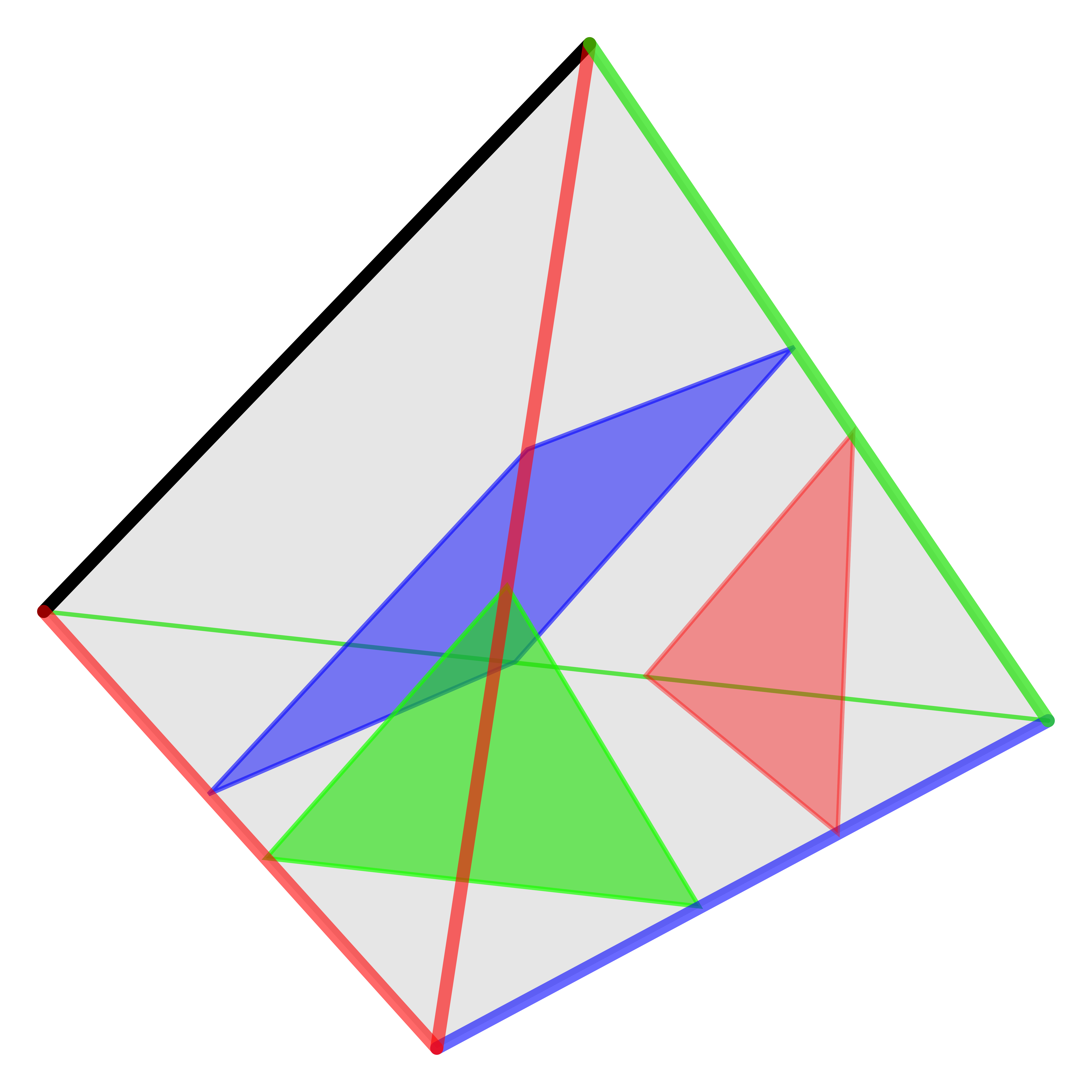} \;
\includegraphics[height=21mm, width=21mm]{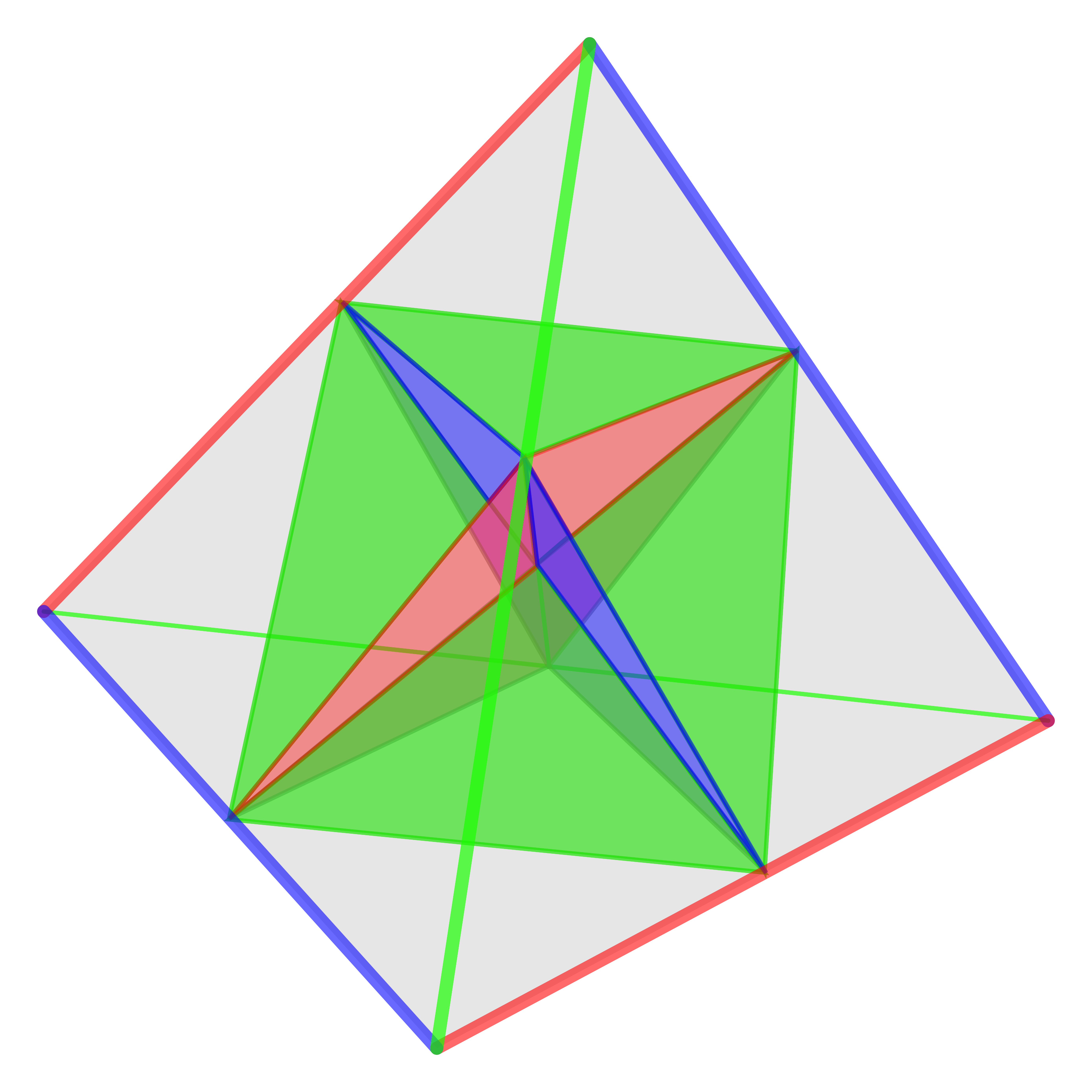}
\end{center}
\vspace{-2mm}
\caption{Rank-2 coloring of tetrahedra.}
\label{fig:types-2}
\vspace{-1mm}
\end{figure}

\begin{description}
\item[Type $\typee$]
all six edges are $H$-even.
\item[Type $\typett\tts$]
three edges in a face are $H$-even and other three edges are $i$-even for the same $i \in \Index$; there are 3 distinct unoriented/oriented sub-types.
\item[Type $\ntts\typeqq$]
a pair of opposite edges are $H$-even and other four edges are $i$-even for the same $i \in \Index$; there are 3 distinct unoriented/oriented sub-types.
\item[Type $\ntts\typeqtt\tts$]
one edge is $H$-even, sharing a face with two $i$-even edges and another face with two $j$-even edges, and the last edge is $k$-even, such that $\{i,j,k\}=\Index$; there are 3 distinct unoriented sub-types and 6 distinct oriented sub-types.
\item[Type $\ntts\typeqqq$]
three pairs of opposite edges are $i$-even, $j$-even, $k$-even respectively, such that $\{i,j,k\}=\Index$; there are 2 distinct oriented sub-types.
\end{description}
The types $\typee$, $\typett$, $\typeqq$, $\typeqtt$, and $\typeqqq$ are called types IV, III, II, I, and V respectively in \cite{JRT:Z2}. These five types of tetrahedra are depicted in \hyperref[fig:types-2]{Figure~\ref*{fig:types-2}}.

The $H$-coloring defines three canonical surfaces, $\isurf=\surf_{\iclass}$, $i \in \Index$. The intersection of $\bigcup_{i \in \Index} \isurf$ with a tetrahedron $\tau$ is empty if $\tau$ is of type $\typee$, two triangles if $\tau$ is of type $\typett$, two quadrilaterals if $\tau$ is of type $\typeqq$, a quadrilateral and two triangles if $\tau$ is of type $\typeqtt$, and three quadrilaterals if $\tau$ is of type $\typeqqq$. Let us write
\begin{align*}
\Tcount_\typett(\tri):=\;
  &\#\{\text{tetrahedra of type $\typett$}\},\\
\Tcount_\typeqtt(\tri):=\;
  &\#\{\text{tetrahedra of type $\typeqtt$}\},\\
\Ecount_{\typee,d}(\tri):=\;
  &\#\{\text{$H$-even edges of degree~$d$}\},
\end{align*}
or simply as $\Tcount_\typett$, $\Tcount_\typeqtt$, $\Ecount_{\typee,d}$; they are related to the Euler characteristic $\chi(\isurf)$, $i \in \Index$. 

\begin{lemalpha}[{\cite[Lem.\,4]{JRT:Z2}}] \label{lem:comb2}
Let $M$ be an irreducible 3-manifold and $\tri$ be a minimal triangulation of $M$ with the rank-2 coloring by $\{\zclass,\rclass,\gclass,\bclass\} \leqslant H^1(M;\bbZtwo)$. Then, the surfaces $\isurf$ and the numbers $\Tcount_\typett$, $\Tcount_\typeqtt$, $\Ecount_{\typee,d}$ satisfy
\begin{align} \label{eqn:comb-2}
2\Tcount-4+2 \sum_{i \in \Index}\chi(\isurf)
\;=\;
\sum_{d=3}^\infty (d-4) \Ecount_{\typee,d} + \Tcount_\typett + \Tcount_\typeqtt
\;\geqslant\;
\sum_{d=3}^\infty (d-4) \Ecount_{\typee,d}.
\end{align}
\end{lemalpha}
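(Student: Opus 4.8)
To prove \eqref{eqn:comb-2} --- the inequality being immediate from the displayed equality, since $\Tcount_\typett\geqslant 0$ and $\Tcount_\typeqtt\geqslant 0$ --- the plan is to run the Euler-characteristic bookkeeping that underlies \eqref{eqn:comb1}, now carried out simultaneously for the three canonical surfaces $\isurf$, $i\in\Index$. First I would record the one global constraint: since $H^1(M;\bbZtwo)$ admits a rank-$2$ subgroup, $M$ is none of $S^3$, $\SxS$, $\RP^3$, $L(3,1)$, so by \hyperref[thm:0-efficient]{Theorem~\ref{thm:0-efficient}} the minimal triangulation $\tri$ is one-vertex; as $M$ is closed with $\chi(M)=0$ and each face lies in two tetrahedra, this forces $\Ecount=\Tcount+1$, that is $\Ecount_\typee+\Ecount_1+\Ecount_2+\Ecount_3=\Tcount+1$, where $\Ecount_\typee:=\sum_{d\geqslant 3}\Ecount_{\typee,d}$ counts the $H$-even edges and $\Ecount_i$ the $i$-even edges.

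Next I would compute each $\chi(\isurf)$ from the CW structure on $\isurf$ whose $2$-cells are its normal disks, whose $1$-cells are the normal arcs it cuts in the faces of $\tri$, and whose $0$-cells are its intersection points with the $1$-skeleton of $\tri$. The edge-weight of $\isurf$ on $e$ is $\iclass[e]\in\{0,1\}$, and, using $\rclass+\gclass+\bclass=0$, it equals $1$ precisely when $e$ is $j$-even or $k$-even with $\{i,j,k\}=\Index$; hence $\isurf$ has $\Ecount_j+\Ecount_k$ zero-cells. Since every face of $\tri$ is interior, each normal arc borders exactly two normal disks, so $\isurf$ has $\tfrac12(3t_i+4q_i)$ one-cells and $t_i+q_i$ two-cells, where $t_i$ and $q_i$ count its triangular and quadrilateral disks. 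The per-type intersection patterns recorded immediately before the statement give $\sum_{i\in\Index}t_i=2\Tcount_\typett+2\Tcount_\typeqtt$ and $\sum_{i\in\Index}q_i=2\Tcount_\typeqq+\Tcount_\typeqtt+3\Tcount_\typeqqq$ (writing $\Tcount_\typee$, $\Tcount_\typeqq$, $\Tcount_\typeqqq$ for the numbers of tetrahedra of the remaining types), so that $\chi=V-E+F$ expresses $2\sum_{i\in\Index}\chi(\isurf)$ as an explicit $\bbZ$-linear combination of $\Tcount$, $\Ecount_\typee$, and the five type-counts.

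The remaining ingredient is a double count of edge degrees. On one hand $\sum_{e}\dcount(e)=6\Tcount$. On the other, every corner of a normal disk of $\isurf$ lies on an $\iclass$-odd edge, and each $\iclass$-odd edge of degree $d$ carries exactly $d$ of them, one in each surrounding tetrahedron; hence $\sum_{e:\,\iclass[e]=1}\dcount(e)=3t_i+4q_i$. Summing over $i$, and using that a non-$H$-even edge is $i$-even for exactly one $i$ and therefore $\iclass$-odd for exactly two, yields $\sum_{d\geqslant 3}d\,\Ecount_{\typee,d}=6\Tcount-(3\Tcount_\typett+4\Tcount_\typeqq+5\Tcount_\typeqtt+6\Tcount_\typeqqq)$. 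Substituting this and the expression for $\sum_{i\in\Index}\chi(\isurf)$ into the two sides of \eqref{eqn:comb-2}, I expect both sides to collapse to the common value $6\Tcount-4\Ecount_\typee-2\Tcount_\typett-4\Tcount_\typeqq-4\Tcount_\typeqtt-6\Tcount_\typeqqq$, which finishes the proof. The rank-$1$ identity \eqref{eqn:comb1} is this argument verbatim with the single surface $\asurf$ in place of the triple $\isurf$, $i\in\Index$.

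I expect the closing algebraic collapse to be entirely routine. The step that needs care is the normal-surface bookkeeping: verifying that the zero-cell count, the ``two disks per arc'' count, and the ``$d$ corners per $\iclass$-odd edge'' count hold exactly as stated --- this is where one uses that the weights lie in $\{0,1\}$, that $\tri$ is one-vertex with every face interior, and the explicit list of the five colored tetrahedron types --- together with a clean, index-error-free tally of the contributions of types $\typee$, $\typett$, $\typeqq$, $\typeqtt$, $\typeqqq$ to each of the three surfaces.
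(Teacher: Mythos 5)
The paper does not prove \hyperref[lem:comb2]{Lemma~\ref*{lem:comb2}}; it is quoted from \cite{JRT:Z2} without proof, so there is no internal argument to compare against. Your derivation is correct and is the standard one: I checked the cell counts ($\sum_i t_i = 2\Tcount_\typett+2\Tcount_\typeqtt$, $\sum_i q_i = 2\Tcount_\typeqq+\Tcount_\typeqtt+3\Tcount_\typeqqq$), the edge count $\Ecount=\Tcount+1$ from $\chi(M)=0$ with one vertex, and the degree double-count $\sum_{d}d\,\Ecount_{\typee,d}=6\Tcount-(3\Tcount_\typett+4\Tcount_\typeqq+5\Tcount_\typeqtt+6\Tcount_\typeqqq)$, and both sides do collapse to $6\Tcount-4\Ecount_\typee-2\Tcount_\typett-4\Tcount_\typeqq-4\Tcount_\typeqtt-6\Tcount_\typeqqq$ as you claim.
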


\subsection{\texorpdfstring{$\bbZtwo$-Coefficient}{Z/2Z-Coefficient} Norm} 
\label{ssec:Norm}

Following Thurston \cite{Thurston:Norm}, for any closed surface $\surf$, we set $\chineg(\surf):=\max\{0, -\chi(\surf)\}$ if $\surf$ is connected, and $\chineg(\surf):=\sum_{i} \chineg(\surf_i)$ if $\surf=\bigsqcup_i \surf_i$ is disconnected with connected components $\surf_i$. A closed surface $\surf \subset M$ is said to be \emph{$\bbZtwo$-taut} if it is $\chineg$-minimizing in its $\bbZtwo$-homology class and has no $\bbZtwo$-homologically trivial union of components; every component of a $\bbZtwo$-taut surface is (geometrically) incompressible and non-separating. The \emph{$\bbZtwo$-coefficient Thurston norm} \cite{JRT:Z2} of a cohomology class $\aclass \in H^1(M;\bbZtwo)$ is defined be
\[
||\aclass||:=\;\min \{ \chineg(\surf) \mid [M] \smallfrown \aclass=[\surf]\}.
\]
This is conceived as the $\bbZtwo$-coefficient analogue of the original ($\bbR$-coefficient) \emph{Thurston norm}, introduced in \cite{Thurston:Norm}. When $M$ is irreducible and $M \nhomeo \RP^3$, or more generally when $M$ has no $S^2 \times S^1$-summand or $\RP^3$-summand, any $\bbZtwo$-taut dual surface $\surf$ for any non-zero class $\aclass \in H^1(M;\bbZtwo)$ satisfies
\begin{align*}
||\aclass|| = -\chi(\surf).
\end{align*}

Although a $\bbZtwo$-taut surface can be normalized in any triangulation $\tri$, finding it can be quite cumbersome. When $\tri$ is minimal, the canonical surface $\asurf$ is a natural alternative, satisfying the following basic relation.

\begin{lemalpha}[{\cite[Lem.\,1]{JRT:Z2}}] \label{lem:norm-c}
Let $M$ be an irreducible 3-manifold such that $M \nhomeo \RP^3$, with a non-zero class $\aclass \in H^1(M;\bbZtwo)$. Then, with respect to any one-vertex 0-efficient (e.g.\;minimal) triangulation $\tri$ of $M$, the canonical surface $\asurf$ has no $S^2$-components or $\RP^2$-components, and satisfies
\begin{align*}
||\aclass|| \leqslant -\chi(\asurf).
\end{align*}
\end{lemalpha}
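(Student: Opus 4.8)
The plan is to separate the statement into its two independent parts — the inequality $||\aclass|| \leqslant -\chi(\asurf)$ and the absence of $S^2$- and $\RP^2$-components — and to observe that the first follows from the second together with facts already recorded above. First I would invoke that $\asurf$ is an embedded normal surface Poincar\'e dual to $\aclass$ (the cocycle condition forces the prescribed weights $\aclass[e]\in\{0,1\}$ to assemble, inside each tetrahedron $\tau$, into the empty set, a single normal triangle, or a single normal quadrilateral according to whether $\tau$ has type $\typee$, $\typet$, or $\typeq$; and since $\tri$ has one vertex, $\asurf$ meets each loop edge $e$ transversally in $\aclass[e]$ points, so $[\asurf]=[M]\smallfrown\aclass$). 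By the definition of the $\bbZtwo$-coefficient norm this already yields $||\aclass|| \leqslant \chineg(\asurf)$, and since $\asurf$ is a closed surface, $\chineg(\asurf)=-\chi(\asurf)$ will follow as soon as I know that no component of $\asurf$ is an $S^2$ or an $\RP^2$ — these being the only closed surfaces of positive Euler characteristic.

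For the $S^2$-components I would argue via edge-weights. By construction $\asurf$ has every edge-weight in $\{0,1\}$, hence so does each of its components, each of which is itself a normal surface (a connected component of a normal surface being a union of some of its normal disks). On the other hand, because $\tri$ has a single vertex, every edge is a loop and the vertex-linking $2$-sphere meets each edge twice, so it has all edge-weights equal to $2$. Therefore no component of $\asurf$ is the vertex-linking sphere, and since $\tri$ is $0$-efficient — so that the only normal $2$-spheres are vertex-linking — $\asurf$ has no $S^2$-component.

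For the $\RP^2$-components I would use the topology of $M$. If a component $P$ of $\asurf$ were homeomorphic to $\RP^2$ then, $M$ being orientable, $P$ is one-sided, a closed regular neighborhood $N(P)$ is a twisted $I$-bundle over $\RP^2$, and $\partial N(P)$ is a single $2$-sphere. Since $M$ is irreducible, $\partial N(P)$ bounds a $3$-ball $B$ in $M$; this ball is not $N(P)$ (which is not simply connected), so $B=\overline{M\smallsetminus N(P)}$ and $M = N(P)\cup_{\partial N(P)} B \homeo \RP^3$, contradicting $M\nhomeo\RP^3$. Hence $\asurf$ has no $\RP^2$-component; every component of the closed surface $\asurf$ then has non-positive Euler characteristic, so $\chineg(\asurf)=-\chi(\asurf)$, and combined with the first step we obtain $||\aclass|| \leqslant -\chi(\asurf)$.

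I do not expect a genuine obstacle here: this is a structural warm-up, and everything except the two exclusions is bookkeeping (that a union of normal disks is a normal surface, that a subsurface's edge-weights do not exceed those of $\asurf$, and that the canonical weights really do give an embedded surface — the last because the two $\aclass$-even edges of a type-$\typeq$ tetrahedron are opposite, so at most one quadrilateral type occurs per tetrahedron). The only place the hypotheses are actually consumed is the $\RP^2$ step, where I would take extra care that the $3$-ball produced by irreducibility lies on the complementary side $\overline{M\smallsetminus N(P)}$ rather than swallowing $N(P)$; that is the one point I would write out most carefully.
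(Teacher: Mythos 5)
Your proof is correct and follows essentially the same route the paper takes: this statement is quoted from \cite{JRT:Z2}, but the paper's own proof of its generalization, Lemma~\ref{lem:norm-nc}, uses exactly your decomposition --- $||\aclass||\leqslant\chineg(\asurf)$ by definition, $S^2$-components excluded by $0$-efficiency plus the edge-weight comparison with the vertex-linking sphere, and $\RP^2$-components excluded by irreducibility and $M\nhomeo\RP^3$. Your write-up merely spells out the twisted $I$-bundle argument that the paper leaves implicit.
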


The canonical surface $\asurf$ has the lowest edge-weights in its homology class, but need not be $\bbZtwo$-taut in general. It is convenient to work with a class of surfaces that are almost equally manageable and possibly much closer to being $\bbZtwo$-taut.

\begin{defn}
Given a non-zero class $\aclass \in H^1(M;\bbZtwo)$, a \emph{nearly canonical surface} $\surf$ in $\tri$ dual to the class $\aclass$ is a normal surface homologous to $\asurf$, such that its edge-weight is either 0 or 2 on each $\aclass$-even edge and is 1 on each $\aclass$-odd edge.
\end{defn}

The canonical surface $\asurf$ is a special case of a nearly canonical surface; generally, for a given non-zero class $\aclass$, there can also be a large number of nearly canonical surfaces that are not canonical. The next lemma generalizes \hyperref[lem:norm-c]{Lemma~\ref*{lem:norm-c}}, and establishes one basic feature of these surfaces.

\begin{lem} \label{lem:norm-nc}
Let $M$ be an irreducible 3-manifold such that $M \nhomeo \RP^3$, with a non-zero class $\aclass \in H^1(M;\bbZtwo)$. Then, with respect to any one-vertex 0-efficient (e.g.\;minimal) triangulation $\tri$ of $M$, any nearly canonical surface $\surf$ dual to $\aclass$ has no $S^2$-components or $\RP^2$-components, and satisfies
\begin{align*}
||\aclass|| \leqslant -\chi(\surf).
\end{align*}
\end{lem}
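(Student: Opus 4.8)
The plan is to reduce the claim about an arbitrary nearly canonical surface $\surf$ to the already-established statement for the canonical surface $\asurf$ in \hyperref[lem:norm-c]{Lemma~\ref*{lem:norm-c}}. First I would observe that the ``no $S^2$- or $\RP^2$-component'' conclusion is essentially automatic from $0$-efficiency plus irreducibility: any normal $S^2$ in a $0$-efficient triangulation is vertex-linking, and since $\tri$ is one-vertex the vertex link is a single sphere whose edge-weights are all $2$ on every edge (it meets each tetrahedron in four triangles, one cutting off each vertex), which is \emph{not} a nearly canonical surface dual to a non-zero class — its weight is $0$ on all $\aclass$-odd edges, contradicting the defining condition that $\surf$ has weight $1$ on each $\aclass$-odd edge (recall $\aclass \neq \zclass$, so some $\aclass$-odd edge exists). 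Hence $\surf$ has no normal $S^2$-component. An $\RP^2$-component is excluded because $M \nhomeo \RP^3$ and $M$ is irreducible, so $M$ contains no embedded two-sided-or-one-sided projective plane at all — more carefully, a normal $\RP^2$ would have an orientable double cover that is a normal $S^2$, leading back to the previous case; alternatively one cites that irreducibility with $M\nhomeo\RP^3$ forbids $\RP^2$'s, exactly as in \hyperref[lem:norm-c]{Lemma~\ref*{lem:norm-c}}.

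For the inequality $||\aclass|| \leqslant -\chi(\surf)$, the key point is that a nearly canonical surface $\surf$ and the canonical surface $\asurf$ differ only by adding copies of normal \emph{quadrilateral} pieces in a controlled way, or — more cleanly — that one can relate $\surf$ to $\asurf$ by \emph{normal isotopies and tube-compressions} that do not decrease $\chi$. Concretely, I would argue as follows. Both $\asurf$ and $\surf$ are dual to $\aclass$, so $[\surf] = [\asurf]$ in $H_2(M;\bbZtwo)$, hence $||\aclass|| \leqslant \chineg(\surf)$ by the very definition of the norm. It therefore suffices to show $\chineg(\surf) = -\chi(\surf)$, i.e. that $\surf$ has no $S^2$- or $\RP^2$-components and every component has $\chi \leqslant 0$; but we have just ruled out spherical and projective-plane components, and a normal surface with no such components automatically satisfies $\chineg(\surf) = -\chi(\surf)$ since each remaining component is a torus, Klein bottle, or higher-genus surface, all with $\chineg = -\chi$ (and $-\chi \geqslant 0$). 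So in fact the inequality is immediate \emph{once the component analysis is done}, and the entire content of the lemma is the exclusion of $S^2$- and $\RP^2$-components — which is exactly the first half, already sketched above.

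The main obstacle, then, is making the $S^2$-exclusion argument airtight for a \emph{nearly} canonical surface rather than the canonical one: I need to know that $\surf$ cannot have a normal $S^2$-component at all, and the subtlety is that $\surf$ need not be connected, so I must argue component-by-component. If $S_0 \subseteq \surf$ were a normal $2$-sphere component, then by $0$-efficiency $S_0$ is vertex-linking; in a one-vertex triangulation there is up to normal isotopy a unique vertex-linking surface $L$, with edge-weight $2$ on \emph{every} edge of $\tri$. Now $\surf$ being nearly canonical forces its \emph{total} edge-weight to be $1$ on each $\aclass$-odd edge; since all summands have non-negative weight and $L$ contributes $2$ on such an edge, $\surf$ cannot contain $L$ as a component. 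This kills the $S^2$ case, and the $\RP^2$ case follows by passing to the orientable double cover (a normal $S^2$) or by invoking irreducibility plus $M\nhomeo\RP^3$ directly; one should double-check that a normal $\RP^2$-component indeed has vertex-linking double cover in this setting, but this is routine and parallels \cite{JR:0-efficient}. With the component analysis complete, $\chineg(\surf)=-\chi(\surf)$ and $||\aclass||\leqslant\chineg(\surf)$ together give the claim.
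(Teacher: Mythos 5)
Your proposal is correct and follows essentially the same route as the paper: $||\aclass||\leqslant\chineg(\surf)$ by definition of the norm, $S^2$-components are excluded because $0$-efficiency forces them to be vertex-linking with edge-weight $2$ everywhere while a nearly canonical surface has weight $1$ on some $\aclass$-odd edge, and $\RP^2$-components are excluded by irreducibility together with $M\nhomeo\RP^3$. The digression about tube-compressions and the double-cover alternative for $\RP^2$ are unnecessary, but the argument you actually rely on matches the paper's proof.
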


\begin{proof}
The inequality $||\aclass|| \leqslant \chineg(\surf)$ holds unconditionally by definition, and the equality $\chineg(\surf) = -\chi(\surf)$ holds if $\surf$ contains no $S^2$-components or $\RP^2$-components. Since $\tri$ is 0-efficient, $\surf$ contains no $S^2$-components that are not vertex-linking. Since $\aclass$ is a non-zero class and $\surf$ is a nearly canonical surface dual to $\aclass$, there exists some $\aclass$-odd edge and the wedge-weight of $\surf$ must be 1 on this edge; this rules out the existence of a vertex-linking $S^2$-component which requires the edge-weights of $\surf$ to be at least 2 on all edges. Finally, since $M \nhomeo \RP^3$ and $M$ is irreducible, $\surf$ contains no $\RP^2$-components. Hence, the equality $\chineg(\surf) = -\chi(\surf)$ holds.
\end{proof}

\subsection{Maximal Layered Solid Tori}
\label{ssec:mLST}

Many examples of minimal triangulations, e.g.\;from \cite{Burton:Thesis, JR:Layered}, are known to contain a layered solid torus as a subcomplex. We shall regard layered solid tori as building blocks in a minimal triangulation $\tri$.

\begin{defn}
Given a triangulation $\tri$ of a 3-manifold $M$, a \emph{layered solid torus $\lst$ in $\tri$}, written $\lst \subset \tri$, is a subcomplex homeomorphic to a solid torus and with the induced triangulation isomorphic to a layered solid torus $\lst$. It is said to be \emph{maximal} if it is not properly contained in any other layered solid torus $\lst' \subset \tri$.
\end{defn}

The counting arguments will be organized according to the configurations of maximal layered solid tori in our triangulation. The essential facts of how these subcomplexes can meet with each other is collected in the following proposition.

\begin{propalpha}[{\cite[Lem.\,20]{JRT:Lens}}] \label{prop:mlst}
If $\tri$ is a 0-efficient minimal triangulation of a 3-manifold and is not a layered lens space, then the intersection of two distinct maximal layered solid tori in $\tri$ is at most an edge in $\tri$.
\end{propalpha}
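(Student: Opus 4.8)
The plan is to argue by contradiction: suppose $\lst_1, \lst_2 \subset \tri$ are distinct maximal layered solid tori whose intersection $R = \lst_1 \cap \lst_2$ contains more than one edge, and derive either a contradiction with minimality/$0$-efficiency or the conclusion that $\tri$ is a layered lens space. First I would understand the local structure of $R$. Since $\lst_1$ and $\lst_2$ are each subcomplexes, $R$ is a subcomplex of $\tri$; and since each $\lst_i$ is built by layering from $\lst_{\{1,2,3\}}$, its boundary $\partial \lst_i$ is a two-triangle torus, so $R$ must sit inside $\partial \lst_1 \cap \partial \lst_2$ unless $\lst_1 \subseteq \lst_2$ or vice versa (which is excluded by maximality and distinctness, since containment of layered solid tori is comparable via the layering filtration). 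Thus $R$ is contained in the common boundary, and it is a subcomplex of a two-triangle torus containing at least two edges. The key observation is that a two-triangle torus $\partial \lst$ has exactly three edges, and any subcomplex with two or more of these edges is quite large: two edges already force (via the incidence structure of the two triangles) that the subcomplex contains at least one full triangle, hence both triangles, hence all of $\partial \lst$.

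The heart of the argument is then: if $R \supseteq \partial \lst_1$ (equivalently $R = \partial \lst_1 = \partial \lst_2$, since a torus is closed and both boundaries are two-triangle tori), then $\lst_1$ and $\lst_2$ are two solid tori glued along their entire boundary torus, so $\lst_1 \cup \lst_2$ is a closed subcomplex of $\tri$ that is a genuine closed $3$-manifold obtained by gluing two solid tori — i.e., a lens space (or $S^3$ or $S^2 \times S^1$). Because $M$ is connected, $\lst_1 \cup \lst_2 = M$, and the resulting triangulation exhibits $M$ with a triangulation that is a layered solid torus folded/glued onto another layered solid torus along a torus; I would then invoke the classification of such objects to conclude this is (up to the conventions in §\ref{ssec:Layered}) precisely a layered lens space triangulation, contradicting the hypothesis that $\tri$ is not a layered lens space. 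The case $R \supsetneq \partial \lst_1$ cannot happen since $R \subseteq \partial \lst_1$; and the remaining case where $R$ contains two edges but is \emph{not} all of $\partial \lst_1$ is ruled out by the combinatorial observation above that two edges of a two-triangle torus span the whole torus.

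I also need to address the possibility that $R$ is not contained in either common boundary — i.e.\ that $\lst_1$ and $\lst_2$ share an interior tetrahedron or an interior edge of one of them. Here minimality does the work: if $\lst_1$ and $\lst_2$ share a tetrahedron $\tau$, one can show using the layering filtration that the "younger" of the two nested sequences of layered solid tori refines the other near $\tau$, forcing $\lst_1 \subseteq \lst_2$ or $\lst_2 \subseteq \lst_1$, against distinctness and maximality. Alternatively, an interior edge $e$ of $\lst_1$ lying in $\lst_2$ would have, by Lemma~\ref{lem:edges_lst}, degree in $\lst_1$ at least $4$ (or $=3$ in the $\lst_{\{1,3,4\}}$ exceptional case), and I would trace through the incidence of $e$ with the layering tetrahedra of $\lst_2$ to again force comparability; the degree-$3$ exceptional edge is handled separately using Theorem~\ref{thm:edges_min}, which pins down its neighborhood completely.

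The main obstacle I anticipate is the bookkeeping in the last paragraph: ruling out shared interior cells requires a careful induction on the layering construction, and the exceptional edges of degree $3$ (the $\lst_{\{1,3,4\}}$ blocks) must be tracked throughout — these are exactly the configurations that make naive degree-counting fail. Getting the base case and the exceptional cases to line up with the conventions on edges of degree $1$ (which we never layer along) is where the argument is most delicate; everything else reduces to the elementary combinatorics of a two-triangle torus and the uniqueness of layered solid tori for a given meridional slope (§\ref{ssec:Layered}).
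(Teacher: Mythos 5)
This proposition is not proved in the paper at all: it is imported verbatim from \cite[Lem.\,20]{JRT:Lens}, so there is no internal proof to compare against. Judged on its own terms, your reconstruction has a genuine gap at its central step. You claim that any subcomplex of the two-triangle boundary torus $\partial\lst_1$ containing two of its three edges must contain a full triangle, ``via the incidence structure of the two triangles.'' That is false: the intersection $\lst_1 \cap \lst_2$ of two unions of closed tetrahedra in a pseudo-simplicial triangulation is a subcomplex in the weak sense only --- two tetrahedra can share two (or even all) edges of a face without sharing the face itself, since face identifications are prescribed by the pairing set $\pairings$ and are not forced by shared edges. So the case where $\lst_1$ and $\lst_2$ meet in exactly two or three edges but no face is a real, open case --- indeed it is the heart of the lemma, since the surrounding counting arguments (e.g.\ the inequality $\lfloor \dcount(e)/2\rfloor \geqslant \scount(e)$ extracted from this very proposition) are exactly about several maximal layered solid tori meeting along single edges. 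Your argument silently disposes of this case with a false combinatorial claim, so the main content of the statement is not established.

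There is a secondary problem in the case you do treat. When $\partial\lst_1 = \partial\lst_2$ and $M = \lst_1 \cup \lst_2$, you assert that the resulting triangulation ``is precisely a layered lens space triangulation.'' Under the conventions of \S\ref{ssec:Layered}, a layered lens space is obtained from a \emph{single} layered solid torus by \emph{folding} along a boundary edge; a union of two layered solid tori identified along their common boundary torus is a different object and is not in general a layered lens space. What is needed here is instead a contradiction with minimality or $0$-efficiency (or a reduction showing the outermost tetrahedron of one torus can be reinterpreted as a layering on the other), and that argument is absent. The shared-tetrahedron and shared-interior-edge cases are, as you acknowledge, only sketched; the appeal to ``comparability via the layering filtration'' would itself need proof, since two layered solid tori sharing a tetrahedron need not share their one-tetrahedron cores a priori.
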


\section{The Rank-1 Inequality for Prime Manifolds}
\label{sec:Rank1}

In this section, we establish \hyperref[thm:main1]{Theorem~\ref{thm:main1}} regarding a lower bound on the complexity of a prime 3-manifold $M$ with $\rank H^1(M;\bbZtwo) \geqslant 1$. As noted in the introduction, under an extra assumption that $M$ is \emph{atoroidal}, the same lower bound had been established implicitly in \cite{JRT:Lens}, where they proved the optimal bound for lens space; see \hyperref[thm:JRT1]{Theorem~\ref{thm:JRT1}}. We give a unified treatment, and prove the following.

\begin{thm} \label{thm:rank1}
Let $M$ be an orientable connected closed prime 3-manifold with a rank-1 subgroup $\{0,\aclass\} \leqslant H^1(M;\bbZtwo)$, and $\scrT$ be a minimal triangulation of $M$. If $M$ is not a lens space, or if $M$ is a lens space but $\scrT$ is not a layered lens space triangulation of $M$, then we have
\[
\Tcount(\scrT) \geqslant 2+2||\aclass||.
\]
\end{thm}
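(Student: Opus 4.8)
### Proof Plan

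The plan is to run a counting argument built on the identity \eqref{eqn:comb1} from \hyperref[lem:comb1]{Lemma~\ref*{lem:comb1}}. Fix a minimal triangulation $\tri$ of $M$; by \hyperref[thm:0-efficient]{Theorem~\ref*{thm:0-efficient}} we may assume $\tri$ is a one-vertex $0$-efficient triangulation, and we may assume $||\aclass|| \geqslant 1$ since the case $||\aclass||=0$ needs only $\Tcount\geqslant 2$, which follows from $0$-efficiency and the exclusion of $S^3$. Applying \hyperref[lem:norm-c]{Lemma~\ref*{lem:norm-c}} gives $||\aclass|| \leqslant -\chi(\asurf)$, so rearranging \eqref{eqn:comb1} yields
\begin{align*}
2\Tcount - 4 - 4||\aclass|| \;\geqslant\; 2\Tcount - 4 + 4\chi(\asurf) \;=\; \sum_{d\geqslant 3}(d-4)\Ecount_{\typee,d} + \Tcount_\typet.
\end{align*}
The right-hand side is bounded below by $-\Ecount_{\typee,3}$, since edges of degree $\geqslant 4$ contribute nonnegatively and $\Tcount_\typet\geqslant 0$. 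Thus it suffices to show $-\Ecount_{\typee,3} \geqslant -2$ would give $\Tcount \geqslant 1 + 2||\aclass||$, and we need to squeeze out one more tetrahedron, i.e.\ we must rule out the borderline configurations and sharpen the estimate by exactly $1$. The bulk of the argument is therefore a careful analysis of the $\aclass$-even edges of degree $3$ and how they sit inside maximal layered solid tori.

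The first key step is to invoke \hyperref[thm:edges_min]{Theorem~\ref*{thm:edges_min}}: every degree-$3$ edge of $\tri$ (the exceptional lens spaces being excluded, or handled separately by hand) is the unique interior edge of a two-tetrahedron subcomplex isomorphic to $\lst_{\{1,3,4\}}$, hence lies in some maximal layered solid torus $\lst \subset \tri$. By \hyperref[lem:edges_lst]{Lemma~\ref*{lem:edges_lst}}, each such $\lst$ contains at most one interior degree-$3$ edge, so distinct degree-$3$ even edges live in distinct maximal layered solid tori, and by \hyperref[prop:mlst]{Proposition~\ref*{prop:mlst}} (valid since $M$ is not a layered lens space, as $\tri$ is not a layered lens space triangulation and, when $M$ is a lens space, any such $\tri$ is already excluded) these tori meet pairwise in at most an edge. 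Now one examines the boundary edges of each maximal layered solid torus $\lst$ carrying a degree-$3$ even interior edge: $\lst$ has a boundary edge of degree $1$ and a boundary edge of degree $3$ by \hyperref[lem:edges_lst]{Lemma~\ref*{lem:edges_lst}}, and since $\tri$ has no edge of degree $1$ (Theorem~\ref{thm:edges_min}, $M$ not among the small lens spaces), the degree-$1$ boundary edge of $\lst$ must be identified in $\tri$ with other $1$-simplices coming from outside $\lst$; tracking the $\aclass$-colors and degrees of these external contributions is what produces the compensating positive terms. Concretely, each "bad" even edge of degree $3$ forces either a tetrahedron of type $\typet$ or an even edge of degree $\geqslant 5$ in or adjacent to its host solid torus, and a bookkeeping argument shows the net contribution of $\lst$ to $\sum(d-4)\Ecount_{\typee,d}+\Tcount_\typet$ is $\geqslant -1$ rather than $\geqslant -2$, with strict improvement when there are no bad edges at all.

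The second key step, needed to close the final gap of $1$, is the compression mechanism advertised in the outline: if after the local bookkeeping the inequality is still only $\Tcount \geqslant 1+2||\aclass||$, one compresses the canonical surface $\asurf$ across the meridian disk(s) of the relevant layered solid tori. Because the degree-$1$ boundary edge has $\asurf$-weight $0$ or $1$, the normal meridian disk of $\lst_{\{1,3,4\}}$ meets $\asurf$ in a pattern admitting a compression, which strictly drops $-\chi$ of the dual surface while keeping it a nearly canonical surface in the sense of the definition preceding \hyperref[lem:norm-nc]{Lemma~\ref*{lem:norm-nc}}; applying that lemma gives $||\aclass|| \leqslant -\chi(\surf) < -\chi(\asurf)$, improving the bound by the missing unit. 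I expect the main obstacle to be precisely the global coordination of these compressions: showing that the compressions associated to distinct bad configurations can be performed simultaneously (their compression disks are disjoint, or can be made so) and that each genuinely reduces $-\chi$ without recreating a trivial $S^2$- or $\RP^2$-component — this is where the hypotheses "$M$ prime" and "$M$ not a lens space" do real work, replacing the atoroidality used in \cite{JRT:Lens}, and where the disjointness statements from \hyperref[prop:mlst]{Proposition~\ref*{prop:mlst}} must be pushed from "meet in at most an edge" to genuine control of the surface pieces. Finally, I would separately dispatch the small exceptional manifolds ($S^3$, $\SxS$, $\RP^3$, $L(3,1)$, $L(4,1)$, $L(5,1)$, $L(5,2)$, $L(7,2)$) either by direct inspection of their minimal triangulations or by noting that for these the claimed inequality with $||\aclass||=0$ or $1$ is immediate.
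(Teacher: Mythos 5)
Your overall strategy --- the counting identity \eqref{eqn:comb1} plus Lemma~\ref{lem:norm-c}, organizing the $\aclass$-even degree-3 edges via the maximal layered solid tori they live in, and then compressing $\asurf$ to widen the gap between $||\aclass||$ and $-\chi(\asurf)$ --- is the same as the paper's. But two of the three load-bearing steps are either misidentified or missing. First, the bookkeeping: attributing a net deficit of ``$\geqslant -1$ per $\lst$'' is not the right accounting, because the negative contribution of a degree-3 even edge must be balanced against the degree of a \emph{boundary} edge of its host $\lst$, and that boundary edge can be shared by up to $\lfloor \dcount(e)/2\rfloor$ such tori (e.g.\ a degree-6 even edge supporting three babies contributes $(6-4)-3=-1$ even though each torus alone looks harmless). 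The paper therefore partitions the even edges by ``supporter'' boundary edges, not by tori, and after all combinatorial compensations (including a transfer of $\tfrac{1}{3}$ from solvent neighbors across $H$-even faces) the residual deficit is $-1$ per \emph{cluster} of rogue edges --- and there can be arbitrarily many clusters, so the total deficit is $-\kappa$ with $\kappa$ unbounded, not a fixed $-1$ or $-2$ to be recovered by ``one more tetrahedron.''

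Second, the compression mechanism is wrong as stated: the compressing disks are not meridian disks of $\lst_{\{1,3,4\}}$. Inside a type-$\typeq$ layered solid torus $\asurf$ is just two quadrilaterals, and the $\{1,3,4\}$ meridian disk is not a compressing disk for it. The actual compressions live in the bipyramidal clusters $\kluster(e)$ formed by the $5$ or $6$ distinct type-$\typeq$ tetrahedra around a rogue \emph{boundary} edge $e$ of the tori (Lemma~\ref{lem:tprofile1} is what guarantees these tetrahedra are distinct, so the cluster really is a bipyramid): there $\asurf\cap\kluster^\circ(e)$ is an annulus and the compressing disk crosses $e$ once, changing its weight from $0$ to $2$ and producing a \emph{nearly canonical} surface to which Lemma~\ref{lem:norm-nc} applies. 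Finally, the ``global coordination'' you flag as the main obstacle is not an afterthought but the crux: one must show that a maximal pairwise tetrahedron-disjoint subfamily of clusters has size at least $\kappa/4$ (each cluster meets at most $3$ others), so that the achievable drop in $-\chi(\asurf)$ is at least $\kappa/2$, giving $2\Tcount-4-4||\aclass||\geqslant -\kappa+2\kappa=\kappa\geqslant 0$. Without this quantitative step the argument does not close, so as written the proposal has a genuine gap at its center.
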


For $M=L(4,1)$, the unique minimal triangulation $\tri$ is a standard layered lens space, and the statement holds vacuously. For $M=\SxS$ or $\RP^3$, we can also quickly check that the statement holds; $\tri$ is not a standard layered lens space, but we have $\Tcount(\tri)=2$ and $||\aclass||=0$ for the unique non-zero class $\aclass \in H^1(M;\bbZtwo)$. So, we may assume $M \nhomeo \SxS, \RP^3, L(4,1)$, and work with a minimal triangulation $\tri$ that is \emph{not} a standard layered lens space.

Suppose $H=\{\zclass,\aclass\} \leqslant H^1(M;\bbZtwo)$ is a rank-1 subgroup. With respect to the $\bbZtwo$-coloring of $\tri$ by $H$, \hyperref[lem:norm-c]{Lemma~\ref*{lem:norm-c}} and \hyperref[lem:comb1]{Lemma~\ref*{lem:comb1}} together yields
\begin{align}
\label{eqn:roadmap1}
\begin{split}
2\Tcount-4-4||\aclass||
&\;\geqslant\;
2\Tcount-4+4\chi(\asurf) \\
&\;=\;
\sum_{d=3}^\infty (d-4) \Ecount_{\typee,d} + \Tcount_\typet
\;\geqslant\;
\sum_{d=3}^\infty (d-4) \Ecount_{\typee,d}.
\end{split}
\end{align}
We aim to show that the lefthand term is non-negative.
Under an extra topological assumption that $M$ is \emph{atoroidal}, the conclusion of \hyperref[thm:rank1]{Theorem~\ref*{thm:rank1}} is established implicitly in \cite{JRT:Z2} by showing that the righthand term $\sum_{d=3}^\infty (d-4) \Ecount_{\typee,d}$ is bounded below by $-1$. To prove \hyperref[thm:rank1]{Theorem~\ref*{thm:rank1}} in full generality, we work without this lower bound on the righthand term. There are two parts in our argument.

First, we study how the middle term $\sum_{d=3}^\infty (d-4) \Ecount_{\typee,d} + \Tcount_\typet$ fails to be non-negative. Our analysis of the middle term is quite similar to the analysis of the righthand term in \cite{JRT:Lens}, but incorporating the extra term $\Tcount_\typet$ leads to a finer description of the combinatorial structures causing the negativity. 

Second, we exploit the gap between the lefthand term $2\Tcount-4-4||\aclass||$ and the middle term $2\Tcount-4+4\chi(\asurf)$. When the canonical surface $\asurf$ is (geometrically) compressible, there is a gap between $||\aclass||$ and $-\chi(\asurf)$. Analyzing the combinatorics of the triangulation around the surface $\asurf$, we show that $\asurf$ is always sufficiently compressible so that the gap between $||\aclass||$ and $-\chi(\asurf)$ is large enough to make up for the nagativity of the middle term, leading to the desired inequality.

As evident from the discussion above, the middle term in \hyperref[eqn:roadmap1]{(\ref*{eqn:roadmap1})} plays the central role in our argument. Throughout this section, we denote this quantity by
\[
\Icount_1
:=\; 2\Tcount-4+4\chi(\asurf)
\;=\;\sum_{d=3}^\infty (d-4) \Ecount_{\typee,d} + \Tcount_\typet.
\]

\subsection{Demography}
\label{ssec:Demography1}

For the rest of this section, we assume that $M \nhomeo \RP^3, L(4,1)$ is an irreducible 3-manifold with a rank-1 subgroup $H=\{\zclass,\aclass\} \leqslant H^1(M;\bbZtwo)$, and that $\tri$ is a minimal triangulation of $M$ with the $H$-coloring and is not a stanard layered lens space. The \emph{$\typet$-degree} $\dcount_\typet(e)$ of an edge $e$ is defined to be 
\[
\dcount_\typet(e):=\;\#\left\{\begin{array}{cc}
\text{1-simplices in the preimage $\quot^{-1}(e)$ that are incident to}\\
\text{3-simplices in the preimage of type $\typet$ tetrahedra}
\end{array}\right\}.
\]

The set of all $H$-even edges is denoted by $\Evens$. Let us first rewrite the quantity $\Icount_1$ as a sum of contributions from $H$-even edges. For each $e \in \Evens$, we define
\begin{align*}
\icount_1(e):=\;\dcount(e)-4+{\textstyle\frac{\tts1\tts}{3}} \dcount_\typet(e).
\end{align*}

\begin{lem} \label{lem:irewrite1}
With the notations as above, we have
\[
\Icount_1 \,=\, \sum_{e \in \Evens} \icount_1(e).
\]
\end{lem}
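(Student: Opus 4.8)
The plan is to prove the identity $\Icount_1 = \sum_{e \in \Evens} \icount_1(e)$ by a straightforward double-counting argument, matching the two expressions for $\Icount_1$ against the definition of $\icount_1(e)$. Recall that $\Icount_1 = \sum_{d=3}^\infty (d-4)\Ecount_{\typee,d} + \Tcount_\typet$, and that for each $H$-even edge $e$ we have set $\icount_1(e) = \dcount(e) - 4 + \frac{1}{3}\dcount_\typet(e)$. So it suffices to verify separately that $\sum_{e \in \Evens} (\dcount(e) - 4) = \sum_{d=3}^\infty (d-4)\Ecount_{\typee,d}$ and that $\sum_{e \in \Evens} \dcount_\typet(e) = 3\Tcount_\typet$; adding these and dividing the second by $3$ gives the claim.

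For the first identity, I would simply partition the set $\Evens$ of $H$-even edges according to degree: the number of $H$-even edges of degree $d$ is by definition $\Ecount_{\typee,d}$, so grouping the terms $\dcount(e)-4$ by the common value $d$ of $\dcount(e)$ gives $\sum_{e \in \Evens}(\dcount(e)-4) = \sum_{d \geqslant 3}(d-4)\Ecount_{\typee,d}$. (Here one should note that an $H$-even edge has degree at least $3$: by \hyperref[thm:edges_min]{Theorem~\ref*{thm:edges_min}}, under our standing assumptions $\tri$ has no edge of degree $1$ or $2$, so the sum over $d$ indeed starts at $3$, matching the range in the definition of $\Icount_1$.)

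For the second identity, the point is to count, in two ways, the set of pairs consisting of a $1$-simplex in $\simplices$ lying in the preimage of an $H$-even edge and incident to a $3$-simplex of type $\typet$. Counting by the type-$\typet$ tetrahedra: each tetrahedron of type $\typet$ has exactly three $\aclass$-even edges (the three edges of its even face), hence contributes exactly $3$ such $1$-simplices, for a total of $3\Tcount_\typet$. Counting by the $H$-even edges: the number of such $1$-simplices incident to the preimage of a fixed $H$-even edge $e$ is exactly $\dcount_\typet(e)$ by definition, so the total is $\sum_{e \in \Evens}\dcount_\typet(e)$. Equating the two counts gives $\sum_{e \in \Evens}\dcount_\typet(e) = 3\Tcount_\typet$, and combining with the first identity completes the proof.

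There is no serious obstacle here; the only point requiring a moment's care is the bookkeeping in the second count — namely that a type-$\typet$ tetrahedron contributes to the $\typet$-degree of exactly its three even edges (with multiplicity one for each $1$-simplex of $\tau$ mapping to that edge, which is automatically accounted for since $\dcount_\typet(e)$ counts $1$-simplices, not tetrahedra), and that every $1$-simplex contributing to some $\dcount_\typet(e)$ arises this way. Once the counting conventions are matched, the identity is immediate.
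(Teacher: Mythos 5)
Your proposal is correct and follows essentially the same route as the paper's proof: split $\sum_{e\in\Evens}\icount_1(e)$ into the degree part (regrouped by degree to give $\sum_d (d-4)\Ecount_{\typee,d}$) and the $\typet$-degree part, using the double count $\sum_{e\in\Evens}\dcount_\typet(e)=3\Tcount_\typet$ coming from the three $\aclass$-even edges of each type~$\typet$ tetrahedron. Your extra remark that $H$-even edges have degree at least $3$ under the standing assumptions is a correct and harmless addition.
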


\begin{proof}
We note that the $\typet$-degrees of $H$-even edges satisfy $\sum_{e \in \Evens} \dcount_\typet(e)= 3\Tcount_\typet$. Taking the sum of $\icount_1(e)$ over $\Evens$ and regrouping by their degrees, we obtain
\[
\sum_{e \in \Evens} \icount_1(e)=\sum_{e \in \Evens} (\dcount(e)-4)+\sum_{e \in \Evens} {\textstyle\frac{\tts1\tts}{3}}\dcount_\typet(e)=\sum_{d=3}^\infty (d-4)\Ecount_{\typee,d}+\Tcount_\typet=\Icount_1
\]
by the definition of $\Icount_1$ and $\icount_1(e)$, as desired.
\end{proof}

To derive a tractable estimate of $\Icount_1=\sum_{e \in \Evens} \icount_1(e)$, we introduce a certain subset $\Adults \subset \Evens$ and a counting scheme based on a partition of $\Evens$ into subsets $\calH(e)$, $e \in \Adults$.

\begin{defn} \label{defn:edges}
An $H$-even edge is called a \emph{child} edge if it is an interior edge in a maximal layered solid torus, and a child edge of degree~3 is called a \emph{baby} edge; the set of all child edges is denoted by $\Children \subset \Evens$ and the set of all child edges in a maximal layered solid torus $\lst$ is denoted by $\Children(\lst) \subset \Children$. All other $H$-even edges are called \emph{adult} edges; the set of all adult edges is denoted by $\Adults \subset \Evens$.
\end{defn}

By \hyperref[thm:edges_min]{Theorem~\ref*{thm:edges_min}}, every $H$-even edge of degree~3 is a baby edge, while adult edges and non-baby child edges have degree at least 4. Each maximal layered solid torus $\lst$ contains at most one baby edge, possibly some non-baby child edges, and precisely one (if $\lst$ is of type $\typeq$) or three (if $\lst$ is of type $\typee$) adult edges.

\begin{defn} \label{defn:supporter}
The \emph{supporter} of a maximal layered solid torus $\lst$ is defined to~be
\begin{itemize}
\item[($\typeq$)] the unique adult edge $e$ in $\lst$ if $\lst$ is of type $\typeq$, or
\item[($\typee$)] the adult edge $e$ with maximal degree in $\lst$ if $\lst$ is of type $\typee$.
\end{itemize}
If $e \in \Adults$ is the supporter of $\lst$, we say $e$ supports $\lst$ and write $\lst \leadsto e$, and say $e$ supports $e'$ and write $e' \leadsto e$ for any child edge $e' \in \Children(\lst)$.
\end{defn}

Every child edge is supported by a unique adult edge. For our counting purpose, it is convenient to group each child edge $e' \in \Children$ with the adult edge $e \in \Adults$ such that $e' \leadsto e$. So, we define a partiton of $\Evens$ into subsets $\House(e)$, $e \in \Adults$, by setting
\begin{align} \label{eqn:partition}
\House(e):=\;\{e\} \cup \{e' \in \Children \mid e' \leadsto e\}\;=\;\{e\} \cup \bigcup_{\lst \leadsto e} \Children(\lst).
\end{align}
To obtain a tractable estimate of $\Icount_1=\sum_{e \in \Evens} \icount_1(e)$ from this partition, we set
\begin{align*}
\bcount(e):=\;\#\{\text{baby edges $e' \in \House(e)$}\}\;=\,\sum_{\lst \leadsto e} \Ecount_{\typee,3}(\lst),
\end{align*}
and define a counting function $\gcount_1$ on the set $\Adults \subset \Evens$ of adult edges by
\begin{align*}
\gcount_1(e):=\;\dcount(e)-4-\bcount(e)+{\textstyle\frac{\tts1\tts}{3}} \dcount_\typet(e).
\end{align*}

\begin{lem} \label{lem:grewrite1}
With the notations as above, we have
\[
\Icount_1 \,\geqslant\, \sum_{e \in \Adults} \gcount_1(e).
\]
\end{lem}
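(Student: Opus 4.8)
The plan is to pass from the edge‑by‑edge expression $\Icount_1 = \sum_{e \in \Evens}\icount_1(e)$ of \hyperref[lem:irewrite1]{Lemma~\ref*{lem:irewrite1}} to a block‑by‑block estimate over the partition \eqref{eqn:partition} of $\Evens$ into the sets $\House(e)$, $e \in \Adults$ — this is a genuine partition since every child edge is supported by a unique adult edge, so $\bigsqcup_{e \in \Adults}\House(e) = \Adults \sqcup \Children = \Evens$ — and then to show that the child edges inside a single block $\House(e)$ cost at most $\bcount(e)$, which is precisely the deficit built into $\gcount_1(e)$.

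First I would regroup the sum according to the blocks $\House(e) = \{e\} \cup \{e' \in \Children : e' \leadsto e\}$, getting
\[
\Icount_1 \;=\; \sum_{e \in \Adults}\biggl(\icount_1(e) + \sum_{\substack{e' \in \Children \\ e' \leadsto e}}\icount_1(e')\biggr),
\]
so it suffices to prove $\icount_1(e) + \sum_{e' \leadsto e}\icount_1(e') \geqslant \gcount_1(e)$ for each fixed $e \in \Adults$. From the definitions, $\icount_1(e)$ and $\gcount_1(e)$ differ only in that $\gcount_1(e)$ carries the extra summand $-\bcount(e)$ (the $\typet$‑degree term $\tfrac13\dcount_\typet(e)$ is common to both), so $\icount_1(e) = \gcount_1(e) + \bcount(e)$; hence the block estimate reduces to
\[
\sum_{\substack{e' \in \Children \\ e' \leadsto e}}\icount_1(e') \;\geqslant\; -\bcount(e).
\]

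Second, I would bound $\icount_1(e')$ below for a single child edge $e'$. Since $\dcount_\typet(e') \geqslant 0$, we have $\icount_1(e') \geqslant \dcount(e') - 4$; and, as recorded in the discussion following \hyperref[defn:edges]{Definition~\ref*{defn:edges}} (via \hyperref[thm:edges_min]{Theorem~\ref*{thm:edges_min}} and \hyperref[lem:edges_lst]{Lemma~\ref*{lem:edges_lst}}), a child edge has degree $3$ exactly when it is a baby edge and has degree $\geqslant 4$ otherwise. Thus $\icount_1(e') \geqslant -1$ when $e'$ is a baby edge and $\icount_1(e') \geqslant 0$ when $e'$ is a non‑baby child edge. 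Summing over the child edges supported by $e$ and using $\bcount(e) = \#\{\text{baby edges in } \House(e)\}$ gives $\sum_{e' \leadsto e}\icount_1(e') \geqslant -\bcount(e)$; combining with the first step and summing over $e \in \Adults$ yields $\Icount_1 \geqslant \sum_{e \in \Adults}\gcount_1(e)$.

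I do not expect a genuine obstacle here: once \hyperref[lem:irewrite1]{Lemma~\ref*{lem:irewrite1}} and the partition are in place the argument is bookkeeping. The one point to watch is to keep the nonnegative summand $\tfrac13\dcount_\typet(e)$ attached to the adult edge $e$ (where it cancels against the corresponding term in $\gcount_1(e)$) while freely discarding $\tfrac13\dcount_\typet(e')$ for each child edge $e'$; and, when applying the degree bound, to recall that a child edge lies in the interior of its maximal layered solid torus, so all tetrahedra of $\tri$ incident to it lie in that layered solid torus and the degree bounds of \hyperref[lem:edges_lst]{Lemma~\ref*{lem:edges_lst}} may be read off inside the ambient triangulation $\tri$.
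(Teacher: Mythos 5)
Your proposal is correct and follows essentially the same route as the paper: regroup $\sum_{e\in\Evens}\icount_1(e)$ over the partition into blocks $\House(e)$, observe that within each block the adult edge contributes $\gcount_1(e)+\bcount(e)$ and each child edge contributes at least $-1$ if it is a baby edge and at least $0$ otherwise, so the block total is at least $\gcount_1(e)$. The only cosmetic difference is that the paper notes $\dcount_\typet(e')=0$ exactly for child edges (since a layered solid torus contains no type $\typet$ tetrahedra) and phrases the baby-edge count as $\sum_{\lst\leadsto e}\Ecount_{\typee,3}(\lst)$, whereas you simply discard the nonnegative term and count baby edges in $\House(e)$ directly; these are equivalent.
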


\begin{proof}
In this proof, we write $a$ for adult edges, $c$ for child edges, and $e$ for arbitrary edges in $\Evens$. Using the partition of $\Evens$ into subsets $\House(a)$, $a \in \Adults$, as defined in \hyperref[eqn:partition]{(\ref*{eqn:partition})},
\begin{align*}
\Icount_1
&=\sum_{e \in \Evens} \icount_1(e)=\sum_{a \in \Adults} \, \sum_{e \in \House(a)} \icount_1(e)
=\sum_{a \in \Adults} \, \bigg(\icount_1(a)+\sum_{\lst \leadsto a} \, \sum_{c \in \Children(\lst)} \icount_1(c)\bigg) 
\end{align*}
by \hyperref[lem:irewrite1]{Lemma~\ref*{lem:irewrite1}}. For any child edge $c \in \Children(\lst)$, we have $\dcount_\typet(c)=0$ since $\lst$ contains no tetrahedra of type $\typet$. Grouping together the edges of the same degree as before,
\begin{align*}
\sum_{c \in \Children(\lst)} \icount_1(c)
=\sum_{c \in \Children(\lst)} (\dcount(c)-4)
=\sum_{d=3}^\infty (d-4) \, \Ecount_{\typee,d} (\lst)
\geqslant -\Ecount_{\typee,3} (\lst).
\end{align*}
Thus, combining the observations above, we have
\begin{align*}
\Icount_1
&=\sum_{a \in \Adults} \, \bigg(\icount_1(a)+\sum_{\lst \leadsto a} \, \sum_{c \in \Children(\lst)} \icount_1(c)\bigg) \\
&\geqslant \sum_{a \in \Adults} \, \bigg(\dcount(a)-4+{\textstyle\frac{\tts1\tts}{3}} \dcount_\typet(a)-\sum_{\lst \leadsto a}\Ecount_{\typee,3} (\lst) \bigg)
=\sum_{a \in \Adults} \gcount_1(a)
\end{align*}
by the definition of $\icount_1(e)$ and $\gcount_1(e)$, as desired.
\end{proof}

\subsection{Insolvent Adults}
\label{ssec:Insolvent1}

A wishful inequality $\Icount_1 \geqslant 0$ follows from \hyperref[lem:grewrite1]{Lemma~\ref*{lem:grewrite1}}, if $\gcount_1(e) \geqslant 0$ for all adult edges $e \in \Adults$. However, this fails to hold in general; so, we shall study edges $e \in \Adults$ with $\gcount_1(e)<0$. We use the following terminology.

\begin{defn}
An adult edge $e \in \Adults$ is said to be \emph{solvent} if $\gcount_1(e) \geqslant 0$, and it is said to be \emph{insolvent} if $\gcount_1(e)<0$.
\end{defn}

We aim to identify the local combinatorics around solvent/insolvent adult edges. Although the $\bcount(e)$ term in $\gcount_1(e)=\dcount(e)-4-\bcount(e)+\frac{1}{3}\dcount_\typet(e)$ cannot be articulated merely by the local data around $e \in \Adults$, we have an estimate
\begin{align*}
\scount(e):=\;\#\{\text{maximal layered solid tori $\lst \ni e$}\}\;\geqslant\;\bcount(e)
\end{align*}
as each maximal layered solid torus contains at most one baby edge.
The quantity $\scount(e)$ is constrained, as shown in \hyperref[prop:mlst]{Proposition~\ref*{prop:mlst}}, by the local structure around $e$. In turn, we can relate $\gcount_1(e)$ to the combinatorics around $e$ and derive basic criteria for an adult edge $e \in \Adults$ to be solvent, in terms of $\dcount(e)$, $\scount(e)$, and $\bcount(e)$.

\begin{lem} \label{lem:halfdeg1}
If an adult edge $e \in \Adults$ satisfies one of the following conditions, then $\gcount_1(e) \geqslant \dcount(e)-4-\bcount(e) \geqslant 0$ holds, and hence in particular $e$ is a solvent adult edge:
(i) $\dcount(e) \geqslant 7$;
(ii) $\dcount(e)=6$, $\bcount(e) \leqslant 2$;
(iii) $\dcount(e)=5$, $\bcount(e) \leqslant 1$;
(iv) $\bcount(e)=0$.
\par
Thus, every insolvent adult edge $e$ must satisfy one of the following conditions:
(1) $\dcount(e)=6$, $\scount(e)=\bcount(e)=3$;
(2) $\dcount(e)=5$, $\scount(e)=\bcount(e)=2$;
(3) $\dcount(e)=4$, $\scount(e)=\bcount(e)=2$;
(4) $\dcount(e)=4$, $\scount(e)=2$, $\bcount(e)=1$;
(5) $\dcount(e)=4$, $\scount(e)=\bcount(e)=1$.
\end{lem}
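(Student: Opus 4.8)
The plan is to prove the two assertions of Lemma~\ref{lem:halfdeg1} in sequence, where the second follows from the first by pure logic: if $e$ is insolvent then $\gcount_1(e) < 0$, so $e$ satisfies none of the conditions (i)--(iv); negating (i)--(iv) and combining with the bound $\scount(e) \geqslant \bcount(e)$ from the displayed inequality and with the constraints on $\scount(e)$ from Proposition~\ref{prop:mlst} will leave exactly the five listed profiles (1)--(5). So the mathematical content is entirely in the first assertion.

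For the first assertion, I would start from the definition $\gcount_1(e) = \dcount(e) - 4 - \bcount(e) + \tfrac{1}{3}\dcount_\typet(e)$ and simply note that $\dcount_\typet(e) \geqslant 0$ always, so $\gcount_1(e) \geqslant \dcount(e) - 4 - \bcount(e)$ unconditionally; the task reduces to checking $\dcount(e) - 4 - \bcount(e) \geqslant 0$, i.e.\ $\bcount(e) \leqslant \dcount(e) - 4$, under each of the four hypotheses. Case (iv) is immediate since $\bcount(e) = 0 \leqslant \dcount(e) - 4$ (recall adult edges have degree at least $4$ by Theorem~\ref{thm:edges_min}). For cases (i)--(iii) I need the key bound $\bcount(e) \leqslant \scount(e)$ together with an upper bound on $\scount(e)$ in terms of $\dcount(e)$: since each maximal layered solid torus $\lst$ with $e \in \lst$ meets $e$ along an edge, and by Proposition~\ref{prop:mlst} two distinct maximal layered solid tori intersect in at most an edge, the solid tori containing $e$ are ``glued'' along $e$ in a controlled way. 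The natural bound to extract is $\scount(e) \leqslant \lfloor \dcount(e)/2 \rfloor$ or something comparable — each layered solid torus abutting $e$ uses up at least two of the $\dcount(e)$ edge-ends (equivalently, at least two tetrahedron-corners) around $e$, and distinct ones are essentially disjoint there. Plugging this in: for $\dcount(e) \geqslant 7$ we get $\bcount(e) \leqslant \scount(e) \leqslant 3 \leqslant \dcount(e) - 4$; for $\dcount(e) = 6$ the hypothesis directly gives $\bcount(e) \leqslant 2 = \dcount(e) - 4$; for $\dcount(e) = 5$ the hypothesis gives $\bcount(e) \leqslant 1 = \dcount(e) - 4$. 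In each case $\dcount(e) - 4 - \bcount(e) \geqslant 0$, proving solvency.

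For the derivation of (1)--(5), I negate (i)--(iv): an insolvent $e$ has $\bcount(e) \geqslant 1$ (not (iv)) and $\dcount(e) \leqslant 6$ (not (i)). If $\dcount(e) = 6$, failure of (ii) forces $\bcount(e) \geqslant 3$; but $\bcount(e) \leqslant \scount(e)$, and the layered-solid-torus packing bound around a degree-$6$ edge gives $\scount(e) \leqslant 3$, so $\scount(e) = \bcount(e) = 3$: this is (1). If $\dcount(e) = 5$, failure of (iii) forces $\bcount(e) \geqslant 2$, and the packing bound gives $\scount(e) \leqslant 2$, so $\scount(e) = \bcount(e) = 2$: this is (2). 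If $\dcount(e) = 4$, then $\bcount(e) \in \{1, 2\}$ (it is $\geqslant 1$, and $\leqslant \scount(e) \leqslant 2$); when $\bcount(e) = 2$ we must have $\scount(e) = 2$, giving (3); when $\bcount(e) = 1$, either $\scount(e) = 2$ giving (4) or $\scount(e) = 1$ giving (5). This exhausts the possibilities.

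The main obstacle I anticipate is pinning down the precise packing bound $\scount(e) \leqslant \lfloor \dcount(e)/2 \rfloor$ from Proposition~\ref{prop:mlst}: one must argue carefully that a maximal layered solid torus $\lst$ containing $e$ meets the star of $e$ in at least two tetrahedron-corners at $e$ (so that it ``occupies'' at least $2$ of the $\dcount(e)$ slots), and that for two distinct such solid tori these corner-sets are disjoint — which is exactly where ``the intersection is at most an edge'' is used, since a shared corner at $e$ would force a shared face and hence a larger intersection. One also has to handle the edge cases where $\lst = \lst_{\{1,2,3\}}$ (a single tetrahedron) to make sure even a one-tetrahedron layered solid torus still occupies two corners at its interior edge — but $\lst_{\{1,2,3\}}$ has no interior edge, and a baby edge has degree $3$ and sits inside an $\lst_{\{1,3,4\}}$ block by Theorem~\ref{thm:edges_min}, so the relevant solid tori are always at least two tetrahedra and the count is safe. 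Once this combinatorial packing estimate is in hand, everything else is bookkeeping.
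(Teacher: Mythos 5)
Your overall route is the same as the paper's: it records the inequality $\lfloor \dcount(e)/2\rfloor \geqslant \scount(e) \geqslant \bcount(e)$ (quoted as a consequence of Proposition~\ref{prop:mlst}, extracted from the proof of \cite[Thm.\,5]{JRT:Lens}), deduces $\gcount_1(e)\geqslant \dcount(e)-4-\bcount(e)\geqslant \lceil \dcount(e)/2\rceil-4$ for case (i), treats (ii)--(iv) as immediate, and obtains the list (1)--(5) by negating (i)--(iv) against that same inequality, exactly as you do. One small slip: in case (i) you write $\scount(e)\leqslant 3\leqslant \dcount(e)-4$, which fails for $\dcount(e)\geqslant 8$; the correct chain is $\scount(e)\leqslant\lfloor\dcount(e)/2\rfloor\leqslant\dcount(e)-4$, the last step being equivalent to $\dcount(e)\geqslant 7$.

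The substantive problem is in your proposed justification of the packing bound $\scount(e)\leqslant\lfloor\dcount(e)/2\rfloor$, which you correctly identify as the crux. You derive it from the claim that each maximal layered solid torus $\lst\ni e$ meets the star of $e$ in at least two tetrahedron-corners, i.e.\ $\dcount_\lst(e)\geqslant 2$. That claim is false: an adult edge contained in $\lst$ is a \emph{boundary} edge of $\lst$ (interior edges are child edges by Definition~\ref{defn:edges}), and by Lemma~\ref{lem:edges_lst} every layered solid torus has a boundary edge of degree $1$; indeed Lemma~\ref{lem:eprofile1} shows $\dcount_\lst(e)=1$ for every $\lst\ni e$ in all the insolvent profiles, so in case (1) a degree-$6$ edge meets three solid tori, each in a single corner. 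The factor of $2$ comes instead from non-adjacency: two distinct maximal layered solid tori containing $e$ cannot occupy cyclically adjacent corners around $e$, because the face separating adjacent corners contains $e$ and would lie in both subcomplexes, making their intersection larger than an edge and contradicting Proposition~\ref{prop:mlst}. The $\scount(e)$ arcs of corners are therefore pairwise disjoint and pairwise non-adjacent in the cyclic order, which forces $\dcount(e)\geqslant 2\scount(e)$. Your side discussion of whether $\lst_{\{1,2,3\}}$ occupies two corners at its interior edge is a red herring for the same reason ($e$ is never an interior edge of the relevant solid tori). With the packing bound established this way, the remainder of your argument is correct.
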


\begin{proof}
We first record a consequence of \hyperref[prop:mlst]{Proposition~\ref*{prop:mlst}}, extracted from the proof of \cite[Thm.\,5]{JRT:Lens}: for any adult edge $e \in \Adults$, $\lfloor\dcount(e)/2\rfloor \geqslant \scount(e) \geqslant \bcount(e)$. This implies $\gcount_1(e) \geqslant \dcount(e)-4-\bcount(e) \geqslant \dcount(e)-4-\lfloor \dcount(e)/2 \rfloor=\lceil \dcount(e)/2 \rceil-4$; since $\lceil \dcount(e)/2 \rceil \geqslant 4$ for $\dcount(e) \geqslant 7$, the case (i) follows. The cases (ii)-(iii) are trivial and the case (iv) follows from $\dcount(e) \geqslant 4$ for $e \in \Adults$. This completes the proof of the first statement; the second statement follows from the first, together with $\lfloor\dcount(e)/2\rfloor \geqslant \scount(e) \geqslant \bcount(e)$.
\end{proof}

Although \hyperref[lem:halfdeg1]{Lemma~\ref*{lem:halfdeg1}} is formulated with $\gcount_1(e)$, the profiles of edges given in the lemma coincide with the ones studied in \cite[\S3.2,\,\S5.2,\,\S6.1]{JRT:Lens}. \hyperref[lem:halfdeg1]{Lemma~\ref*{lem:halfdeg1}} does not establish $\gcount_1(e) < 0$ under the conditions (1)-(5); we have $\dcount(e)-4-\bcount(e)<0$ for these cases, but we must consider the extra term $\frac{1}{3} \dcount_\typet(e)$ in $\gcount_1(e)$. We establish this converse statement presently in \hyperref[lem:eprofile1]{Lemma~\ref*{lem:eprofile1}} and \hyperref[lem:tprofile1]{Lemma~\ref*{lem:tprofile1}}. The degree~4 cases (3)-(5) in these lemmas are analyzed already and utilized in the proof of \cite[Thm.\,5]{JRT:Lens}; the remaining cases (1)-(2) are essential in our subsequent arguments. We give a direct unified proof for all cases (1)-(5) of these lemmas.

\begin{lem} \label{lem:eprofile1}
The following statements (in which $\lst$ denotes a maximal layered solid torus) holds for any adult edge $e \in \Adults$:
\begin{enumerate} 
\item $\dcount(e)=6$, $\scount(e)=3$, $\bcount(e)=3$ $\Rightarrow$
$\lst \leadsto e$ and $\lst$ is of type $\typeq$ for each $\lst \ni e$;
\item $\dcount(e)=5$, $\scount(e)=2$, $\bcount(e)=2$ $\Rightarrow$
$\lst \leadsto e$ and $\lst$ is of type $\typeq$ for each $\lst \ni e$;
\item $\dcount(e)=4$, $\scount(e)=2$, $\bcount(e)=2$ $\Rightarrow$
$\lst \leadsto e$ and $\lst$ is of type $\typeq$ for each $\lst \ni e$;
\item $\dcount(e)=4$, $\scount(e)=2$, $\bcount(e)=1$ $\Rightarrow$
$\lst \leadsto e$ and $\lst$ is of type $\typeq$ for one $\lst \ni e$;
\item $\dcount(e)=4$, $\scount(e)=1$, $\bcount(e)=1$ $\Rightarrow$
$\lst \leadsto e$ and $\lst$ is of type $\typeq$ for $\lst \ni e$;
\end{enumerate}
moreover, in all cases, we must have $\dcount_\lst(e)=1$ for each $\lst \ni e$.
\end{lem}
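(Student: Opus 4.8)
The plan is to combine three tools: the inequality $\lfloor\dcount(e)/2\rfloor\geqslant\scount(e)\geqslant\bcount(e)$ recalled inside the proof of \hyperref[lem:halfdeg1]{Lemma~\ref*{lem:halfdeg1}}; the edge-degree bookkeeping of layered solid tori from \hyperref[lem:edges_lst]{Lemma~\ref*{lem:edges_lst}}; and the maximality of the layered solid tori involved. As a first step, note that in each of the five profiles $\bcount(e)\geqslant 1$, so from $\bcount(e)=\sum_{\lst\leadsto e}\Ecount_{\typee,3}(\lst)$ and $\Ecount_{\typee,3}(\lst)\leqslant 1$ there is a maximal layered solid torus with $\lst\leadsto e$ and $\Ecount_{\typee,3}(\lst)=1$; by \hyperref[lem:edges_lst]{Lemma~\ref*{lem:edges_lst}} this means $\lst\supseteq\lst_{\{1,3,4\}}$, in particular $\lst\neq\lst_{\{1,2,3\}}$, and since a supporter lies in its own solid torus, $\lst$ is one of the $\scount(e)$ maximal layered solid tori at $e$. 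Comparing $\bcount(e)$ with $\scount(e)$ now yields the ``$\lst\leadsto e$'' statements: in profiles (1)--(3), where $\bcount(e)=\scount(e)$, every maximal layered solid torus at $e$ is supported by $e$ and contains $\lst_{\{1,3,4\}}$; in (5) the unique one is; and in (4) at least one is.

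Next I would prove the ``moreover'' clause $\dcount_\lst(e)=1$ for every $\lst\ni e$. Around the interior edge $e$, the fans of the maximal layered solid tori at $e$ are pairwise non-adjacent: two adjacent fans would share a face, and hence more than an edge, contradicting \hyperref[prop:mlst]{Proposition~\ref*{prop:mlst}}; so each pair of consecutive fans is separated by at least one further tetrahedron, giving $\sum_{\lst\ni e}\dcount_\lst(e)\leqslant\dcount(e)-\scount(e)$. In profiles (1), (3), (4), where $\scount(e)=\dcount(e)/2$, this forces $\dcount_\lst(e)=1$ for each $\lst\ni e$. In profile (2) it leaves open $\dcount_\lst(e)=2$ for one of the two solid tori; but by \hyperref[lem:edges_lst]{Lemma~\ref*{lem:edges_lst}} a boundary edge of degree $2$ occurs only in $\lst=\lst_{\{1,2,3\}}$, which has no interior edge of degree $3$ and hence no baby edge, contradicting $\bcount(e)=2$. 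In profile (5) the single solid torus has $\dcount_\lst(e)\leqslant 3$; the value $2$ is ruled out as in (2), and if $\dcount_\lst(e)=3$ then the unique tetrahedron at $e$ outside $\lst$ is glued to $\lst$ along its two boundary faces at $e$, i.e.\ it layers onto $\lst$ along $e$, producing a layered solid torus strictly containing $\lst$ and contradicting maximality. Thus $\dcount_\lst(e)=1$ in all cases.

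Finally, for the type-$\typeq$ assertions, suppose some relevant $\lst$ (so $\lst\leadsto e$, $\dcount_\lst(e)=1$, $\lst\supseteq\lst_{\{1,3,4\}}$) were of type $\typee$. Its three boundary edges $e,e_2,e_3$ are then all adult --- a boundary edge of a maximal layered solid torus cannot be interior to another, since a shared tetrahedron would again contradict \hyperref[prop:mlst]{Proposition~\ref*{prop:mlst}} --- and $e$ is the one of largest degree. But $\dcount_\lst(e)=1$, while an easy induction on layerings from $\lst_{\{1,3,4\}}$ (using \hyperref[lem:edges_lst]{Lemma~\ref*{lem:edges_lst}}) shows the long boundary edge of any $\lst\supseteq\lst_{\{1,3,4\}}$ has $\dcount_\lst\geqslant 5$; being a boundary edge of $\lst$, that edge --- call it $e_3$ --- then has $\dcount(e_3)\geqslant 6$, forcing $\dcount(e)\geqslant 6$. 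This already contradicts the hypotheses in profiles (2)--(5), where $\dcount(e)\leqslant 5$. In profile (1), $\dcount(e)=6$ pins $\dcount_\lst(e_3)=5$ and $\dcount(e_3)=6$, so the unique tetrahedron at $e_3$ outside $\lst$ layers onto $\lst$ along $e_3$ and again contradicts maximality. (If the phrase ``maximal degree in $\lst$'' in the definition of the supporter is read as maximal $\dcount_\lst$, the contradiction with $\dcount_\lst(e)=1$ is immediate.) Hence every relevant $\lst$ is of type $\typeq$.

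The main obstacle I anticipate is the cyclic combinatorial bookkeeping around $e$: making the ``separating tetrahedron'' count rigorous, and verifying that the tetrahedra assembled in the maximality arguments --- in profiles (2) and (5) for $\dcount_\lst(e)$, and in profile (1) for the type --- genuinely yield an embedded layered solid torus strictly containing $\lst$, rather than some configuration that contradicts \hyperref[prop:mlst]{Proposition~\ref*{prop:mlst}} first or fails to be a submanifold. Profile (1) with $\dcount(e)=6$, which has no counterpart in \cite{JRT:Lens}, is where this reasoning is pushed furthest.
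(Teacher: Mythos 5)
Your proof is correct, and for the two claims ``$\lst \leadsto e$'' and ``$\dcount_\lst(e)=1$'' it follows the paper's argument essentially verbatim: the comparison $\scount(e)\geqslant\#\{\lst\leadsto e\}\geqslant\bcount(e)$ for the support claim; Proposition~\ref{prop:mlst} as a separation/pigeonhole bound on $\sum_{\lst\ni e}\dcount_\lst(e)$; the observation that $\dcount_\lst(e)=2$ forces $\lst=\lst_{\{1,2,3\}}$, which has no baby edge; and the layering-contradicts-maximality argument for $\dcount_\lst(e)=3$ in case (5) (the paper additionally notes the degenerate subcase where adjoining $\tau$ closes up into a layered lens space, which is excluded by the standing assumption -- a detail worth keeping). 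Where you genuinely diverge is the type-$\typeq$ step. The paper disposes of it in one line (its observation (A)): reading ``maximal degree in $\lst$'' in Definition~\ref{defn:supporter} as maximal $\dcount_\lst$, the edge $e$ with $\dcount_\lst(e)=1$ is the \emph{minimal}-degree boundary edge of $\lst$, so a type-$\typee$ torus cannot have $e$ as its supporter. You instead hedge on that reading and argue via the ambient degree: the long boundary edge $e_3$ of any $\lst\supseteq\lst_{\{1,3,4\}}$ has $\dcount_\lst(e_3)\geqslant 5$, hence $\dcount(e_3)\geqslant 6$ (the equality case $\dcount(e_3)=\dcount_\lst(e_3)$ is a fold, i.e.\ a layered lens space, which you should say explicitly), contradicting $\dcount(e)\leqslant 5$ in cases (2)--(5) and forcing a maximality-violating layering along $e_3$ in case (1). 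This is longer but buys robustness against the ambiguity in the supporter definition, and it correctly supplies the auxiliary fact -- implicit in the paper -- that boundary edges of a maximal layered solid torus are adult. Both routes are sound.
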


\begin{proof}
We first record two consequences of \hyperref[lem:edges_lst]{Lemma~\ref*{lem:edges_lst}}. (A) if $\dcount_\lst(e)=1$ and $\lst \leadsto e$, then $\lst$ is of type $\typeq$; for $\dcount_\lst(e)=1$ implies that $e$ is the boundary edge of minimal degree in $\lst$, and $\lst$ must be of type $\typeq$ to satisfy $\lst \leadsto e$. (B) if $\dcount_\lst(e)=2$, then $\scount(e)>\bcount(e)$; for $\dcount_\lst(e)=2$ forces $\lst=\lst_{\{1,2,3\}}$ with no child edge, hence $\scount(e)>\bcount(e)$. We refer to these observatoins as (A) and (B) below.

Consider the cases (1)-(3) with $\lfloor \dcount(e)/2 \rfloor = \scount(e)=\bcount(e)$. Note that $\scount(e) = \bcount(e)$ forces $\lst \leadsto e$ for each $\lst \ni e$. By \hyperref[prop:mlst]{Proposition~\ref*{prop:mlst}}, $\dcount_\lst(e)=1$ for each $\lst \ni e$, except for at most one $\lst$ with $\dcount_\lst(e)=2$ when $\dcount(e)=5$. Ruling out $\dcount_\lst(e)=2$ by (B), we have $\dcount_\lst(e)=1$ for every $\lst \ni e$; hence, each $\lst \ni e$ is of type $\typeq$ by (A).

Next, consider the case (4) with $\dcount(e)=4$, $\scount(e)=2$, $\bcount(e)=1$. By \hyperref[prop:mlst]{Proposition~\ref*{prop:mlst}}, we have $\dcount_\lst(e)=1$ for each $\lst \ni e$. Since $\bcount(e)=1$, there exists one $\lst \ni e$ such that $\lst \leadsto e$; this $\lst$ is of type $\typeq$ by (A).

Finally, consider the case (5) with $\dcount(e)=4$, $\scount(e)=\bcount(e)=1$. We first note that $\scount(e) = \bcount(e)$ forces $\lst \leadsto e$ for $\lst \ni e$. \emph{A priori}, we may have $\dcount_\lst(e)=1,2$ or $3$. We can rule out $\dcount_\lst(e)=2$ by (B). If $\dcount_\lst(e)=3$, $e$ meets one tetrahedron $\tau$ not contained in $\lst$. In this case, $\tau$ is layered along $e$; adjoining $\tau$ to $\lst$ yields a layered solid torus containing $\lst$ or a layered lens space, and we have a contradiction either way. Hence, we must have $\dcount_\lst(e)=1$, and $\lst \ni e$ is of type $\typeq$ by (A). 
\end{proof}

\begin{rem}
For the other $\lst' \ni e$ in the case (4), $\dcount_{\lst'}(e)=1$ and either (i) $\lst' \not\leadsto e$ and $\lst'$ is of type $\typee$, or (ii) $\lst \leadsto e$ and $\lst'$ is of type $\typeq$, with no baby edge.
\end{rem}

\begin{lem} \label{lem:tprofile1}
An adult edge $e \in \Adults$ is insolvent if and only if the types of tetrahedra incident to $e$, expressed by cyclically ordered $\dcount(e)$-tuples of type-symbols up to dihedral symmetry (with a dot, such as $\dot\typeq$, if the underlying tetrahedron is in a layered solid torus $\lst$ and with two dots, such as $\ddot\typeq$, if this $\lst$ contains a baby edge $e' \leadsto e$), is one of the following:
\begin{enumerate}
\item $\dcount(e)=6$, $\scount(e)=3$, $\bcount(e)=3$:
$(\typeq,\ddot\typeq,\typeq,\ddot\typeq,\typeq,\ddot\typeq)$;
\item $\dcount(e)=5$, $\scount(e)=2$, $\bcount(e)=2$:
$(\typeq,\ddot\typeq,\typeq,\ddot\typeq,\typeq)$ or $(\typet,\ddot\typeq,\typeq,\ddot\typeq,\typet)$;
\item $\dcount(e)=4$, $\scount(e)=2$, $\bcount(e)=2$:
$(\typeq,\ddot\typeq,\typeq,\ddot\typeq)$;
\item $\dcount(e)=4$, $\scount(e)=2$, $\bcount(e)=1$:
$(\typeq,\ddot\typeq,\typeq,\dot\typeq)$ or $(\typet,\ddot\typeq,\typet,\dot\typee)$;
\item $\dcount(e)=4$, $\scount(e)=1$, $\bcount(e)=1$:
$(\typeq,\ddot\typeq,\typeq,\typeq)$, $(\typet,\ddot\typeq,\typeq,\typet)$, or $(\typet,\ddot\typeq,\typet,\typee)$.
\end{enumerate}
In all cases, the edge $e$ must be incident to $\dcount(e)$ distinct tetrahedra.
\end{lem}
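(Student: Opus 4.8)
The plan is to prove the equivalence by running through the five numerical profiles (1)--(5) of \hyperref[lem:halfdeg1]{Lemma~\ref*{lem:halfdeg1}}, and in each one pinning down the exact cyclic word of tetrahedron-types around $e$ from three inputs: (a) the layered-solid-torus structure near $e$ supplied by \hyperref[lem:eprofile1]{Lemma~\ref*{lem:eprofile1}} and its Remark; (b) the defining condition $\gcount_1(e)=\dcount(e)-4-\bcount(e)+\tfrac13\dcount_\typet(e)<0$ of insolvency, which, given the known values of $\dcount(e)$ and $\bcount(e)$, bounds $\dcount_\typet(e)$; and (c) a \emph{parity rule} for how consecutive tetrahedra around $e$ must fit together under the $H$-coloring. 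The ``moreover'' clause, that $e$ meets $\dcount(e)$ distinct tetrahedra, is built into the argument by first excluding $\dcount_\tau(e)\geq2$ for every tetrahedron $\tau$ incident to $e$. The converse implication (each listed word is insolvent) is then an immediate recomputation of $\gcount_1(e)$, reading the length as $\dcount(e)$, the number of doubly-dotted symbols as $\bcount(e)$, and the number of $\typet$'s as $\dcount_\typet(e)$.

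The parity rule is the combinatorial core, and I would isolate it first. Every face of $\tri$ incident to $e$ is $H$-even or $H$-odd (a face containing the $H$-even edge $e$ cannot be of any other type), so reading off the faces around the interior edge $e$ gives a cyclic sequence of length $\dcount(e)$ of $H$-even/$H$-odd labels, with a tetrahedron $\tau_j$ sitting between consecutive faces. Inspecting the three type-diagrams of \hyperref[fig:types1]{Figure~\ref*{fig:types1}} shows: a type-$\typee$ tetrahedron is flanked by two $H$-even faces, a type-$\typeq$ tetrahedron met by $e$ along a single edge is flanked by two $H$-odd faces, and a type-$\typet$ tetrahedron is flanked by one of each. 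Hence $\dcount_\typet(e)$ equals twice the number of maximal $H$-even runs (equivalently, maximal $H$-odd runs) in the cyclic sequence --- in particular it is even --- the type-$\typeq$ tetrahedra around $e$ are exactly those strictly inside an $H$-odd run, and the type-$\typee$ tetrahedra those strictly inside an $H$-even run; when the sequence is constant, all tetrahedra around $e$ have the same type.

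The layered-solid-torus data is then grafted on. By \hyperref[lem:eprofile1]{Lemma~\ref*{lem:eprofile1}} each maximal layered solid torus $\lst\ni e$ has $\dcount_\lst(e)=1$, so it contributes exactly one tetrahedron around $e$, and (in case (4) all but one of the slots, via the Remark after \hyperref[lem:eprofile1]{Lemma~\ref*{lem:eprofile1}}) is of type $\typeq$; using that a type-$\typeq$ layered solid torus is met by the canonical surface in its meridian disk, which traverses each of its tetrahedra as a single quadrilateral, this contributed tetrahedron is of type $\typeq$, with $e$ among its $H$-even edges, and it carries the $\ddot{\typeq}$ marking exactly when $\lst$ contains a baby edge --- there are $\bcount(e)$ such solid tori. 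A type-$\typee$ layered solid torus contributes a type-$\typee$ tetrahedron. Finally, by \hyperref[prop:mlst]{Proposition~\ref*{prop:mlst}} two tetrahedra coming from distinct maximal layered solid tori cannot be adjacent around $e$, since they would share a face through $e$.

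With these in hand the enumeration is short. When $\dcount_\typet(e)=0$, the parity sequence is constant, so all $\dcount(e)$ tetrahedra around $e$ are of one type; since some type-$\typeq$ solid-torus tetrahedron is always present, they are all of type $\typeq$, and the only freedom is the cyclic placement of the $\scount(e)$ solid-torus tetrahedra (pairwise non-adjacent, $\bcount(e)$ of them doubly dotted), which is unique up to dihedral symmetry and gives the first listed option in each case. When $\dcount_\typet(e)=2$, there is exactly one $H$-odd run and one $H$-even run; reading off the number of type-$\typeq$, type-$\typee$, and (two) type-$\typet$ tetrahedra from the run lengths and discarding every filling that makes two solid-torus tetrahedra adjacent leaves precisely the remaining listed options. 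Insolvency forces $\dcount_\typet(e)\leq2$ in cases (1), (2), (4), (5), and in case (3) it allows $\dcount_\typet(e)\in\{0,2,4\}$, but $\dcount_\typet(e)\in\{2,4\}$ there always produces two adjacent type-$\typeq$ solid-torus tetrahedra and is thus excluded. Lastly, $\dcount_\tau(e)=1$ for solid-torus tetrahedra is immediate from $\dcount_\lst(e)=1$, while a non-solid-torus tetrahedron $\tau$ with $\dcount_\tau(e)=2$ would be folded or layered onto itself along $e$, generating a layered solid torus or layered lens space incompatible with minimality and the standing hypotheses $M\nhomeo\RP^3,L(4,1)$ with $\tri$ not a layered lens space --- the same device used in the proof of \hyperref[lem:eprofile1]{Lemma~\ref*{lem:eprofile1}}(5). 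I expect this self-folding exclusion, and making the ``fill a $\dcount(e)$-cycle under non-adjacency'' bookkeeping genuinely exhaustive, to be the main obstacles.
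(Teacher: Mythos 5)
Your enumeration of the cyclic type-words is sound and matches the paper's: the forward direction is the same direct computation of $\gcount_1(e)$, and your ``parity rule'' (reading the cyclic sequence of $H$-even/$H$-odd faces around $e$, so that $\dcount_\typet(e)$ is twice the number of runs and the type-$\typeq$, resp.\ type-$\typee$, tetrahedra sit strictly inside odd, resp.\ even, runs) is a clean systematization of the paper's terser statement that the coloring on the boundary faces of the maximal layered solid tori restricts the remaining types. Combined with \hyperref[lem:halfdeg1]{Lemma~\ref*{lem:halfdeg1}}, \hyperref[lem:eprofile1]{Lemma~\ref*{lem:eprofile1}}, and the non-adjacency of distinct maximal layered solid tori from \hyperref[prop:mlst]{Proposition~\ref*{prop:mlst}}, your run-length bookkeeping does reproduce exactly the listed words.

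The genuine gap is in the ``moreover'' clause. You propose to dispose of it up front by claiming that any non-solid-torus tetrahedron $\tau$ with $\dcount_\tau(e)=2$ is ``folded or layered onto itself along $e$,'' producing a layered solid torus or layered lens space. This is not what happens in most of the configurations that must be excluded, and the argument cannot in fact be run before the enumeration. If $\tau$ occupies two \emph{non-consecutive} slots, no face of $\tau$ is identified to another face of $\tau$, so nothing is ``folded onto itself''; the actual contradiction (for a repeated type-$\typeq$ symbol) is that both appearances of $\tau$ flank a $\ddot\typeq$ slot, so two faces of $\tau$ are glued to the two boundary faces of that maximal layered solid torus and $\tau$ is layered \emph{onto the solid torus}, extending it and violating maximality --- an argument that requires already knowing where the $\ddot\typeq$ sits in the tuple, i.e.\ it must come \emph{after} the enumeration, not before. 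Moreover, the repeated type-$\typet$ configurations (e.g.\ the two $\typet$'s in $(\typet,\ddot\typeq,\typet,\typee)$ or in $(\typet,\ddot\typeq,\typeq,\ddot\typeq,\typet)$ being a single tetrahedron) are not excluded by producing any layered structure at all: they are excluded by coloring, because a type-$\typet$ tetrahedron has a unique $H$-even face and so cannot share $H$-even faces with the type-$\typee$ tetrahedron in two distinct slots, nor can it have two $\aclass$-odd faces self-identified while also meeting the adjacent type-$\typeq$ tetrahedra along odd faces. You need these case-by-case exclusions (consecutive vs.\ non-consecutive repetition, $\typeq$ vs.\ $\typet$), carried out on the already-derived tuples; the single uniform ``self-folding'' mechanism you propose does not cover them.
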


\begin{proof}
For each case in the list, we can readily verify $\gcount_1(e)<0$, i.e.\;$e$ is insolvent. For the converse, suppose $e$ is insolvent. Possible combinations (1)-(5) of $\dcount(e), \scount(e), \bcount(e)$ are given in \hyperref[lem:halfdeg1]{Lemma~\ref*{lem:halfdeg1}}. The types of tetrahedra, incident to $e$ and contained in maximal layered solid tori, are given in \hyperref[lem:eprofile1]{Lemma~\ref*{lem:eprofile1}}; they are always type $\typeq$ except for one of type $\typee$ in case (4). The coloring on the boundary faces of maximal layered solid tori restricts the possible types of remaining tetrahedra; the only possible combinations are the ones listed in the statement of the lemma.

Suppose for contradiction that these tetrahedra are not distinct, and let $\tau$ be a terahedron incident to $e$ more than once; by \hyperref[prop:mlst]{Proposition~\ref*{prop:mlst}}, the \emph{undotted} type-symbol of $\tau$ must appear more than once in the cyclic $\dcount(e)$-tuples of type-symbols.

Suppose $\tau$ is of type $\typeq$, appearing twice non-consecutively around $e$; this occurs in
$(\typeq,\ddot\typeq,\typeq,\ddot\typeq,\typeq,\ddot\typeq)$,
$(\typeq,\ddot\typeq,\typeq,\ddot\typeq,\typeq)$,
$(\typeq,\ddot\typeq,\typeq,\ddot\typeq)$,
$(\typeq,\ddot\typeq,\typeq,\dot\typeq)$,
$(\typeq,\ddot\typeq,\typeq,\typeq)$.
In the cyclic order, these symbols $\typeq$ are adjacent to a symbol $\ddot\typeq$ between them, representing the outermost tetrahedron in a maximal layered solid torus $\lst$. Two faces of $\tau$ are identified with two boundary faces of $\lst$, respecting the coloring. Since $\tri$ is not a layered lens space, it follows that $\tau$ is layered on a $\aclass$-odd boundary edge of $\lst$, producing a larger layered solid torus by \hyperref[lem:embedded]{Lemma~\ref*{lem:embedded}}. This contradicts the maximality of $\lst$.

Suppose $\tau$ is of type $\typeq$, appearing twice consecutively around $e$; this occurs in
$(\typeq,\ddot\typeq,\typeq,\ddot\typeq,\typeq)$,
$(\typeq,\ddot\typeq,\typeq,\typeq)$.
In the cyclic order, one of these symbols $\typeq$ is adjacent to a symbol $\ddot\typeq$, representing the outermost tetrahedron in a maximal layered solid torus $\lst$. Since $\tau$ appears twice consecutively, two faces of $\tau$ must be identified, respecting the coloring. It follows that this face-identification of $\tau$ produces $\lst_{\{1,2,3\}}$ that shares a face with $\lst$. This contradicts \hyperref[prop:mlst]{Proposition~\ref*{prop:mlst}}.

Suppose $\tau$ is of type $\typet$, appearing twice non-consecutively around $e$; this occurs in
$(\typet,\ddot\typeq,\typet,\dot\typee)$,
$(\typet,\ddot\typeq,\typet,\typee)$.
This requires that the type $\typee$ tetrahedron shares two distinct $\aclass$-even faces (incident to $e$) with $\tau$; this is impossible for the type $\typet$ tetrahedron $\tau$.

Suppose $\tau$ is of type $\typet$, appearing twice consecutively around $e$; this occurs in
$(\typet,\ddot\typeq,\typeq,\ddot\typeq,\typet)$,
$(\typet,\ddot\typeq,\typeq,\typet)$.
In the cyclic order, each of these symbols $\typet$ is adjacent to a symbol $\ddot\typeq$ or $\typeq$. In both cases, these type $\typeq$ tetrahedra must be distinct. Moreover, since the $\tau$ appears twice consecutively, two faces in $\tau$ must be identified. This requires that $\tau$ shares a $\aclass$-odd face with each of the type $\typeq$ tetrahedra, and has two $\aclass$-odd faces identified; this is impossible for the type $\typet$ tetrahedron $\tau$.

These cases cover all occurrences of undotted type-symbols, repeated in a cyclic $\dcount(e)$-tuple from the list; in all cases, they cannot represent a tetrahedron appearing twice around $e$. Hence, tetrahedra around an insolvent edge $e$ are distinct.
\end{proof}

An \emph{edge flip} is a re-triangulating operation, replacing an edge of degree~4 incident to four distinct tetrahedra with another such edge, while preserving $\Tcount$; this amounts to replacing an edge connecting a pair of opposite vertices of an octahedron with another such edge. It never produces, or applies to, a triangulation with no edge of degree~4 incident to four distinct tetrahedra, e.g.\,a layered lens space.

Edge flips are used in \cite[\S6.1]{JRT:Lens} to eliminate insolvent edges of degree~4, listed as cases (3)-(5) in \hyperref[lem:tprofile1]{Lemma~\ref*{lem:tprofile1}}. If such an edge exists, a suitably chosen edge flip reduces one of the following quantities without increasing the other: (i) the number of maximal layered solid tori of type $\typeq$ incident to insolvent edges of degree~4, or (ii) the number of tetrahedra of type $\typee$. Since an insolvent edge of degree~4 is incident to at least one maximal layered solid torus of type $\typeq$, it follows that we can eliminate them by a finite number of edge flips.
For reference, we extract the following statement from the proof of \cite[Thm.\,5]{JRT:Lens}.

\begin{propalpha}[{\cite[\S6.1]{JRT:Lens}}] \label{prop:edgeflip1}
Let $M \nhomeo \RP^3,L(4,1)$ be an irreducible 3-manifold with a rank-1 subgroup $H=\{\zclass,\aclass\} < H^1(M;\bbZtwo)$. If $M$ admits a minimal triangulation that is not a layered lens space, then $M$ also admits a minimal triangulation with no insolvent edge of degree~4 with respect to the $H$-coloring, that is again not a layered lens space.
\end{propalpha}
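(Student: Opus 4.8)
The plan is to run the edge-flip reduction argument from \cite[\S6.1]{JRT:Lens} essentially verbatim, carefully tracking that the structural features established above (in particular \hyperref[lem:tprofile1]{Lemma~\ref*{lem:tprofile1}} and \hyperref[prop:mlst]{Proposition~\ref*{prop:mlst}}) survive each flip, and that we never accidentally produce a layered lens space. First I would fix a minimal triangulation $\tri$ of $M$ that is not a layered lens space, equipped with the $H$-coloring, and I would suppose it has an insolvent edge of degree~4. By \hyperref[lem:tprofile1]{Lemma~\ref*{lem:tprofile1}}, such an edge falls into one of the cases (3)--(5), is incident to four distinct tetrahedra, and is incident to at least one maximal layered solid torus of type $\typeq$ (indeed, in case (3) two such tori, in cases (4)--(5) one). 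Since the edge is incident to four distinct tetrahedra, an edge flip across it is defined, replacing it with the ``other'' diagonal of the resulting octahedron while keeping $\Tcount$ fixed.

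Next I would introduce the monovariant used in \cite[\S6.1]{JRT:Lens}: to the triangulation $\tri$ (among minimal triangulations that are not layered lens spaces) associate the pair
\[
\Phi(\tri)\;:=\;\bigl(\#\{\text{maximal layered solid tori of type $\typeq$ incident to some insolvent edge of degree }4\},\;\Tcount_\typee(\tri)\bigr),
\]
ordered lexicographically (or, as in \cite{JRT:Lens}, one reduces one coordinate without increasing the other). The core claim is that, whenever an insolvent edge of degree~4 is present, one can choose a flip — flipping across that edge, or across a suitable adjacent degree-4 edge inside the type-$\typeq$ layered solid torus attached to it — so that the resulting triangulation $\tri'$ is still minimal, is still not a layered lens space, and satisfies $\Phi(\tri')<\Phi(\tri)$. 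The bookkeeping behind this is exactly the case analysis of \hyperref[lem:tprofile1]{Lemma~\ref*{lem:tprofile1}}: in each profile $(\typeq,\ddot\typeq,\typeq,\ddot\typeq)$, $(\typeq,\ddot\typeq,\typeq,\dot\typeq)$, $(\typet,\ddot\typeq,\typet,\dot\typee)$, $(\typeq,\ddot\typeq,\typeq,\typeq)$, $(\typet,\ddot\typeq,\typeq,\typet)$, $(\typet,\ddot\typeq,\typet,\typee)$, one identifies the octahedron around the insolvent edge, reads off what the flipped triangulation looks like near the attached maximal layered solid tori, and checks that either a type-$\typeq$ layered solid torus is ``absorbed'' (decreasing the first coordinate) or, at worst, the number of type-$\typee$ tetrahedra strictly drops while the first coordinate does not rise. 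One also checks that a flip across a degree-4 edge incident to four distinct tetrahedra cannot yield a layered lens space, since a layered lens space has no such edge. Iterating, the monovariant forces termination, leaving a minimal, non-layered-lens-space triangulation of $M$ with no insolvent edge of degree~4 with respect to the $H$-coloring.

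The main obstacle — and the only real content beyond citing \cite[\S6.1]{JRT:Lens} — is verifying that the flip analysis of \cite{JRT:Lens}, which was carried out for lens spaces (and, for the degree-4 cases, for general irreducible atoroidal $M$), goes through under our weaker hypothesis that $M$ is merely prime and not a lens space. The point is that the degree-4 cases (3)--(5) of \hyperref[lem:tprofile1]{Lemma~\ref*{lem:tprofile1}} are \emph{local}: the argument only uses \hyperref[prop:mlst]{Proposition~\ref*{prop:mlst}} (intersections of maximal layered solid tori are at most an edge), \hyperref[lem:edges_lst]{Lemma~\ref*{lem:edges_lst}}, and the fact that $\tri$ is not a layered lens space — none of which invokes atoroidality. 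So I would check that each flip preserves minimality (automatic, $\Tcount$ is unchanged and $\tri$ was minimal), preserves ``not a layered lens space'' (because the flipped edge is again degree~4 incident to four distinct tetrahedra, which a layered lens space lacks), and that \hyperref[prop:mlst]{Proposition~\ref*{prop:mlst}} continues to apply to $\tri'$ so that the next iteration's profile list is again governed by \hyperref[lem:tprofile1]{Lemma~\ref*{lem:tprofile1}}. With those verifications the induction on $\Phi$ closes exactly as in \cite{JRT:Lens}.
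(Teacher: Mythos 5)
Your proposal matches the paper's treatment: the paper does not reprove this proposition but extracts it from \cite[\S6.1]{JRT:Lens}, sketching exactly the monovariant you describe (a suitably chosen flip decreases the number of type-$\typeq$ maximal layered solid tori incident to insolvent degree-4 edges, or else the number of type-$\typee$ tetrahedra, without increasing the other), together with the observation that a flip always leaves a degree-4 edge incident to four distinct tetrahedra and hence never produces a layered lens space. Your added check that the degree-4 analysis of Lemma~\ref{lem:tprofile1} is local and independent of atoroidality is consistent with the paper's framing, so the argument closes as intended.
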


\begin{rem}
At this point, one can establish \hyperref[thm:rank1]{Theorem~\ref*{thm:rank1}} for \emph{atoroidal} manifolds. If an insolvent edge $e'$ of degree~5 or 6 exists, arguments in \cite[Prop.\,28]{JRT:Lens} assure that it is the unique insolvent edge. We have $\gcount_1(e) \geqslant 0$ for all $e \in \Adults$ except for possibly one insolvent edge $e'$ with $\gcount_1(e') \geqslant -1$. Hence, \hyperref[eqn:roadmap1]{(\ref*{eqn:roadmap1})} and \hyperref[lem:grewrite1]{Lemma~\ref*{lem:grewrite1}} together yield $2\Tcount-4-4||\aclass|| \geqslant \Icount_1 \geqslant \sum_{e \in \Adults} \gcount_1(e) \geqslant -1$. Since $2\Tcount-4-4||\aclass||$ is an even integer, we have $2\Tcount-4-4||\aclass|| \geqslant 0$ and thus $\Tcount \geqslant 2+2||\aclass||$.
\end{rem}

\subsection{Decent Adults}
\label{ssec:Decent1}

Invoking \hyperref[prop:edgeflip1]{Proposition~\ref*{prop:edgeflip1}}, we may now assume that $\tri$ has no insolvent edges of degree~4 with respect to the rank-1 $H$-coloring. Every insolvent edge $e$ must occur as one of the following cases from \hyperref[lem:tprofile1]{Lemma~\ref*{lem:tprofile1}}.
\begin{itemize}
\item[(1)] $\dcount(e)=6$, $\scount(e)=\bcount(e)=3$: $(\typeq,\ddot\typeq,\typeq,\ddot\typeq,\typeq,\ddot\typeq)$ with $\gcount_1(e)=-1$;
\item[(2a)] $\dcount(e)=5$, $\scount(e)=\bcount(e)=2$: $(\typeq,\ddot\typeq,\typeq,\ddot\typeq,\typeq)$ with $\gcount_1(e)=-1$;
\item[(2b)] $\dcount(e)=5$, $\scount(e)=\bcount(e)=2$: $(\typet,\ddot\typeq,\typeq,\ddot\typeq,\typet)$ with $\gcount_1(e)=-\frac{\tts1\tts}{3}$.
\end{itemize}
We shall now make small adjustments to the counting function $\gcount_1$. With the adjustments, we see that not every insolvent edge is troublesome in our counting. We refer to manageable ones as \emph{decent} edges, and troublesome ones as \emph{rogue} edges.

\begin{defn} \label{defn:rogue}
Distinct adult edges $e,e' \in \Adults$ are said to be \emph{neighbors} of each other if there is a (necessarily $H$-even) face containing both $e$ and $e'$; an adult edge is said to be \emph{isolated} if it has no neighbors. A insolvent edge is said to be \emph{decent} if it has a solvent neighbor, and is said to be \emph{rogue} otherwise.
\end{defn}

An insolvent edge in the case (1) or (2a) is incident to no $H$-even faces, and hence always isolated and rogue. An insolvent edge in the case (2b) is not isolated, and it may be decent or rogue. We shall see that a decent edge $e'$, necessarily in case (2b), stays out of troubles because its ``generous'' solvent neighbor $e$ can afford to give up $\frac{\tts1\tts}{3}$ from $\gcount_1(e)$ and pass it onto $e'$ with $\gcount_1(e')=-\frac{\tts1\tts}{3}$. Let us formalize such adjustments; for each $e \in \Adults$, we define
\begin{align*}
\acount_1(e):=\;\begin{cases}
-\frac{\tts1\tts}{3} \times \#\{\text{insolvent neighbors of $e$}\} \qquad \text{if $e$ is solvent},\\
+\frac{\tts1\tts}{3} \times \#\{\text{solvent neighbors of $e$}\} \qquad \text{if $e$ is insolvent},
\end{cases}
\end{align*}
and modify our counting function $\gcount_1(e)$ with this adjustment term by setting
\[
\fcount_1(e):=\;\gcount_1(e)+\acount_1(e)\;=\;\dcount(e)-4-\bcount(e)+{\textstyle\frac{\tts1\tts}{3}\dcount_\typet(e)}+\acount_1(e).
\]

\begin{lem} \label{lem:frewrite1}
With the notations as above, we have
\[
\Icount_1 \,\geqslant\, \sum_{e \in \Adults} \fcount_1(e).
\]
\end{lem}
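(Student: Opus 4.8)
The plan is to deduce this directly from \hyperref[lem:grewrite1]{Lemma~\ref*{lem:grewrite1}} by checking that the total of the adjustment terms is non-positive. Since $\fcount_1(e)=\gcount_1(e)+\acount_1(e)$ by definition, summing over $\Adults$ gives $\sum_{e\in\Adults}\fcount_1(e)=\sum_{e\in\Adults}\gcount_1(e)+\sum_{e\in\Adults}\acount_1(e)$, and \hyperref[lem:grewrite1]{Lemma~\ref*{lem:grewrite1}} already supplies $\Icount_1\geqslant\sum_{e\in\Adults}\gcount_1(e)$. So the whole task reduces to proving $\sum_{e\in\Adults}\acount_1(e)\leqslant 0$; in fact I expect to show it equals $0$.

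To do that, first I would record that the relation ``$e$ and $e'$ are neighbors'' is symmetric --- this is immediate from \hyperref[defn:rogue]{Definition~\ref*{defn:rogue}}, since it is witnessed by an $H$-even face containing both edges --- and that $\#\{\text{insolvent neighbors of }e\}$ and $\#\{\text{solvent neighbors of }e\}$ count \emph{distinct} edges. Then I would reorganize $\sum_{e\in\Adults}\acount_1(e)$ as a sum over unordered pairs: let $P$ denote the set of unordered pairs $\{e,e'\}$ of distinct neighboring adult edges, split into $P_{\mathrm{ss}}$ (both solvent), $P_{\mathrm{ii}}$ (both insolvent), and $P_{\mathrm{si}}$ (one solvent, one insolvent). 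A pair in $P_{\mathrm{ss}}$ contributes nothing to either endpoint's $\acount_1$ (a solvent edge counts only insolvent neighbors), and likewise a pair in $P_{\mathrm{ii}}$ contributes nothing (an insolvent edge counts only solvent neighbors); a pair $\{e,e'\}\in P_{\mathrm{si}}$ with $e$ solvent and $e'$ insolvent contributes $-\frac{1}{3}$ to $\acount_1(e)$ and $+\frac{1}{3}$ to $\acount_1(e')$. Summing these contributions over all pairs, the solvent edges contribute $-\frac{1}{3}\,|P_{\mathrm{si}}|$ in total and the insolvent edges contribute $+\frac{1}{3}\,|P_{\mathrm{si}}|$ in total, so $\sum_{e\in\Adults}\acount_1(e)=0$; combining this with \hyperref[lem:grewrite1]{Lemma~\ref*{lem:grewrite1}} gives $\Icount_1\geqslant\sum_{e\in\Adults}\gcount_1(e)=\sum_{e\in\Adults}\fcount_1(e)$, as desired.

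There is no serious obstacle here; this is a bookkeeping argument. The only place to take care is the pseudo-simplicial setting, where an $H$-even face may have two of its three edges identified to a single edge of $\tri$ --- but since ``neighbor'' is a relation between distinct edges counted without multiplicity, a repeated edge inside a face disturbs neither the symmetry of the relation nor the cancellation above, so the argument goes through unchanged.
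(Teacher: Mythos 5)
Your proposal is correct and is essentially the paper's own proof: the paper likewise observes that each increment $+\frac{1}{3}$ at an insolvent edge pairs off with a deduction $-\frac{1}{3}$ at the corresponding solvent neighbor, so $\sum_{e\in\Adults}\acount_1(e)=0$ and the claim follows from Lemma~\ref{lem:grewrite1}. Your write-up merely makes the pairing explicit via unordered neighbor pairs and adds a (harmless, and reasonable) remark about the pseudo-simplicial setting.
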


\begin{proof}
By definition, an increase by the increment $+\frac{\tts1\tts}{3}$ for an insolvent edge can be paired uniquely with a deduction $-\frac{\tts1\tts}{3}$ for a solvent edge. Together with \hyperref[lem:grewrite1]{Lemma~\ref*{lem:grewrite1}},
\[
\Icount_1 \geqslant \sum_{e \in \Adults} \gcount_1(e)=\sum_{e \in \Adults} \fcount_1(e)
\]
as the total adjustments on all insolvent and solvent edges cancel out. 
\end{proof}

By \hyperref[lem:frewrite1]{Lemma~\ref*{lem:frewrite1}}, we may use $\fcount_1(e)$ in place of $\gcount_1(e)$ for our counting. In particular, we have $\Icount_1 \geqslant 0$ if $\fcount_1(e) \geqslant 0$ for all $e \in \Adults$. A small but significant advantage of this modified counting function $\fcount_1(e)$ is evident in the following lemma which shows that the only problematic insolvent edges are the rogue edges.

\begin{lem} \label{lem:decent1}
An adult edge $e \in \Adults$ satisfies $\fcount_1(e) \geqslant 0$ if and only if the edge is either (i) a solvent edge or (ii) a decent insolvent edge.
\end{lem}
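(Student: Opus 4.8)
The plan is to prove both implications; essentially all of the content lies in the ``if'' direction for solvent edges. Throughout I will use the following three facts, all available at this point: (a) under the standing assumptions of this subsection, every insolvent edge is of case (1), (2a), or (2b) of \hyperref[lem:tprofile1]{Lemma~\ref*{lem:tprofile1}}; (b) a tetrahedron of type $\typeq$ has all four of its faces $\aclass$-odd, whereas a tetrahedron of type $\typet$ has exactly one $H$-even face; and (c) a layered solid torus contains no tetrahedron of type $\typet$ (as recorded in the proof of \hyperref[lem:grewrite1]{Lemma~\ref*{lem:grewrite1}}).

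For the ``only if'' direction I argue the contrapositive. Let $e$ be a rogue insolvent edge. If $e$ is of case (1) or (2a), every tetrahedron incident to $e$ is of type $\typeq$, so by (b) the edge $e$ lies on no $H$-even face; hence $e$ has no neighbors, is isolated, $\acount_1(e)=0$, and $\fcount_1(e)=\gcount_1(e)=-1<0$. If $e$ is of case (2b), being rogue means $e$ has no solvent neighbor, so $\acount_1(e)=\tfrac13\cdot 0=0$ and $\fcount_1(e)=\gcount_1(e)=-\tfrac13<0$. For the ``if'' direction with $e$ a decent insolvent edge: $e$ has a neighbor, so it is not of case (1) or (2a), hence of case (2b) with $\gcount_1(e)=-\tfrac13$. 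Here one records a structural fact, reused below: a case-(2b) edge $e$ has at most two neighbors, all of which are adult. Indeed, among the two faces through $e$ of each of the two type $\typet$ tetrahedra incident to $e$, one is $\aclass$-odd (the one shared with the adjacent type $\typeq$ tetrahedron) and the other is its unique $H$-even face; these two $H$-even faces are one and the same, namely the face $P$ shared by the two type $\typet$ tetrahedra, and so $e$ lies on exactly one $H$-even face. The edges of $P$ other than $e$ lie on a type $\typet$ tetrahedron, so by (c) they are not interior edges of any maximal layered solid torus, hence not child edges, hence adult. Thus a decent $e$ has one or two solvent neighbors, so $\acount_1(e)\geqslant\tfrac13$ and $\fcount_1(e)=\gcount_1(e)+\acount_1(e)\geqslant 0$.

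Now the main case: let $e$ be solvent, and let $n$ be its number of insolvent neighbors, so that we must prove $\fcount_1(e)=\gcount_1(e)-\tfrac13 n\geqslant 0$. By the previous paragraph, each insolvent neighbor of $e$ is a case-(2b) edge $e'$ with $e$ lying on the unique $H$-even face $P_{e'}$ of $e'$, which is the common $H$-even face of the two type $\typet$ tetrahedra incident to $e'$. At most two insolvent neighbors of $e$ give rise to the same such face (they must be among the two edges of that face other than $e$), and, since each type $\typet$ tetrahedron has a single $H$-even face, the pairs of type $\typet$ tetrahedra at $e$ contributed by distinct faces $P_{e'}$ are disjoint. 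Hence $n\leqslant\dcount_\typet(e)$, and therefore
\[
\fcount_1(e)\;=\;\dcount(e)-4-\bcount(e)+\tfrac13\dcount_\typet(e)-\tfrac13 n\;\geqslant\;\dcount(e)-4-\bcount(e),
\]
so it suffices to show $\dcount(e)-4-\bcount(e)\geqslant 0$ for every solvent adult edge $e$.

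This is immediate when $\bcount(e)=0$, and it follows from $\bcount(e)\leqslant\scount(e)\leqslant\lfloor\dcount(e)/2\rfloor$ (recorded in the proof of \hyperref[lem:halfdeg1]{Lemma~\ref*{lem:halfdeg1}}) when $\dcount(e)\geqslant 7$. Otherwise $\dcount(e)\in\{4,5,6\}$ and $\dcount(e)-3\leqslant\bcount(e)\leqslant\lfloor\dcount(e)/2\rfloor$; the case $\dcount(e)=4$, $\bcount(e)=2$ cannot occur for a solvent edge, since $\gcount_1(e)\geqslant 0$ would force $\dcount_\typet(e)\geqslant 6$, impossible as $\dcount_\typet(e)\leqslant\dcount(e)$. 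In the remaining cases $\scount(e)=\bcount(e)=\lfloor\dcount(e)/2\rfloor$, so \hyperref[lem:eprofile1]{Lemma~\ref*{lem:eprofile1}} shows that every maximal layered solid torus meeting $e$ is of type $\typeq$ with $\dcount_\lst(e)=1$; these contribute $\bcount(e)=\dcount(e)-3$ of the tetrahedra in the cyclic arrangement around $e$ (all of type $\typeq$), leaving exactly three others, each of which must be of type $\typet$ in order that $\gcount_1(e)\geqslant 0$. Since a face shared by a type $\typet$ and a type $\typeq$ tetrahedron is $\aclass$-odd, no type $\typet$ tetrahedron in this cyclic arrangement can be flanked by two type $\typeq$ tetrahedra (its two faces through $e$ would then both be $\aclass$-odd, whereas $e$ lies on its $H$-even face), so the three type $\typet$ tetrahedra occur consecutively; but then the middle one has both of its faces through $e$ equal to $H$-even faces (each being the $H$-even face of a flanking type $\typet$ tetrahedron), contradicting that a type $\typet$ tetrahedron has a single $H$-even face. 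This contradiction completes the argument. The main obstacle is precisely this final step: the parity and coloring analysis of the cyclic arrangement of type $\typet$ and type $\typeq$ tetrahedra around a near-extremal solvent edge. The remaining arguments are bookkeeping with the definitions of $\gcount_1$, $\acount_1$, $\fcount_1$ and applications of \hyperref[lem:tprofile1]{Lemma~\ref*{lem:tprofile1}}, \hyperref[lem:eprofile1]{Lemma~\ref*{lem:eprofile1}}, and \hyperref[lem:grewrite1]{Lemma~\ref*{lem:grewrite1}}.
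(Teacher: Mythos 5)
Your argument for the necessity, for decent insolvent edges, and for the key bound $\#\{\text{insolvent neighbors of }e\}\leqslant\dcount_\typet(e)$ (via pairing the type $\typet$ tetrahedra at $e$ along their unique $H$-even faces) is correct and is essentially the paper's proof. Where you diverge is that you re-derive the inequality $\dcount(e)-4-\bcount(e)\geqslant 0$ for solvent $e$ from scratch; the paper simply quotes \hyperref[lem:halfdeg1]{Lemma~\ref*{lem:halfdeg1}} here, which strictly speaking only gives the implication in the other direction --- the missing converse is really supplied by \hyperref[lem:tprofile1]{Lemma~\ref*{lem:tprofile1}}, whose case analysis shows that every realizable profile under conditions (1)--(5) is insolvent. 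So your instinct to prove this step is reasonable, and your coloring/parity argument (a type $\typeq$ tetrahedron has all faces $\aclass$-odd, a type $\typet$ tetrahedron has exactly one $H$-even face through $e$, hence no run of type $\typet$ tetrahedra of length $1$ or $\geqslant 3$ around $e$) is the right mechanism.

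There is, however, a slip in that extra step. After excluding $\dcount(e)=4$, $\bcount(e)=2$, you assert that in the remaining cases $\scount(e)=\bcount(e)=\lfloor\dcount(e)/2\rfloor$; this is false for $\dcount(e)=4$, $\bcount(e)=1$, where $\scount(e)$ may be $1$ or $2$. When $\scount(e)=2$ you are in case (4) of \hyperref[lem:eprofile1]{Lemma~\ref*{lem:eprofile1}}, which only guarantees that \emph{one} of the two maximal layered solid tori at $e$ is of type $\typeq$ (the other may be of type $\typee$), so neither ``every maximal layered solid torus meeting $e$ is of type $\typeq$'' nor ``these contribute $\bcount(e)=\dcount(e)-3$ of the tetrahedra'' holds: the layered solid tori occupy $\scount(e)$ slots around $e$, not $\bcount(e)$. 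The patch is immediate --- with $\scount(e)=2$ only $\dcount(e)-\scount(e)=2$ slots lie outside layered solid tori, so $\dcount_\typet(e)\leqslant 2$ and $\gcount_1(e)\leqslant -1+\tfrac{2}{3}<0$, contradicting solvency --- but as written the sub-case is not covered. With that repair (or by instead invoking \hyperref[lem:tprofile1]{Lemma~\ref*{lem:tprofile1}} for this step, as the paper implicitly does), the proof is complete.
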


The necessity is immediate; a rogue edge $e$ satisfies $\gcount_1(e)<0$, $\acount_1(e)=0$, and hence $\fcount_1(e)=\gcount_1(e)+\acount_1(e)<0$. We shall verify the sufficiency.

\begin{proof}
Suppose first that an edge $e$ is a decent insolvent edge. Having a neighbor forces it to be an edge from the case (2b) with $\gcount_1(e)=-\frac{\tts1\tts}{3}$, and having at least one solvent neighbor guarantees $\acount_1(e) \geqslant \frac{\tts1\tts}{3}$. Hence $\fcount_1(e)=\gcount_1(e)+\acount_1(e) \geqslant 0$.

Suppose now that $e$ is a solvent edge. We have $\dcount(e)-4-\bcount(e) \geqslant 0$ by \hyperref[lem:halfdeg1]{Lemma~\ref*{lem:halfdeg1}}. Since $\fcount_1(e)=\gcount_1(e)+\acount_1(e)=\dcount(e)-4-\bcount(e)+{\textstyle\frac{\tts1\tts}{3}\dcount_\typet(e)}+\acount_1(e)$, it suffices to show ${\textstyle\frac{\tts1\tts}{3}\dcount_\typet(e)}+\acount_1(e) \geqslant 0$, or equivalently
\[
\dcount_\typet(e) \;\geqslant\; -3\acount_1(e) \;=\; \#\{\text{insolvent neighbors of $e$}\}.
\]
By definition, if the solvent edge $e$ has an insolvent neighbor $e'$, there exists a $H$-even face containing $e$ and $e'$; moreover, by \hyperref[lem:tprofile1]{Lemma~\ref*{lem:tprofile1}}, this $H$-even face must be the common face of two distinct tetrahedra of type $\typet$. Tetrahedra of type $\typet$, each containing the edge $e$ and meeting another tetrahedron of type $\typet$ along its $H$-even face, are naturally paired up along their $H$-even faces; the number of such pairs, and hence the number of $H$-even faces shared between them, is at most $\lfloor \dcount_\typet(e)/2 \rfloor$. Each $H$-even face between such a pair contains at most two insolvent neighbors of $e$. Hence, we have $\dcount_\typet(e) \geqslant 2\lfloor \dcount_\typet(e)/2 \rfloor \geqslant \#\{\text{insolvent neighbors of $e$}\}$ as desired.
\end{proof}

\subsection{Rogue Adults}
\label{ssec:Rogue1}

By \hyperref[lem:frewrite1]{Lemma~\ref*{lem:frewrite1}} and \hyperref[lem:decent1]{Lemma~\ref*{lem:decent1}}, the inequality $\Icount_1 \geqslant 0$ fails only in the presence of rogue insolvent edges. Any isolated insolvent edge is rogue by definition; a non-isolated insolvent edge is incident to one $H$-even face, and it is rogue if and only if its neighbors along the $H$-even face are also insolvent. It follows that the neighbors of a non-isolated rogue edge must be distinct, and hence non-isolated rogue edges always come in a triple along a common $H$-even face.

\begin{defn} \label{defn:posse}
A triple $\Posse=\{e_1,e_2,e_3\}$ of non-isolated rogue edges sharing a $H$-even face is called a \emph{posse} of (non-isolated) rogue edges.
\end{defn}

Building on \hyperref[lem:tprofile1]{Lemma~\ref*{lem:tprofile1}}, we shall further analyze the local structure of the triangulation around rogue edges, and describe how the canonical surface $\asurf$ around these edges can be compressed across these edges.

\begin{defn} \label{defn:cluster}
The \emph{cluster} $\kluster(e)$ around an edge $e$ is the union of tetrahedra containing $e$, and the \emph{open cluster} $\kluster^\circ\ntts (e)$ is the cluster $\kluster(e)$ with the faces not containing $e$ removed. For any collection $\Evens' \subseteq \Evens$ of edges, we define the cluster $\kluster(\Evens')$ by $\kluster(\Evens')=\bigcup_{e \in \Evens'} \kluster(e)$ and the open cluster $\kluster^\circ\ntts (\Evens')$ by $\kluster^\circ\ntts (\Evens')=\bigcup_{e \in \Evens'} \kluster^\circ\ntts (e)$.
\end{defn}

Let $e$ be an isolated rogue edge. By \hyperref[lem:tprofile1]{Lemma~\ref*{lem:tprofile1}}, the cluster $\kluster(e)$ consists of $\dcount(e)$ distinct tetrahedra of type $\typeq$ with $\dcount(e)=5$ or $6$. The open cluster $\kluster^\circ\ntts (e)$ is a $\dcount(e)$-gonal bipyramid with the boundary faces removed. 

\begin{lem} \label{lem:solocomp1}
Let $e$ be an isolated rogue edge. If $\surf$ is a nearly canonical surface dual to $\aclass$ with $\surf \cap \kluster(e)=\asurf \cap \kluster(e)$, then $\surf$ can be compressed once inside $\kluster^\circ(e)$ to a nearly canonical surface $\surf'$ dual to $\aclass$, satisfying $||\aclass|| \leqslant -\chi(\surf')=-\chi(\surf)-2$.
\end{lem}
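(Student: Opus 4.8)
The local model is already in hand: by \hyperref[lem:tprofile1]{Lemma~\ref*{lem:tprofile1}} (cases (1) and (2a)), $\kluster(e)$ is a $\dcount(e)$-gonal bipyramid built from $\dcount(e)\in\{5,6\}$ distinct type-$\typeq$ tetrahedra $\tau_1,\dots,\tau_{\dcount(e)}$ cyclically arranged around $e$; write $p,q$ for the two endpoints of $e$ (the apices) and $v_1,\dots,v_{\dcount(e)}$ for the equatorial vertices, so $\tau_i$ has vertices $p,q,v_{i-1},v_i$ and $\tau_i\cap\tau_{i+1}$ is the face $pqv_i$. Since $\surf\cap\kluster(e)=\asurf\cap\kluster(e)$ and each $\tau_i$ is of type $\typeq$, the surface $\surf$ meets $\tau_i$ in the single quadrilateral $Q_i$ separating the $\aclass$-even pair $\{e,\,v_{i-1}v_i\}$ from the four $\aclass$-odd edges. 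First I would record that the $Q_i$ glue along the faces $pqv_i$ into a properly embedded annulus $A=\surf\cap\kluster(e)$ with $\partial A=C_p\sqcup C_q$, where $C_p$ (resp.\ $C_q$) is a circle on $\partial\kluster(e)$ encircling $p$ (resp.\ $q$) and crossing each edge $pv_i$ (resp.\ $qv_i$) once; that $\bigcup_iQ_i$ is an annulus, and not a worse surface, follows from connectedness, $\chi=0$, orientability, and having two boundary circles. In particular the edge-weight of $\surf$ is $0$ on $e$ and on every $v_{i-1}v_i$, and $1$ on the $\aclass$-odd edges $pv_i,qv_i$.

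Next I would exhibit the compressing disk. The annulus $A$ separates the ball $\kluster(e)$ into two pieces, and since $C_p\sqcup C_q$ cuts the sphere $\partial\kluster(e)$ into a small disk around $p$, a small disk around $q$, and a middle annulus, one of these pieces --- call it $B$ --- is a ball whose boundary is $A$ together with the two polar disks, and $B$ contains $e$ as a properly embedded arc from the $p$-disk to the $q$-disk. The core circle $c$ of $A$ separates $\partial B\homeo S^2$ into the halves containing $p$ and $q$, so it bounds a disk $D$ in $B$ with $\interior D\subset\interior B$; since $D$ separates $p$ from $q$ in $B$ it must meet $e$, and can be chosen to meet $e$ exactly once. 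By construction $\interior D$ is disjoint from $A=\surf\cap\kluster(e)$, we have $\partial D=c\subset A\subset\surf$, and $D$ avoids the boundary faces of $\kluster(e)$, so $D\subset\kluster^\circ(e)$ is a compressing disk for $\surf$ lying inside the open cluster.

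Then I would identify the surface $\surf'$ obtained by compressing $\surf$ along $D$: it is $\surf$ with the annulus $A$ replaced by the disjoint union $\Delta_p\sqcup\Delta_q$, where $\Delta_p$ (resp.\ $\Delta_q$) is the normal disk in $\kluster(e)$ cutting $p$ (resp.\ $q$) off from the rest --- equivalently, $\Delta_p$ (resp.\ $\Delta_q$) meets each $\tau_i$ in the triangle near $p$ (resp.\ near $q$), and $\partial\Delta_p=C_p$, $\partial\Delta_q=C_q$. (Two properly embedded disks in a ball with the same boundary curve are isotopic rel boundary, which is why capping the two halves of $A\ssminus c$ with parallel copies of $D$ yields exactly $\Delta_p$ and $\Delta_q$.) From this description $\surf'$ is manifestly a normal surface, and comparing edge-weights with $\surf$ one sees that every edge keeps its weight except $e$, whose weight rises from $0$ to $2$; the point is that the $\dcount(e)$ triangles comprising $\Delta_p$ have their corners on $e$ forced to coincide (by matching along the shared faces $pqv_i$), so $\Delta_p$ contributes a single point to $e$, and likewise for $\Delta_q$. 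Hence $\surf'$ has weight $0$ or $2$ on every $\aclass$-even edge and $1$ on every $\aclass$-odd edge, so it is a nearly canonical surface; it is dual to $\aclass$ since compression preserves the $\bbZtwo$-homology class (its $\bbZtwo$ edge-weight vector agrees with that of $\surf$). Compression gives $\chi(\surf')=\chi(\surf)+2$, and applying \hyperref[lem:norm-nc]{Lemma~\ref*{lem:norm-nc}} to the nearly canonical surface $\surf'$ yields $||\aclass||\leqslant-\chi(\surf')=-\chi(\surf)-2$, as required.

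The main obstacle is the geometric bookkeeping in the middle step: checking that $\bigcup_iQ_i$ is genuinely an annulus separating the cluster ball into a ball containing $e$ plus a (solid-torus) piece, that the core $c$ of $A$ bounds a disk in the ball piece meeting $e$ exactly once, and that the abstract compression along this disk coincides with the explicit normal surface obtained by replacing each quadrilateral $Q_i$ by the two vertex-cutting triangles. The one genuinely delicate point to get right is that $\Delta_p$ (a union of $\dcount(e)$ triangles) contributes only a single new intersection point with $e$ rather than $\dcount(e)$ of them, since their corners on $e$ are identified by the normal gluing; this is exactly what keeps $\surf'$ nearly canonical instead of leaving a large even weight on $e$.
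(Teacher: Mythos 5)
Your proposal is correct and follows essentially the same route as the paper: identify $\surf\cap\kluster^\circ(e)$ as an annulus of $\dcount(e)$ quadrilaterals, compress along the disk bounded by the core (boundary-parallel) loop meeting $e$ once, realize the resulting pair of disks as the two vertex-cutting normal disks so that only the weight on $e$ changes from $0$ to $2$, and conclude via Lemma~\ref{lem:norm-nc}. Your extra care about the $\dcount(e)$ triangle corners on $e$ being identified (so each polar disk contributes exactly one point to $e$) is a correct and worthwhile elaboration of what the paper states more briefly.
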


\begin{figure}[t]
\begin{center}
\includegraphics[width=55.6mm, height=56.0mm]{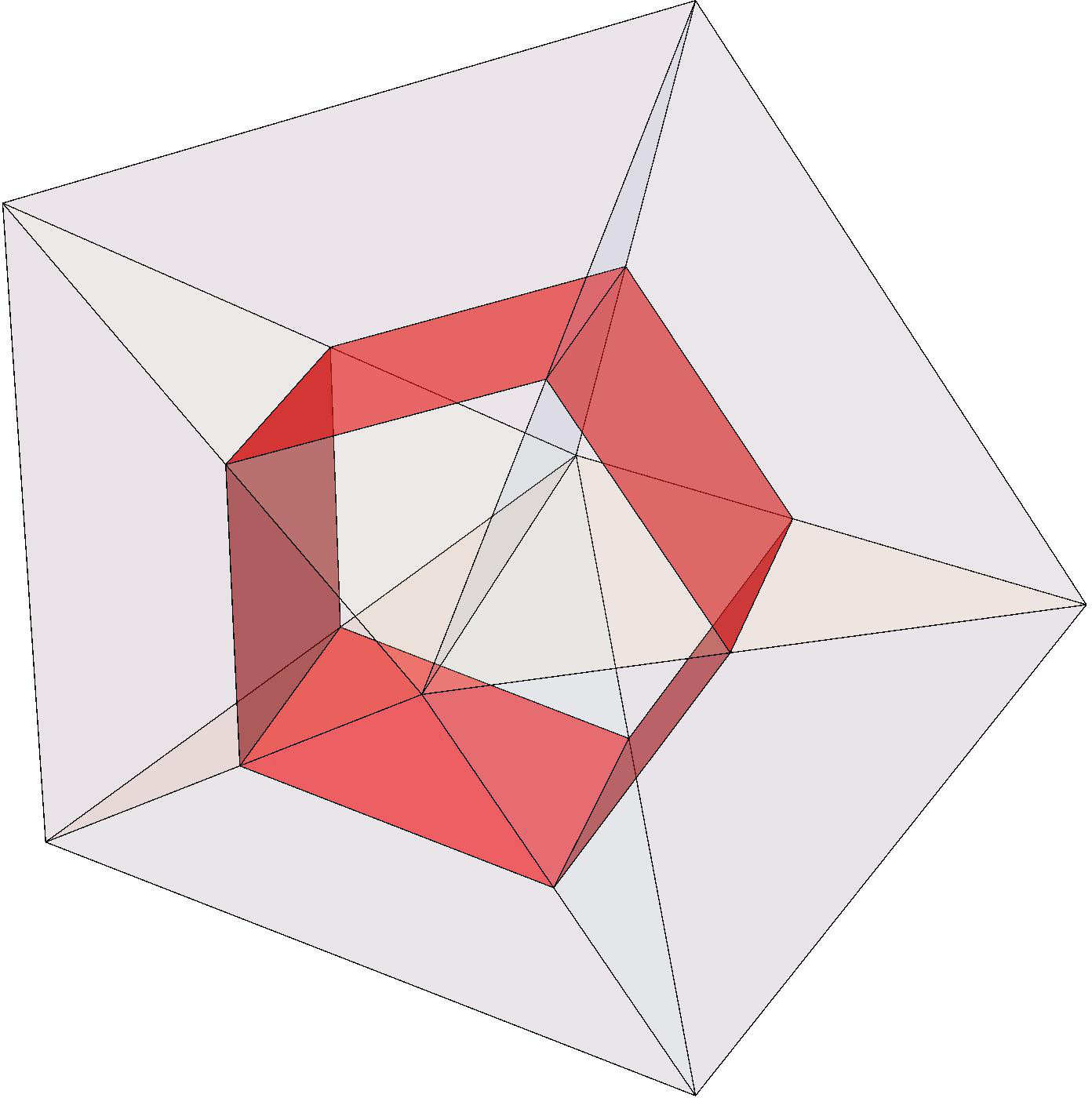} \,\qquad\,
\includegraphics[width=55.6mm, height=56.0mm]{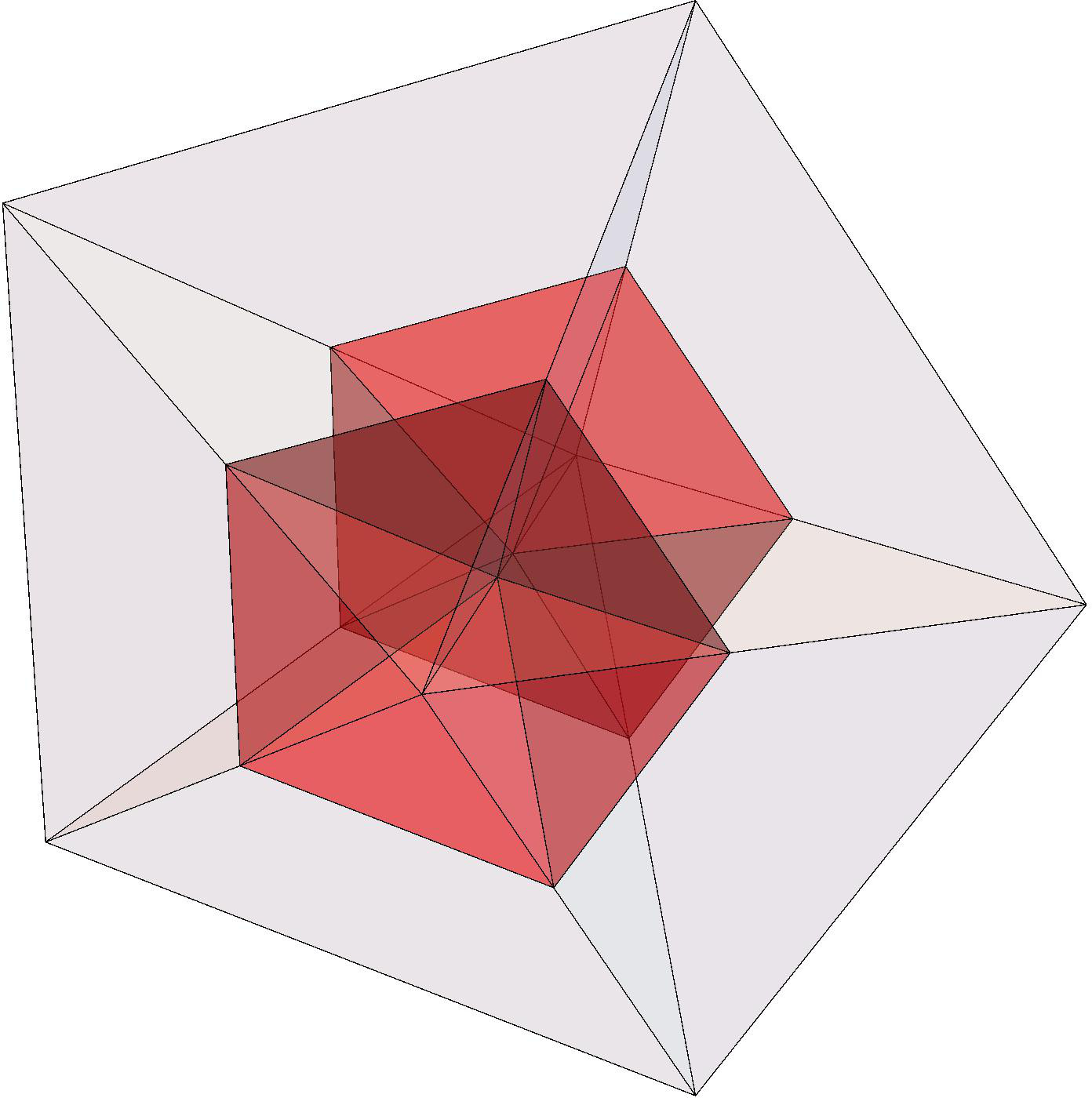}
\end{center}
\caption{A cluster around an isolated rogue edge (left), and compression of the canonical surface within the cluster (right).}
\label{fig:5cluster}
\end{figure}

\begin{proof}
The subsurface $\subsurf=\surf \cap \kluster^\circ\ntts (e)=\asurf \cap \kluster^\circ\ntts (e)$ is the union of $\dcount(e)$ quadrilaterals, forming an open annulus; see \hyperref[fig:5cluster]{Figure~\ref*{fig:5cluster}} (left).
The $\partial$-parallel loop $\gamma \subset \subsurf$ bounds a disk $D \subset \kluster^\circ(e)$ that intersects $e$ once. The surgery along $D$ yields a new surface $\subsurf'$ consisting of a pair of open disks, and it can be realized via isotopy as a normal surface in $\kluster(e)$; see \hyperref[fig:5cluster]{Figure~\ref*{fig:5cluster}} (right).
This surgery takes the nearly canonical surface $\surf$ with $\surf \cap \kluster^\circ\ntts (e)=\subsurf$ and yields a new normal surfece $\surf'$ with $\surf' \cap \kluster^\circ\ntts (e)=\subsurf'$; the edge-weights are unchanged on all edges except on $e$, for which the edge-weight is changed from 0 to 2, and the surface $\surf'$ is homologous to $\surf$. In particular, $\surf'$ is nearly canonical for the same class $\aclass$. By \hyperref[lem:norm-nc]{Lemma~\ref*{lem:norm-nc}}, the inequality $||\aclass|| \leqslant -\chi(\surf')$ holds, and this surgery is indeed a compression; the equality $\chi(\surf')=\chi(\surf)+2$ holds in general for a surgery along a disk.
\end{proof}

Let $\Posse=\{e_1,e_2,e_3\}$ be a posse of rogue edges, and $\kluster(\Posse)=\kluster(e_1) \cup \kluster(e_2) \cup \kluster(e_3)$ be the cluster around $\Posse$. By \hyperref[lem:tprofile1]{Lemma~\ref*{lem:tprofile1}}, each cluster $\kluster(e_m)$ consists of 5 distinct tetrahedra $\tau_{1(m)},\tau_{2(m)},\tau_{3(m)},\tau_{4(m)},\tau_{5(m)}$ of types $\typet,\typeq,\typeq,\typeq,\typet$ respectively in a cyclic order around $e_m$; each open cluster $\kluster^\circ\ntts (e_m)$ is a pentagonal bipyramid with the boundary faces removed. The 3 distinct edges $e_m$ belong to a single $H$-even face, which occurs as the face between 2 tetrahedra of type $\typet$ around each $e_m$. Hence, the clusters $\kluster(e_m)$ shrare this $H$-even face and the 2 tetrahedra of type $\typet$ containing it, say $\tau_1\nts=\nts\tau_{1(m)}$ and $\tau_5\nts=\nts\tau_{5(m)}$ for all $m$. The 6 tetrahedra $\tau_{2(m)}$, $\tau_{4(m)}$, $m=1,2,3$, of type $\typeq$ that are adjacent to one of the type $\typet$ tetrahedra $\tau_1,\tau_5$, must be distinct by \hyperref[lem:tprofile1]{Lemma~\ref*{lem:tprofile1}}, since they belong to 6 distinct maximal layered solid tori. The remaining 3 tetrahedra $\tau_{3(m)}$, $m=1,2,3,$ of type $\typeq$ may or may not be distinct. If $\tau_{3(m)}$ are distinct, then $\kluster(\Posse)$ is a union of 11 tetrahedra. If $\tau_{3(m)}$ are not distinct, then exactly 2 of them coincide and $\kluster(\Posse)$ is a union of 10 tetrahedra.

\begin{lem} \label{lem:posse11comp1}
Let $\Posse$ be a posse of rogue edges, with the 11-tetrahedra cluster $\kluster(\Posse)$ around it. If $\surf$ is a nearly canonical surface dual to $\aclass$ with $\surf \cap \kluster(\Posse)=\asurf \cap \kluster(\Posse)$, then $\surf$ can be compressed twice inside $\kluster^\circ(\Posse)$ to a nearly canonical surface $\surf'$ dual to $\aclass$, satisfying $||\aclass|| \leqslant -\chi(\surf')=-\chi(\surf)-4$.
\end{lem}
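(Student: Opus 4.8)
The plan is to analyze the surface $\asurf\cap\kluster(\Posse)$ directly and to locate, inside $\kluster^{\circ}(\Posse)$, two disjoint compressing disks each crossing one of the rogue edges exactly once, in the spirit of \hyperref[lem:solocomp1]{Lemma~\ref*{lem:solocomp1}}.

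First I would pin down the combinatorics of $\surf\cap\kluster(\Posse)=\asurf\cap\kluster(\Posse)$. By \hyperref[lem:tprofile1]{Lemma~\ref*{lem:tprofile1}} each $e_m$ has the profile $(\typet,\ddot\typeq,\typeq,\ddot\typeq,\typet)$, and the discussion preceding this lemma identifies the two type-$\typet$ tetrahedra $\tau_1,\tau_5$ (shared by all three clusters, meeting along the common $H$-even face $F$, whose three edges are exactly $e_1,e_2,e_3$) together with the $6+3=9$ distinct type-$\typeq$ tetrahedra in the eleven-tetrahedron case. Hence $\asurf\cap\kluster(\Posse)$ is the union of the two normal triangles $\Delta_x\subset\tau_1$, $\Delta_y\subset\tau_5$ (one near each apex $x,y$ of the bipyramid $\tau_1\cup_F\tau_5$) and the nine normal quadrilaterals. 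Since $F$ is $H$-even we have $\asurf\cap F=\varnothing$; thus $\Delta_x$ and $\Delta_y$ are disjoint, and each of the three near-annuli $\subsurf_m=\asurf\cap\kluster^{\circ}(e_m)$ of \hyperref[lem:solocomp1]{Lemma~\ref*{lem:solocomp1}} is here ``cut open'' along $F$ into a disk. The essential structural point is that these three disks are glued along $\Delta_x$ and $\Delta_y$: I would verify that $\asurf\cap\kluster(\Posse)$ is a $3$-holed sphere, with $\Delta_x,\Delta_y$ as two ``hubs'' joined by three ``bridges'' $B_m=Q_{2(m)}\cup Q_{3(m)}\cup Q_{4(m)}$ running alongside $e_m$, the three boundary circles $C_1,C_2,C_3$ of this pair of pants lying near the three vertices of $F$. (Computing the Euler characteristic gives $11-12=-1$; the planarity is part of what must be checked here.)

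Next I would construct the compressing disks. For each $m$, using that $e_m$ is an interior edge of $\kluster(\Posse)$ lying ``behind'' the bridge $B_m$ together with $F$, I would produce a disk $D_m\subset\kluster^{\circ}(\Posse)$ meeting $e_m$ transversally once and meeting no other edge, whose boundary $\partial D_m$ is the simple closed curve obtained by joining the two long free sides of $B_m$ by short arcs across the hubs $\Delta_x$ and $\Delta_y$ — precisely the completion, around the $F$-gap, of the $\partial$-parallel loop used in \hyperref[lem:solocomp1]{Lemma~\ref*{lem:solocomp1}}; each $\partial D_m$ is parallel to one of the boundary circles $C_i$, hence essential, and surgery along $D_m$ changes the edge-weight of $\asurf$ from $0$ to $2$ on $e_m$ and on no other edge. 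The three disks $D_1,D_2,D_3$ are not pairwise disjoint, but any two of them — say $D_1,D_2$, crossing $e_1,e_2$ — can be made disjoint: first make $\partial D_1,\partial D_2$ disjoint essential curves on the pair of pants (they are parallel to distinct boundary circles) with disjoint spanning arcs across $\Delta_x$ and $\Delta_y$, then push the disks off one another into the ball-like interior of $\kluster^{\circ}(\Posse)$ while keeping $D_i\cap e_j=\varnothing$ for $i\ne j$, $|D_i\cap e_i|=1$, and $D_i\cap(\asurf\cap\kluster(\Posse))=\partial D_i$. Finally, since $\surf$ agrees with $\asurf$ on $\kluster(\Posse)$, surgering $\surf$ along $D_1$ and then $D_2$ yields a normal surface $\surf'$ with $\chi(\surf')=\chi(\surf)+4$ (each surgery along a disk raises $\chi$ by $2$) whose edge-weights are unchanged except on $e_1,e_2$, where they pass from $0$ to $2$; thus $\surf'$ is again nearly canonical dual to $\aclass$, and \hyperref[lem:norm-nc]{Lemma~\ref*{lem:norm-nc}} gives $||\aclass||\leqslant-\chi(\surf')=-\chi(\surf)-4$. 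I expect the main obstacle to be the middle step: checking carefully that two of the three candidate disks can be realized simultaneously as disjoint embedded disks in $\kluster^{\circ}(\Posse)$ each crossing only its one intended edge once — equivalently, that the two meridian curves, re-routed around the common face $F$, can be chosen disjoint without acquiring extra intersections with the $1$-skeleton; the pair-of-pants description of $\asurf\cap\kluster(\Posse)$ and the symmetry among $e_1,e_2,e_3$ are what make this manageable.
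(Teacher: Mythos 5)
Your description of $\asurf\cap\kluster(\Posse)$ as a pair of pants with two hubs $\Delta_x,\Delta_y$ and three bridges $B_1,B_2,B_3$, with $\chi=-1$ and boundary circles near the three vertices of $F$, is correct and agrees with the paper. The gap is in the construction of the compressing disks. The curve you assign to $D_m$ --- both free sides of the single bridge $B_m$, closed up by arcs across the two hubs --- is \emph{null-homotopic} on the pair of pants, not boundary-parallel: the two endpoints of $B_m$'s attaching arc on $\partial\Delta_x$ cut $\partial\Delta_x$ into $B_m$'s attaching arc and a complementary arc carrying the attachments of the other two bridges, so any embedded arc $\alpha_x\subset\Delta_x$ joining them (and likewise $\alpha_y\subset\Delta_y$) leaves $B_i,B_j$ on the far side, and your curve bounds the disk $E_m=B_m\cup(\text{half of }\Delta_x)\cup(\text{half of }\Delta_y)$ \emph{in the surface}. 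A curve parallel to $C_k$ instead runs along one free side of $B_i$ and one free side of $B_j$. Worse, a disk $D_m$ with that boundary meeting $e_m$ exactly once cannot exist: $D_m\cup E_m$ would be an embedded sphere in the irreducible manifold $M$, hence null-homologous, yet it would meet the loop $e_m$ (the triangulation has one vertex) in $0+1=1$ point, contradicting even mod-$2$ intersection. So no compression changes the weight on a single $e_m$ alone, and the two ``disjoint disks each crossing one edge'' you propose to surger along do not exist.

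The paper's construction is genuinely different and is forced by this parity constraint: the compressing disks are $D_{ij}$ with $\partial D_{ij}=\gamma_i\cup\gamma_j$, where $\gamma_m$ is an arc running through the bridge $B_m$ from hub to hub; such a curve passes through \emph{two} bridges, is honestly parallel to $C_k$, and bounds a disk meeting each of $e_i,e_j$ once. Two of these, say $D_{12}$ and $D_{13}$, can be made disjoint, and surgery along both turns the pair of pants into three disks; the leftover third piece is isotoped (removing its two intersections with $e_1$) to a normal disk parallel to $D_{23}$, so that the resulting nearly canonical surface has edge-weight $2$ on all three of $e_1,e_2,e_3$ (not just two of them, as your final paragraph asserts). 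You should redo the disk construction along these lines; the pair-of-pants picture you already have makes the rest of your argument (disjointness of two disks, $\chi(\surf')=\chi(\surf)+4$, and the appeal to \hyperref[lem:norm-nc]{Lemma~\ref*{lem:norm-nc}}) go through.
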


\begin{figure}[t]
\begin{center}
\includegraphics[width=56.6mm, height=61.8mm]{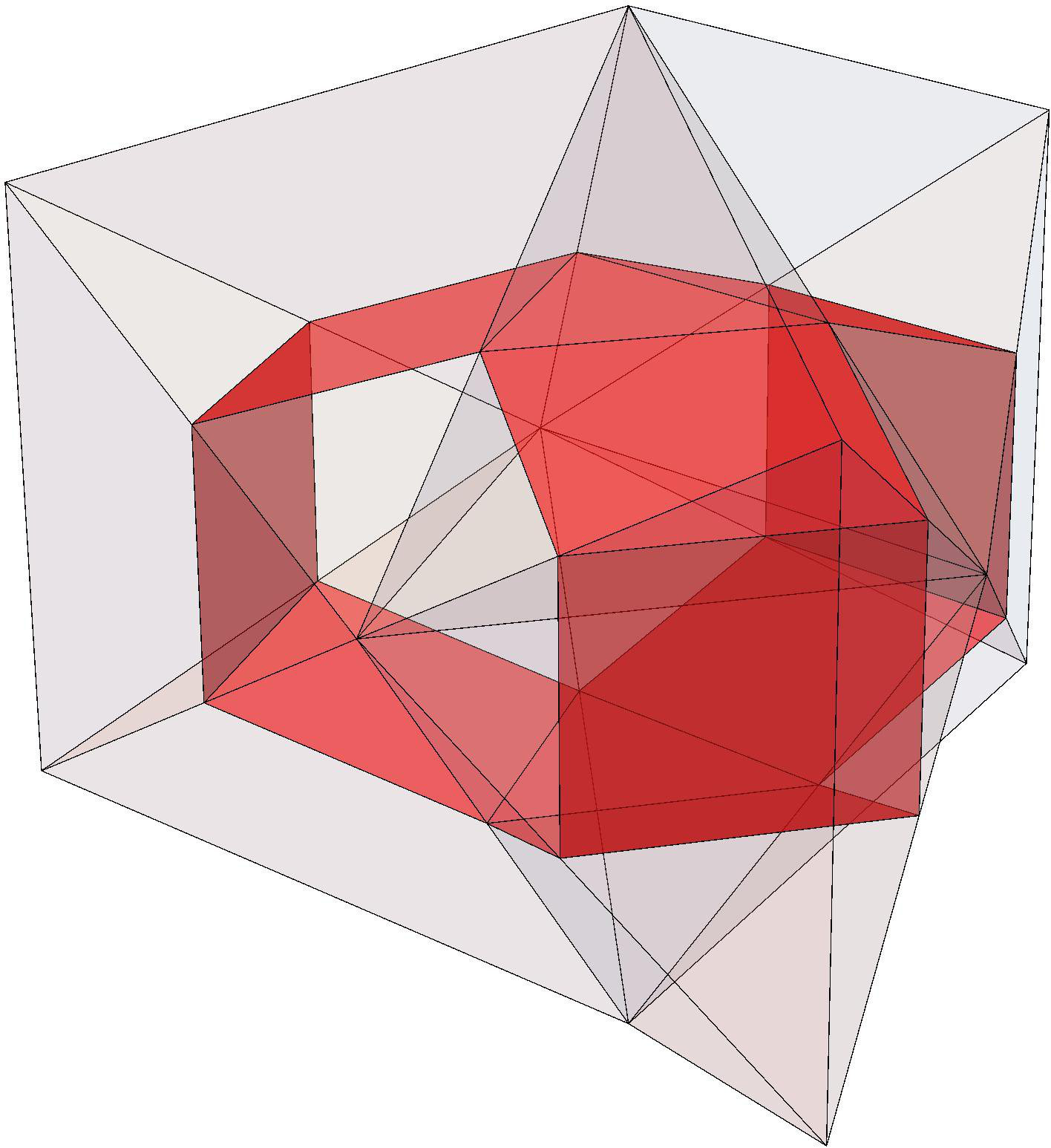} \,\qquad\,
\includegraphics[width=56.6mm, height=61.8mm]{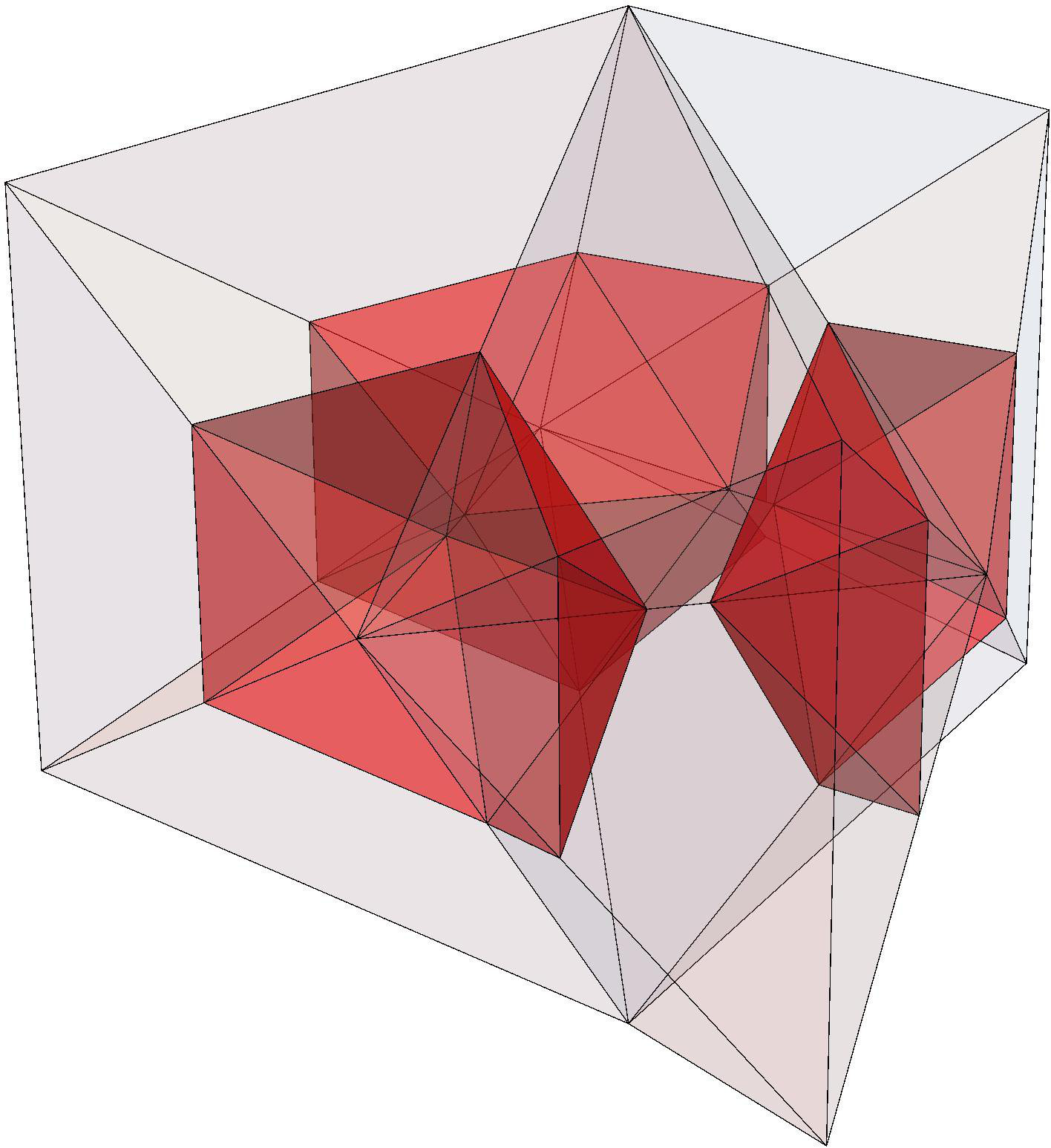}
\end{center}
\caption{An 11-tetrahedra cluster around a posse of non-isolated rogue edges (left), and double compression of the canonical surface within the cluster (right).}
\label{fig:11cluster}
\end{figure}

\begin{proof}
Let $\Posse=\{e_1,e_2,e_3\}$. The subsurface $\subsurf=\surf \cap \kluster^\circ\ntts (\Posse)=\asurf \cap \kluster^\circ\ntts (e)$ is an open pair of pants, formed by 2 triangles and 3 bands of 3 quadrilaterals connecting them; see \hyperref[fig:11cluster]{Figure~\ref*{fig:11cluster}} (left).
Consider an embedded $\theta$-shaped graph in $\subsurf$, formed by 3 arcs $\gamma_k$ connecting centers of two triangles through the bands of 3 quadrilaterals in $\kluster(e_k)$. The union $\gamma_{ij}=\gamma_i \cup \gamma_j$ of a pair of such arcs is a $\partial$-parallel loop on $\subsurf$, and it bounds a disk $D_{ij} \subset \kluster^\circ(e_i) \cup \kluster^\circ(e_j)$ that intersects each of $e_i, e_j$, once and is disjoint from the other edge $e_k$; we may isotope them to be pairwise disjoint.
The surgery along any pair of such disks, say $D_{12}$, $D_{13}$, yields a new surface $\subsurf'$ consisting of three open disks $D'_{ij}$ parallel to disks $D_{ij}$, and it can be realized via isotopy (that removes, in particular, 2 points of intersection between $D'_{23}$ and $e_1$) as a normal suface in $\kluster(\Posse)$; see \hyperref[fig:11cluster]{Figure~\ref*{fig:11cluster}} (right).
This surgery takes the nearly canonical surface $\surf$ with $\surf \cap \kluster^\circ\ntts (e)=\subsurf$ and yields a new normal surfece $\surf'$ with $\surf' \cap \kluster^\circ\ntts (e)=\subsurf'$; the edge-weights are unchanged on all edges except on $e_1,e_2,e_3$, for which the edge-weights are changed from 0 to 2, and the surface $\surf'$ is homologous to $\surf$. In particular, $\surf'$ is nearly canonical for the same class $\aclass$. By \hyperref[lem:norm-nc]{Lemma~\ref*{lem:norm-nc}}, the inequality $||\aclass|| \leqslant -\chi(\surf')$ holds, and each surgery is indeed a compression; the equality $\chi(\surf')=\chi(\surf)+4$ holds in general for a surgery along two disks.
\end{proof}


\begin{lem} \label{lem:posse10comp1}
Let $\Posse$ be a posse of rogue edges with the 10-tetrahedra cluster $\kluster(\Posse)$ around it. If $\surf$ is a nearly canonical surface dual to $\aclass$ with $\surf \cap \kluster(\Posse)=\asurf \cap \kluster(\Posse)$, then $\surf$ can be compressed once inside $\kluster^\circ(\Posse)$ to a nearly canonical surface $\surf'$ dual to $\aclass$, satisfying $||\aclass|| \leqslant -\chi(\surf')=-\chi(\surf)-2$.
\end{lem}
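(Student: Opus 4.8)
The plan is to follow the proof of \hyperref[lem:posse11comp1]{Lemma~\ref*{lem:posse11comp1}} as closely as possible, the only structural change being that in the 10-tetrahedra case exactly two of the middle type $\typeq$ tetrahedra coincide. Relabeling the edges of $\Posse=\{e_1,e_2,e_3\}$ if necessary, I would assume $\tau_3:=\tau_{3(1)}=\tau_{3(2)}$. Since $e_1$ and $e_2$ are $\aclass$-even and $\tau_3$ is of type $\typeq$, the edges $e_1,e_2$ must constitute the pair of opposite $\aclass$-even edges of $\tau_3$; hence the quadrilateral $\asurf\cap\tau_3$ separates $e_1$ from $e_2$, with two of its sides lying on the faces of $\tau_3$ incident to $e_1$ (shared with $\tau_{2(1)},\tau_{4(1)}$) and the other two sides lying on the faces incident to $e_2$ (shared with $\tau_{2(2)},\tau_{4(2)}$). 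Consequently the subsurface $\subsurf=\surf\cap\kluster^\circ(\Posse)=\asurf\cap\kluster^\circ(\Posse)$ is built from $2$ triangles (in $\tau_1,\tau_5$) and $3$ bands of quadrilaterals joining them — one running through $\kluster(e_3)$ and two running through $\kluster(e_1)$ and $\kluster(e_2)$ — but now the latter two bands share the single quadrilateral $\asurf\cap\tau_3$ as their common middle piece. Counting the $2$ triangular and $8$ quadrilateral normal pieces against the $12$ normal arcs along which they are glued gives $\chi(\subsurf)=-2$.

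Next I would produce a single compressing disk. As in \hyperref[lem:posse11comp1]{Lemma~\ref*{lem:posse11comp1}}, let $\gamma_k\subset\subsurf$ be the arc joining the centers of the two triangles through the band of quadrilaterals in $\kluster(e_k)$. As a subcomplex, $\kluster^\circ(e_1)\cup\kluster^\circ(e_3)$ is unaffected by the coincidence, and $\surf\cap(\kluster^\circ(e_1)\cup\kluster^\circ(e_3))$ is an annulus with core $\gamma_1\cup\gamma_3$; exactly as before, $\gamma_1\cup\gamma_3$ is an embedded loop on $\subsurf$ bounding a disk $D_{13}\subset\kluster^\circ(e_1)\cup\kluster^\circ(e_3)$ that meets each of $e_1,e_3$ once and is disjoint from every other edge of $\tri$. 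The one new point to check is that $D_{13}$ still avoids $e_2$, even though $e_2$ now lies on $\tau_3\subset\kluster(e_1)$: the portion $D_{13}\cap\tau_3$ is the sub-disk bounded by the spoke $\gamma_1\cap\tau_3$ and the edge $e_1$, hence it is confined to the $e_1$-side of the quadrilateral $\asurf\cap\tau_3$ and misses $e_2$. By contrast, $\gamma_1\cup\gamma_2$ is not embedded (the spokes $\gamma_1,\gamma_2$ cross at the center of $\asurf\cap\tau_3$), while $\gamma_1\cup\gamma_3$ and the symmetric loop $\gamma_2\cup\gamma_3$ (bounding an analogous disk $D_{23}$) share the arc $\gamma_3$; surgering along any two of the available disks $D_{13},D_{23}$ would raise the edge-weight of $e_3$ from $0$ to $4$, destroying the nearly canonical property and so the applicability of \hyperref[lem:norm-nc]{Lemma~\ref*{lem:norm-nc}}. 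This is precisely why only a single compression is available here.

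Finally, surgering $\surf$ along $D_{13}$ yields a surface $\surf'$ that agrees with $\surf$ outside $\kluster(\Posse)$ and, after isotoping it to be normal, differs from $\surf$ only in that its edge-weight on $e_1$ and on $e_3$ is changed from $0$ to $2$ — the two parallel copies of $D_{13}$ each meet $e_1$ and $e_3$ once, while the remainder of $\surf'$ is disjoint from $e_1,e_2,e_3$. Thus $\surf'$ is again a nearly canonical surface dual to $\aclass$, and \hyperref[lem:norm-nc]{Lemma~\ref*{lem:norm-nc}} gives $||\aclass||\leqslant-\chi(\surf')$; since $\chi(\surf')=\chi(\surf)+2$ for a surgery along one disk, we obtain $||\aclass||\leqslant-\chi(\surf')=-\chi(\surf)-2$, as required. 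The only real obstacle is the local bookkeeping around $\tau_3$: correctly identifying the subsurface $\subsurf$ (which is no longer a pair of pants) and verifying that the disk $D_{13}$, transplanted essentially verbatim from the 11-tetrahedra construction, remains disjoint from $e_2$; everything else is a direct repetition of the earlier argument.
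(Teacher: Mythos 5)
Your proposal is correct and follows essentially the same route as the paper's proof: identify the coinciding type-$\typeq$ tetrahedron, observe that the subsurface is the pair of pants of the 11-tetrahedra case with two quadrilaterals identified (the paper identifies it as an open disk with 3 cross-caps, consistent with your $\chi(\subsurf)=-2$), note that the loop through the shared quadrilateral is only immersed, and compress once along one of the two remaining embedded disks, then invoke \hyperref[lem:norm-nc]{Lemma~\ref*{lem:norm-nc}}. The only differences are cosmetic (which pair of middle tetrahedra you declare to coincide) or parenthetical (the paper's reason that a second compression is unavailable is that the first surgery destroys the other disk, rather than your edge-weight bookkeeping, but neither point is needed for the statement).
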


\begin{figure}[t]
\begin{center}
\includegraphics[width=56.6mm, height=61.8mm]{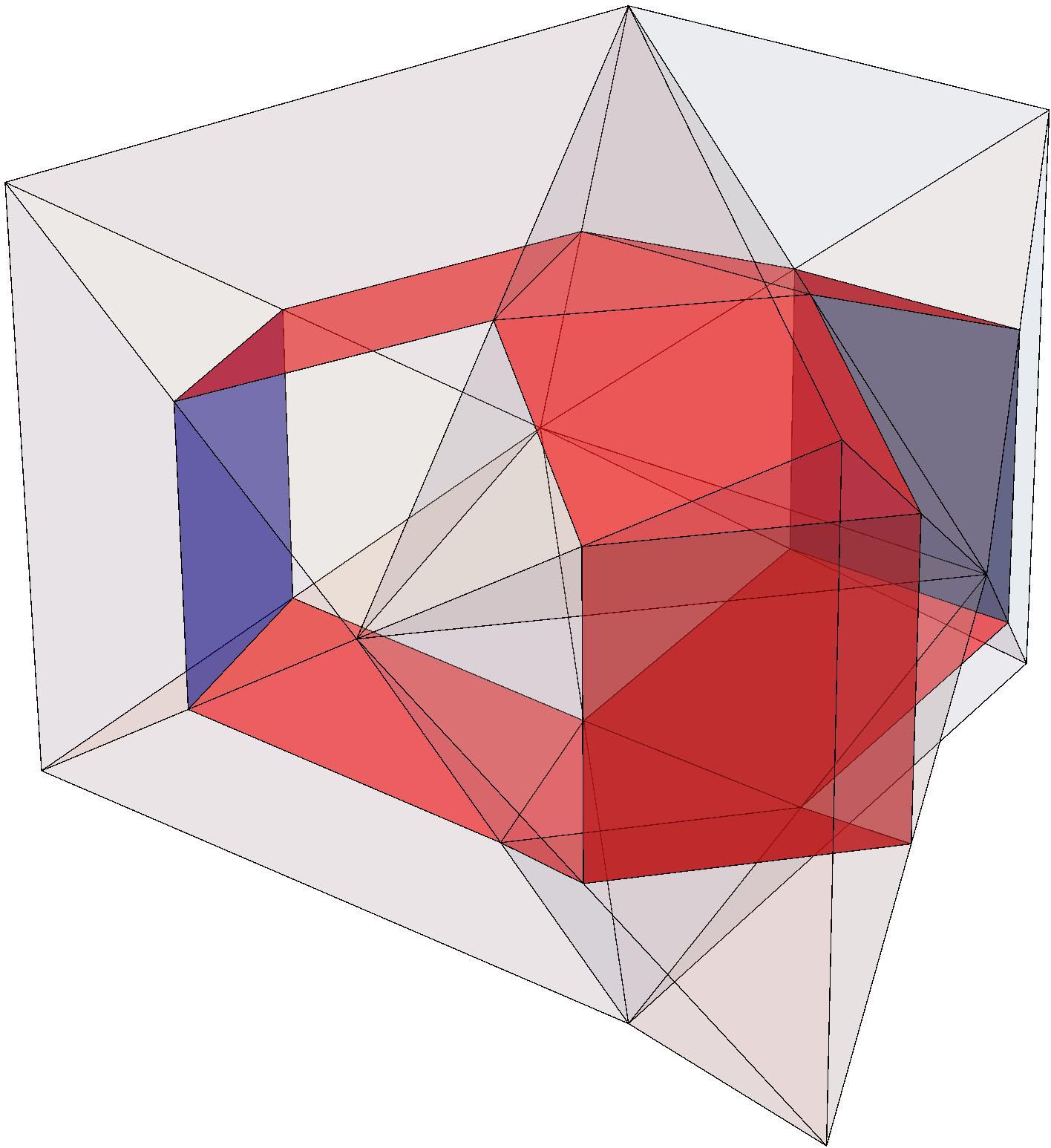} \,\qquad\,
\includegraphics[width=56.6mm, height=61.8mm]{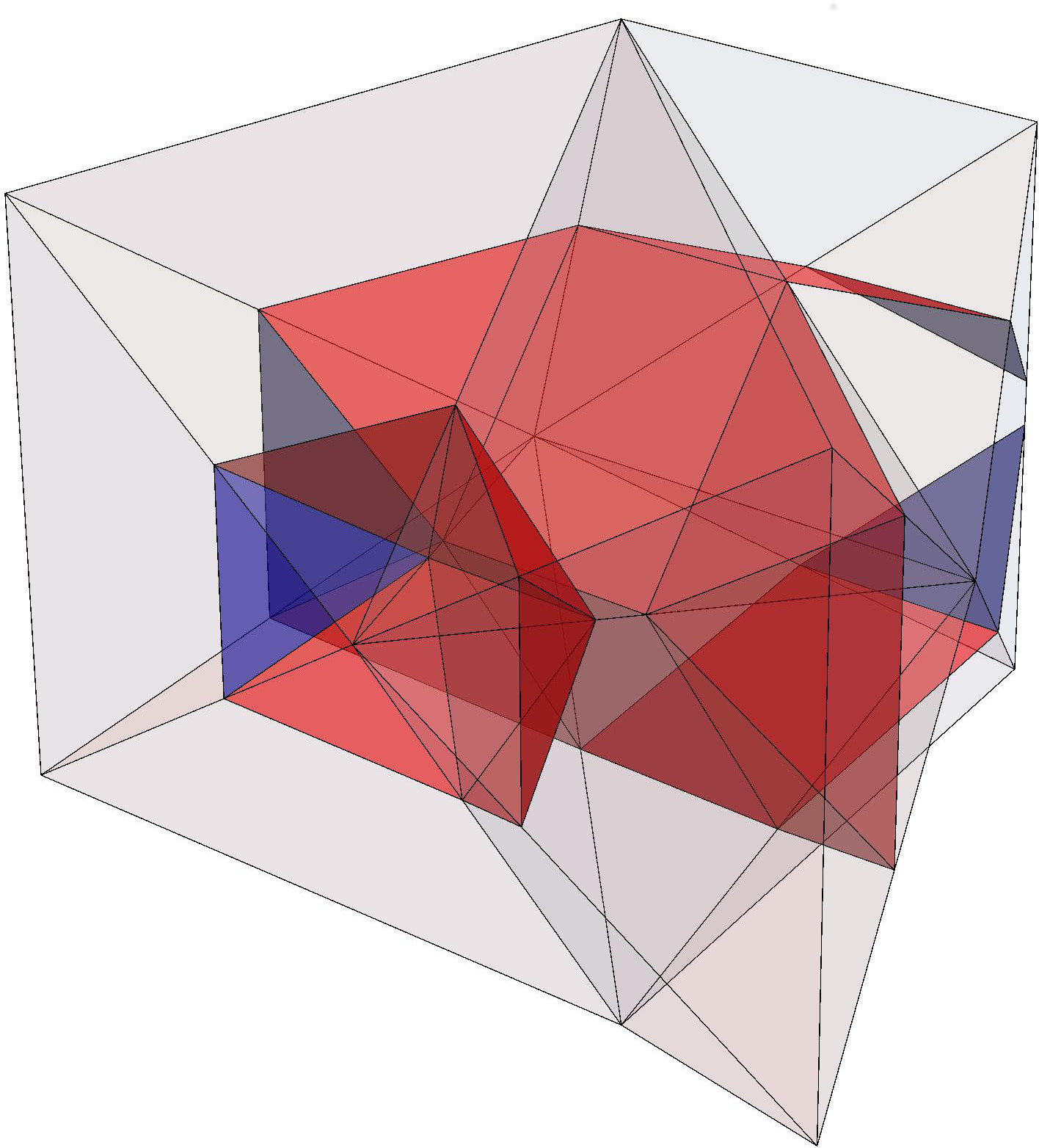}
\end{center}
\caption{A 10-tetrahedra cluster around a posse of non-isolated rogue edges (left), and compression of the canonical surface within the cluster (right); although we drew two tetrahedra containing blue disks for the sake of visualization, they actually represent the same single tetrahedron.}
\label{fig:10cluster}
\end{figure}

\begin{proof}
Let $\Posse=\{e_1,e_2,e_3\}$. Without loss of generality, we assume $\tau_{3(2)} \subset \kluster(e_2)$ and $\tau_{3(3)} \subset \kluster(e_3)$ coincide. The surface $\subsurf=\surf \cap \kluster^\circ\ntts (\Posse)=\asurf \cap \kluster^\circ\ntts (\Posse)$ is an open disk with 3 cross-caps, obtained from the pair of pants in \hyperref[lem:posse11comp1]{Lemma~\ref*{lem:posse11comp1}} by identifying the quadrilaterals in $\tau_{3(2)}$ and $\tau_{3(3)}$; see \hyperref[fig:10cluster]{Figure~\ref*{fig:10cluster}} (left).
As in the proof of \hyperref[lem:posse11comp1]{Lemma~\ref*{lem:posse11comp1}}, let $\gamma_k$ be embedded arcs connecting the centers of 2 triangles through the bands of 3 quadrilaterals in $\kluster(e_k)$, and consider homotopically non-trivial loops $\gamma_{ij}=\gamma_i \cup \gamma_j$ in $\subsurf$. In the present case, however, the arcs $\gamma_2$ and $\gamma_3$ intersect once transversely in the quadrilateral in $\tau_{3(2)}=\tau_{3(3)}$; consequentially, $\gamma_{23}$ is an immersed loop on $\subsurf$ that intersects itself once, while $\gamma_{12}$ and $\gamma_{13}$ are embedded loops on $\subsurf$ that intersect each other once (and $\gamma_{23}$ once as well). The loop $\gamma_{12}$ bounds an embedded disk $D_{12} \subset \kluster^\circ(e_1) \cup \kluster^\circ(e_2)$ that intersects each of $e_1, e_2$, once and is disjoint from $e_3$; similarly, the loop $\gamma_{13}$ bounds an embedded disk $D_{13} \subset \kluster^\circ(e_1) \cup \kluster^\circ(e_3)$ that intersects each of $e_1, e_3$ once and is disjoint from $e_2$. The surgery along either one of these disks, say $D_{12}$, yields a new surface $\subsurf'$ homeomorphic to an open M\"obius band, and it can be realized via isotopy as a normal suface in $\kluster(\Posse)$; see \hyperref[fig:10cluster]{Figure~\ref*{fig:10cluster}} (right).
This surgery takes the nearly canonical surface $\surf$ with $\surf \cap \kluster^\circ\ntts (e)=\subsurf$ and yields a new normal surfece $\surf'$ with $\surf' \cap \kluster^\circ\ntts (e)=\subsurf'$; the edge-weights are unchanged on all edges except on $e_1,e_2$, for which the edge-weights are changed from 0 to 2, and the surface $\surf'$ is homologous to $\surf$. In particular, $\surf'$ is nearly canonical for the same class $\aclass$. By \hyperref[lem:norm-nc]{Lemma~\ref*{lem:norm-nc}}, the inequality $||\aclass|| \leqslant -\chi(\surf')$ holds, and this surgery is indeed a compression; the equality $\chi(\surf')=\chi(\surf)+2$ holds in general for a surgery along a disk.
\end{proof}

\begin{rem}
The surface $\subsurf=\surf \cap \kluster^\circ\ntts (\Posse)$ can be compressed along a disk $D_{12}$ or $D_{13}$ but not both; the compression along one disk destroys the other disk.
The loop $\gamma_{23}$ bounds a disk $D_{23}$ with an embedded interior, but we cannot surger along this disk since $\partial D_{23}=\gamma_{23}$ is an immersed loop that intersects itself.
\end{rem}

\subsection{Busting}
\label{ssec:Busting1}

We now \emph{bust} the rogue edges; more precisely, we compress the canonical surface $\asurf$ across the rogue edges inside clusters. We apply \hyperref[lem:solocomp1]{Lemma~\ref*{lem:solocomp1}}, \hyperref[lem:posse11comp1]{Lemma~\ref*{lem:posse11comp1}}, and \hyperref[lem:posse10comp1]{Lemma~\ref*{lem:posse10comp1}} to achieve these compressions; we refer to these three lemmas collectively as the \emph{Busting Lemmas}.

The idea is to consider all clusters, and apply the Busting Lemmas as much as possible; the undesirable contribution from the rogue edge will be compensated by the drop in the norm of the surface. There is a one subtle point here: the clusters are not necessarily disjoint. More specifically, the tetrahedra around the rogue edge that are not contained in the maximal layered solid tori may belong to one cluster on one side and another cluster on the other side; in such a case, we can only apply the compression to one side within one of the clusters. Hence, we must show that, even if many clusters overlap at many tetrahedra and the surface cannot be compressed in all clusters simultaneously, there are enough compressions we can apply to make the desired inequality valid.

Let $\Klusters$ be the collection of all clusters around either an isolated rogue edge or a posse of non-isolated rogue edges; we denote the number of constituent clusters by
\[
\kappa:=\#\Klusters.
\]
Although we can't always compress $\isurf$ ($i=1,2,3$) repeatedly inside \emph{all} clusters in $\Klusters$, we can and shall compress them repeatedly inside clusters that share no tetrahedra to assure a large gap between $\sum ||\iclass||$ and $-\sum \chi(\isurf)$.

\begin{lem} \label{lem:maxcomp1}
Let $\Klusters' \subseteq \Klusters$ be a subcollection of clusters such that no two clusters $\kluster_1,\kluster_2 \nts \in \Klusters'$ share a tetrahedron, and write $\kappa'\nts:=\#\Klusters'$ for the number of constituent clusters in $\Klusters'$. Then, we have 
\[
||\aclass|| \leqslant -\chi(\asurf)-2\kappa'.
\]
\end{lem}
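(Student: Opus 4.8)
The plan is to process the clusters of $\Klusters'$ one at a time, invoking the Busting Lemmas and exploiting the disjointness hypothesis to guarantee that a compression carried out in one cluster does not disturb the canonical picture inside the clusters handled afterward. Concretely, I would enumerate $\Klusters'=\{\kluster_1,\dots,\kluster_{\kappa'}\}$ and build a finite chain of nearly canonical surfaces dual to $\aclass$,
\[
\asurf=\surf_0,\;\surf_1,\;\dots,\;\surf_{\kappa'},
\]
where $\surf_n$ is obtained from $\surf_{n-1}$ by applying the Busting Lemma appropriate to $\kluster_n$: \hyperref[lem:solocomp1]{Lemma~\ref*{lem:solocomp1}} when $\kluster_n$ is the cluster around an isolated rogue edge, \hyperref[lem:posse11comp1]{Lemma~\ref*{lem:posse11comp1}} when $\kluster_n$ is an $11$-tetrahedra cluster around a posse, and \hyperref[lem:posse10comp1]{Lemma~\ref*{lem:posse10comp1}} when $\kluster_n$ is a $10$-tetrahedra cluster around a posse. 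Each such step is a compression (a double compression in the $11$-tetrahedra case) supported inside $\kluster_n$ with its boundary faces removed, so $-\chi(\surf_n)=-\chi(\surf_{n-1})-2$, with an extra drop of $2$ in the $11$-tetrahedra case.

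First I would verify the hypothesis needed at the $n$-th step, namely $\surf_{n-1}\cap\kluster_n=\asurf\cap\kluster_n$; this is exactly where the assumption that no two clusters in $\Klusters'$ share a tetrahedron is used, and I would prove it by induction alongside the construction. Each of the three Busting Lemmas modifies the surface only within the tetrahedra of its cluster and changes edge-weights only on the rogue edge (or edges) of that cluster, from $0$ to $2$, leaving the surface unchanged on the boundary faces of the cluster. Since every tetrahedron incident to a rogue edge already lies in that edge's cluster, the surface $\surf_{n-1}$ differs from $\asurf$ only inside the tetrahedra of $\kluster_1\cup\dots\cup\kluster_{n-1}$; as $\kluster_n$ shares no tetrahedron with any of these by hypothesis, $\surf_{n-1}$ and $\asurf$ carry identical normal disks in every tetrahedron of $\kluster_n$, hence agree on $\kluster_n$. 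Thus the relevant Busting Lemma applies, $\surf_n$ is again a nearly canonical surface dual to $\aclass$, and it still agrees with $\asurf$ outside $\kluster_1\cup\dots\cup\kluster_n$, which closes the induction.

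Finally I would accumulate the Euler-characteristic drops: $-\chi(\surf_{\kappa'})=-\chi(\asurf)-2N$ where $N\geqslant\kappa'$ is the total number of compressions performed (at least one per cluster). Since $\surf_{\kappa'}$ is a nearly canonical surface dual to $\aclass$, \hyperref[lem:norm-nc]{Lemma~\ref*{lem:norm-nc}} yields $||\aclass||\leqslant-\chi(\surf_{\kappa'})\leqslant-\chi(\asurf)-2\kappa'$, which is the claim. The only genuinely delicate point is the bookkeeping in the inductive step above, that is, tracking precisely which tetrahedra and edge-weights are affected by each compression so that the clusters not yet processed remain untouched; the ``no shared tetrahedron'' hypothesis is tailored to make that bookkeeping go through, and the rest is a direct accumulation of the Busting Lemmas together with \hyperref[lem:norm-nc]{Lemma~\ref*{lem:norm-nc}}.
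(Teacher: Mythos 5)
Your proposal is correct and follows essentially the same route as the paper: enumerate the clusters of $\Klusters'$, iteratively apply the appropriate Busting Lemma, use the no-shared-tetrahedron hypothesis to keep the surface equal to $\asurf$ on each unprocessed cluster, and conclude with \hyperref[lem:norm-nc]{Lemma~\ref*{lem:norm-nc}}. Your inductive bookkeeping of which tetrahedra and edge-weights change is, if anything, slightly more explicit than the paper's.
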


\begin{proof}
Let $\kluster_1, \cdots, \kluster_{\kappa'}$ be the clusters in $\Klusters'$. We compress $\surf^{(0)}=\asurf$ inside the cluster $\kluster_1$ by one of the \emph{Busting Lemmas}, and produce a nearly canonical surface $\surf^{(1)}$ which differs from $\surf^{(0)}$ only within $\kluster_1$ and hence satisfies $\surf^{(1)} \cap \kluster^\circ_\ell = \asurf \cap \kluster^\circ_\ell$ for $\ell=2,\cdots,\kappa'$.
We repeat inductively for $k=1, \cdots, \kappa'$: given a nearly canonical surface $\surf^{(k-1)}$ such that $\surf^{(k-1)} \cap \kluster^\circ_\ell=\asurf \cap \kluster^\circ_\ell$ for $\ell=k, \cdots, \kappa'$, we compress $\surf^{(k-1)}$ inside the cluster $\kluster_k$ by one of the \emph{Busting Lemmas}, and produce a nearly canonical surface $\surf^{(k)}$ which differs from $\surf^{(k-1)}$ only within $\kluster_k$ and hence satisfies $\surf^{(k)} \cap \kluster^\circ_\ell = \asurf \cap \kluster^\circ_\ell$ for $\ell=k+1,\cdots,\kappa'$. In particular, since we compress $\surf^{(k-1)}$ at least once to produce $\surf^{(k)}$ at each step in this series of compressions, we have
\[
||\aclass|| \,\leqslant\, -\chi(\surf^{(k)}) \,\leqslant\, -\chi(\surf^{(k-1)})-2
\]
for $k=1, \cdots, \kappa'$ by the \emph{Busting Lemmas}. Hence, we obtain
\[
||\aclass|| \,\leqslant\, -\chi(\surf^{(\kappa')}) \,\leqslant\, -\chi(\asurf)-2\kappa'
\]
by combining all inequalities in $\kappa'$ steps and rewriting $\surf^{(0)}=\asurf$.
\end{proof}

The inequality in \hyperref[lem:maxcomp1]{Lemma~\ref*{lem:maxcomp1}} involves $\kappa'$ which depends on the choice of the subcollection $\Klusters'$. To obtain an estimate independent of such choices, we would like to compare $\kappa'$ with $\kappa$. This can be done under another condition on $\Klusters'$.

\begin{lem} \label{lem:network1}
Let $\Klusters' \subseteq \Klusters$ be a subcollection of clusters such that every cluster $\kluster \nts \in \Klusters \ssminus \Klusters'$ share a tetrahedron with some cluster $\kluster' \nts \in \Klusters'$, and write $\kappa':=\#\Klusters'$ for the number of constituent clusters in $\Klusters'$. Then, we have
\[
4\kappa' \geqslant \kappa.
\]
\end{lem}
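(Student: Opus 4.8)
The plan is to reduce the inequality $4\kappa' \geqslant \kappa$ to the single structural assertion that \emph{each cluster in $\Klusters$ shares a tetrahedron with at most three other clusters in $\Klusters$}. Granting this, write $N(\kluster)$ for the set of clusters $\kluster'' \in \Klusters$ with $\kluster'' \neq \kluster$ that share a tetrahedron with $\kluster$; by the hypothesis on $\Klusters'$, every cluster of $\Klusters \ssminus \Klusters'$ lies in $N(\kluster')$ for some $\kluster' \in \Klusters'$, so
\[
\kappa - \kappa' \;=\; \#(\Klusters \ssminus \Klusters') \;\leqslant\; \sum_{\kluster' \in \Klusters'} \#N(\kluster') \;\leqslant\; 3\kappa',
\]
and rearranging gives $\kappa \leqslant 4\kappa'$.

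To prove the structural assertion, I would first classify the tetrahedra of a cluster according to the profiles recorded in \hyperref[lem:tprofile1]{Lemma~\ref*{lem:tprofile1}}, the key point being that a tetrahedron $\tau$ lies in the cluster attached to a rogue edge $e'$ only if $e'$ is one of the six edges of $\tau$; since rogue edges are adult, hence $\aclass$-even, it is enough to track the adult edges of each $\tau$. For an isolated rogue edge $e$ (of degree $5$ or $6$), the cluster $\kluster(e)$ is built from the $\ddot\typeq$ tetrahedra---each the outermost tetrahedron of a maximal layered solid torus $\lst$ with $\lst \leadsto e$---together with exactly three undecorated type $\typeq$ tetrahedra. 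For a posse $\Posse = \{e_1,e_2,e_3\}$, the cluster $\kluster(\Posse)$ is built from the six layered-solid-torus tetrahedra $\tau_{2(m)},\tau_{4(m)}$, the two type $\typet$ tetrahedra $\tau_1,\tau_5$ meeting along the common $H$-even face of $\Posse$, and the three ``middle'' type $\typeq$ tetrahedra $\tau_{3(1)},\tau_{3(2)},\tau_{3(3)}$ (all distinct in the $11$-tetrahedron case, with exactly two distinct in the $10$-tetrahedron case).

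The heart of the argument is to show that sharing can occur only through the undecorated/middle type $\typeq$ tetrahedra, and through each at most once. If $\tau$ lies in a maximal layered solid torus $\lst$ with $\lst \leadsto e$, then by \hyperref[lem:eprofile1]{Lemma~\ref*{lem:eprofile1}} the torus $\lst$ is of type $\typeq$, so $\lst$ has a unique adult edge, namely its supporter; every edge of $\tau$ is an edge of $\lst$, so the supporter is the only adult edge of $\tau$, and it is precisely the rogue/posse edge already defining the cluster at hand---hence $\tau$ lies in no other cluster of $\Klusters$. If $\tau$ is one of the type $\typet$ tetrahedra $\tau_1,\tau_5$ of a posse cluster, its three $\aclass$-even edges are exactly $e_1,e_2,e_3$, all belonging to $\Posse$, so again $\tau$ lies in no other cluster. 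Finally, if $\tau$ is an undecorated (resp.\ middle) type $\typeq$ tetrahedron, then $\tau$ has exactly two $\aclass$-even edges, forming an opposite pair $\{e_\star, e^*\}$ with $e_\star$ the rogue/posse edge defining the cluster; any further cluster of $\Klusters$ containing $\tau$ is governed by a rogue edge of $\tau$, which must then be $e^*$, so $\tau$ lies in at most one cluster besides the original. Moreover, in the $10$-tetrahedron case the coincident tetrahedron $\tau_{3(2)} = \tau_{3(3)}$ has its two $\aclass$-even edges equal to $e_2$ and $e_3$, both in $\Posse$, so it contributes no further cluster. Counting up: a cluster has at most three undecorated/middle type $\typeq$ tetrahedra, each producing at most one further cluster, so each cluster in $\Klusters$ shares a tetrahedron with at most three others---as required.

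The step I expect to be the main obstacle is the third one above: verifying uniformly, across both the isolated-edge and the posse profiles, that no layered-solid-torus tetrahedron and no type $\typet$ tetrahedron of a cluster can sit inside a second cluster. What makes this manageable is that each such tetrahedron carries no adult edge other than the one (or three) already defining its own cluster---for the layered-solid-torus tetrahedra this is forced by \hyperref[lem:eprofile1]{Lemma~\ref*{lem:eprofile1}} (the enclosing torus is type $\typeq$ and hence has a single adult edge, namely its supporter) and for the type $\typet$ tetrahedra it is immediate from the definition of a posse. Once these exclusions are in hand, the remaining count is a direct reading of the cyclic profiles listed in \hyperref[lem:tprofile1]{Lemma~\ref*{lem:tprofile1}}.
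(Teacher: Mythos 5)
Your proposal is correct and follows essentially the same route as the paper: both reduce the inequality to the structural fact that each cluster can share a tetrahedron with at most three other clusters (which the paper asserts tersely by citing \hyperref[lem:tprofile1]{Lemma~\ref*{lem:tprofile1}} and the cluster descriptions, and which you verify in detail by tracking the adult edges of each tetrahedron type), and then conclude by the same covering/double-counting argument, differing only in whether the cluster itself is included in the neighbor count (your bound $\kappa-\kappa'\leqslant 3\kappa'$ versus the paper's $\kappa\leqslant 4\kappa'$, which are equivalent).
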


\begin{proof}
Since every cluster $\kluster \nts \in \Klusters \ssminus \Klusters'$ share a tetrahedron with some cluster $\kluster' \nts \in \Klusters'$ by assumption, we have an equality of collections
\[
\Klusters \;= \! \bigcup_{\kluster' \in \Klusters'} \! \{ \text{clusters $\kluster \nts \in \Klusters$ sharing a tetrahedron with $\kluster'$} \}.
\]
Possibly counting some clusters $\kluster \nts \in \Klusters$ with multiplicities, we obtain an inequality
\begin{align} \label{eqn:network1}
\kappa \;\leqslant \! \sum_{\kluster' \in \Klusters'} \! \#\{ \text{clusters $\kluster \nts \in \Klusters$ sharing a tetrahedron with $\kluster'$} \}.
\end{align}
We shall estimate the righthand term in terms of $\kappa'$. If a cluster $\kluster' \nts \in \Klusters'$ and another cluster $\kluster \nts \in \Klusters$ share a tetrahedron, the common tetrahedron must be a type $\typeq$ tetrahedron that is not contained in maximal layered solid tori. Hence, by \hyperref[lem:tprofile1]{Lemma~\ref*{lem:tprofile1}} and the description of clusters from \hyperref[ssec:Rogue1]{\S\ref*{ssec:Rogue1}}, each cluster $\kluster' \nts \in \Klusters'$ can share a tetrahedron with at most 3 other clusters $\kluster \nts \in \Klusters$. Including $K'$ itself, we thus have a coarse estimate
\[
4 \,\geqslant\, \#\{ \text{clusters $\kluster \nts \in \Klusters$ sharing a tetrahedron with $\kluster'$} \}
\]
for each $\kluster' \nts \in \Klusters'$. Taking the sum over all $\kluster' \nts \in \Klusters'$ on both sides, we obtain
\begin{align} \label{eqn:network'1}
4\kappa' \;\geqslant \!\! \sum_{\kluster' \in \Klusters'} \!\! \#\{ \text{clusters $\kluster \nts \in \Klusters$ sharing a tetrahedron with $\kluster'$} \}.
\end{align}
From the inequalities \hyperref[eqn:network1]{(\ref*{eqn:network1})} and \hyperref[eqn:network'1]{(\ref*{eqn:network'1})} above, we conclude $4 \kappa' \geqslant \kappa$.
\end{proof}

Combining the last two lemmas, we obtain the following inequality, which refines \hyperref[lem:norm-c]{Lemma~\ref*{lem:norm-c}} under a few additional assumptions on $\tri$ and $M$.

\begin{prop} \label{prop:fullcomp1}
Let $M$ be an irreducible 3-manifold such that $M \nhomeo \RP^3, L(4,1)$, with a non-zero class $\aclass \in H^1(M;\bbZtwo)$. Then, with respect to any minimal triangulation $\tri$ of $M$, that is not a layered lens space, the canonical surface $\asurf$ has no $S^2$-components or $\RP^2$-components, and satisfies
\[
||\aclass|| \leqslant -\chi(\asurf)-\kappa/2.
\]
\end{prop}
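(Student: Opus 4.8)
The plan is to obtain the proposition by combining Lemma~\ref{lem:maxcomp1} and Lemma~\ref{lem:network1}, applied to a single well-chosen subcollection of $\Klusters$. Before doing so, I would dispose of the assertion about $S^2$- and $\RP^2$-components. Since $M$ is irreducible it is in particular prime, and since $M$ carries a non-zero class in $H^1(M;\bbZtwo)$ it cannot be $S^3$ or $L(3,1)$ (both have trivial first $\bbZtwo$-cohomology), nor $\SxS$ (which is not irreducible); together with the standing hypothesis $M \nhomeo \RP^3$, this places us outside the exceptional list of Theorem~\ref{thm:0-efficient}, so the minimal triangulation $\tri$ is one-vertex and $0$-efficient. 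Lemma~\ref{lem:norm-c} (equivalently Lemma~\ref{lem:norm-nc}) then applies to $\asurf$ directly and yields the statement about components, as well as the weaker inequality $||\aclass|| \leqslant -\chi(\asurf)$ that the present proposition strengthens.

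For the inequality, the crux is the choice of $\Klusters'$. I would take $\Klusters' \subseteq \Klusters$ to be a subcollection that is \emph{maximal} with respect to inclusion among all subcollections whose members are pairwise tetrahedron-disjoint (i.e.\ a maximal independent set in the intersection graph of clusters). By construction, no two clusters in $\Klusters'$ share a tetrahedron, so the hypothesis of Lemma~\ref{lem:maxcomp1} is satisfied and we get $||\aclass|| \leqslant -\chi(\asurf) - 2\kappa'$, where $\kappa' = \#\Klusters'$. Maximality is precisely what makes $\Klusters'$ also satisfy the hypothesis of Lemma~\ref{lem:network1}: if some $\kluster \in \Klusters \ssminus \Klusters'$ shared a tetrahedron with no member of $\Klusters'$, then $\Klusters' \cup \{\kluster\}$ would be a strictly larger tetrahedron-disjoint subcollection, contradicting maximality; hence every cluster outside $\Klusters'$ shares a tetrahedron with one inside, and Lemma~\ref{lem:network1} gives $4\kappa' \geqslant \kappa$.

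Combining the two, $2\kappa' \geqslant \kappa/2$, so
\[
||\aclass|| \;\leqslant\; -\chi(\asurf) - 2\kappa' \;\leqslant\; -\chi(\asurf) - \kappa/2,
\]
which is the desired inequality.

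I do not anticipate a serious obstacle: the substantive work — the local compression analysis in the Busting Lemmas, the iterated-surgery bookkeeping behind Lemma~\ref{lem:maxcomp1}, and the bounded-overlap estimate behind Lemma~\ref{lem:network1} — is already in hand, and what remains is the single observation that one maximal tetrahedron-disjoint subcollection simultaneously meets the hypotheses of both lemmas. The only point that deserves a line of care is the opening reduction, namely confirming that $\tri$ is one-vertex and $0$-efficient so that Lemma~\ref{lem:norm-c}/\ref{lem:norm-nc} is available; this follows because every manifold in the exceptional list of Theorem~\ref{thm:0-efficient} either fails to be irreducible or has trivial first $\bbZtwo$-cohomology, and is therefore excluded by the hypotheses of the proposition.
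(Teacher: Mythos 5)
Your proposal is correct and follows essentially the same route as the paper: the paper also chooses a maximal pairwise tetrahedron-disjoint subcollection (built greedily, which is the same as your maximal independent set) and combines Lemma~\ref{lem:maxcomp1} with Lemma~\ref{lem:network1} to conclude. Your explicit handling of the $S^2$/$\RP^2$ claim via Theorem~\ref{thm:0-efficient} and Lemma~\ref{lem:norm-c}, and your writing the final step as an inequality $-2\kappa' \leqslant -\kappa/2$ rather than the paper's (typographical) equality, are both sound.
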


\begin{proof}
Choose a subcollection $\Klusters' \subseteq \Klusters$ of clusters, such that (i) no two clusters $\kluster_1,\kluster_2 \in \Klusters'$ share a tetrahedron, and (ii) every cluster $\kluster \nts \in \Klusters \ssminus \Klusters'$ share a tetrahedron with some cluster $\kluster' \nts \in \Klusters'$. We can always choose such a subcollection $\Klusters' \subseteq \Klusters$: starting with the empty collection, we enlarge it by adding clusters while maintaining the condition (i), until no more clusters can be added; the resulted collection then satisfies (ii). With the notation $\kappa':=\#\Klusters'$ as before, the inequalities
\[
||\aclass|| \,\leqslant\, -\chi(\asurf)-2\kappa' \,=\, -\chi(\asurf)-\kappa/2
\]
follow immediately from \hyperref[lem:maxcomp1]{Lemma~\ref*{lem:maxcomp1}} and \hyperref[lem:network1]{Lemma~\ref*{lem:network1}}.
\end{proof}

The last proposition assures that the gap between $||\aclass||$ and $-\chi(\asurf)$ is at least $\kappa/2$. This gap estimate can be related directly to the values $\fcount_1(e)$ on rogue edges.

\begin{lem} \label{lem:clusterogue1}
Let $\Rogues$ be the collection of rogue edges. Then, we have
\[
\kappa=-\!\!\sum_{e \in \Rogues} \fcount_1(e).
\]
\end{lem}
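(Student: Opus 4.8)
The plan is to classify the rogue edges via Lemma~\ref{lem:tprofile1} and to set up an explicit bijection between $\Klusters$ and the set of ``rogue configurations'' (isolated rogue edges together with posses of non-isolated ones), and then to match the count of configurations against $-\sum_{e\in\Rogues}\fcount_1(e)$ by recording the value of $\fcount_1$ on each type of rogue edge.

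First I would observe that $\fcount_1$ and $\gcount_1$ agree on $\Rogues$: a rogue edge is by definition insolvent with no solvent neighbor, so $\acount_1(e)=\tfrac13\,\#\{\text{solvent neighbors of }e\}=0$ and hence $\fcount_1(e)=\gcount_1(e)$; thus it suffices to prove $\kappa=-\sum_{e\in\Rogues}\gcount_1(e)$. Since we have already passed to a triangulation with no insolvent edge of degree~$4$ (Proposition~\ref{prop:edgeflip1}), every rogue edge is, by Lemma~\ref{lem:tprofile1} and the case list at the start of \S\ref{ssec:Decent1}, of exactly one of the following kinds: case~(1) ($\dcount(e)=6$, $\gcount_1(e)=-1$), case~(2a) ($\dcount(e)=5$, $\gcount_1(e)=-1$), or case~(2b) ($\dcount(e)=5$, $\gcount_1(e)=-\tfrac13$). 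Edges in cases~(1) and~(2a) are incident to no $H$-even face, hence isolated; an edge in case~(2b) is incident to exactly one $H$-even face (the face between its two type-$\typet$ tetrahedra), and it is rogue precisely when the other two edges of that face are insolvent, in which case those two edges are necessarily also of case~(2b) and also rogue. Hence the non-isolated rogue edges are partitioned into posses of size~$3$ (each such edge lying in the unique posse cut out by its $H$-even face), and
\[
-\sum_{e\in\Rogues}\gcount_1(e)\;=\;\#\{\text{isolated rogue edges}\}\;+\;\tfrac13\,\#\{\text{non-isolated rogue edges}\}\;=\;\#\{\text{isolated rogue edges}\}+\#\{\text{posses}\}.
\]

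Next I would identify the right-hand side with $\kappa$. By construction $\Klusters$ consists of the clusters $\kluster(e)$ for $e$ an isolated rogue edge together with the clusters $\kluster(\Posse)$ for $\Posse$ a posse, so there is a surjection from $\{\text{isolated rogue edges}\}\sqcup\{\text{posses}\}$ onto $\Klusters$, and I must check it is injective. This follows from the explicit pictures in Lemma~\ref{lem:tprofile1} and \S\ref{ssec:Rogue1}: $\kluster(e)$ for an isolated rogue edge is a $\dcount(e)$-gonal bipyramid ($\dcount(e)=5,6$) all of whose tetrahedra are of type~$\typeq$, and its core edge $e$ --- the unique edge common to all $\dcount(e)$ of them --- is recoverable from the cluster; whereas $\kluster(\Posse)$ contains the two type-$\typet$ tetrahedra meeting along the central $H$-even face, whose three edges form $\Posse$, so $\Posse$ is recoverable, and no isolated cluster can equal a posse cluster since the two have different numbers of type-$\typet$ tetrahedra. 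Hence the map is a bijection, $\kappa=\#\Klusters=\#\{\text{isolated rogue edges}\}+\#\{\text{posses}\}$, and combined with the displayed identity this gives $\kappa=-\sum_{e\in\Rogues}\fcount_1(e)$.

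The argument is essentially bookkeeping, and I expect the only point requiring care to be the injectivity in the last paragraph --- concretely, that each non-isolated rogue edge lies in exactly one posse (which holds because it has a unique incident $H$-even face) and that no tetrahedron-level coincidence collapses two distinct rogue configurations onto a single cluster (ruled out by the local combinatorial descriptions already established in Lemma~\ref{lem:tprofile1} and \S\ref{ssec:Rogue1}). I do not anticipate any substantial difficulty beyond assembling these facts.
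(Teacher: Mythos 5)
Your proposal is correct and follows essentially the same route as the paper: both arguments reduce to observing that $\fcount_1=\gcount_1=-1$ on each isolated rogue edge and $-\tfrac{1}{3}$ on each member of a posse, so that each cluster in $\Klusters$ contributes exactly $-1$ to the sum. The only difference is cosmetic — the paper sums $\fcount_1$ cluster-by-cluster using the fact that each rogue edge is an interior edge of exactly one cluster, while you phrase the same bookkeeping as a bijection between rogue configurations and clusters, with a slightly more explicit justification of injectivity.
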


\begin{proof}
For each cluster $\kluster \in \Klusters$, let us write $\fcount_1(\kluster)$ for the sum of $\fcount_1(e)$ over all interior rogue edges $e \in \Rogues$ in the cluster $\kluster$. Since every rogue edge $e \in \Rogues$ is contained as an interior edge in exactly one cluster $K \in \Klusters$, we have
\begin{align} \label{eqn:clusterogue1}
\sum_{e \in \Rogues} \fcount_1(e)=\sum_{\kluster \in \Klusters} \fcount_1(\kluster).
\end{align}

An isolated rogue edge $e$ appears either in the case (1) or (2a), listed at the beginning of \hyperref[ssec:Decent1]{\S\ref*{ssec:Decent1}}, with $\gcount_1(e)=-1$, $\acount_1(e)=0$, and $\fcount_1(e)=-1$; hence, if $\kluster \nts \in \Klusters$ is a cluster around an isolated rogue edge $e$, then $\fcount_1(\kluster)=\fcount_1(e)=-1$. Each edge $e_k$ in a posse $\Posse=\{e_1,e_2,e_3\}$ of non-isolated rogue edges appears in the case (2b), listed at the beginning of \hyperref[ssec:Decent1]{\S\ref*{ssec:Decent1}}, with $\gcount_1(e_k)=-\frac{1}{3}$, $\acount_1(e_k)=0$, and $\fcount_1(e_k)=-\frac{1}{3}$; hence, if $\kluster \nts \in \Klusters$ is a cluster around a posse $\Posse=\{e_1,e_2,e_3\}$ of non-isolated rogue edges, then $\fcount_1(\kluster)=\fcount_1(e_1)+\fcount_1(e_2)+\fcount_1(e_3)=-1$. Therefore, we have $\fcount_1(\kluster)=-1$ for every cluster $\kluster \in \Klusters$, and the equality $\kappa=-\sum_{e \in \Rogues} \fcount_1(e)$ follows from \hyperref[eqn:clusterogue1]{(\ref*{eqn:clusterogue1})}.
\end{proof}

We now complete the proof of \hyperref[thm:rank1]{Theorem~\ref*{thm:rank1}}. By \hyperref[prop:fullcomp1]{Proposition~\ref*{prop:fullcomp1}}, we have
\[
2\Tcount-4-4||\aclass||
\;\geqslant\; 2\Tcount-4+4\chi(\asurf)+2\kappa.
\]
Let us write $\Rogues$ for the set of all rogue edges as before. Then, by \hyperref[lem:frewrite1]{Lemma~\ref*{lem:frewrite1}}, \hyperref[lem:decent1]{Lemma~\ref*{lem:decent1}}, and \hyperref[lem:clusterogue1]{Lemma~\ref*{lem:clusterogue1}}, we obtain
\[
2\Tcount-4+4\chi(\asurf)
\;\geqslant\; \sum_{e \in \Adults} \fcount_1(e)
\;\geqslant\; \sum_{e \in \Rogues} \fcount_1(e)
\;=\; -\kappa.
\]
Hence, combining these inequalities, we have
\[
2\Tcount-4-4||\aclass||
\;\geqslant\; 2\Tcount-4+4\chi(\asurf)+2\kappa
\;\geqslant\; -\kappa+2\kappa
\;=\;\kappa
\;\geqslant\; 0,
\]
or equivalently $\Tcount \geqslant 2+2||\aclass||$. This concludes the proof of \hyperref[thm:rank1]{Theorem~\ref*{thm:rank1}}.

\section{The Rank-2 Inequality for Prime Manifolds}
\label{sec:Rank2}

In this section, we establish \hyperref[thm:main2]{Theorem~\ref{thm:main2}}, regarding a lower bound on the complexity of an prime 3-manifold $M$ with $\rank H^1(M;\bbZtwo) \geqslant 2$. We restate \hyperref[thm:main2]{Theorem~\ref{thm:main2}} as follows.

\begin{thm} \label{thm:rank2}
Let $M$ be an orientable connected closed prime 3-manifold with a rank-2 subgroup $\{0,\rclass, \gclass, \bclass\} \leqslant H^1(M;\bbZtwo)$, and $\scrT$ be a minimal triangulation of $M$. Then, we have
\[
\Tcount(\tri) \geqslant 2+||\rclass||+||\gclass||+||\bclass||.
\]
\end{thm}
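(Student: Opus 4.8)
The plan is to run the argument of \S\ref{sec:Rank1} with the rank-$2$ $H$-coloring of $\tri$ in place of the rank-$1$ coloring, invoking Lemma~\ref{lem:comb2} instead of Lemma~\ref{lem:comb1} and carrying the three canonical surfaces $\rsurf,\gsurf,\bsurf$ along simultaneously. First I would dispose of the small cases: since $\rank H^1(M;\bbZtwo)\geqslant 2$, the manifold $M$ is none of $S^3$, $\SxS$, $\RP^3$, $L(3,1)$, $L(4,1)$ and is not a lens space, and, being prime, it is irreducible; hence by Theorem~\ref{thm:0-efficient} any minimal triangulation $\tri$ of $M$ is one-vertex and $0$-efficient, is automatically not a layered lens space, so that Lemma~\ref{lem:comb2}, Lemma~\ref{lem:norm-nc}, Proposition~\ref{prop:mlst} and the edge-flip mechanism of Proposition~\ref{prop:edgeflip1} all apply. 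Writing
\[
\Icount_2:=2\Tcount-4+2\!\sum_{i\in\Index}\!\chi(\isurf)=\sum_{d\geqslant 3}(d-4)\Ecount_{\typee,d}+\Tcount_\typett+\Tcount_\typeqtt,
\]
we have $2\Tcount-4-2\sum_{i}||\iclass||=\Icount_2+2\sum_{i}(-\chi(\isurf)-||\iclass||)$, so it suffices, exactly as in \S\ref{sec:Rank1}, to (a) bound $\Icount_2$ below by $-\kappa$ for a suitable count $\kappa$ of bad clusters and (b) bound the total compression gap $\sum_{i}(-\chi(\isurf)-||\iclass||)$ below by $\kappa$.

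For step (a) I would transcribe the demography of \S\ref{ssec:Demography1}--\S\ref{ssec:Decent1}. The key structural input is that a maximal layered solid torus $\lst$ is colour-homogeneous: because $H^1(\lst;\bbZtwo)\cong\bbZtwo$ and $\rclass+\gclass+\bclass=0$, the restrictions of $\rclass,\gclass,\bclass$ to $\lst$ are either all zero (so $\lst$ is $H$-even, with all tetrahedra of type $\typee$) or are zero for exactly one $i\in\Index$ (so $\lst$ is $i$-even, with all tetrahedra of type $\typee$ or $\typeqq$); in particular $\lst$ contains no tetrahedron of type $\typett$ or $\typeqtt$. Using that a type-$\typett$ tetrahedron has three $H$-even edges and a type-$\typeqtt$ one exactly one, we may set $\icount_2(e):=\dcount(e)-4+\tfrac{1}{3}\dcount_\typett(e)+\dcount_\typeqtt(e)$ for each $H$-even edge $e$, so that $\Icount_2=\sum_{e}\icount_2(e)$; grouping the child edges of maximal layered solid tori under their supporters then gives $\Icount_2\geqslant\sum_{e\in\Adults}\gcount_2(e)$ with $\gcount_2(e):=\dcount(e)-4-\bcount(e)+\tfrac{1}{3}\dcount_\typett(e)+\dcount_\typeqtt(e)$, the verbatim analogues of Lemmas~\ref{lem:irewrite1}--\ref{lem:grewrite1}. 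The bound $\lfloor\dcount(e)/2\rfloor\geqslant\scount(e)\geqslant\bcount(e)$ confines insolvent adult edges to degrees $4,5,6$; those of degree $4$ are cleared by edge flips as in Proposition~\ref{prop:edgeflip1}; defining the adjustment $\acount_2$ and the refined count $\fcount_2:=\gcount_2+\acount_2$ as in \S\ref{ssec:Decent1} yields $\Icount_2\geqslant\sum_{e\in\Adults}\fcount_2(e)\geqslant\sum_{e\in\Rogues}\fcount_2(e)=-\kappa$, where $\kappa$ counts the clusters around isolated rogue edges and around posses.

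For step (b) I would adapt the Busting Lemmas. The coefficient $+1$ on $\dcount_\typeqtt(e)$ forces $\dcount_\typeqtt(e)=0$ at every insolvent $H$-even edge of degree $5$ or $6$, and a type-$\typeqqq$ tetrahedron has no $H$-even edge, so the cluster around such an edge is built only from tetrahedra of types $\typee$, $\typett$ and $\typeqq$, and matching the face-colours around the edge shows that all of its $\typett$- and $\typeqq$-tetrahedra share one common even direction $i_0\in\Index$. Hence $\surf_{i_0}$ is empty throughout the cluster, whereas for the two indices $i\neq i_0$ the surface $\isurf$ restricts over the cluster to exactly one of the surfaces compressed in Lemmas~\ref{lem:solocomp1}, \ref{lem:posse11comp1}, \ref{lem:posse10comp1}. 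Applying those lemmas to each of the two surfaces $\isurf$, $i\neq i_0$, gives at least two (around an isolated rogue edge or a $10$-tetrahedron posse) or four (around an $11$-tetrahedron posse) compressions inside each cluster, so each cluster contributes at least $4$ to $\sum_{i}(-\chi(\isurf)-||\iclass||)$; the network estimate $4\kappa'\geqslant\kappa$ of Lemma~\ref{lem:network1} (whose proof is local in the triangulation and carries over) then gives $\sum_{i}(-\chi(\isurf)-||\iclass||)\geqslant 4\kappa'\geqslant\kappa$. Combining (a) and (b),
\[
2\Tcount-4-2\!\sum_{i}\!||\iclass||=\Icount_2+2\!\sum_{i}\!\bigl(-\chi(\isurf)-||\iclass||\bigr)\geqslant-\kappa+2\kappa=\kappa\geqslant 0,
\]
that is, $\Tcount\geqslant 2+||\rclass||+||\gclass||+||\bclass||$.

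The step I expect to be the main obstacle is the rank-$2$ analogue of the type-profile analysis of Lemmas~\ref{lem:eprofile1}--\ref{lem:tprofile1}. With five coloured types the list of admissible colour-configurations around an insolvent $H$-even edge is genuinely longer than in the rank-$1$ case: a degree-$6$ $H$-even edge can now carry one or two type-$\typett$ tetrahedra, together with an intervening type-$\typee$ tetrahedron, among its non-layered-solid-torus gaps --- a configuration with no analogue in Lemma~\ref{lem:tprofile1} --- and for each new configuration one must show either that $e$ is decent, so that $\acount_2(e)$ already absorbs the deficit, or that $e$ lies in a cluster whose structure still lets the (suitably extended) Busting Lemmas extract the required compressions. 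Enumerating this finite list of configurations, ruling out any that would create an uncompressible bad cluster, and extending $\acount_2$ and the Busting Lemmas to cover the remainder, is the technical core of \S\ref{sec:Rank2}.
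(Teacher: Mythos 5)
Your proposal follows the paper's own route essentially step for step: the same reduction via Lemma~\ref{lem:comb2}, the same demography $\icount_2,\gcount_2,\fcount_2$ with the extra terms $\frac{1}{3}\dcount_\typett(e)+\dcount_\typeqtt(e)$, the same edge-flip elimination of degree-4 insolvent edges, and the same busting-plus-network count; your final bookkeeping (one global disjoint subcollection $\Klusters'$, each cluster yielding at least one compression of each of the two surfaces it meets, so $2\sum_i(-\chi(\isurf)-\|\iclass\|)\geqslant 8\kappa'\geqslant 2\kappa$) is only cosmetically different from the paper's per-color subcollections $\Klusters'_i$ with $\|\iclass\|\leqslant-\chi(\isurf)-\kappa_i/2$ and $\kappa=\frac{1}{2}\sum_i\kappa_i$. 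The one step you defer --- the rank-2 analogue of the type-profile enumeration --- is indeed the technical core, but it resolves more simply than you anticipate: in the paper's Lemma~\ref{lem:tprofile2} one checks that every candidate configuration containing a type-$\typeqtt$ tetrahedron is already solvent, precisely because $\dcount_\typeqtt(e)$ enters $\gcount_2$ with coefficient $+1$, and the configuration you fear (a degree-6 $H$-even edge with $\typett$ tetrahedra and an intervening $\typee$ in its gaps) cannot occur, since a gap tetrahedron between two outermost $\typeqq$ tetrahedra must present an $i$-even face on both sides, which forces it to be of type $\typeqq$ or $\typeqtt$. Consequently the insolvent profiles are verbatim the rank-1 ones with each normal disk doubled, no new decent/rogue cases arise, and the Busting Lemmas carry over without extension --- exactly as your step (b) assumes.
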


We adapt ideas and arguments from the rank-1 setting in \hyperref[sec:Rank1]{\S\ref*{sec:Rank1}}, with suitable adjustments for the rank-2 setting. By the condition $\rank H^1(M;\bbZtwo) \geqslant 2$, a prime manifold $M$ is irreducible, and not a lens space.

Suppose $H=\{\zclass,\rclass,\gclass,\bclass\} \leqslant H^1(M;\bbZtwo)$ is a rank-2 subgroup; we write $\calI=\{1,2,3\}$ for the set indexing the non-zero elements of $H$. With respect to the $\bbZtwo$-coloring of $\tri$ by $H$, \hyperref[lem:norm-c]{Lemma~\ref*{lem:norm-c}} and \hyperref[lem:comb2]{Lemma~\ref*{lem:comb2}} together yields
\begin{align}
\label{eqn:roadmap2}
\begin{split}
2\Tcount-4-2\sum_{i \in \calI} ||\iclass||
&\;\geqslant\;
2\Tcount-4+2\sum_{i \in \calI} \chi(\isurf)\\
&\;=\;
\sum_{d=3}^\infty (d-4) \Ecount_{\typee,d} + \Tcount_\typett + \Tcount_\typeqtt
\;\geqslant\;
\sum_{d=3}^\infty (d-4) \Ecount_{\typee,d}.
\end{split}
\end{align}
We aim to show that the lefthand term is non-negative. Under an extra topological assumption that $M$ is \emph{atoroidal}, the conclusion of \hyperref[thm:rank2]{Theorem~\ref*{thm:rank2}} was established in \cite{JRT:Z2} by showing that the righthand term $\sum_{d=3}^\infty (d-4) \Ecount_{\typee,d}$ is bounded below by $-1$. To prove \hyperref[thm:rank2]{Theorem~\ref*{thm:rank2}} in full generality, we work without this lower bound on the righthand term. As in the rank-1 case, there are two parts to our argument; let us spell out our outline although they are analogous to the rank-1 case.

First, we study how the middle term $\sum_{d=3}^\infty (d-4) \Ecount_{\typee,d} + \Tcount_\typett + \Tcount_\typeqtt$ fails to be non-negative. Our analysis of the middle term is similar to the analysis of the righthand term in \cite{JRT:Z2}, but incorporating the extra term $\Tcount_\typett,\Tcount_\typeqtt$ leads to a much finer description of the combinatorial structures causing the negativity. This part is more involved than the rank-1 case due to the number of cases to be considered.

Second, we exploit the gap between the lefthand term $2\Tcount-4-2\sum_{i \in \calI} ||\iclass||$ and the middle term $2\Tcount-4+2\sum_{i \in \calI} \chi(\isurf)$. When the canonical surfaces $\isurf$ are (geometrically) compressible, there is a gap between $\sum_{i \in \calI}||\iclass||$ and $-\sum_{i \in \calI}\chi(\isurf)$. Analyzing the combinatorics of the triangulation around the surfaces $\isurf$, we show that $\isurf$ are always sufficiently compressible so that the gap between $\sum_{i \in \calI}||\aclass||$ and $-\sum_{i \in \calI}\chi(\asurf)$ is large enough to make up for the nagativity of the middle term, leading to the desired inequality. This part reduces to the rank-1 case, since the profiles of problematic edges turn out to be essntially the same as the rank-1 case.

As evident from the discussion above, the middle term in \hyperref[eqn:roadmap2]{(\ref*{eqn:roadmap2})} plays the central role in our argument. Throughout this section, we denote this quantity by
\[
\Icount_2
:=\; 2\Tcount-4+2\sum_{i \in \calI} \chi(\isurf)
\;=\;\sum_{d=3}^\infty (d-4) \Ecount_{\typee,d} + \Tcount_\typet +\Tcount_\typeqtt.
\]

\subsection{Demography}
\label{ssec:Demography2}

For the rest of this section, we assume that $M$ is an irreducible 3-manifold with a rank-2 subgroup $H=\{\zclass,\rclass, \gclass, \bclass \} \leqslant H^1(M;\bbZtwo)$, and that $\tri$ is a minimal triangulation of $M$ with the $H$-coloring. The \emph{$\typett$-degree} $\dcount_\typett(e)$ and the \emph{$\typeqtt$-degree} $\dcount_\typeqtt(e)$ of an edge $e$ are defined to be 
\begin{align*}
\dcount_\typett(e):=\;\#\left\{\begin{array}{cc}
\text{1-simplices in the preimage $\quot^{-1}(e)$ that are incident to}\\
\text{3-simplices in the preimage of type $\typett$ tetrahedra}
\end{array}\right\},\\
\dcount_\typeqtt(e):=\;\#\left\{\begin{array}{cc}
\text{1-simplices in the preimage $\quot^{-1}(e)$ that are incident to}\\
\text{3-simplices in the preimage of type $\typeqtt$ tetrahedra}
\end{array}\right\}.
\end{align*}

The set of all $H$-even edges is denoted by $\Evens$ again. As in \hyperref[ssec:Demography1]{\S\ref*{ssec:Demography1}}, let us first rewrite $\Icount_2$ as a sum of contributions from $H$-even edges. For each $e \in \Evens$, we define
\begin{align*}
\icount_2(e):=\;\dcount(e)-4+{\textstyle\frac{\tts1\tts}{3}} \dcount_\typett(e) + \dcount_\typeqtt(e).
\end{align*}

\begin{lem} \label{lem:irewrite-2}
With the notations as above, we have
\[
\Icount_2 \,=\, \sum_{e \in \Evens} \icount_2(e).
\]
\end{lem}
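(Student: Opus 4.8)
The plan is to mirror the proof of \hyperref[lem:irewrite1]{Lemma~\ref*{lem:irewrite1}}, just with an extra term to track. The statement asserts that summing the local quantity $\icount_2(e)$ over all $H$-even edges recovers the global quantity $\Icount_2$, which by definition equals $\sum_{d=3}^\infty (d-4)\Ecount_{\typee,d} + \Tcount_\typett + \Tcount_\typeqtt$.

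First, I would establish the two counting identities that relate the $\typett$-degrees and $\typeqtt$-degrees to the tetrahedron counts. Each tetrahedron of type $\typett$ has exactly three $H$-even edges (the three edges in one face), so each such tetrahedron contributes $1$ to $\dcount_\typett(e)$ for exactly three $H$-even edges $e$; hence $\sum_{e \in \Evens} \dcount_\typett(e) = 3\Tcount_\typett$. Similarly, each tetrahedron of type $\typeqtt$ has exactly one $H$-even edge (as described in the type list, the edge that shares a face with two $i$-even edges and another face with two $j$-even edges), so it contributes $1$ to $\dcount_\typeqtt(e)$ for exactly one $H$-even edge $e$; hence $\sum_{e \in \Evens} \dcount_\typeqtt(e) = \Tcount_\typeqtt$. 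I should double-check the coefficient placement against the definition of $\icount_2(e)$: since $\icount_2(e) = \dcount(e)-4+\frac{1}{3}\dcount_\typett(e)+\dcount_\typeqtt(e)$, summing gives $\frac{1}{3} \cdot 3\Tcount_\typett + \Tcount_\typeqtt = \Tcount_\typett + \Tcount_\typeqtt$, matching $\Icount_2$.

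Then I would assemble the computation directly:
\[
\sum_{e \in \Evens} \icount_2(e) = \sum_{e \in \Evens}(\dcount(e)-4) + {\textstyle\frac{1}{3}}\sum_{e \in \Evens}\dcount_\typett(e) + \sum_{e \in \Evens}\dcount_\typeqtt(e),
\]
where the first sum regroups by degree into $\sum_{d=3}^\infty (d-4)\Ecount_{\typee,d}$, the second equals $\Tcount_\typett$ by the identity above, and the third equals $\Tcount_\typeqtt$. Comparing with the definition of $\Icount_2$ completes the argument.

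There is no real obstacle here; the only point requiring a moment of care is the combinatorial bookkeeping of how many $H$-even edges lie in a type $\typett$ versus a type $\typeqtt$ tetrahedron, which is read off directly from the type classification in \hyperref[ssec:Coloring2]{\S\ref*{ssec:Coloring2}}. The proof is a verbatim adaptation of \hyperref[lem:irewrite1]{Lemma~\ref*{lem:irewrite1}} with the single extra summand handled identically.
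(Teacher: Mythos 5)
Your proof is correct and follows essentially the same route as the paper: establish $\sum_{e \in \Evens} \dcount_\typett(e)=3\Tcount_\typett$ and $\sum_{e \in \Evens} \dcount_\typeqtt(e)=\Tcount_\typeqtt$ from the edge counts in the type classification, then sum $\icount_2(e)$ and regroup by degree. The only difference is that you spell out the justification for the two counting identities, which the paper simply asserts.
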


\begin{proof}
We note that the $\typett$-degrees of $H$-even edges satisfy $\sum_{e \in \Evens} \dcount_\typett(e)= 3\Tcount_\typett$ and the $\typeqtt$-degrees of $H$-even edges satisfy $\sum_{e \in \Evens} \dcount_\typeqtt(e)= \Tcount_\typeqtt$. Taking the sum of $\icount_2(e)$ over $\Evens$ and regrouping by their degrees, we obtain
\[
\;\sum_{e \in \Evens} \icount_2(e)
=\sum_{e \in \Evens} (\dcount(e)-4)+\sum_{e \in \Evens} {\textstyle\frac{\tts1\tts}{3}}\dcount_\typett(e)+\dcount_\typeqtt(e)
=\sum_{d=3}^\infty (d-4)\Ecount_{\typee,d}+\Tcount_\typett+\Tcount_\typeqtt=\Icount_2
\]
by the definition of $\Icount_2$ and $\icount_2(e)$, as desired.
\end{proof}

Following \hyperref[ssec:Demography1]{\S\ref*{ssec:Demography1}}, we derive a tractable estimate of $\Icount_2=\sum_{e \in \Evens} \icount_2(e)$. We use the notions of \emph{adult} edges, \emph{child} edges, and \emph{baby} edges from \hyperref[defn:edges]{Definition~\ref*{defn:edges}}, and define the \emph{supporter} of a maximal layered solid torus as in \hyperref[defn:supporter]{Definition~\ref*{defn:supporter}}, with the type $\typeqq$ replacing the type $\typeq$; we write $\lst \leadsto e$ and $e' \leadsto e$ as before to mean $e$ supports $\lst$ and $e'$, respectively. Then, the partition \hyperref[eqn:partition]{(\ref*{eqn:partition})} and the notation $\bcount(e)$ make sense verbatim. We define a counting function $\gcount_2$ on the set $\Adults \subset \Evens$ of adult edges by
\begin{align*}
\gcount_2(e):=\;\dcount(e)-4-\bcount(e)+{\textstyle\frac{\tts1\tts}{3}} \dcount_\typett(e)+\dcount_\typeqtt(e).
\end{align*}

\begin{lem} \label{lem:grewrite2}
With the notations as above, we have
\[
\Icount_2 \,\geqslant\, \sum_{e \in \Adults} \gcount_2(e).
\]
\end{lem}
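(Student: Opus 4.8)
The plan is to follow the proof of \hyperref[lem:grewrite1]{Lemma~\ref*{lem:grewrite1}} step for step, the only change being that the single correction term $\dcount_\typet$ is replaced by the pair $\dcount_\typett,\dcount_\typeqtt$. First I would invoke \hyperref[lem:irewrite-2]{Lemma~\ref*{lem:irewrite-2}} to write $\Icount_2 = \sum_{e \in \Evens}\icount_2(e)$, and then split this sum according to the partition of $\Evens$ into the subsets $\House(a)$, $a \in \Adults$, from \hyperref[eqn:partition]{(\ref*{eqn:partition})}, which gives
\[
\Icount_2 \;=\; \sum_{a \in \Adults}\bigg(\icount_2(a) + \sum_{\lst \leadsto a}\,\sum_{c \in \Children(\lst)}\icount_2(c)\bigg).
\]

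The only genuinely new ingredient is the rank-$2$ counterpart of the fact, used in \hyperref[lem:grewrite1]{Lemma~\ref*{lem:grewrite1}}, that a maximal layered solid torus $\lst$ contains no type-$\typet$ tetrahedron; namely, that $\lst$ contains no tetrahedron of type $\typett$, $\typeqtt$, or $\typeqqq$. Indeed, $\lst$ is a solid torus, so $H^1(\lst;\bbZtwo)$ has rank at most $1$; hence the restrictions $\rclass|_\lst,\gclass|_\lst,\bclass|_\lst$, which satisfy $\rclass+\gclass+\bclass=0$, are either all trivial, or exactly one is trivial while the other two agree and equal the generator. In either case every edge of $\lst$ is $H$-even or $i$-even for one fixed index $i$, which rules out types $\typeqtt$ and $\typeqqq$; and a type-$\typett$ tetrahedron of $\lst$ would be of rank-$1$ type $\typet$ for the generator class, which cannot occur. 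So every tetrahedron of $\lst$ is of type $\typee$ or $\typeqq$, whence $\dcount_\typett(c) = \dcount_\typeqtt(c) = 0$ and $\icount_2(c) = \dcount(c) - 4$ for each child edge $c \in \Children(\lst)$. Grouping child edges by degree and applying \hyperref[lem:edges_lst]{Lemma~\ref*{lem:edges_lst}} (no interior edge of degree $1$ or $2$; a degree-$3$ interior edge is a baby edge), I then get
\[
\sum_{c \in \Children(\lst)}\icount_2(c) \;=\; \sum_{d=3}^\infty (d-4)\,\Ecount_{\typee,d}(\lst) \;\geqslant\; -\Ecount_{\typee,3}(\lst).
\]

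Finally I would assemble the bound just as in the rank-$1$ case: since $\icount_2(a) = \dcount(a)-4+\tfrac13\dcount_\typett(a)+\dcount_\typeqtt(a)$ and $\bcount(a) = \sum_{\lst \leadsto a}\Ecount_{\typee,3}(\lst)$, the two displays above combine to give
\[
\Icount_2 \;\geqslant\; \sum_{a \in \Adults}\bigg(\dcount(a)-4+\tfrac13\dcount_\typett(a)+\dcount_\typeqtt(a)-\sum_{\lst \leadsto a}\Ecount_{\typee,3}(\lst)\bigg) \;=\; \sum_{a \in \Adults}\gcount_2(a),
\]
which is the assertion. I do not expect a real obstacle here: the homological observation about layered solid tori is immediate, and the remaining manipulations are the same regrouping identity already established for $\Icount_1$. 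The one thing to watch is that $\dcount_\typeqtt$ enters $\icount_2$ and $\gcount_2$ with coefficient $1$ rather than $\tfrac13$, so it must be carried verbatim through every line and checked to vanish on child edges.
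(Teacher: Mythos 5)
Your proposal is correct and follows essentially the same route as the paper: invoke Lemma~\ref{lem:irewrite-2}, regroup over the partition into $\House(a)$, observe that $\dcount_\typett(c)=\dcount_\typeqtt(c)=0$ for child edges because a layered solid torus contains no tetrahedra of type $\typett$ or $\typeqtt$, and bound $\sum_c \icount_2(c)$ below by $-\Ecount_{\typee,3}(\lst)$. Your homological justification of the no-$\typett$/$\typeqtt$/$\typeqqq$ claim (via the rank of the image of $H$ in $H^1(\lst;\bbZtwo)$) is a welcome elaboration of a fact the paper simply asserts, but it does not change the argument.
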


\begin{proof}
The proof is essentially the same as the proof of \hyperref[lem:grewrite1]{Lemma~\ref*{lem:grewrite1}}.
In this proof, we write $a$ for adult edges, $c$ for child edges, and $e$ for arbitrary edges in $\Evens$. Using the partition of $\Evens$ into subsets $\House(a)$, $a \in \Adults$, as defined in \hyperref[eqn:partition]{(\ref*{eqn:partition})},
\begin{align*}
\Icount_2
&=\sum_{e \in \Evens} \icount_2(e)=\sum_{a \in \Adults} \, \sum_{e \in \House(a)} \icount_2(e)
=\sum_{a \in \Adults} \, \bigg(\icount_2(a)+\sum_{\lst \leadsto a} \, \sum_{c \in \Children(\lst)} \icount_2(c)\bigg) 
\end{align*}
by \hyperref[lem:irewrite-2]{Lemma~\ref*{lem:irewrite-2}}. For any child edge $c \in \Children(\lst)$, we have $\dcount_\typett(c)=\dcount_\typeqtt(c)=0$ since $\lst$ contains no tetrahedra of type $\typett$ or type $\typeqtt$. Grouping together the edges of the same degree as before,
\begin{align*}
\sum_{c \in \Children(\lst)} \icount_2(c)
=\sum_{c \in \Children(\lst)} (\dcount(c)-4)
=\sum_{d=3}^\infty (d-4) \, \Ecount_{\typee,d} (\lst)
\geqslant -\Ecount_{\typee,3} (\lst).
\end{align*}
Thus, combining the observations above, we have
\begin{align*}
\Icount_2
&=\sum_{a \in \Adults} \, \bigg(\icount_2(a)+\sum_{\lst \leadsto a} \, \sum_{c \in \Children(\lst)} \icount_2(c)\bigg) \\
&\geqslant \sum_{a \in \Adults} \, \bigg(\dcount(a)-4+{\textstyle\frac{\tts1\tts}{3}} \dcount_\typett(a)+\dcount_\typeqtt(a)-\sum_{\lst \leadsto a}\Ecount_{\typee,3} (\lst) \bigg)
=\sum_{a \in \Adults} \gcount_2(a)
\end{align*}
by the definition of $\icount_2(e)$ and $\gcount_2(e)$, as desired.
\end{proof}

\subsection{Insolvent Adults}
\label{ssec:Insolvent2}

As in \hyperref[ssec:Insolvent1]{\S\ref*{ssec:Insolvent1}}, we shall study edges $e \in \Adults$ with $\gcount_2(e)<0$. We redefine solvent and insolvent edges for the present rank-2 setting.

\begin{defn}
An adult edge $e \in \Adults$ is said to be \emph{solvent} if $\gcount_2(e) \geqslant 0$, and it is said to be \emph{insolvent} if $\gcount_2(e)<0$.
\end{defn}

We aim to identify the local combinatorics around solvent/insolvent adult edges. Note that the proof of \hyperref[lem:halfdeg1]{Lemma~\ref*{lem:halfdeg1}} analyzes the necessary condition for the quantity $\dcount(e)-4-\bcount(e)$ to be non-negative using the inequalities $\lfloor\dcount(e)/2\rfloor \geqslant \scount(e) \geqslant \bcount(e)$. Since these inequality holds in the rank-2 setting as well, we can readily establish the rank-2 analogue of \hyperref[lem:halfdeg1]{Lemma~\ref*{lem:halfdeg1}}; the proof is verbatim and omitted.

\begin{lem} \label{lem:halfdeg2}
If an adult edge $e \in \Adults$ satisfies one of the following conditions, then $\gcount_2(e) \geqslant \dcount(e)-4-\bcount(e) \geqslant 0$ holds, and hence in particular $e$ is a solvent adult edge:
(i) $\dcount(e) \geqslant 7$;
(ii) $\dcount(e)=6$, $\bcount(e) \leqslant 2$;
(iii) $\dcount(e)=5$, $\bcount(e) \leqslant 1$;
(iv) $\bcount(e)=0$.
\par
Thus, every insolvent adult edge $e$ must satisfy one of the following conditions:
(1) $\dcount(e)=6$, $\scount(e)=\bcount(e)=3$;
(2) $\dcount(e)=5$, $\scount(e)=\bcount(e)=2$;
(3) $\dcount(e)=4$, $\scount(e)=\bcount(e)=2$;
(4) $\dcount(e)=4$, $\scount(e)=2$, $\bcount(e)=1$;
(5) $\dcount(e)=4$, $\scount(e)=\bcount(e)=1$.
\end{lem}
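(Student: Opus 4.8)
The plan is to run the proof of Lemma~\ref{lem:halfdeg1} verbatim, checking only that nothing in it was specific to the rank-1 coloring. The one substantive observation to make at the outset is that the two new terms in
\[
\gcount_2(e)=\dcount(e)-4-\bcount(e)+{\textstyle\frac{\tts1\tts}{3}}\dcount_\typett(e)+\dcount_\typeqtt(e)
\]
are both nonnegative, so $\gcount_2(e)\geqslant\dcount(e)-4-\bcount(e)$, exactly as $\gcount_1(e)\geqslant\dcount(e)-4-\bcount(e)$ in the rank-1 case. Hence it suffices to determine when $\dcount(e)-4-\bcount(e)\geqslant 0$, and this quantity involves neither the rank of the coloring nor the types of tetrahedra.

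Next I would recall the structural inequality $\lfloor\dcount(e)/2\rfloor\geqslant\scount(e)\geqslant\bcount(e)$ for adult edges, which is the engine of the rank-1 argument; it is extracted from the proof of \cite[Thm.\,5]{JRT:Lens} via \hyperref[prop:mlst]{Proposition~\ref*{prop:mlst}}. Both inequalities survive untouched: \hyperref[prop:mlst]{Proposition~\ref*{prop:mlst}} is a statement about how maximal layered solid tori in a $0$-efficient minimal triangulation can overlap, and $\scount(e)\geqslant\bcount(e)$ only uses that a maximal layered solid torus contains at most one baby edge — neither depends on any $\bbZtwo$-coloring. Here I would note that the notions of adult/child/baby edge and of supporter are imported into the rank-2 section unchanged (with type $\typeqq$ in place of type $\typeq$), so that $\bcount(e)$ has the same meaning. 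From this point the argument is pure bookkeeping: $\dcount(e)-4-\bcount(e)\geqslant\dcount(e)-4-\lfloor\dcount(e)/2\rfloor=\lceil\dcount(e)/2\rceil-4\geqslant 0$ once $\dcount(e)\geqslant 7$, giving (i); cases (ii)–(iii) are immediate from $\dcount(e)-4-\bcount(e)\geqslant 0$; and (iv) uses $\dcount(e)\geqslant 4$ for adult edges. For the second statement, an insolvent edge must fail all of (i)–(iv), forcing $4\leqslant\dcount(e)\leqslant 6$ and $\bcount(e)\geqslant 1$; feeding these into $\lfloor\dcount(e)/2\rfloor\geqslant\scount(e)\geqslant\bcount(e)$ and splitting on $\dcount(e)\in\{4,5,6\}$ yields precisely the list (1)–(5).

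There is essentially no obstacle: the whole content is the nonnegativity of the two new terms in $\gcount_2$ together with the observation that \hyperref[prop:mlst]{Proposition~\ref*{prop:mlst}} and \hyperref[thm:edges_min]{Theorem~\ref*{thm:edges_min}} are insensitive to the rank of the coloring. This is exactly why the statement can be given with the proof ``verbatim and omitted''; the only thing worth double-checking is that the rank-2 definitions of baby edge and supporter preserve ``at most one baby edge per maximal layered solid torus'', which they do.
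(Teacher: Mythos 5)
Your proposal is correct and is essentially the paper's own argument: the paper explicitly notes that the proof of Lemma~\ref{lem:halfdeg1} depends only on $\lfloor\dcount(e)/2\rfloor \geqslant \scount(e) \geqslant \bcount(e)$ and the nonnegativity of the extra terms in $\gcount_2$, and omits the rank-2 proof as verbatim. Your checks that Proposition~\ref{prop:mlst} and the baby-edge count are insensitive to the coloring are exactly the points the paper relies on.
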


\hyperref[lem:halfdeg2]{Lemma~\ref*{lem:halfdeg2}} does not establish $\gcount_2(e) < 0$ under the conditions (1)-(5); we have $\dcount(e)-4-\bcount(e)<0$ for these cases, but we must consider the extra terms $\frac{1}{3} \dcount_\typet(e)$ and $\dcount_\typeqtt(e)$ in $\gcount_2(e)$. We first give the rank-2 analogue of \hyperref[lem:eprofile1]{Lemma~\ref*{lem:eprofile1}}.

\begin{lem} \label{lem:eprofile2}
The following statements (in which $\lst$ denotes a maximal layered solid torus) holds for any adult edge $e \in \Adults$:
\begin{enumerate}
\item $\dcount(e)=6$, $\scount(e)=3$, $\bcount(e)=3$ $\Rightarrow$
$\lst \leadsto e$ and $\lst$ is of type $\typeqq$ for each $\lst \ni e$;
\item $\dcount(e)=5$, $\scount(e)=2$, $\bcount(e)=2$ $\Rightarrow$
$\lst \leadsto e$ and $\lst$ is of type $\typeqq$ for each $\lst \ni e$;
\item $\dcount(e)=4$, $\scount(e)=2$, $\bcount(e)=2$ $\Rightarrow$
$\lst \leadsto e$ and $\lst$ is of type $\typeqq$ for each $\lst \ni e$;
\item $\dcount(e)=4$, $\scount(e)=2$, $\bcount(e)=1$ $\Rightarrow$
$\lst \leadsto e$ and $\lst$ is of type $\typeqq$ for one $\lst \ni e$;
\item $\dcount(e)=4$, $\scount(e)=1$, $\bcount(e)=1$ $\Rightarrow$
$\lst \leadsto e$ and $\lst$ is of type $\typeqq$ for $\lst \ni e$;
\end{enumerate}
moreover, in all cases, we must have $\dcount_\lst(e)=1$ for each $\lst \ni e$.
\end{lem}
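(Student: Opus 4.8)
The plan is to observe that, once the two auxiliary facts (A) and (B) from the proof of \hyperref[lem:eprofile1]{Lemma~\ref*{lem:eprofile1}} are re-established with the type $\typeq$ replaced by the type $\typeqq$, the proof of \hyperref[lem:eprofile1]{Lemma~\ref*{lem:eprofile1}} applies \emph{verbatim}. Indeed, the case analysis there draws only on four ingredients: \hyperref[prop:mlst]{Proposition~\ref*{prop:mlst}} (two distinct maximal layered solid tori meet in at most an edge); \hyperref[lem:edges_lst]{Lemma~\ref*{lem:edges_lst}} (the degree profile of the edges of a layered solid torus); the \emph{degree-3 layering argument} (if $\dcount_\lst(e)=3$ then the unique tetrahedron incident to $e$ and lying outside $\lst$ is layered along $e$, so adjoining it to $\lst$ yields either a strictly larger layered solid torus or a layered lens space --- a contradiction with maximality of $\lst$, or with $\tri$ not being a layered lens space); and observations (A)--(B). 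The first three are statements about the underlying $\varDelta$-complex and make no reference to the coloring, so they carry over unchanged to the rank-2 setting.

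It therefore suffices to transfer (A) and (B). The governing point is that a maximal layered solid torus $\lst$, equipped with the restricted coloring, records only the data of $H|_{\lst} \leqslant H^1(\lst;\bbZtwo) \cong \bbZtwo$; hence either $H|_{\lst}=0$ and every edge of $\lst$ is $H$-even (so $\lst$ is of type $\typee$), or $H|_{\lst}=\bbZtwo$, in which case a single $\iclass$, $i\in\Index$, vanishes on $\lst$, every tetrahedron of $\lst$ is of type $\typee$ or $\typeqq$ for this $i$ (compare the proof of \hyperref[lem:grewrite2]{Lemma~\ref*{lem:grewrite2}}; note also that types $\typeqtt$ and $\typeqqq$ would require $H|_{\lst}$ to have rank $2$), and $\lst$ is of type $\typeqq$ in the sense of the convention adapting \hyperref[defn:supporter]{Definition~\ref*{defn:supporter}}. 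Thus the dichotomy ``type $\typee$ versus type $\typeqq$'' for $\lst$ runs exactly parallel to the rank-1 dichotomy ``type $\typee$ versus type $\typeq$'', and the rank-1 reasoning goes through: \textbf{(A)} if $\dcount_\lst(e)=1$ and $\lst\leadsto e$, then $e$ is the unique degree-1 boundary edge of $\lst$ by \hyperref[lem:edges_lst]{Lemma~\ref*{lem:edges_lst}}; since a type-$\typee$ layered solid torus supports its adult edge of \emph{maximal} degree, which has degree at least $3$ (again by \hyperref[lem:edges_lst]{Lemma~\ref*{lem:edges_lst}}) and hence is not $e$, the torus $\lst$ cannot be of type $\typee$, so it is of type $\typeqq$; and \textbf{(B)} if $\dcount_\lst(e)=2$, then $\lst=\lst_{\{1,2,3\}}$ by \hyperref[lem:edges_lst]{Lemma~\ref*{lem:edges_lst}}, which has no interior edge and therefore no child edge and no baby edge, so $\lst$ contributes to $\scount(e)$ but not to $\bcount(e)$, giving $\scount(e)>\bcount(e)$.

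With (A) and (B) in hand one runs the proof of \hyperref[lem:eprofile1]{Lemma~\ref*{lem:eprofile1}} line by line: in cases (1)--(3) the equality $\lfloor\dcount(e)/2\rfloor=\scount(e)=\bcount(e)$ forces $\lst\leadsto e$ for every $\lst\ni e$, and \hyperref[prop:mlst]{Proposition~\ref*{prop:mlst}} together with (B) forces $\dcount_\lst(e)=1$ for each such $\lst$, which is then of type $\typeqq$ by (A); in case (4), \hyperref[prop:mlst]{Proposition~\ref*{prop:mlst}} gives $\dcount_\lst(e)=1$ for all $\lst\ni e$ and $\bcount(e)=1$ locates one $\lst$ with $\lst\leadsto e$, of type $\typeqq$ by (A); and in case (5) the possibility $\dcount_\lst(e)=2$ is excluded by (B) and $\dcount_\lst(e)=3$ by the degree-3 layering argument, leaving $\dcount_\lst(e)=1$ and $\lst$ of type $\typeqq$ by (A). The only real obstacle here is bookkeeping: one must be sure that no step of the rank-1 argument secretly relied on a feature peculiar to rank-1 colorings, and the middle paragraph isolates precisely the fact --- that $H$ restricts to a subgroup of rank at most $1$ on each layered solid torus --- that rules this out; everything else is transcription with $\typeq$ replaced by $\typeqq$.
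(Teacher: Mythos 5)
Your proposal is correct and follows the same route as the paper: the paper's own proof simply declares that the argument for \hyperref[lem:eprofile1]{Lemma~\ref*{lem:eprofile1}} applies verbatim with type $\typeqq$ replacing type $\typeq$, since the coloring entered that proof only through the type of the layered solid tori. You supply the justification the paper leaves implicit --- that $H$ restricts to a subgroup of rank at most one on a solid torus, so the dichotomy ``type $\typee$ versus type $\typeqq$'' and the observations (A), (B) carry over --- which is a faithful (and somewhat more careful) elaboration of the intended argument.
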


\begin{proof}
Recall that, in the statement and the proof of \hyperref[lem:eprofile1]{Lemma~\ref*{lem:eprofile1}}, the $\bbZtwo$-coloring by the rank-1 subgroup $H$ was relevant only in the appearance of the type $\typeq$ tetrahedra. The proof of the present lemma is identical to the proof of \hyperref[lem:eprofile1]{Lemma~\ref*{lem:eprofile1}}, with the type $\typeqq$ tetrahedra replacing the type $\typeq$ tetrahedra.
\end{proof}

We now give the rank-2 analogue of \hyperref[lem:tprofile1]{Lemma~\ref*{lem:tprofile1}}, characterizing insolvent edgse completely. In contrast to the rank-1 case, an edge $e$ satisfying $\dcount(e)-4-\bcount(e)<0$, and hence satisfying one of the conditions (1)-(5) by \hyperref[lem:eprofile2]{Lemma~\ref*{lem:eprofile2}}, need not be insolvent in general because of the extra terms $\frac{1}{3} \dcount_\typet(e)$ and $\dcount_\typeqtt(e)$ in $\gcount_2(e)$.

\begin{lem} \label{lem:tprofile2}
An adult edge $e \in \Adults$ is insolvent if and only if the types of tetrahedra incident to $e$, expressed by cyclically ordered $\dcount(e)$-tuples of type-symbols up to dihedral symmetry (with a dot, such as $\dot\typeqq$, if the underlying tetrahedron is in a layered solid torus $\lst$ and with two dots, such as $\ddot\typeqq$, if this $\lst$ contains a baby edge $e' \leadsto e$), is one of the following:
\begin{enumerate}
\item $\dcount(e)=6$, $\scount(e)=3$, $\bcount(e)=3$:
$(\typeqq,\ddot\typeqq,\typeqq,\ddot\typeqq,\typeqq,\ddot\typeqq)$;
\item $\dcount(e)=5$, $\scount(e)=2$, $\bcount(e)=2$:
$(\typeqq,\ddot\typeqq,\typeqq,\ddot\typeqq,\typeqq)$ or
$(\typett,\ddot\typeqq,\typeqq,\ddot\typeqq,\typett)$;
\item $\dcount(e)=4$, $\scount(e)=2$, $\bcount(e)=2$:
$(\typeqq,\ddot\typeqq,\typeqq,\ddot\typeqq)$;
\item $\dcount(e)=4$, $\scount(e)=2$, $\bcount(e)=1$:
$(\typeqq,\ddot\typeqq,\typeqq,\dot\typeqq)$,
$(\typett,\ddot\typeqq,\typett,\dot\typee)$.
\item $\dcount(e)=4$, $\scount(e)=1$, $\bcount(e)=1$:
$(\typeqq,\ddot\typeqq,\typeqq,\typeqq)$, 
$(\typett,\ddot\typeqq,\typeqq,\typett)$, or
$(\typett,\ddot\typeqq,\typett,\typee)$.
\end{enumerate}
In all cases, the edge $e$ must be incident to $\dcount(e)$ distinct tetrahedra.
\end{lem}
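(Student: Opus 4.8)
The plan is to mirror the proof of Lemma~\ref{lem:tprofile1}, with type $\typeqq$ tetrahedra in the role of type $\typeq$ and type $\typett$ in the role of type $\typet$; the genuinely new point is to rule out type $\typeqtt$ and type $\typeqqq$ tetrahedra around an insolvent edge. For the ``if'' direction, for each listed cyclic tuple I would read off $\dcount_\typett(e)$ and $\dcount_\typeqtt(e)$ and check $\gcount_2(e)=\dcount(e)-4-\bcount(e)+\tfrac{1}{3}\dcount_\typett(e)+\dcount_\typeqtt(e)<0$ directly; since in each listed tuple $\dcount_\typeqtt(e)=0$, $\dcount_\typett(e)\le 2$, and $\dcount(e)-4-\bcount(e)\in\{-1,-2\}$, this is a routine computation.

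For the converse, suppose $e$ is insolvent. By Lemma~\ref{lem:halfdeg2} the triple $(\dcount(e),\scount(e),\bcount(e))$ is one of the five listed combinations, and by Lemma~\ref{lem:eprofile2} every tetrahedron incident to $e$ that lies in a maximal layered solid torus $\lst$ is of type $\typeqq$ with $\dcount_\lst(e)=1$, except for a single type $\typee$ tetrahedron occurring in case (4). The main tool is the following classification of the unordered pair of face-types at $e$ inside a tetrahedron $T\ni e$ (a face of $T$ containing the $H$-even edge $e$ is either $H$-even or $i$-even for some $i\in\Index$): since $e$ is $H$-even, $T$ is never of type $\typeqqq$; if $T$ is of type $\typee$ the pair is $(H\text{-even},H\text{-even})$; if of type $\typett$ it is $(H\text{-even},i\text{-even})$; if of type $\typeqq$ it is $(i\text{-even},i\text{-even})$; and if of type $\typeqtt$ it is $(i\text{-even},j\text{-even})$ with $i\neq j$; in the last three cases $i$ (and $j$) is the index of the sub-type.

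Arranging the $\dcount(e)$ tetrahedra and $\dcount(e)$ faces incident to $e$ cyclically, this classification lets one ``track the index'' around $e$. The two faces flanking the outermost tetrahedron of a layered solid torus of index $i$ are both $i$-even; a type $\typee$ layered solid torus similarly forces its two flanking faces to be $H$-even; and, by Proposition~\ref{prop:mlst}, the outermost tetrahedra of two distinct maximal layered solid tori cannot be cyclically adjacent around $e$, since they would then share a face. It follows that a single non-layered tetrahedron between two layered solid tori of different indices is impossible (a type $\typeqq$ tetrahedron preserves the index, a single type $\typett$ tetrahedron ends at an $H$-even face, and a type $\typee$ tetrahedron stays at $H$-even faces), so bridging distinct indices forces either a type $\typeqtt$ tetrahedron or a run of at least two type $\typett$ tetrahedra. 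Counting the available non-layered positions in each of the five combinations then shows that if the layered solid tori incident to $e$ do not all carry one common index, then $\dcount_\typeqtt(e)\ge 2$, or $\dcount_\typett(e)$ is large, forcing $\gcount_2(e)\ge 0$; hence all layered solid tori incident to $e$ share one index, $\dcount_\typeqtt(e)=0$, and every non-layered tetrahedron at $e$ is of type $\typeqq$, $\typett$, or (only in case (4)) $\typee$. Running through the five combinations under these constraints together with the colorings of the boundary faces of the layered solid tori, and discarding the profiles with $\dcount_\typett(e)$ large enough to make $\gcount_2(e)\ge 0$, yields exactly the listed cyclic tuples.

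Finally, the distinctness statement is obtained exactly as in the last part of the proof of Lemma~\ref{lem:tprofile1}: a repeated tetrahedron makes its undotted type-symbol appear more than once in a listed tuple, and each possibility---a type $\typeqq$ tetrahedron appearing twice non-consecutively or consecutively around $e$, or a type $\typett$ tetrahedron doing so---is excluded using Proposition~\ref{prop:mlst} and the layering and folding constructions of \S\ref{ssec:Layered}, either yielding a larger layered solid torus, or a copy of $\lst_{\{1,2,3\}}$ sharing a face with a maximal layered solid torus, or an impossible pair of face-identifications inside a type $\typett$ tetrahedron. The main obstacle is the bookkeeping in the converse direction: with five tetrahedron types and three face-types one must track the index in $\Index$ attached to every face around $e$ and verify that each configuration compatible with the layered-solid-torus boundary colorings is either one of the listed tuples or has $\gcount_2(e)\ge 0$. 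Once the face-type classification, the index-matching, and the exclusion of types $\typeqtt$ and $\typeqqq$ are in hand, the remaining case-analysis is a direct adaptation of the rank-1 argument.
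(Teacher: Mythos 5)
Your proposal is correct and follows essentially the same route as the paper: both directions rest on Lemma~\ref{lem:halfdeg2}, Lemma~\ref{lem:eprofile2} and Proposition~\ref{prop:mlst}, the converse is an enumeration of local colorings compatible with the boundary faces of the maximal layered solid tori in which every profile containing a type $\typeqtt$ tetrahedron is discarded because its $+1$ contribution to $\gcount_2(e)$ outweighs the deficit $\dcount(e)-4-\bcount(e)$ (your index-tracking of face types around $e$ is just a cleaner packaging of the paper's explicit 21-candidate list), and the distinctness claim is delegated to the rank-1 argument exactly as in the paper. One small slip to fix: a \emph{non-layered} type $\typee$ tetrahedron also occurs in case (5) (the profile $(\typett,\ddot\typeqq,\typett,\typee)$), not only in case (4), so the parenthetical restriction in your sentence ``every non-layered tetrahedron at $e$ is of type $\typeqq$, $\typett$, or (only in case (4)) $\typee$'' should be dropped; this is harmless, since your own face-type bookkeeping permits the walk $i$-even $\to$ $H$-even $\to$ $H$-even $\to$ $i$-even there, and note also that in cases other than (3) a \emph{single} $\typeqtt$ already forces $\gcount_2(e)\geqslant 0$ regardless of whether the layered solid tori share an index.
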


\begin{proof}
For each subcase in the list, we can readily verify $\gcount_2(e)<0$, i.e.\;$e$ is insolvent. For the converse, suppose $e$ is insolvent. Possible combinations (1)-(5) of $\dcount(e), \scount(e), \bcount(e)$ are given in \hyperref[lem:halfdeg2]{Lemma~\ref*{lem:halfdeg2}}. The types of tetrahedra, incident to $e$ and contained in maximal layered solid tori, are given in \hyperref[lem:eprofile2]{Lemma~\ref*{lem:eprofile2}}; they are always type $\typeqq$ except for one of type $\typee$ in case (4). The coloring on the boundary faces of maximal layered solid tori restricts the possible types of remaining tetrahedra; to list all possible combinations, we first consider the types of remaining tetrahedra with the normal arc patterns on faces matching the boundary faces of maximal layered solid tori, and then remove the combinations without consistent coloring.

\begin{enumerate}
\item Each symbol between two $\ddot\typeqq$ must be $\typeqq$ or $\typeqtt$. Among 4 possible combinations up to symmetry, 3 of them admit consistent coloring:\\
\phantom{------}
$(\typeqq,\ddot\typeqq,\typeqq,\ddot\typeqq,\typeqq,\ddot\typeqq)$,
$(\typeqtt,\ddot\typeqq,\typeqtt,\ddot\typeqq,\typeqq,\ddot\typeqq)$,
$(\typeqtt,\ddot\typeqq,\typeqtt,\ddot\typeqq,\typeqtt,\ddot\typeqq)$.
\item The two consecutive symbols two $\ddot\typeqq$ must be both $\typeqq$, one $\typeqq$ and one $\typeqtt$, both $\typeqtt$, or both $\typett$; the remaining symbol must be $\typeqq$ or $\typeqtt$. Among 8 possible combinations up to symmetry, 6 of them admit consistent coloring:\\
\phantom{------}
$(\typeqq,\ddot\typeqq,\typeqq,\ddot\typeqq,\typeqq)$,
$(\typeqtt,\ddot\typeqq,\typeqq,\ddot\typeqq,\typeqtt)$,
$(\typett,\ddot\typeqq,\typeqq,\ddot\typeqq,\typett)$,\\
\phantom{------}
$(\typeqtt,\ddot\typeqq,\typeqtt,\ddot\typeqq,\typeqq)$,
$(\typeqtt,\ddot\typeqq,\typeqtt,\ddot\typeqq,\typeqtt)$,
$(\typett,\ddot\typeqq,\typeqtt,\ddot\typeqq,\typett)$.
\item Each symbol between two $\ddot\typeqq$ must be $\typeqq$ or $\typeqtt$. Among 3 possible combinations up to symmetry, 2 of them admit consistent coloring:\\
\phantom{------}
$(\typeqq,\ddot\typeqq,\typeqq,\ddot\typeqq)$,
$(\typeqtt,\ddot\typeqq,\typeqtt,\ddot\typeqq)$.
\item If $\dot\typeqq$ is opposite to $\ddot\typeqq$, each symbol between them must be $\typeqq$ or $\typeqtt$. Among 3 possible combinations up to symmetry, 2 of them admit consistent coloring:\\
\phantom{------}
$(\typeqq,\ddot\typeqq,\typeqq,\dot\typeqq)$,
$(\typeqtt,\ddot\typeqq,\typeqtt,\dot\typeqq)$.
\\
If $\dot\typee$ is opposite to $\ddot\typeqq$, each symbol between them must be $\typett$. The unique possible combination, up to symmetry, admits consistent coloring:\\
\phantom{------}
$(\typett,\ddot\typeqq,\typett,\dot\typee)$.
\item If $\typeqq$ is opposite to $\ddot\typeqq$, each symbol between them must be $\typeqq$ or $\typeqtt$. Among 3 possible combinations up to symmetry, 2 of them admit consistent coloring:\\
\phantom{------}
$(\typeqq,\ddot\typeqq,\typeqq,\typeqq)$, 
$(\typeqtt,\ddot\typeqq,\typeqtt,\typeqq)$.
\\
If $\typeqtt$ is opposite to $\ddot\typeqq$, each symbol between them must be $\typeqq$ or $\typeqtt$. Among 3 possible combinations up to symmetry, 2 of them admit consistent coloring:\\
\phantom{------}
$(\typeqtt,\ddot\typeqq,\typeqq,\typeqtt)$, 
$(\typeqtt,\ddot\typeqq,\typeqtt,\typeqtt)$.
\\
If $\typett$ is opposite to $\ddot\typeqq$, one symbol between them must be $\typett$, and the other $\typeqq$ or $\typeqtt$. Both possible combinations, up to symmetry, admit consistent coloring:\\
\phantom{------}
$(\typett,\ddot\typeqq,\typeqq,\typett)$.
$(\typett,\ddot\typeqq,\typeqtt,\typett)$.
\\
If $\typee$ is opposite to $\ddot\typeqq$, each symbols between them must be $\typett$. The unique possible combination, up to symmetry, admits consistent coloring:\\
\phantom{------}
$(\typett,\ddot\typeqq,\typett,\typee)$.
\end{enumerate}

Since $e$ is insolvent by assumption, we require $\gcount_2(e)<0$. Computing $\gcount_2(e)$ for 21 candidate combinations above, we find that ones with a type $\typeqtt$ tetrahedron yields $\gcount_2(e) \geqslant 0$ due to the presence of the positive extra term $\dcount_\typeqtt(e)$. The remaining 9 combinations satisfy $\gcount_2(e)<0$, and are listed in the statement of the lemma,

Note that these 9 combinations are essentially the same as the ones in \hyperref[lem:tprofile1]{Lemma~\ref*{lem:tprofile1}}, except that every normal disk (a triangle in a type $\typet$ tetrahedron, or a quadrilateral in a type $\typeq$ tetrahedron) is replaced by two parallel copies (two triangles in a type $\typett$ tetrahedron, or two quadrilaterals in a type $\typeqq$ tetrahedron). Hence, an argument in the proof of \hyperref[lem:tprofile1]{Lemma~\ref*{lem:tprofile1}} apply almost verbatim (requiring only to replace type $\typet$ and type $\typeq$ with type $\typett$ and type $\typeqq$ respectively), proving that the tetrahedra around an insolvent edge $e$ are distinct.
\end{proof}

The degree~4 cases (3)-(5) in \hyperref[lem:tprofile2]{Lemma~\ref*{lem:tprofile2}} are studied to some extent and utilized in \cite[\S5.1]{JRT:Z2}; the degree~4 edges they considered are equivalent to adult edges $e$ satisfying $\dcount(e)-4-\bcount(e)<0$ in our language. We carried out a finer analysis using $\gcount_2(e)$, with extra terms $\frac{1}{3} \dcount_\typet(e)$ and $\dcount_\typeqtt(e)$, substantially cutting down the number of cases of problematic edges, and essentially reducing them to the rank-1 setting.

The edge flips in the rank-2 context were studied in \cite[\S5.1]{JRT:Z2} to eliminate edges of degree~4 satisfying $\dcount(e)-4-\bcount(e)<0$. We only need to eliminate insolvent edges, so many cases considered in \cite[\S5.1]{JRT:Z2} are obsolete; indeed, since profile of insolvent edges are essentially identical in rank-1 and rank-2 settings, the effect of edge flips around insolvent edges in the rank-2 coloring can be deduced \emph{directly} from the corresponding analysis for the rank-1 coloring.

\begin{propalpha}[{\cite[\S5.1]{JRT:Z2}}] \label{prop:edgeflip2}
Let $M$ be an irreducible 3-manifold with a rank-2 subgroup $H=\{\zclass,\rclass,\gclass,\bclass\} < H^1(M;\bbZtwo)$. Then $M$ admits a minimal triangulation with no insolvent edge of degree~4 with respect to the $H$-coloring.
\end{propalpha}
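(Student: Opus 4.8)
The plan is to adapt the edge-flip argument of \cite[\S6.1]{JRT:Lens}, recorded here as \hyperref[prop:edgeflip1]{Proposition~\ref*{prop:edgeflip1}}, to the rank-2 coloring. By \hyperref[lem:tprofile2]{Lemma~\ref*{lem:tprofile2}}, every insolvent edge $e$ of degree~4 realizes one of the five local profiles listed in cases (3)--(5), and in each case $e$ is incident to four distinct tetrahedra; these therefore assemble into an octahedron having $e$ as one of its three main diagonals, and $e$ is incident to at least one maximal layered solid torus of type $\typeqq$, namely the one containing the outermost tetrahedron marked $\ddot\typeqq$. I would consider the triangulation obtained by the \emph{edge flip} across $e$, i.e.\ by replacing $e$ with a suitably chosen other main diagonal $e^{*}$ of this octahedron; this operation preserves $\Tcount$, hence preserves minimality.

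The key observation is that the five profiles in cases (3)--(5) of \hyperref[lem:tprofile2]{Lemma~\ref*{lem:tprofile2}} are precisely the ``doubled'' versions of the five profiles in cases (3)--(5) of \hyperref[lem:tprofile1]{Lemma~\ref*{lem:tprofile1}}: a type $\typee$, $\typett$, $\typeqq$ tetrahedron carries, respectively, no normal disk, two parallel triangles, two parallel quadrilaterals, where the corresponding type $\typee$, $\typet$, $\typeq$ tetrahedron of the rank-1 picture carries no disk, one triangle, one quadrilateral, and the face-pairings respect the coloring in the same way; types $\typeqtt$ and $\typeqqq$ never occur in these local pictures. Since the edge flip is a purely combinatorial move on the octahedron together with its adjacent maximal layered solid tori, its effect in the rank-2 setting is word-for-word that of the corresponding flip in \cite[\S6.1]{JRT:Lens}, with $\typett$ and $\typeqq$ in place of $\typet$ and $\typeq$. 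Following \cite[\S6.1]{JRT:Lens}, I would then track the monovariant given by the pair whose first entry is the number of maximal layered solid tori of type $\typeqq$ incident to an insolvent edge of degree~4 and whose second entry is the number of tetrahedra of type $\typee$: whenever an insolvent edge of degree~4 is present, a suitably chosen flip strictly decreases the first entry without increasing the second, or decreases the second without increasing the first. As both entries are non-negative integers, and the first is positive as long as an insolvent edge of degree~4 exists, finitely many flips yield a minimal triangulation with no insolvent edge of degree~4. (No layered lens space intervenes: since $\rank H^1(M;\bbZtwo)\geqslant 2$, the manifold $M$ is not a lens space, so none of its minimal triangulations is a layered lens space and the flips are always available.)

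The step I expect to be the main obstacle is the profile-by-profile verification that the flip affects the monovariant as claimed --- in particular, checking how the flip interacts with the maximal layered solid torus glued along the $\ddot\typeqq$ face (whether it is absorbed, split off, or left intact) and confirming that no new type-$\typeqq$-incident insolvent edge of degree~4 is created that could cancel the decrease. By the doubled-normal-disk correspondence above, however, this case analysis is identical to the one already performed for the rank-1 coloring in \cite[\S6.1]{JRT:Lens}, so what remains is the routine bookkeeping of transporting that analysis across the correspondence.
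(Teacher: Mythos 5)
Your proposal matches the paper's treatment: the paper likewise observes that the insolvent degree-4 profiles of \hyperref[lem:tprofile2]{Lemma~\ref*{lem:tprofile2}} are the ``doubled'' versions of those in \hyperref[lem:tprofile1]{Lemma~\ref*{lem:tprofile1}} (with $\typett$, $\typeqq$ replacing $\typet$, $\typeq$ and types $\typeqtt$, $\typeqqq$ absent), so that the edge-flip analysis and monovariant of \cite[\S6.1]{JRT:Lens} transfer directly, deferring the detailed case check to \cite[\S5.1]{JRT:Z2}. Your argument, including the remark that no layered lens space can intervene since $\rank H^1(M;\bbZtwo)\geqslant 2$, is essentially the same route.
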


\begin{rem}
At this point, one can establish \hyperref[thm:rank2]{Theorem~\ref*{thm:rank2}} for \emph{atoroidal} manifolds. If an insolvent edge $e'$ of degree~5 or 6 exists, arguments in \cite[Lem.\,9, Lem.\,10]{JRT:Z2} assure that it is the unique insolvent edge. We have $\gcount_2(e) \geqslant 0$ for all $e \in \Adults$ except for possibly one insolvent edge $e'$ with $\gcount_2(e') \geqslant -1$. Hence, \hyperref[eqn:roadmap2]{(\ref*{eqn:roadmap2})} and \hyperref[lem:grewrite2]{Lemma~\ref*{lem:grewrite2}} together yield $2\Tcount-4-2 \sum_{i \in \Index} ||\iclass|| \geqslant \Icount_2 \geqslant \sum_{e \in \Adults} \gcount_2(e) \geqslant -1$. Since $2\Tcount-4-2 \sum_{i \in \Index} ||\iclass||$ is an even integer, we have $2\Tcount-4-2 \sum_{i \in \Index} ||\iclass|| \geqslant 0$ and thus $\Tcount \geqslant 2+\sum_{i \in \Index} ||\iclass||$.
\end{rem}

\subsection{Decent Adults}
\label{ssec:Decent2}

Invoking \hyperref[prop:edgeflip2]{Proposition~\ref*{prop:edgeflip2}}, we may now assume that $\tri$ has no insolvent edges of degree~4 with respect to the rank-2 $H$-coloring. Every insolvent edge $e$ must occur as one of the following cases from \hyperref[lem:tprofile2]{Lemma~\ref*{lem:tprofile2}}.
\begin{itemize}
\item[(1)] $\dcount(e)=6$, $\scount(e)=\bcount(e)=3$: $(\typeqq,\ddot\typeqq,\typeqq,\ddot\typeqq,\typeqq,\ddot\typeqq)$ with $\gcount_2(e)=-1$;
\item[(2a)] $\dcount(e)=5$, $\scount(e)=\bcount(e)=2$: $(\typeqq,\ddot\typeqq,\typeqq,\ddot\typeqq,\typeqq)$ with $\gcount_2(e)=-1$;
\item[(2b)] $\dcount(e)=5$, $\scount(e)=\bcount(e)=2$: $(\typett,\ddot\typeqq,\typeqq,\ddot\typeqq,\typett)$ with $\gcount_2(e)=-\frac{\tts1\tts}{3}$.
\end{itemize}
Following \hyperref[ssec:Decent1]{\S\ref*{ssec:Decent1}}, we shall now make adjustments to $\gcount_2$, essentially in the same way as in the rank-1 setting. Insolvent edges are divided into \emph{decent} edges and \emph{rogue} edges according to \hyperref[defn:rogue]{Definition~\ref*{defn:rogue}}, verbatim but reinterpreted in the context of the rank-2 $H$-coloring. An insolvent edge in the case (1) or (2a) is incident to no $H$-even faces, and hence always isolated and rogue. An insolvent edge in the case (2b) is not isolated, and it may be decent or rogue. For each $e \in \Adults$, we define
\begin{align*}
\acount_2(e):=\;\begin{cases}
-\frac{\tts1\tts}{3} \times \#\{\text{insolvent neighbors of $e$}\} \qquad \text{if $e$ is solvent},\\
+\frac{\tts1\tts}{3} \times \#\{\text{solvent neighbors of $e$}\} \qquad \text{if $e$ is insolvent},
\end{cases}
\end{align*}
and modify our counting function $\gcount_2(e)$ with this adjustment term by setting
\[
\fcount_2(e):=\;\gcount_2(e)+\acount_2(e)\;=\;\dcount(e)-4-\bcount(e)+{\textstyle\frac{\tts1\tts}{3}\dcount_\typet(e)}+\dcount_\typeqtt(e)+\acount_2(e).
\]
We have the rank-2 analogue of \hyperref[lem:frewrite1]{Lemma~\ref*{lem:frewrite1}}; the proof is verbatim and omitted.

\begin{lem} \label{lem:frewrite2}
With the notations as above, we have
\[
\Icount_2 \,\geqslant\, \sum_{e \in \Adults} \fcount_2(e).
\]
\end{lem}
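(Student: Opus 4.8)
The plan is to mimic the proof of \hyperref[lem:frewrite1]{Lemma~\ref*{lem:frewrite1}} essentially verbatim, replacing $\gcount_1,\acount_1,\Icount_1$ throughout by $\gcount_2,\acount_2,\Icount_2$. The starting point is \hyperref[lem:grewrite2]{Lemma~\ref*{lem:grewrite2}}, which already delivers $\Icount_2 \geqslant \sum_{e \in \Adults} \gcount_2(e)$. Since $\fcount_2(e) = \gcount_2(e) + \acount_2(e)$ by definition, it suffices to show that the total adjustment $\sum_{e \in \Adults} \acount_2(e)$ vanishes, and then the claimed inequality $\Icount_2 \geqslant \sum_{e \in \Adults} \fcount_2(e)$ follows by adding $0$.

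To establish that the adjustments cancel, I would first note that the neighbor relation between adult edges (as in \hyperref[defn:rogue]{Definition~\ref*{defn:rogue}}) is symmetric: two adult edges are neighbors precisely when some $H$-even face contains both. Next I would observe that $\acount_2$ is built entirely out of neighbor pairs consisting of exactly one solvent and one insolvent edge --- a solvent edge $e$ contributes $-\tfrac13$ for each of its insolvent neighbors, an insolvent edge $e'$ contributes $+\tfrac13$ for each of its solvent neighbors, while neighbor pairs of two solvent edges or of two insolvent edges contribute nothing on either side. Reorganizing the sum $\sum_{e \in \Adults} \acount_2(e)$ by unordered neighbor pairs $\{e,e'\}$ with $e$ solvent and $e'$ insolvent, each such pair contributes $-\tfrac13$ (from the term indexed by $e$) together with $+\tfrac13$ (from the term indexed by $e'$), for a net contribution of $0$; summing over all such pairs gives $\sum_{e \in \Adults} \acount_2(e) = 0$. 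Combining with \hyperref[lem:grewrite2]{Lemma~\ref*{lem:grewrite2}} then yields $\sum_{e \in \Adults}\fcount_2(e) = \sum_{e \in \Adults}\gcount_2(e) \leqslant \Icount_2$, as desired.

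I do not expect a genuine obstacle here; this is a bookkeeping argument identical in structure to the rank-1 case. The only point that warrants a moment's care is confirming that the cancellation is exact --- i.e.\ that every $+\tfrac13$ increment assigned to an insolvent edge is matched by a unique $-\tfrac13$ deduction assigned to a solvent edge, and conversely --- which is immediate from the symmetry of the neighbor relation and the case split in the definition of $\acount_2(e)$. Since none of the quantities $\dcount(e)$, $\bcount(e)$, $\dcount_\typett(e)$, $\dcount_\typeqtt(e)$ or the supporter/child structure entered the corresponding step in \hyperref[lem:frewrite1]{Lemma~\ref*{lem:frewrite1}}, the argument transfers to the rank-2 setting with no change beyond renaming, and the proof may indeed be omitted as stated.
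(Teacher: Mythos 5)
Your proof is correct and is essentially the paper's own argument: the paper omits the proof of this lemma, stating it is verbatim the proof of Lemma~\ref{lem:frewrite1}, which is exactly the cancellation-by-neighbor-pairs bookkeeping you spell out, combined with Lemma~\ref{lem:grewrite2}.
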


By \hyperref[lem:frewrite2]{Lemma~\ref*{lem:frewrite2}}, we may use $\fcount_2(e)$ in place of $\gcount_2(e)$ for our counting. The following rank-2 analogue of \hyperref[lem:decent1]{Lemma~\ref*{lem:decent1}} assures that the rogue edges are the only troublesome insolvent edges in our counting; the proof is almost verbatim (requiring only to replace type $\typet$ with type $\typett$) and omitted.

\begin{lem} \label{lem:decent2}
An adult edge $e \in \Adults$ satisfies $\fcount_2(e) \geqslant 0$ if and only if the edge is either (i) a solvent edge or (ii) a decent insolvent edge.
\end{lem}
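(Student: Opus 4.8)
The plan is to follow the proof of \hyperref[lem:decent1]{Lemma~\ref*{lem:decent1}} almost word for word, performing the single substitution of type~$\typett$ tetrahedra for the type~$\typet$ tetrahedra that appear there, and invoking \hyperref[lem:halfdeg2]{Lemma~\ref*{lem:halfdeg2}} and \hyperref[lem:tprofile2]{Lemma~\ref*{lem:tprofile2}} in place of \hyperref[lem:halfdeg1]{Lemma~\ref*{lem:halfdeg1}} and \hyperref[lem:tprofile1]{Lemma~\ref*{lem:tprofile1}}. The only feature special to the rank-2 setting that is relevant here is the extra summand $\dcount_\typeqtt(e)$ in $\fcount_2(e)$; since $\dcount_\typeqtt(e)\geqslant 0$, it can simply be dropped whenever only a lower bound on $\fcount_2(e)$ is needed, so it poses no obstruction.

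First I would record the necessity, which is immediate: a rogue edge $e$ is, by definition, an insolvent edge with no solvent neighbour, so $\acount_2(e)=0$ and hence $\fcount_2(e)=\gcount_2(e)<0$; consequently, every edge $e$ with $\fcount_2(e)\geqslant 0$ is either solvent or a decent insolvent edge. For the sufficiency I would treat the two cases separately. If $e$ is a decent insolvent edge, then it has a neighbour and therefore an incident $H$-even face; among the insolvent profiles listed in \hyperref[lem:tprofile2]{Lemma~\ref*{lem:tprofile2}} (the degree-4 cases having been removed via \hyperref[prop:edgeflip2]{Proposition~\ref*{prop:edgeflip2}}) only the case~(2b) profile $(\typett,\ddot\typeqq,\typeqq,\ddot\typeqq,\typett)$ carries an incident $H$-even face, and for it $\gcount_2(e)=-\frac{\tts1\tts}{3}$; since $e$ has at least one solvent neighbour we get $\acount_2(e)\geqslant\frac{\tts1\tts}{3}$, whence $\fcount_2(e)=\gcount_2(e)+\acount_2(e)\geqslant 0$.

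If instead $e$ is solvent, then \hyperref[lem:halfdeg2]{Lemma~\ref*{lem:halfdeg2}} gives $\dcount(e)-4-\bcount(e)\geqslant 0$, and, discarding the nonnegative term $\dcount_\typeqtt(e)$, it remains to show
\[
\dcount_\typett(e) \;\geqslant\; \#\{\text{insolvent neighbours of } e\}.
\]
This is the core of the argument and is the verbatim analogue of the rank-1 counting: if a solvent edge $e$ has an insolvent neighbour $e'$, then the $H$-even face $f$ containing $e$ and $e'$ is, by the case~(2b) profile of \hyperref[lem:tprofile2]{Lemma~\ref*{lem:tprofile2}}, the common $H$-even face of two distinct type~$\typett$ tetrahedra, both incident to $e$; hence the type~$\typett$ tetrahedra incident to $e$ that abut another type~$\typett$ tetrahedron across their (unique) $H$-even face occur in pairs glued along such faces. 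There are at most $\lfloor\dcount_\typett(e)/2\rfloor$ such pairs, each contributing one $H$-even face, and each such face carries three $H$-even edges, one of which is $e$, hence at most two insolvent neighbours of $e$; so $\#\{\text{insolvent neighbours of } e\}\leqslant 2\lfloor\dcount_\typett(e)/2\rfloor\leqslant\dcount_\typett(e)$, as needed.

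The only point I expect to need genuine care is confirming that \hyperref[lem:tprofile2]{Lemma~\ref*{lem:tprofile2}} really supplies the structural input that the rank-1 proof used --- namely that an insolvent edge with an incident $H$-even face is forced into the degree-5 case~(2b) profile, with all $\dcount(e)$ incident tetrahedra distinct and the $H$-even face sitting between two type~$\typett$ tetrahedra. Since \hyperref[lem:tprofile2]{Lemma~\ref*{lem:tprofile2}} already establishes exactly this, the remainder of the proof is a mechanical transcription of that of \hyperref[lem:decent1]{Lemma~\ref*{lem:decent1}}, and the asserted equivalence follows.
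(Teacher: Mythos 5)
Your proposal is correct and is exactly the argument the paper intends: the paper omits the proof of this lemma precisely because it is the verbatim adaptation of the proof of Lemma~\ref{lem:decent1} with type~$\typet$ replaced by type~$\typett$, and your handling of the extra nonnegative term $\dcount_\typeqtt(e)$ by simply discarding it is the right (and only) new point. The structural input you flag as needing care --- that a non-isolated insolvent edge is forced into the case~(2b) profile with the $H$-even face lying between two distinct type~$\typett$ tetrahedra --- is indeed supplied by Lemma~\ref{lem:tprofile2} together with Proposition~\ref{prop:edgeflip2}, so the transcription goes through.
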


\subsection{Rogue Adults}
\label{ssec:Rogue2}

Following \hyperref[ssec:Rogue1]{\S\ref*{ssec:Rogue1}}, we now deal with rogue insolvent edges. As in the rank-1 setting, any isolated insolvent edge is rogue, and non-isolated rogue edges always come in a triple along a common $H$-even face. We define a \emph{posse} of non-isolated rogue edges by \hyperref[defn:posse]{Definition~\ref*{defn:posse}} verbatim, but reinterpreted in the context of the rank-2 $H$-coloring. As before, we aim to compress the canonical surface $\asurf$ around rogue edges. For this, we define \emph{clusters} and \emph{open clusters} around an isolated edge and around non-isolated rogue edges by \hyperref[defn:cluster]{Definition~\ref*{defn:cluster}} verbatim, but reinterpreted in the context of the rank-2 $H$-coloring.

First, let $e$ be an isolated rogue edge. By \hyperref[lem:tprofile2]{Lemma~\ref*{lem:tprofile2}}, the structure of the cluster $\kluster(e)$ is essentially identical to the rank-1 case, except that all tetrahedra have two normal disks of the same type, instead of one normal disk. Namely, the cluster $\kluster(e)$ consists of $\dcount(e)$ distinct tetrahedra of type $\typeqq$ with $\dcount(e)=5$ or $6$, and the open cluster $\kluster^\circ\ntts (e)$ is a $\dcount(e)$-gonal bipyramid with the boundary faces removed. The gluing of these tetrahedra forces that all tetrahedra have two normal disks of the same colors, say colors $i$ and $j$ but not $k$ where $\{i,j,k\}=\{1,2,3\}$. In other words, the canonical surfaces inside the cluster $\kluster(e)$ is just two parallel copies (one with color $i$ and the other with color $j$) of the cylinders we dealt with in \hyperref[lem:solocomp1]{Lemma~\ref*{lem:solocomp1}}. So, restricting our attention to a rank-1 subgroup $\{0,\iclass\}<H$, the lemma below follows immediately from \hyperref[lem:solocomp1]{Lemma~\ref*{lem:solocomp1}}.

\begin{lem} \label{lem:solocomp2}
Let $e$ be an isolated rogue edge. If $\surf$ is a nearly canonical surface dual to $\iclass$ with $\surf \cap \kluster(e)=\isurf \cap \kluster(e) \neq \nil$, then $\surf$ can be compressed once inside $\kluster^\circ(e)$ to a nearly canonical surface $\surf'$ dual to $\iclass$, satisfying $||\iclass|| \leqslant -\chi(\surf')=-\chi(\surf)-2$.
\end{lem}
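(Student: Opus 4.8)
The plan is to reduce the lemma directly to its rank-1 analogue, \hyperref[lem:solocomp1]{Lemma~\ref*{lem:solocomp1}}, applied after passing to the rank-1 subgroup $\{\zclass,\iclass\}<H$. First I would record the local picture around $e$. Being an isolated rogue edge, $e$ falls under case (1) or (2a) of \hyperref[lem:tprofile2]{Lemma~\ref*{lem:tprofile2}}, so $\kluster(e)$ is a union of $\dcount(e)\in\{5,6\}$ pairwise distinct tetrahedra of type $\typeqq$ which, after deleting boundary faces, form a $\dcount(e)$-gonal bipyramid with axis $e$. As observed just before the lemma, the face identifications running around the $H$-even edge $e$ force all of these $\typeqq$ tetrahedra to be $k$-even on their four non-$H$-even edges for one common $k\in\Index$; hence each of them carries two normal quadrilaterals, one of each index $\ne k$, the index-$k$ canonical surface is disjoint from $\kluster(e)$, and for every index $\ne k$ the corresponding canonical surface meets $\kluster^\circ(e)$ in an open annulus assembled from $\dcount(e)$ quadrilaterals.

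Writing $i$ for the index of $\iclass$, the hypothesis $\surf\cap\kluster(e)=\isurf\cap\kluster(e)\ne\nil$ forces $i\ne k$, and then $\surf\cap\kluster^\circ(e)$ is exactly such an open annulus of $\dcount(e)$ quadrilaterals. Now restrict the $H$-coloring to $\{\zclass,\iclass\}$: every $H$-even edge stays $\iclass$-even while every $k$-even edge becomes $\iclass$-odd, so each $\typeqq$ tetrahedron of $\kluster(e)$ becomes a type $\typeq$ tetrahedron and every face of $\kluster(e)$ through $e$ becomes $\iclass$-odd; since the maximal layered solid tori of $\tri$ and their baby edges are independent of the coloring, one checks against \hyperref[defn:edges]{Definition~\ref*{defn:edges}}, \hyperref[defn:supporter]{Definition~\ref*{defn:supporter}}, and \hyperref[defn:rogue]{Definition~\ref*{defn:rogue}} that $e$ is again an isolated rogue edge, now for the rank-1 $\iclass$-coloring, with the maximal layered solid tori around it now of type $\typeq$ and supported by $e$, and with $\gcount_1(e)=\dcount(e)-4-\bcount(e)<0$. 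Thus $\surf$ and $e$ satisfy the hypotheses of \hyperref[lem:solocomp1]{Lemma~\ref*{lem:solocomp1}}, which produces a single compression of $\surf$ inside $\kluster^\circ(e)$ to a nearly canonical surface $\surf'$ dual to $\iclass$ with $||\iclass||\leqslant-\chi(\surf')=-\chi(\surf)-2$ --- the desired conclusion.

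The only real work is the verification in the second paragraph that \emph{isolated} and \emph{rogue} genuinely survive the restriction to $\{\zclass,\iclass\}$: that no $\iclass$-even face through $e$ is created, that $\scount(e)$, $\bcount(e)$, and the supporter of each maximal layered solid torus through $e$ are unchanged, and hence that $e$ retains the same degree~5 or 6 insolvent profile. All of this is immediate upon matching the explicit cyclic type-tuples from case (1)/(2a) of \hyperref[lem:tprofile2]{Lemma~\ref*{lem:tprofile2}} against the definitions --- this is exactly why the lemma can be stated as an immediate consequence of \hyperref[lem:solocomp1]{Lemma~\ref*{lem:solocomp1}}. Once it is granted, the surgery and the Euler-characteristic and $\bbZtwo$-coefficient norm estimates are inherited verbatim from \hyperref[lem:solocomp1]{Lemma~\ref*{lem:solocomp1}} and \hyperref[lem:norm-nc]{Lemma~\ref*{lem:norm-nc}}, so nothing further needs to be proved.
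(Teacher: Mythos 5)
Your proposal is correct and follows essentially the same route as the paper: both reduce to \hyperref[lem:solocomp1]{Lemma~\ref*{lem:solocomp1}} by restricting the $H$-coloring to the rank-1 subgroup $\{\zclass,\iclass\}$, under which the type $\typeqq$ tetrahedra of the cluster become type $\typeq$ and the canonical surface meets $\kluster^\circ(e)$ in the same $\dcount(e)$-quadrilateral annulus. Your extra verification that $e$ remains an isolated rogue edge for the rank-1 coloring (no $\iclass$-even faces are created, and $\scount(e)$, $\bcount(e)$, and the supporter relations are unchanged) is a slightly more explicit justification of what the paper treats as immediate, but it is the same argument.
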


Next, let $\Posse=\{e_1,e_2,e_3\}$ be a posse of rogue edges, and $\kluster(\Posse)=\kluster(e_1) \cup \kluster(e_2) \cup \kluster(e_3)$ be the cluster around $\Posse$. Again, by \hyperref[lem:tprofile2]{Lemma~\ref*{lem:tprofile2}}, the structure of the cluster $\kluster(e)$ is essentially identical to the rank-1 case, except that all tetrahedra have two normal disks of the same type, instead of one normal disk. Namely, the cluster $\kluster(\Posse)$ consists of 11 tetrahedra (see \hyperref[fig:11cluster]{Figure~\ref*{fig:11cluster}}, left) or 10 tetrahedra (see \hyperref[fig:10cluster]{Figure~\ref*{fig:10cluster}}, left), except that all tetrahedra has two copies of normal disks of the same type. The gluing of these tetrahedra forces that all tetrahedra have two normal disks of the same colors, say colors $i$ and $j$ but not $k$ where $\{i,j,k\}=\{1,2,3\}$. In other words, the canonical surfaces inside the cluster $\kluster(\Posse)$ is just two parallel copies (one with color $i$ and the other with color $j$) of the surfaces we dealt with in \hyperref[lem:posse11comp1]{Lemma~\ref*{lem:posse11comp1}} and \hyperref[lem:posse10comp1]{Lemma~\ref*{lem:posse10comp1}}. So, restricting our attention to a rank-1 subgroup $\{0,\iclass\}<H$, the lemmas below follows immediately from \hyperref[lem:posse11comp1]{Lemma~\ref*{lem:posse11comp1}} and \hyperref[lem:posse10comp1]{Lemma~\ref*{lem:posse10comp1}}.

\begin{lem} \label{lem:posse11comp2}
Let $\Posse$ be a posse of rogue edges, with the 11-tetrahedra cluster $\kluster(\Posse)$ around it. If $\surf$ is a nearly canonical surface dual to $\iclass$ with $\surf \cap \kluster(\Posse)=\isurf \cap \kluster(\Posse) \neq \nil$, then $\surf$ can be compressed twice inside $\kluster^\circ(\Posse)$ to a nearly canonical surface $\surf'$ dual to $\iclass$, satisfying $||\iclass|| \leqslant -\chi(\surf')=-\chi(\surf)-4$.
\end{lem}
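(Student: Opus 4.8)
The plan is to deduce Lemma~\ref{lem:posse11comp2} directly from its rank-1 counterpart, Lemma~\ref{lem:posse11comp1}, by passing to the rank-1 coloring induced by $\iclass$. Recall from the discussion preceding the lemma that the 11-tetrahedra cluster $\kluster(\Posse)$ of the rank-2 coloring sits on the same triangulated polytope as in the rank-1 case: around each $e_m \in \Posse$ there are five distinct tetrahedra of types $\typett,\typeqq,\typeqq,\typeqq,\typett$ in cyclic order, with the outer pair of type $\typett$ tetrahedra shared by all three clusters along the common $H$-even face, for a total of eleven distinct tetrahedra. The gluing constraints force every edge of $\kluster(\Posse)$ to be either $H$-even or $k$-even for a single inert index $k \in \Index$, so each tetrahedron carries exactly two parallel normal disks, one belonging to $\isurf$ and one to $\jsurf$ with $\{i,j,k\}=\Index$, while $\ksurf$ is disjoint from $\kluster(\Posse)$.

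The hypothesis $\surf \cap \kluster(\Posse)=\isurf \cap \kluster(\Posse)\neq\nil$ forces the given index $i$ to be one of the two active indices, not the inert index $k$. I would then restrict to the rank-1 subgroup $\{\zclass,\iclass\}\leqslant H$ and its induced rank-1 $\bbZtwo$-coloring: inside $\kluster(\Posse)$ the $H$-even edges become $\iclass$-even and the $k$-even edges become $\iclass$-odd, so every type $\typett$ tetrahedron of $\kluster(\Posse)$ becomes a type $\typet$ tetrahedron and every type $\typeqq$ tetrahedron becomes a type $\typeq$ tetrahedron in the sense of \hyperref[ssec:Coloring1]{\S\ref*{ssec:Coloring1}}. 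Hence $\kluster(\Posse)$, with the $\iclass$-coloring, is exactly an 11-tetrahedra cluster around the posse $\Posse$ of rogue edges in the rank-1 sense, the canonical surface of the $\iclass$-coloring agrees with $\isurf$ on $\kluster(\Posse)$, and $\isurf\cap\kluster^\circ(\Posse)$ is the open pair of pants from \hyperref[lem:posse11comp1]{Lemma~\ref*{lem:posse11comp1}}.

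With this identification, I would invoke \hyperref[lem:posse11comp1]{Lemma~\ref*{lem:posse11comp1}} verbatim for the nearly canonical surface $\surf$ dual to $\iclass$: it surgers $\surf$ along two disjoint compressing disks inside $\kluster^\circ(\Posse)$ to produce a nearly canonical surface $\surf'$ dual to $\iclass$ whose edge-weights differ from those of $\surf$ only on $e_1,e_2,e_3$ (changed from $0$ to $2$) and with $\chi(\surf')=\chi(\surf)+4$, and the inequality $||\iclass|| \leqslant -\chi(\surf')$ then follows from \hyperref[lem:norm-nc]{Lemma~\ref*{lem:norm-nc}}. I do not expect a genuine obstacle here: the only rank-2-specific input is the combinatorial claim that the active disks form two parallel copies of the rank-1 picture and that all eleven tetrahedra are distinct, and this has already been extracted from \hyperref[lem:tprofile2]{Lemma~\ref*{lem:tprofile2}} and the cluster analysis of \hyperref[ssec:Rogue2]{\S\ref*{ssec:Rogue2}}. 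The proof is therefore short, amounting to this rank-1 reduction.
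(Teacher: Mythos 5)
Your proposal is correct and follows essentially the same route as the paper: the author likewise observes (via Lemma~\ref{lem:tprofile2} and the cluster description in \S\ref{ssec:Rogue2}) that the 11-tetrahedra cluster carries two parallel copies of the rank-1 picture in two active colors, and then deduces the lemma by restricting to the rank-1 subgroup $\{\zclass,\iclass\}$ and invoking Lemma~\ref{lem:posse11comp1}. No issues.
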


\begin{lem} \label{lem:posse10comp2}
Let $\Posse$ be a posse of rogue edges with the 10-tetrahedra cluster $\kluster(\Posse)$ around it. If $\surf$ is a nearly canonical surface dual to $\iclass$ with $\surf \cap \kluster(\Posse)=\isurf \cap \kluster(\Posse) \neq \nil$, then $\surf$ can be compressed once inside $\kluster^\circ(\Posse)$ to a nearly canonical surface $\surf'$ dual to $\iclass$, satisfying $||\iclass|| \leqslant -\chi(\surf')=-\chi(\surf)-2$.
\end{lem}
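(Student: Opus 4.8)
The plan is to reduce the statement directly to its rank-1 counterpart, \hyperref[lem:posse10comp1]{Lemma~\ref*{lem:posse10comp1}}, along the lines already announced in the paragraph preceding it. The governing observation, extracted from \hyperref[lem:tprofile2]{Lemma~\ref*{lem:tprofile2}} and the discussion in \hyperref[ssec:Rogue2]{\S\ref*{ssec:Rogue2}}, is that the ten tetrahedra of $\kluster(\Posse)$ carry normal disks in exactly two of the three colors, say $i,j\in\Index$, with the third color $k$ absent; the hypothesis $\isurf\cap\kluster(\Posse)\neq\nil$ then guarantees that the index $i$ in the statement is one of these two active colors, so the relevant rank-1 subgroup is $\{\zclass,\iclass\}<H$.

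First I would pass to the rank-1 $\iclass$-coloring and record its effect on $\kluster(\Posse)$. Using the relation $\rclass+\gclass+\bclass=0$ in the rank-2 group --- which forces any $k$-even edge that is not $H$-even to be $\iclass$-odd (and $\jclass$-odd) --- each type $\typeqq$ tetrahedron of the cluster restricts to a type $\typeq$ tetrahedron for the $\iclass$-coloring, and each type $\typett$ tetrahedron restricts to a type $\typet$ tetrahedron, while the $H$-even face containing the three edges of $\Posse$ stays $\iclass$-even and the maximal layered solid tori (with their baby edges) are unchanged as subcomplexes. Hence, with respect to the $\iclass$-coloring, the profile around each edge of $\Posse$ is $(\typet,\ddot\typeq,\typeq,\ddot\typeq,\typet)$ --- case (2b) of \hyperref[lem:tprofile1]{Lemma~\ref*{lem:tprofile1}} --- so these three edges are again a posse of rogue edges, and $\kluster(\Posse)$ is precisely a $10$-tetrahedra cluster in the sense of \hyperref[ssec:Rogue1]{\S\ref*{ssec:Rogue1}}. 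I would also note that the standing hypotheses of \hyperref[sec:Rank1]{\S\ref*{sec:Rank1}} hold here: being prime with $\rank H^1(M;\bbZtwo)\geqslant2$, the manifold $M$ is irreducible, not a lens space, and in particular $M\nhomeo\RP^3,L(4,1)$, and the minimal triangulation $\tri$ is not a layered lens space. Since a nearly canonical surface dual to $\iclass$ depends only on the $\iclass$-coloring, $\surf$ is such a surface in the rank-1 sense, and by hypothesis $\surf\cap\kluster(\Posse)=\isurf\cap\kluster(\Posse)$ is the subsurface $\asurf\cap\kluster^\circ(\Posse)$ treated in \hyperref[lem:posse10comp1]{Lemma~\ref*{lem:posse10comp1}}.

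With these identifications the conclusion is an application of \hyperref[lem:posse10comp1]{Lemma~\ref*{lem:posse10comp1}}: it compresses $\surf$ once inside $\kluster^\circ(\Posse)$ to a normal surface $\surf'$ whose edge-weights agree with those of $\surf$ except on two of the three edges of $\Posse$, where they change from $0$ to $2$; thus $\surf'$ is nearly canonical dual to $\iclass$ and homologous to $\surf$, \hyperref[lem:norm-nc]{Lemma~\ref*{lem:norm-nc}} gives $||\iclass||\leqslant-\chi(\surf')$, and $\chi(\surf')=\chi(\surf)+2$ since a single-disk surgery raises the Euler characteristic by $2$; combining these, $||\iclass||\leqslant-\chi(\surf')=-\chi(\surf)-2$. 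I do not anticipate a genuine obstacle: the only step that needs real care is the bookkeeping of the first paragraph --- checking that the restriction of the rank-2 coloring carries the type-$\typeqq$/type-$\typett$ structure of the cluster, and the rogue status of the posse, onto the corresponding rank-1 data --- and once the identity $\rclass+\gclass+\bclass=0$ is invoked this is routine, after which the statement follows verbatim from \hyperref[lem:posse10comp1]{Lemma~\ref*{lem:posse10comp1}} together with \hyperref[lem:norm-nc]{Lemma~\ref*{lem:norm-nc}}.
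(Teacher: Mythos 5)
Your proposal is correct and follows the same route the paper intends: the paper's own justification is precisely that the cluster's canonical surfaces are two parallel copies (in the two active colors $i,j$) of the rank-1 configuration, so that restricting to the rank-1 subgroup $\{\zclass,\iclass\}<H$ reduces the statement to Lemma~\ref{lem:posse10comp1}; your first paragraph just supplies the bookkeeping (type $\typeqq\to\typeq$, $\typett\to\typet$ via $\rclass+\gclass+\bclass=0$) that the paper leaves implicit. The only cosmetic overreach is the claim that the triple is again a ``posse of rogue edges'' in the rank-1 sense (solvency bookkeeping can differ between the two colorings), but this is immaterial since the compression argument uses only the local structure of the cluster and of $\surf\cap\kluster^\circ(\Posse)$.
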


\subsection{Busting}
\label{ssec:Busting2}

We now \emph{bust} the rogue edges, as we did in the rank-1 case; more precisely, we compress the canonical surface $\isurf$ ($i=1,2,3$) across the rogue edges inside clusters, by applying \emph{the Busting Lemmas} \hyperref[lem:solocomp2]{Lemma~\ref*{lem:solocomp2}}, \hyperref[lem:posse11comp2]{Lemma~\ref*{lem:posse11comp2}}, and \hyperref[lem:posse10comp2]{Lemma~\ref*{lem:posse10comp2}}.

The procedures are essentially the same as in the rank-1 case, but with some extra book-keeping arising from dealing with colors. Let $\Klusters$ be the collection of all clusters around either an isolated rogue edge or a posse of non-isolated rogue edges; we denote the number of constituent clusters by
\[
\kappa:=\#\Klusters.
\]
Note that each constituent cluster $K \in \Klusters$ intersects two of the canonical surface non-trivially while being disjoint from the other canonical surface. So, it is natural to consider the subcollections of $\Klusters$ accordingly: for each $\{i,j\} \subset \{i,j,k\}=\{1,2,3\}$,
\[
\Klusters_{i,j}:=\{K \in \Klusters \mid \isurf \cap \kluster \neq \nil, \jsurf \cap \kluster \neq \nil, \ksurf \cap \kluster = \nil \},
\quad
\kappa_{i,j}:=\#\Klusters_{i,j}
\]
and for each $i \in \{i,j,k\}=\{1,2,3\}$, 
\[
\Klusters_{i}:=\Klusters_{i,j} \sqcup \Klusters_{i,k},
\quad
\kappa_{i}:=\kappa_{i,j}+\kappa_{i,k}=\#\Klusters_{i}.
\]
Hence, we have
\begin{equation} \label{eqn:kappa2}
\Klusters=\Klusters_{1,2} \sqcup \Klusters_{1,3} \sqcup \Klusters_{2,3},
\quad
\kappa=\kappa_{1,2}+\kappa_{1,3}+\kappa_{2,3}=(\kappa_1+\kappa_2+\kappa_3)/2.
\end{equation}
For each $i$, we can compress $\isurf$ inside each cluster $K \in \Klusters_i$ using the Busting Lemmas; however, we can't always compress $\isurf$ inside \emph{all} clusers in $\Klusters_i$ simultaneously; so, as in the rank-1 case, we can and shall compress $\isurf$ repeatedly inside clusters in $\Klusters_i$ that share no tetrahedra.

It is tempting to just apply the counting from the rank-1 case to the rank-1 subgroup $\{0,\iclass\} \leqslant H$. However, strictly speaking, a rank-1 rogue edge $e$ with respect a subgroup $\{0,\iclass\} \leqslant H$ need not be a rank-2 rogue edge with respect $H$; such a rank-1 rogue edge $e$ is a rank-2 rogue edge with respect to $H$ if and only if it is also a rank-1 rogue edge with respect to another subgroup $\{0,\jclass\} \leqslant H$, $j \neq i$. To avoid confusions arising from this subtle issue, we shall restate the versions of the lemmas and propositions from the rank-1 case, using the notations above. The proof of the following
\hyperref[lem:maxcomp2]{Lemma~\ref*{lem:maxcomp2}},
\hyperref[lem:network2]{Lemma~\ref*{lem:network2}},
\hyperref[prop:fullcomp2]{Proposition~\ref*{prop:fullcomp2}}, and
\hyperref[lem:clusterogue2]{Lemma~\ref*{lem:clusterogue2}}
are omitted since they are essentially identical to the proof of the corresponding 
\hyperref[lem:maxcomp1]{Lemma~\ref*{lem:maxcomp1}},
\hyperref[lem:network1]{Lemma~\ref*{lem:network1}},
\hyperref[prop:fullcomp1]{Proposition~\ref*{prop:fullcomp1}}, and
\hyperref[lem:clusterogue1]{Lemma~\ref*{lem:clusterogue1}},

\begin{lem} \label{lem:maxcomp2}
Let $i \in \{1,2,3\}$.
Let $\Klusters'_i \subseteq \Klusters_i$ be a subcollection of clusters such that no two clusters $\kluster_1,\kluster_2 \nts \in \Klusters'_i$ share a tetrahedron, and write $\kappa'_i \nts:=\#\Klusters'_i$ for the number of constituent clusters in $\Klusters'_i$. Then, we have 
\[
||\iclass|| \leqslant -\chi(\isurf)-2\kappa'_i.
\]
\end{lem}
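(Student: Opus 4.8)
The plan is to replay the inductive compression argument from the proof of Lemma~\ref{lem:maxcomp1}, now applied to the single canonical surface $\isurf$ and to the subcollection $\Klusters'_i \subseteq \Klusters_i$. Enumerate the clusters of $\Klusters'_i$ as $\kluster_1,\dots,\kluster_{\kappa'_i}$; by hypothesis they are pairwise tetrahedron-disjoint.

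First I would note that every cluster $\kluster \in \Klusters_i$ satisfies $\isurf \cap \kluster \neq \nil$ by the definition of $\Klusters_i$, so one of the Busting Lemmas~\ref{lem:solocomp2}, \ref{lem:posse11comp2}, \ref{lem:posse10comp2} applies to $\isurf$ inside each $\kluster_k$ --- Lemma~\ref{lem:solocomp2} if $\kluster_k$ is the cluster of an isolated rogue edge, and Lemma~\ref{lem:posse11comp2} or Lemma~\ref{lem:posse10comp2} if $\kluster_k$ is the $11$- or $10$-tetrahedron cluster of a posse. Then I would run the induction verbatim: set $\surf^{(0)}=\isurf$, and for $k=1,\dots,\kappa'_i$, given a nearly canonical surface $\surf^{(k-1)}$ dual to $\iclass$ with $\surf^{(k-1)} \cap \kluster^{\circ}_\ell = \isurf \cap \kluster^{\circ}_\ell$ for $\ell=k,\dots,\kappa'_i$, apply the matching Busting Lemma inside $\kluster_k$ to obtain a nearly canonical surface $\surf^{(k)}$ dual to $\iclass$ that differs from $\surf^{(k-1)}$ only within $\kluster_k$, hence still agrees with $\isurf$ on $\kluster^{\circ}_\ell$ for $\ell=k+1,\dots,\kappa'_i$ (this is precisely where the tetrahedron-disjointness of $\Klusters'_i$ enters), and that satisfies $||\iclass|| \leqslant -\chi(\surf^{(k)}) \leqslant -\chi(\surf^{(k-1)})-2$. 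Telescoping the $\kappa'_i$ inequalities and rewriting $\surf^{(0)}=\isurf$ yields $||\iclass|| \leqslant -\chi(\surf^{(\kappa'_i)}) \leqslant -\chi(\isurf)-2\kappa'_i$, as desired.

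There is no genuinely new obstacle here beyond the rank-1 case; the only subtlety --- which is exactly why the Busting Lemmas~\ref{lem:solocomp2}--\ref{lem:posse10comp2} were phrased for an arbitrary nearly canonical surface agreeing with $\isurf$ on the relevant cluster, rather than for $\isurf$ itself --- is ensuring that the hypothesis ``$\surf \cap \kluster(e) = \isurf \cap \kluster(e)$'' required by those lemmas is inherited by $\surf^{(k-1)}$ at each step, which follows from the tetrahedron-disjointness assumption on $\Klusters'_i$. Everything else is identical to the proof of Lemma~\ref{lem:maxcomp1}, so the proof can be safely omitted.
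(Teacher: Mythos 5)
Your proposal is correct and matches the paper exactly: the paper omits the proof of this lemma precisely because it is the verbatim inductive compression argument of Lemma~\ref{lem:maxcomp1}, applied to the single surface $\isurf$ and the tetrahedron-disjoint subcollection $\Klusters'_i$, with the Busting Lemmas~\ref{lem:solocomp2}--\ref{lem:posse10comp2} supplying each compression step. Your remark on why the Busting Lemmas are stated for arbitrary nearly canonical surfaces agreeing with $\isurf$ on the cluster is exactly the point that makes the induction go through.
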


\begin{lem} \label{lem:network2}
Let $i \in \{1,2,3\}$.
Let $\Klusters'_i \subseteq \Klusters_i$ be a subcollection of clusters such that every cluster $\kluster \nts \in \Klusters_i \ssminus \Klusters'_i$ share a tetrahedron with some cluster $\kluster'_i \nts \in \Klusters'_i$, and write $\kappa'_i:=\#\Klusters'_i$ for the number of constituent clusters in $\Klusters'_i$. Then, we have
\[
4\kappa'_i \geqslant \kappa_i.
\]
\end{lem}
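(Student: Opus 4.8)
The plan is to follow the proof of Lemma~\ref{lem:network1} almost verbatim, replacing $\Klusters$ and $\Klusters'$ by $\Klusters_i$ and $\Klusters'_i$ throughout, and inserting one extra observation that keeps the count confined to $\Klusters_i$.

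First I would record the covering of $\Klusters_i$. Since every cluster $\kluster \in \Klusters_i \ssminus \Klusters'_i$ shares a tetrahedron with some cluster of $\Klusters'_i$ by hypothesis, and every cluster of $\Klusters'_i$ shares a tetrahedron with itself, we have
\[
\Klusters_i \;=\! \bigcup_{\kluster'_i \in \Klusters'_i} \! \{ \text{clusters } \kluster \in \Klusters_i \text{ sharing a tetrahedron with } \kluster'_i \},
\]
and hence, possibly counting some clusters of $\Klusters_i$ with multiplicity,
\[
\kappa_i \;\leqslant\! \sum_{\kluster'_i \in \Klusters'_i} \! \#\{ \text{clusters } \kluster \in \Klusters_i \text{ sharing a tetrahedron with } \kluster'_i \}.
\]

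Next I would bound each summand by $4$. If $\kluster'_i \in \Klusters'_i$ shares a tetrahedron $\tau$ with another cluster $\kluster \in \Klusters$, then by Lemma~\ref{lem:tprofile2} and the description of clusters in \S\ref{ssec:Rogue2} the tetrahedron $\tau$ is of type~$\typeqq$ and is not contained in any maximal layered solid torus; exactly as in the proof of Lemma~\ref{lem:network1}, the number of such tetrahedra in $\kluster'_i$ is at most $3$ --- they are the analogues of the tetrahedra $\tau_{3(m)}$ in a posse cluster, or of the undotted type~$\typeqq$ tetrahedra in an isolated-rogue-edge cluster --- so $\kluster'_i$ shares a tetrahedron with at most $3$ clusters of $\Klusters$ besides itself. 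The one genuinely new point in the rank-2 setting is that a type~$\typeqq$ tetrahedron carries exactly two normal quadrilaterals, of two distinct colors; since all tetrahedra of $\kluster'_i$ carry normal disks of the same pair of colors, say $i$ and $j$, the two quadrilaterals in $\tau$ have colors $i$ and $j$, and therefore any cluster of $\Klusters$ containing $\tau$ must lie in $\Klusters_{i,j} \subseteq \Klusters_i$. Thus each summand above counts only clusters of $\Klusters_i$ and is bounded by $1+3=4$.

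Summing over all $\kluster'_i \in \Klusters'_i$ then yields $4\kappa'_i \geqslant \kappa_i$, which is the assertion. I expect the only steps needing attention to be the ``at most $3$'' count --- the same finite case check (an isolated rogue edge versus an $11$- or $10$-tetrahedron posse cluster) already performed for Lemma~\ref{lem:network1} --- together with the color-tracking observation above; neither presents a genuine difficulty beyond the rank-1 argument.
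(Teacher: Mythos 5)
Your proposal is correct and follows essentially the same route as the paper, which omits this proof precisely because it is the rank-1 argument of Lemma~\ref{lem:network1} transcribed with $\Klusters_i$, $\Klusters'_i$ in place of $\Klusters$, $\Klusters'$. The color-tracking observation (that a shared type~$\typeqq$ tetrahedron forces the neighboring cluster into the same $\Klusters_{i,j}\subseteq\Klusters_i$) is a sensible clarification, though not strictly needed since the bound of at most $3$ neighbors within all of $\Klusters$ already suffices.
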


\begin{prop} \label{prop:fullcomp2}
Let $M$ be an irreducible 3-manifold with a rank-2 subgroup $H=\{0,\rclass,\gclass,\bclass\} \leqslant H^1(M;\bbZtwo)$. Then, with respect to any minimal triangulation $\tri$ of $M$, that is not a layered lens space, the canonical surface $\isurf$ has no $S^2$-components or $\RP^2$-components, and satisfies
\[
||\iclass|| \leqslant -\chi(\isurf)-\kappa_i/2.
\]
\end{prop}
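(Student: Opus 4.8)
The plan is to repeat the proof of Proposition~\ref{prop:fullcomp1} essentially verbatim, but one index $i\in\Index$ at a time, with $\isurf$ playing the role of $\asurf$ and the rank-2 Busting Lemmas~\ref{lem:solocomp2}, \ref{lem:posse11comp2}, \ref{lem:posse10comp2} playing the role of their rank-1 counterparts.

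First I would dispose of the topological statement. Since $\rank H^1(M;\bbZtwo)\geqslant 2$ and $H^1(\RP^3;\bbZtwo)$ has rank $1$, we have $M\nhomeo\RP^3$; as $M$ is irreducible (hence prime), a minimal triangulation of $M$ is one-vertex and $0$-efficient by Theorem~\ref{thm:0-efficient}, so Lemma~\ref{lem:norm-c} applies to each non-zero class $\iclass$ and shows that $\isurf$ has no $S^2$- or $\RP^2$-components and satisfies $||\iclass||\leqslant-\chi(\isurf)$. Everything that follows only sharpens this last inequality.

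Now fix $i\in\Index$. As in the proof of Proposition~\ref{prop:fullcomp1}, I would choose a subcollection $\Klusters'_i\subseteq\Klusters_i$ enjoying the two properties that (i) no two clusters in $\Klusters'_i$ share a tetrahedron, and (ii) every cluster in $\Klusters_i\ssminus\Klusters'_i$ shares a tetrahedron with some cluster in $\Klusters'_i$. Such a $\Klusters'_i$ exists by the same greedy construction used there: start from $\nil$ and adjoin clusters from $\Klusters_i$ as long as (i) is preserved; the resulting maximal collection then satisfies (ii). Writing $\kappa'_i:=\#\Klusters'_i$, property (i) lets me apply Lemma~\ref{lem:maxcomp2} to get $||\iclass||\leqslant-\chi(\isurf)-2\kappa'_i$, and property (ii) lets me apply Lemma~\ref{lem:network2} to get $4\kappa'_i\geqslant\kappa_i$; chaining these, $||\iclass||\leqslant-\chi(\isurf)-2\kappa'_i\leqslant-\chi(\isurf)-\kappa_i/2$, which is the assertion.

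The only point calling for care --- and the closest thing to an obstacle --- is the colour bookkeeping. By definition, the clusters comprising $\Klusters_i$ are exactly those clusters (around an isolated rank-2 rogue edge, or around a posse of non-isolated rank-2 rogue edges) that meet $\isurf$ non-trivially; by the discussion in \S\ref{ssec:Rogue2}, inside such a cluster the canonical surfaces restrict to two parallel copies of the configuration analysed in \S\ref{ssec:Rogue1}, one copy of which is a piece of $\isurf$, so the hypothesis $\surf\cap\kluster=\isurf\cap\kluster\neq\nil$ of Lemmas~\ref{lem:solocomp2}, \ref{lem:posse11comp2}, \ref{lem:posse10comp2} holds precisely for the clusters of $\Klusters_i$. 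Hence the inductive compression in Lemma~\ref{lem:maxcomp2} (compress $\surf^{(0)}=\isurf$ successively inside the pairwise tetrahedron-disjoint clusters of $\Klusters'_i$, dropping $\chi$ by at least $2$ at each step) and the local tetrahedron-sharing count behind Lemma~\ref{lem:network2} (a cluster of $\Klusters'_i$ shares a tetrahedron with at most three other clusters of $\Klusters_i$, that shared tetrahedron being a type-$\typeqq$ tetrahedron outside every maximal layered solid torus, just as in Lemma~\ref{lem:network1}) go through with no change beyond keeping track of $i$; there is nothing genuinely new.
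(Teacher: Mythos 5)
Your proposal is correct and follows exactly the route the paper intends: the paper omits the proof of Proposition~\ref{prop:fullcomp2} precisely because it is the same greedy choice of a tetrahedron-disjoint, dominating subcollection $\Klusters'_i \subseteq \Klusters_i$ followed by Lemma~\ref{lem:maxcomp2} and Lemma~\ref{lem:network2}, mirroring the proof of Proposition~\ref{prop:fullcomp1}. Your added care about the colour bookkeeping (that the clusters of $\Klusters_i$ are exactly those meeting $\isurf$, so the hypotheses of the rank-2 Busting Lemmas are satisfied) matches the discussion in \S\ref{ssec:Rogue2}--\S\ref{ssec:Busting2}.
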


\begin{lem} \label{lem:clusterogue2}
Let $\Rogues_i$ be the collection of rogue edges that are contained in some cluster $K \in \Klusters_i$. Then, we have
\[
\kappa_i=-\!\!\sum_{e \in \Rogues_i} \fcount_2(e).
\]
\end{lem}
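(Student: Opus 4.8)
The plan is to transcribe the proof of Lemma~\ref{lem:clusterogue1} almost verbatim, replacing $\Klusters$ by $\Klusters_i$, $\Rogues$ by $\Rogues_i$, and $\fcount_1$ by $\fcount_2$, and then to check that each ingredient survives the substitution. The skeleton is this: for every cluster $\kluster \in \Klusters_i$, set $\fcount_2(\kluster)$ to be the sum of $\fcount_2(e)$ over the interior rogue edges $e$ of $\kluster$. Since every rogue edge lying in a cluster of $\Klusters_i$ is the interior rogue edge of exactly one such cluster (an isolated rogue edge being the unique interior edge of its cluster, and the three members of a posse being the interior edges of the cluster around that posse), the edges in $\Rogues_i$ regroup to give
\[
\sum_{e \in \Rogues_i} \fcount_2(e) \;=\; \sum_{\kluster \in \Klusters_i} \fcount_2(\kluster),
\]
and it then remains only to show $\fcount_2(\kluster) = -1$ for every $\kluster \in \Klusters_i$, which yields $\sum_{e \in \Rogues_i}\fcount_2(e) = -\#\Klusters_i = -\kappa_i$.

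To compute $\fcount_2(\kluster)$ I would first note that $\acount_2$ vanishes on every rogue edge: an isolated rogue edge has no neighbour at all, and a non-isolated rogue edge (a member of a posse) is rogue precisely because all of its neighbours are insolvent, so in either case it has no solvent neighbour and hence $\acount_2(e) = 0$. Thus $\fcount_2(e) = \gcount_2(e)$ on rogue edges. Then, from the list of insolvent types recalled at the start of \S\ref{ssec:Decent2} (coming from Lemma~\ref{lem:tprofile2}), an isolated rogue edge falls into case (1) or case (2a) with $\gcount_2(e) = -1$, so a cluster around an isolated rogue edge has $\fcount_2(\kluster) = -1$; and each of the three edges of a posse falls into case (2b) with $\gcount_2(e) = -\tfrac{1}{3}$, so a cluster around a posse has $\fcount_2(\kluster) = 3 \cdot (-\tfrac{1}{3}) = -1$. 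Since every $\kluster \in \Klusters_i$ is a cluster of one of these two kinds, $\fcount_2(\kluster) = -1$ throughout, which completes the argument.

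I do not expect a genuine obstacle here; the one point I would take care with is the bookkeeping flagged in \S\ref{ssec:Rogue2}, namely that ``rogue'' is a rank-$2$ notion (defined through $\gcount_2$), not the rank-$1$ notion applied to a subgroup $\{0,\iclass\} \leqslant H$. Because the entire computation above is phrased in terms of the rank-$2$ quantities $\gcount_2,\acount_2,\fcount_2$ and the rank-$2$ type list of Lemma~\ref{lem:tprofile2}, no such confusion arises, and the constants ($-1$ for an isolated rogue edge, $-\tfrac{1}{3}$ for each edge of a posse) are exactly as in the rank-$1$ setting. So the proof is word-for-word that of Lemma~\ref{lem:clusterogue1}, which justifies the paper's choice to omit it.
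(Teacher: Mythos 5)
Your proof is correct and is exactly the paper's (omitted) argument: it is the proof of Lemma~\ref{lem:clusterogue1} transcribed with $\Klusters_i$, $\Rogues_i$, $\fcount_2$ in place of $\Klusters$, $\Rogues$, $\fcount_1$, using the case list of \S\ref{ssec:Decent2} to get $\fcount_2(\kluster)=-1$ per cluster. The only point worth a word more than you give it is that an edge of $\Rogues_i$ (defined as \emph{contained in} some cluster of $\Klusters_i$) really is \emph{interior to} a cluster of $\Klusters_i$, which holds because clusters sharing a tetrahedron must carry the same pair of colors.
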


We now complete the proof of \hyperref[thm:rank2]{Theorem~\ref*{thm:rank2}}. By \hyperref[prop:fullcomp2]{Proposition~\ref*{prop:fullcomp2}}, we have
\begin{equation} \label{eqn:proof2}
2\Tcount-4-2\sum_{i \in \calI} ||\iclass||
\;\geqslant\; 2\Tcount-4+2\sum_{i \in \calI} \chi(\isurf)+\sum_{i \in \calI}\kappa_i.
\end{equation}
Let us write $\Rogues$ for the set of all rogue edges. Then, by \hyperref[lem:frewrite2]{Lemma~\ref*{lem:frewrite2}} and \hyperref[lem:decent2]{Lemma~\ref*{lem:decent2}}, we obtain
\begin{equation} \label{eqn:rogues2}
2\Tcount-4+2\sum_{i \in \calI} \chi(\isurf)
\;\geqslant\; \sum_{e \in \Adults} \fcount_2(e)
\;\geqslant\; \sum_{e \in \Rogues} \fcount_2(e).
\end{equation}
As above, for each $i \in \{1,2,3\}$, let $\Rogues_i$ be the collection of rogue edges that are contained in some cluster $K
\in \Klusters_i$; also, for each $\{i,j\} \subset \{i,j,k\} =\{1,2,3\}$, let $\Rogues_{i,j}$ be the collection of rogue edges that are contained in some cluster $K \in \Klusters_{i,j}$.
Then, since
$\Klusters=\Klusters_{i,j} \sqcup \Klusters_{i,k} \sqcup \Klusters_{j,k}$,
we have
$\Rogues=\Rogues_{i,j} \sqcup \Rogues_{i,k} \sqcup \Rogues_{j,k}$;
also, since
$\Klusters_i=\Klusters_{i,j} \sqcup \Klusters_{i,k}$,
we have
$\Rogues=\Rogues_{i,j} \sqcup \Rogues_{i,k}$.
Hence, together with \hyperref[lem:clusterogue2]{Lemma~\ref*{lem:clusterogue2}},
\[
\sum_{e \in \Rogues} \fcount_2(e)
= \!\!\sum_{e \in \Rogues_{i,j}} \!\!\fcount_2(e)+\!\!\!\!\sum_{e \in \Rogues_{i,k}} \!\!\fcount_2(e)+\!\!\!\!\sum_{e \in \Rogues_{j,k}} \!\!\fcount_2(e)\\
= \frac{1}{2} \sum_{i \in \calI}\sum_{e \in \Rogues_i} \fcount_2(e)
= -\frac{1}{2} \sum_{i \in \calI}\kappa_i.
\]
Substituting this into the equation \hyperref[eqn:rogues2]{(\ref{eqn:rogues2})}, we obtain
\begin{equation}
2\Tcount-4+2\sum_{i \in \calI} \chi(\isurf)
\;\geqslant\; \sum_{e \in \Rogues} \fcount_2(e)
= -\frac{1}{2} \sum_{i \in \calI}\kappa_i.
\end{equation}
Hence, combining this with \hyperref[eqn:proof2]{(\ref{eqn:proof2})}, we have
\[
2\Tcount-4-2\sum_{i \in \calI} ||\iclass||
\;\geqslant\; -\frac{1}{2} \sum_{i \in \calI}\kappa_i+\sum_{i \in \calI} \kappa_i
\;=\; \frac{1}{2} \sum_{i \in \calI}\kappa_i
\;\geqslant\; 0,
\]
or equivalently $\Tcount \geqslant 2+\sum_{i \in \calI} ||\iclass||$. This concludes the proof of \hyperref[thm:rank2]{Theorem~\ref*{thm:rank2}}.

\bibliography{MinTri-Z2}
\bibliographystyle{hamsalpha}

\end{document}